\definecolor{lightgray}{gray}{0.9}
\theoremstyle{definition}
\newtheorem{definition}{Definition}[section]
\newtheorem{example}[definition]{Example}
\newtheorem{remark}[definition]{Remark}
\theoremstyle{plain}
\newtheorem{theorem}[definition]{Theorem}
\newtheorem{lemma}[definition]{Lemma}
\newtheorem{proposition}[definition]{Proposition}
\newtheorem{corollary}[definition]{Corollary}
\DeclareMathOperator{\Nef}{Nef}
\DeclareMathOperator{\rk}{rk}
\DeclareMathOperator{\id}{id}
\DeclareMathOperator{\Hom}{Hom}
\DeclareMathOperator{\Aut}{Aut}
\DeclareMathOperator{\aut}{Aut^*}
\DeclareMathOperator{\im}{im}
\DeclareMathOperator{\stab}{stab}
\newcommand{\QQ}{\mathbb{Q}}
\newcommand{\FF}{\mathbb{F}}
\newcommand{\RR}{\mathbb{R}}
\newcommand{\ZZ}{\mathbb{Z}}
\newcommand{\PP}{\mathbb{P}}
\renewcommand{\P}{\mathcal{P}}
\newcommand{\R}{\mathcal{R}}
\newcommand{\RN}[1]{\textup{\uppercase\expandafter{\romannumeral#1}}}
\newcommand*{\defeq}{\mathrel{\rlap{
                     \raisebox{0.3ex}{$\m@th\cdot$}}
                     \raisebox{-0.3ex}{$\m@th\cdot$}}
                     =}
\newcommand{\Gbar}{\overline{G}}
\newcommand{\Pbar}{\overline{\P}^\mathbb{Q}}
\newcommand{\DeltabarY}{\overline{\Delta}(Y)}
\def\blfootnote{\xdef\@thefnmark{}\@footnotetext}
\title[]{Orbits of smooth rational curves on Enriques surfaces}
\author{Simon Brandhorst}
\address{Simon Brandhorst,
Fakult\"at f\"ur Mathematik und Informatik, Universit\"at des Saarlandes, Campus E2.4, 66123 Saarbr\"ucken, Germany}
\email{brandhorst@math.uni-sb.de}
\author{Víctor González-Alonso}
\address{Víctor González-Alonso,
Institut für Algebraische Geometrie,
Leibniz Universit\"at Hannover
Welfengarten 1,
30167 Hannover, Germany}
\email{gonzalez@math.uni-hannover.de}
\thanks{Gefördert durch die Deutsche Forschungsgemeinschaft (DFG) – Projektnummer 286237555 – TRR 195.
Funded by the Deutsche Forschungsgemeinschaft (DFG, German Research Foundation) – Project-ID 286237555 – TRR 195.}
\begin{document}
\begin{abstract}
We give a closed formula for the number of orbits of smooth rational curves under the automorphism group of an Enriques surface in terms of its Nikulin root invariant and its Vinberg group. 
\end{abstract}
\maketitle
\addtocontents{toc}{\protect\setcounter{tocdepth}{1}}
\section{Introduction}
An Enriques surface over a field $k$ is a smooth proper algebraic surface $Y/k$ with second (étale) Betti number $b_2(Y)=10$ and numerically trivial canonical bundle. Besides abelian, K3 and bielliptic surfaces, Enriques surfaces appear as one of the four classes of minimal surfaces of Kodira dimension $0$.
In this work we assume that $k$ is an algebraically closed field of characteristic not two. 

The number of smooth rational curves on an Enriques surface $Y$ can be zero, finite or infinite. 
They are in bijection with the facets of the nef cone of $Y$.
The Morrison-Kawamata cone conjecture predicts that the automorphism group $\Aut(Y)$ has a rational polyhedral fundamental domain on the nef cone. It is known for Enriques surfaces in characteristic not two \cite{namikawa:periods_of_enriques,wang}. In particular, the number of $\Aut(Y)$-orbits of facets of the nef cone is finite and coincides with the number of $\Aut(Y)$-orbits of smooth rational curves on $Y$.

Our main result is a formula for the number of orbits. To state it, we need to introduce two ingredients which control the automorphism group and the nef cone: the \emph{Vinberg group} and the \emph{Nikulin root invariant}.
Let $S_Y$ denote the numerical lattice of $Y$, $\R(Y)\subseteq S_Y$ the set of classes of smooth rational curves and $W(Y)=W(\R(Y))$ the Weyl group. Let $\Aut^*(Y)$ denote the image of the representation $\Aut(Y) \to O(S_Y)$.

The \emph{Vinberg group} $\overline{G}_Y$ of $Y$ is the image of the natural map
\[G_Y:=W(Y)\rtimes \Aut^*(Y) \to O(S_Y\otimes \FF_2).\]
Note that the class $r \in S_Y$ of a smooth rational curve has $r^2=-2$. 
Set 
\[\Delta^+(Y)=\{r \in S_Y \mid r^2=-2, r \mbox{ is effective}\},\]
and let $\DeltabarY$ denote the image of $\Delta(Y)$ under the reduction mod $2$ map $S_Y \to S_Y\otimes \FF_2$. 
We view $\DeltabarY$ as an undirected graph by connecting two elements $\overline{r_1},\overline{r_2}$ by an edge if $r_1.r_2 \equiv 1 \mod 2$.
As it turns out, the connected components $\sigma$ of this graph are (graphs of) simply laced irreducible root systems, i.e. ADE-root systems. Let $R_\sigma$ denote the corresponding root lattice (see \Cref{def:R_sigma}). The \emph{Nikulin root invariant} is the pair
\[\left(R:=\bigoplus_\sigma R_\sigma, \;\ker (R\otimes \FF_2 \to S_Y\otimes \FF_2)\right).\]
The \emph{type} of a single connected component $\sigma$ is given by $R_\sigma$ and the kernel $K_\sigma$ of $R_\sigma \otimes \FF_2 \to S_Y\otimes \FF_2$. For $R \subseteq S_Y$ let $\overline{\Delta}(R)$ denote the images of the roots $\Delta(R)$ of $R$ via $S_Y \to S_Y \otimes \FF_2$.

The following proposition gives a first interpretation of Nikulin's root invariant: its connected components give ADE-configurations of rational curves and therefore faces of the nef cone.
\begin{proposition}
    Let $Y/k$ be an Enriques surface and $\sigma$ a connected component of $\DeltabarY$ of type $(R_\sigma,K_\sigma)$. Then there exists a configuration of smooth rational curves $b_1,\dots, b_n$ on $Y$ such that $R_\sigma \cong\langle b_1, \dots, b_n \rangle=:R$, $\sigma = \overline{\Delta}(R)$ and $K_\sigma = \ker R \otimes \FF_2 \to S_Y\otimes \FF_2$.
\end{proposition}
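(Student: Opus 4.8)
The plan is to realize $\sigma$ by the simple roots of the finite root system contained in $R_\sigma$, and to locate essentially all of the difficulty in showing that these simple roots are classes of \emph{smooth rational} curves rather than merely effective $(-2)$-classes. Concretely, let $R_\sigma\subseteq S_Y$ be the sublattice generated by the roots $\{r\in\Delta(Y)\mid \overline r\in\sigma\}$; by the ADE identification quoted before the statement this is a negative definite irreducible root lattice with $\overline{\Delta}(R_\sigma)=\sigma$. Using the standard dichotomy that every $(-2)$-class on an Enriques surface is effective or anti-effective, I fix an ample class $h$ to orient the roots, so that $\Delta^+(R_\sigma):=\Delta(R_\sigma)\cap\Delta^+(Y)=\{r\in\Delta(R_\sigma)\mid r\cdot h>0\}$ is a positive system. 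Let $b_1,\dots,b_n$ be its simple roots, i.e.\ the effective roots of $R_\sigma$ that are not a sum of two effective roots of $R_\sigma$. Since simple roots form a $\ZZ$-basis of the root lattice, $\langle b_1,\dots,b_n\rangle=R_\sigma$; hence the second and third assertions of the proposition are automatic (the map $R\otimes\FF_2\to S_Y\otimes\FF_2$ with $R=\langle b_i\rangle$ is literally the one defining $K_\sigma$, and $\overline{\Delta}(R)=\overline{\Delta}(R_\sigma)=\sigma$). Only the claim ``each $b_i$ is a smooth rational curve'' remains. I note in passing that the naive idea of transporting a standard ADE configuration by some $w\in W(Y)$ fails here, because $w$ need neither preserve $\sigma$ nor send smooth rational curves to smooth rational curves.

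The heart of the argument is therefore to prove that each simple root $b:=b_i$ is irreducible. Suppose not and write $b=\sum_k a_k\Gamma_k$ with the $\Gamma_k$ distinct smooth rational curves and $a_k\ge 1$, where the sum is nontrivial. The key reduction is to show that every component lies in $R_\sigma$, that is $\overline{\Gamma_k}\in\sigma$ for all $k$. Granting this, each $\Gamma_k$ is an effective root of $R_\sigma$, so $b=\sum_k a_k\Gamma_k$ exhibits the positive root $b$ as a nontrivial nonnegative combination of positive roots of $R_\sigma$, contradicting the indecomposability of the simple root $b$; hence $b$ is a single smooth rational curve.

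To control the components I would exploit that $\sigma$ is a \emph{full} connected component of $\DeltabarY$, so no vertex outside $\sigma$ is joined by an edge to a vertex of $\sigma$. The model case is $\sigma=\{\overline b\}$ of type $A_1$: a decomposition $b=\Gamma_1+\Gamma_2$ forces $\Gamma_1\cdot\Gamma_2=1$ from $b^2=-2$, whence $\overline{\Gamma_1}\cdot\overline b=1$ is odd and $\overline{\Gamma_1}\neq\overline b$ (as $\Gamma_2$ is primitive) is a neighbour of $\overline b$, which is impossible for an isolated vertex. In general I would argue that the classes $\overline{\Gamma_k}$ occurring in $\overline b=\sum_{a_k\,\mathrm{odd}}\overline{\Gamma_k}\in\sigma$, together with the intersection relations among the $\Gamma_k$ (all nonnegative and constrained by $b^2=-2$), build a connected subgraph attached to $\sigma$; maximality of the connected component then forces every $\overline{\Gamma_k}\in\sigma$, hence $\Gamma_k\in R_\sigma$. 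Making this bootstrap precise --- ruling out that $b$ ``leaks'' into curves whose classes fall outside $R_\sigma$, and simultaneously handling the even-multiplicity components that disappear under reduction mod $2$ --- is the main obstacle. It is precisely here that the connected-component hypothesis is indispensable: for an arbitrary closed ADE subsystem the simple roots need not be simple in $\Delta^+(Y)$ (already in $A_2\supseteq A_1$ the highest root is not a wall), and it is the mod-$2$ component structure that rules out such configurations.

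Once the $b_i$ are known to be distinct smooth rational curves lying in $R_\sigma$, the intersection pattern is forced. In the negative definite lattice $R_\sigma$, Cauchy--Schwarz gives $b_i\cdot b_j\in\{-1,0,1\}$ for distinct positive roots, while irreducibility gives $b_i\cdot b_j\ge 0$; thus $b_i\cdot b_j\in\{0,1\}$, equal to $1$ exactly when $\overline{b_i},\overline{b_j}$ are joined by an edge of $\sigma$. Hence the Gram matrix of $b_1,\dots,b_n$ is the ADE Cartan matrix of $\sigma$, so $R_\sigma\cong\langle b_1,\dots,b_n\rangle$; combined with the first paragraph this gives $\sigma=\overline{\Delta}(R)$ and $K_\sigma=\ker(R\otimes\FF_2\to S_Y\otimes\FF_2)$, completing the proof.
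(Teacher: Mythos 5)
Your opening step is already false, and it is where the real content of the proposition lives. You define $R_\sigma\subseteq S_Y$ as the span of \emph{all} splitting roots reducing into $\sigma$ and assert that it is a negative definite irreducible root lattice with $\overline{\Delta}(R_\sigma)=\sigma$. But by \Cref{rmk:splitting} the fibre of the reduction map $\Delta(Y)\to\DeltabarY$ over a vertex $\overline r\in\sigma$ consists of \emph{all} $(-2)$-roots of $S_Y$ congruent to $r$ modulo $2S_Y$, and this fibre is infinite: for every isotropic $v\in r^\perp$ (and $r^\perp\subseteq S_Y$ is hyperbolic of rank $9$, so there are infinitely many such $v$) the class $r+2v$ is again a root, hence again a splitting root. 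Consequently your lattice contains $2(v-v')$ for infinitely many isotropic $v,v'$, has rank larger than $n$, and is indefinite; its ``positive system'' is infinite and its ``simple roots'' are neither $n$ in number nor a basis, so the second and third assertions are not ``automatic'' --- they do not even parse for this lattice. The graph isomorphism $\Delta^+(R)\cong\sigma$ of \Cref{prop:root_systems_mod2} identifies $\sigma$ with the positive system of an \emph{abstract} definite root lattice built from the anti-invariant lattice of the K3 cover; it says nothing about the full preimage of $\sigma$ inside $S_Y$. Producing a finite, negative definite root sublattice $R_0\leq S_Y$ with $\overline{\Delta}(R_0)=\sigma$ and the correct kernel is precisely the nontrivial existence statement (\Cref{lem:existsRwithDeltaRequalSigma}), which the paper proves via Shimada's classification of root sublattices of $E_{10}$ (\Cref{shimada:irreducible}), transitivity of $O(R_\sigma)$ on even overlattices, and Kneser's theorem that $O(E_{10})\to O(E_{10}\otimes\FF_2)$ is surjective. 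Your proposal assumes all of this for free.

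The second gap is the one you flag yourself: irreducibility of the chosen roots. Your bootstrap is not carried out, and the difficulty is genuine, since an effective $(-2)$-class may decompose into many components, with multiplicities, and with summands in $\Pbar_Y\cap S_Y$, and nothing in your sketch forces the reductions of the components to stay in $\sigma$. The paper sidesteps this rather than attacking it head-on: having found $R_0$, it takes $h'\in\P_Y\cap R_0^\perp$ and $w\in W(Y)$ with $h:=w(h')$ nef, and replaces $R_0$ by $R:=R_0^w$. Because $R$ is orthogonal to the \emph{nef} class $h$ (your choice of an \emph{ample} $h$ cannot achieve this), any effective decomposition of an element of $\Delta^+(R)$ pairs to zero with $h$ and is confined to the negative definite lattice $h^\perp$; the indecomposable elements of $\Delta^+(R)$ are then classes of smooth rational curves by \Cref{aut_and_weyl_group}~(5). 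Note also that your stated reason for dismissing exactly this manoeuvre is wrong: $W(Y)$ \emph{does} preserve each connected component of $\DeltabarY$, since a reflection in a splitting root reduces to the transvection $x\mapsto x+b(x,\overline r)\,\overline r$, which moves a vertex only to a neighbour inside its own component; and the Weyl group is applied to transport the lattice $R_0$, not any curves --- the curves are extracted only afterwards, which is why the failure of $W(Y)$ to preserve $\R(Y)$ is irrelevant.
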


Since we work in characteristic not $2$, $Y$ is the quotient of a K3 surface $X$ by a fixed point free involution $\epsilon$. Then the finite set $\DeltabarY$ can be calculated easily from the action of $\epsilon$ on the Néron-Severi lattice $S_X$ of $X$, see \Cref{sec:root_invariants}. 
The same is true for $\Gbar_Y$ under the assumption that every automorphism on $Y$ is semi-symplectic, i.e. acts trivially on $H^0(Y,2K_Y)$, see e.g. \cite[Theorem 3.9, Remarks 3.12, 3.13]{brandhorst-gonzalez:527}. 
\begin{theorem}\label{thm:main}
Let $Y/k$ be an Enriques surface, and $a_n,d_n,e_n,a_7',d_8'$ be the number of $\Gbar_Y$-orbits on the set of connected components of $\DeltabarY$ of type $(A_n,0)$ ,$(D_n, 0)$, $(E_n,0)$, $(A_7, \ZZ/2\ZZ)$ and $(D_8, \ZZ/2\ZZ)$. 

Then the number of $\Aut(Y)$-orbits of (-2)-curves is given by
\[\sum_{i=1}^9 a_i + \sum_{i=1}^8 d_i + 2 d_9 + e_6+2e_7+4e_8+a_7'+2d_8'.\]
\end{theorem}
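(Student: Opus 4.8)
The plan is to reduce the count of $\Aut(Y)$-orbits of smooth rational curves to a lattice-theoretic count governed by the reduction map $S_Y \to S_Y\otimes\FF_2$, and then to evaluate it locally on each ADE type. First I would set up the geometric reduction. By the Morrison--Kawamata cone conjecture (known in this setting), the nef cone has a rational polyhedral fundamental domain for $\aut(Y)$, its facets are cut out exactly by the classes in $\R(Y)$, and $W(Y)$ acts simply transitively on the chambers of the positive cone with the nef cone as one chamber. Since the kernel of $\Aut(Y)\to\aut(Y)$ fixes every curve class, an $\Aut(Y)$-orbit of smooth rational curves is the same as an $\aut(Y)$-orbit of facets of the nef cone, i.e. of simple roots of the nef chamber. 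Using the Proposition, the simple roots whose reductions land in a fixed connected component $\sigma$ are precisely a set $b_1,\dots,b_n$ of smooth rational curves generating $R_\sigma$; no other positive root is a smooth rational curve, being a nontrivial effective sum of the $b_i$. Thus the facets are partitioned according to the components of $\DeltabarY$, and the total count is the sum over components of the number of $\aut(Y)$-orbits among their $b_i$.

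Next I would compare the honest orbit count with the Vinberg group. Both $\aut(Y)$ and $W(Y)$ act on $\bar S_Y := S_Y\otimes\FF_2$ through $\Gbar_Y$, where each reflection $s_r$ reduces to the transvection $T_{\bar r}\colon \bar x \mapsto \bar x + (\bar x . \bar r)\,\bar r$, and $\Gbar_Y$ preserves $\DeltabarY$ and permutes its components. This packages the computation: two $\Gbar_Y$-equivalent components of the same type $(R_\sigma,K_\sigma)$ contribute equally, so it suffices to determine, for each type, the number $c(R_\sigma,K_\sigma)$ of $\aut(Y)$-orbits of facets contributed by a single $\Gbar_Y$-orbit of components. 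The theorem then reads as the assertion that $c$ equals $1$ for $(A_n,0)$, $(D_n,0)$ with $n\le 8$, $(E_6,0)$ and $(A_7,\ZZ/2)$; equals $2$ for $(D_9,0)$, $(E_7,0)$ and $(D_8,\ZZ/2)$; equals $4$ for $(E_8,0)$; and vanishes for all other types because they do not occur.

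The heart of the matter, and the main obstacle, is this local computation of $c$. The subtlety is that at the level of $\Gbar_Y$ everything collapses: since $W(R_\sigma)$ is transitive on the roots of $R_\sigma$, the transvections $T_{\bar b_i}$ already act transitively on the reductions of the simple roots, so all the $b_i$ lie over a single $\Gbar_Y$-orbit of vertices of $\sigma$. The integer $c$ therefore measures exactly the failure of these identifications to be realized by genuine automorphisms preserving the nef cone --- equivalently, it is controlled by the kernel $N := \ker(G_Y \to \Gbar_Y)$ and by which chamber-symmetries lift from $\Gbar_Y$ to $\aut(Y)$. I would compute $c$ case by case from the combinatorics of each Dynkin diagram together with the embedding $R_\sigma\otimes\FF_2 \to \bar S_Y$, whose kernel is $K_\sigma$: for the small types the facets already fuse into one orbit, whereas for $E_8$, $E_7$, $D_9$ and the case $(D_8,\ZZ/2)$ the extra symmetries available modulo $2$ are obstructed, splitting the single $\Gbar_Y$-orbit into $4$, $2$, $2$ and $2$ honest orbits respectively. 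Establishing these splittings rigorously --- identifying the obstruction as an invariant whose order is the coefficient, and checking it against the list of types that actually arise on an Enriques surface --- is the technical core of the argument.

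Finally I would assemble the formula by summing $c(R_\sigma,K_\sigma)$ over a set of representatives of the $\Gbar_Y$-orbits of components. By the definition of $a_n,d_n,e_n,a_7',d_8'$ as the numbers of such orbits of each type, this gives
\[\sum_{i=1}^9 a_i + \sum_{i=1}^8 d_i + 2 d_9 + e_6 + 2 e_7 + 4 e_8 + a_7' + 2 d_8',\]
as claimed.
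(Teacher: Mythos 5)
There is a genuine gap, and it occurs already in your first reduction. You assert that the facets of $\Nef_Y$ whose defining classes reduce into a fixed component $\sigma$ are \emph{precisely} the $n=\rk\sigma$ curves $b_1,\dots,b_n$ of the Proposition, every other positive splitting root being a nontrivial effective sum of the $b_i$. This is false: a splitting root $r$ with $\overline{r}\in\sigma$ is only congruent modulo $2S_Y$ to an element of $R_\sigma=\langle b_1,\dots,b_n\rangle$; it need not lie in $R_\sigma$, so it need not be a sum of the $b_i$. In general $\R(Y,\sigma)$ is infinite: already for a generic $1$-nodal Enriques surface (root invariant $(A_1,0)$, the Cossec--Dolgachev case quoted in the introduction) there are infinitely many $(-2)$-curves, all congruent mod $2$, and the content of the theorem is exactly that this infinite set forms a single $\Aut(Y)$-orbit. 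Once this is corrected, your ``local count'' $c(R_\sigma,K_\sigma)$ is a count of orbits on an infinite set, and the difficulty the paper is built around reappears: the passage from curves to mod-$2$ data is lossy (the map in \Cref{prop:rational_curves} is surjective but not injective, with fibers described by orbits of induced chambers, \Cref{lem:phi-fiber}), and only for \emph{maximal} ADE configurations do the fibers collapse to points (\Cref{lem:transitive_on_induced_chambers}). This is why the paper organizes the entire proof around maximal configurations rather than around individual curves or the simple roots of one copy of $R_\sigma$.

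Second, the part you yourself call ``the technical core'' --- that the per-type coefficients are the stated values $1$, $2$ and $4$ --- is asserted rather than proved, and the method you propose (reading an obstruction off the combinatorics of the Dynkin diagram together with the embedding $R_\sigma\otimes\FF_2\to S_Y\otimes\FF_2$) cannot suffice as stated. The upper bounds in the paper require the classification of root sublattices of $E_{10}$ up to the congruence subgroup $O(E_{10})(2)$ (\Cref{sec:orbits-by-2congruence}), the determination of the possible maximal configurations for each type (\Cref{lem:Bmaximal_classification}, \Cref{prop:same_AutY_orbit}), and then genuinely geometric constructions to fuse the curves inside one component into few orbits: elliptic fibrations with prescribed $\widetilde{A}_n$, $\widetilde{D}_n$, $\widetilde{E}_n$ fibers, bielliptic involutions attached to $U$-pairs (\Cref{lem:nef_Upair}), and the $(A_8,E_8)$-involution (\Cref{lem:A8E8invol}). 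The lower bounds are not visible in the mod-$2$ lattice data at all: the paper obtains them from numerically trivial automorphisms whose fixed locus contains four rational curves (type $(D_8,\ZZ/2\ZZ)$, and type $(E_7,0)$ via a deformation to an $(E_7\oplus A_1)$-generic surface), from zero entropy and the uniqueness of the positive-rank elliptic fibration (type $(E_8,0)$), and from Kondo's classification of Enriques surfaces with finite automorphism group (type $(D_9,0)$). Your final bookkeeping --- summing a type-dependent weight over $\Gbar_Y$-orbits of components --- does match the structure of the theorem, but without the corrected reduction and these case-by-case inputs no value of the weights follows, so the proposal does not constitute a proof.
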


\begin{remark}
Most of the connected components of $\overline{\Delta}(Y)$ contribute with one orbit. The exceptions $e_7, d_8',e_8,d_9$ are explained (and proven) by geometric phenomena:

\begin{itemize}
\item A component of type $(D_8, \ZZ/2\ZZ)$ implies the existence of a numerically trivial automorphism $g$ in the center of $\Aut(Y)$ whose fixed locus consists of $4$ isolated points, $4$ smooth rational curves and possibly an elliptic curve \cite[Table 8.8,Theorem 8.2.21, Remark 8.2.22]{enriquesII}. Therefore, $\Aut(Y)$ permutes the $4$ curves and any other $(-2)$-curve on $Y$ (for instance in a $D_8$ configuration giving sigma) must lie in a different orbit. 

\item A component of type $(E_8, 0)$ implies that $Y$ admits a cohomologically trivial involution and is of zero entropy, i.e. has a unique elliptic fibration of positive rank \cite{martin2024enriquessurfaceszeroentropy}. Then the fibration and the automorphism give $\Aut(Y)$-invariant configurations of rational curves, which allows us to separate the orbits. The weight $4$ may be also be explained as follows: the cohomologically trivial involution fixes pointwise $4$ smooth rational curves (and $4$ isolated fixed points). 
There exists an Enriques surface with root invariant $(E_8,0)$ which admits a numerically trivial automorphism of order $4$ whose square is the cohomologically trivial involution \cite[Theorem 8.2.23]{enriquesII}.
Two of the fixed curves are fixed by the order $4$ automorphism and two are not. The orbits are separated by their distance to these two kinds of fixed curves.

\item A component of type $(D_9, 0)$ implies that $Y$ has finite automorphism group of type II and only $12$ rational curves \cite[Main Theorem]{kondo:enriques_finite_automorphism}. Their dual graph has vertices of valency $2$ or $3$ which therefore lie in different orbits.

\item Finally, the multiplicity $2$ for $e_7$ comes from the presence of a numerically trivial automorphism on a deformation $Y'$ of $Y$ with root invariant ($E_7+A_1,\ZZ/2\ZZ$) that preserves the $(E_7, 0)$ component. 
\end{itemize}
\end{remark}

\begin{corollary}
Let $Y$ be an Enriques surface in characteristic $p\neq 2$ and $\rho$ the Picard number of its K3 cover. Then the number of $\Aut(Y)$-orbits of $(-2)$-curves on $Y$ is at most $\rho - 10$.
\end{corollary}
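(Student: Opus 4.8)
The plan is to feed the formula of \Cref{thm:main} into an elementary counting bound and then control $\rk R$ by the geometry of the K3 cover. First I would record that in every type occurring in the formula the coefficient is at most the rank of the corresponding root lattice: explicitly $1\le n$ for $(A_n,0)$ and for $(D_n,0)$ with $n\le 8$, $2\le 9$ for $(D_9,0)$, $1\le 6,\ 2\le 7,\ 4\le 8$ for $(E_6,0),(E_7,0),(E_8,0)$, and $1\le 7,\ 2\le 8$ for $(A_7,\ZZ/2\ZZ)$ and $(D_8,\ZZ/2\ZZ)$. Since every $\Gbar_Y$-orbit of connected components contains at least one component, the number of orbits of a fixed type is bounded by the number of components of that type. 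Combining the two inequalities termwise, the weighted sum in \Cref{thm:main} is at most $\sum_\sigma \rk R_\sigma=\rk R$.

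It then suffices to prove $\rk R\le \rho-10$. Let $\pi\colon X\to Y$ be the K3 cover with its fixed-point-free involution $\epsilon$, and decompose $S_X\otimes\QQ$ into the $(\pm1)$-eigenspaces of $\epsilon^*$, with integral pieces $S_X^{+}$ and $S_X^{-}$. Every $\epsilon^*$-invariant class is a pullback from $Y$, because $Y$ is an Enriques surface and hence $S_Y$ is algebraic of rank $10$; thus $S_X^{+}=\pi^*S_Y\cong S_Y(2)$ has rank $10$ and signature $(1,9)$, so $\rk S_X^{-}=\rho-10$ and, comparing with the hyperbolic signature $(1,\rho-1)$ of $S_X$, the lattice $S_X^{-}$ is negative definite. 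I would next attach to the root invariant a family of anti-invariant classes: over a $(-2)$-curve $b$ on $Y$ the étale cover $\pi$ restricts to a trivial cover of $\PP^1$, so $\pi^{-1}(b)=b'\sqcup\epsilon b'$ with $b',\epsilon b'$ disjoint, and $\delta_b:=[b']-[\epsilon b']\in S_X^{-}$ satisfies $\delta_b^2=-4$. Choosing the lifts compatibly along each (tree-shaped) ADE component $\sigma$ gives $\delta_{b_i}\cdot\delta_{b_j}=\pm2$ exactly for adjacent $b_i,b_j$, so $b_i\mapsto\delta_{b_i}$ realizes an isometry $R_\sigma(2)\hookrightarrow S_X^{-}$.

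The crux is to show that these per-component embeddings assemble into a single rank-$\rk R$ subspace of $S_X^{-}$, equivalently that the classes $\delta_b$ coming from distinct components $\sigma$ are linearly independent. This is where I expect the main work: curves lying in different connected components of $\DeltabarY$ only satisfy $b_i\cdot b_j\equiv0\bmod 2$, so a priori their $\delta$'s need not be orthogonal, and one must rule out a degeneracy of the global Gram matrix whose diagonal blocks $-2C_\sigma$ (with $C_\sigma$ the ADE Cartan matrix) are already negative definite. I would resolve this using the construction of the Nikulin root invariant in \Cref{sec:root_invariants}: the passage from the action of $\epsilon$ on $S_X$ realizes $R(2)$ as a (primitively embedded) sublattice of the negative-definite lattice $S_X^{-}$, so in particular $\rk R\le\rk S_X^{-}=\rho-10$. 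Feeding this into the bound of the first paragraph yields the stated upper bound $\rho-10$.
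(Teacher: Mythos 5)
Your proof is correct and ultimately takes the same route as the paper: the decisive step is that, by the very construction of the Nikulin root invariant in \Cref{sec:root_invariants}, $R(2)=\bigoplus_\sigma R_\sigma(2)$ is a sublattice of the anti-invariant lattice $S_{X-}$, which is negative definite of rank $\rho-10$, combined with the termwise observation (left implicit in the paper) that every coefficient in \Cref{thm:main} is at most the rank of the corresponding root lattice. Your middle paragraph's curve-lifting construction is an unneeded detour — you correctly identify that it leaves cross-component independence open, and the lattice-theoretic definition you then invoke already supplies the embedding on its own (note only that $R(2)$ need not be \emph{primitive} in $S_{X-}$, e.g. for components of type $(A_7,\ZZ/2\ZZ)$ or $(D_8,\ZZ/2\ZZ)$, but primitivity plays no role in the rank bound).
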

\begin{proof}
Each connected component $\sigma$ of $\DeltabarY$ corresponds to a root lattice $R_\sigma$ and their direct sum embeds into the anti-invariant Néron-Severi lattice $S_{X-}$ of the K3 cover $X$ (cf. \Cref{sec:root_invariants}), which is of rank $\rho-10$.
\end{proof}

Precursors to the main theorem include the work of Cossec-Dolgachev \cite{cossec-dolgachev:automorphisms_nodal}, where they prove, using Coxeter groups, that for a generic $1$-nodal Enriques surface (i.e. the ones with root invariant $(A_1,0)$) every rational curve is in the same $\Aut(Y)$-orbit.
Another result is the computer aided computation with Borcherds' method of the $\Aut(Y)$-orbits of $(-2)$-curves for $184$ examples of Enriques surfaces (with prescribed ADE singularities) by the first author and Shimada in \cite{brandhorst_shimada:tautaubar}. This wealth of examples allowed us to conjecture \Cref{thm:main} and then work towards a proof. 

\subsection*{Outline}
The idea for the proof is the following:
The orbits of $\Aut(Y)$ on the facets of the nef cone are not easily controlled in terms of $\DeltabarY$ and $\Gbar_Y$ because the surjection $\R(Y)/\Aut(Y) \to \Delta(Y)/G_Y$ may fail to be injective (\Cref{prop:rational_curves} and its fibers are hard to control explicitly \Cref{lem:phi-fiber}). 
However, the orbits on the minimal (nonisotropic) faces are more accessible: They coincide with the $G_Y$-orbits on maximal ADE configurations in $\Delta(Y)$ (\Cref{lem:transitive_on_induced_chambers}). 
Since $G_Y$ always contains the $2$-congruence subgroup $O^+(S_Y)(2)$ of $O^+(S_Y)$, we classify ADE sublattices of the $E_{10}$-lattice up to the action of $O(E_{10})(2)$ in \Cref{sec:orbits-by-2congruence}. Then in \Cref{sec:maximal-ADE} we determine the maximal ADE configurations in $\R(Y)$ in terms of the root-invariant of $Y$. 
Now, the strategy is as follows: Find a configuration $B$ of $(-2)$-curves on $Y$ such that it contains a representative of each $\Aut(Y)$-orbit of maximal $ADE$-configurations. In the final step one has to determine which elements of $B$ lie in the same $\Aut(Y)$-orbit.
This strategy is carried out in \Cref{sec:orbits-of-rational-curves} leading to the analysis of $8$ cases depending on the root invariant.

\section*{Acknowledgments}
We thank Gebhard Martin, Giacomo Mezeedimi, Ichiro Shimada and Davide Veniani for discussions.
This work is supported by SFB 195 No. 286237555 of DFG.

\section{Preliminaries and Notation.}
Given a group $G$ acting on a set $X$ from the right we use exponential notation $x^g$ for the action of $g\in G$ on $x \in X$. 
For $A \subseteq X$ we write $G_A$ for its pointwise stabilizer. In this notation $G_{\{A\}}$ is the pointwise stabilizer of the 1-element set $\{A\}$ with $G$ acting on the powerset of $X$, i.e. the setwise stabilizer of $A$. Likewise we define $G_{\{A\},\{B\}}:=G_{\{A,B\}}=G_{\{A\}}\cap G_{\{B\}}$, and $G_{x,\{A\}}:=G_x \cap G_{\{A\}}$.

\subsection{Root lattices}
Here we recall well-known facts about (definite) irreducible root lattices, see e.g. \cite{ebeling:lattices-and-codes}. They come in $3$ families $A_n,n\geq 1$, $D_n,n\geq 4$ and $E_6,E_7,E_8$. 
Every root lattice is a direct sum of irreducible root lattices.
Given a root lattice $R$, we write $\tau(R)$ for its isomorphism class. 
We write it as a sum of $ADE$-symbols, e.g. $\tau(R) = A_1 + E_6$.

Let $r_1,\dots, r_n$ be a fundamental root system of $R$. Their Coxeter-Dynkin diagram is an $ADE$ graph. 
Let $r_0$ be the highest root of the corresponding positive root system. Then the Coxeter-Dynkin diagram of $r_0,\dots, r_n$
is an extended $ADE$-Dynkin diagram. These correspond to root lattices with a one dimensional radical. 
The coordinates of a primitive generator $f$ of the radical (necessarily an isotropic vector) are given in \Cref{figure:Antilde,figure:Dntilde,figure:E6tilde,figure:E7tilde,figure:E8tilde} next to each node. We call them multiplicities. The multiplicities of the original vertices $r_1,\dots, r_n$ are the coordinates of the highest root $r_0$.

Let $r_1^\vee,\dots,r_n^\vee \in R^\vee$ be the dual basis of $r_1,\dots, r_n$. The non-zero elements of the discriminant group $D_R:=R^\vee/R$
are in bijection with the fundamental roots of multiplicity one via $r_i\mapsto r_i^\vee+R$. For example the discriminant group of $E_6$ contains $2$ non-zero elements and similarly the $E_6$ diagram has two roots of multiplicity one. 

The discriminant group of $A_n$ is $\ZZ/(n+1)\ZZ$.
That of $D_n$ is $\ZZ/4\ZZ$ if $n$ is odd and $\ZZ/2\ZZ \times \ZZ/2\ZZ$ if $n$ is even.
The discriminant group of $E_6$ is $\ZZ/3\ZZ$ and that of $E_7$ is $\ZZ/2\ZZ$. The one of $E_8$ is trivial.

\begin{figure}
\begin{tikzpicture}
\tikzset{VertexStyle/.style= {fill=black, inner sep=1.5pt, shape=circle}}
\Vertex[NoLabel,x=0.62,y=0.78]{1a}
\Vertex[NoLabel,x=-0.22,y=0.97]{2a}
\Vertex[NoLabel,x=-0.9,y=0.433]{3a}
\Vertex[NoLabel,x=-0.9,y=-0.433]{4a}
\Vertex[NoLabel,x=-0.22,y=-0.97]{5a}
\Vertex[NoLabel,x=0.62,y=-0.78]{6a}
\Vertex[NoLabel,x=1,y=0]{7a}
\Edges(1a,2a,3a,4a,5a,6a,7a)
\tikzset{VertexStyle/.style= {inner sep=1.5pt, shape=circle}}
\Vertex[x=0.81,y=1.02]{1}
\Vertex[x=-0.29,y=1.28]{1}
\Vertex[x=-1.17,y=0.564]{1}
\Vertex[x=-1.17,y=-0.564]{1}
\Vertex[x=-0.29,y=-1.27]{1}
\Vertex[x=0.81,y=-1.01]{1}
\Vertex[x=1.3,y=0]{1}
\begin{scope}   [dashed]
     \draw (1,0) -- (0.62,0.78)  ; 
\end{scope}
\end{tikzpicture}
\caption{The $\widetilde{A}_n$ diagram.}\label{figure:Antilde}
\end{figure}
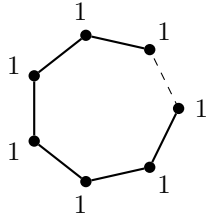
\begin{figure}
\begin{tikzpicture}
\tikzset{VertexStyle/.style= {fill=black, inner sep=1.5pt, shape=circle}}
\Vertex[NoLabel,x=-0.7,y=0.7]{0a}
\Vertex[NoLabel,x=-0.7,y=-0.7]{0b}
\Vertex[NoLabel,x=0,y=0]{1a}
\Vertex[NoLabel,x=1,y=0]{2a}
\Vertex[NoLabel,x=2,y=0]{3a}
\Vertex[NoLabel,x=2.7,y=0.7]{4a}
\Vertex[NoLabel,x=2.7,y=-0.7]{4b}
\Edges(0a,1a,2a)
\Edges(0b,1a)
\Edges(4a,3a)
\Edges(4b,3a)
\tikzset{VertexStyle/.style= {inner sep=1.5pt, shape=circle}}
\Vertex[x=-0.9,y=0.7]{1}
\Vertex[x=-0.9,y=-0.7]{1}
\Vertex[x=0,y=-0.3]{2}
\Vertex[x=1,y=-0.3]{2}
\Vertex[x=2,y=-0.3]{2}
\Vertex[x=3,y=0.7]{1}
\Vertex[x=3,y=-0.7]{1}
\begin{scope}   [dashed]   now dashed is for the lines inside the scope
     \draw (1,0) -- (2,0)  ; 
\end{scope}
\end{tikzpicture}
\caption{The $\widetilde{D}_n$ diagram.}\label{figure:Dntilde}
\end{figure}

\begin{figure}
\begin{tikzpicture}
\tikzset{VertexStyle/.style= {fill=black, inner sep=1.5pt, shape=circle}}
\Vertex[NoLabel,x=3,y=0]{4a}
\Vertex[NoLabel,x=4,y=0]{5a}
\Vertex[NoLabel,x=5,y=0]{6a}
\Vertex[NoLabel,x=6,y=0]{7a}
\Vertex[NoLabel,x=7,y=0]{8a}
\Vertex[NoLabel,x=5,y=2]{9a}
\Vertex[NoLabel,x=5,y=1]{10a}
\Edges(4a,5a,6a,7a,8a)
\Edges(6a,10a,9a)
\tikzset{VertexStyle/.style= {inner sep=1.5pt, shape=circle}}
\Vertex[x=3,y=-0.4]{1}
\Vertex[x=4,y=-0.4]{2}
\Vertex[x=5,y=-0.4]{3}
\Vertex[x=6,y=-0.4]{2}
\Vertex[x=7,y=-0.4]{1}
\Vertex[x=5.3,y=2]{1}
\Vertex[x=5.3,y=1]{2}
\end{tikzpicture}
\caption{The $\widetilde{E}_{6}$ diagram.}\label{figure:E6tilde}
\end{figure}

\begin{figure}
\begin{tikzpicture}
\tikzset{VertexStyle/.style= {fill=black, inner sep=1.5pt, shape=circle}}
\Vertex[NoLabel,x=2,y=0]{3a}
\Vertex[NoLabel,x=3,y=0]{4a}
\Vertex[NoLabel,x=4,y=0]{5a}
\Vertex[NoLabel,x=5,y=0]{6a}
\Vertex[NoLabel,x=6,y=0]{7a}
\Vertex[NoLabel,x=7,y=0]{8a}
\Vertex[NoLabel,x=8,y=0]{9a}
\Vertex[NoLabel,x=5,y=1]{10a}
\Edges(3a,4a,5a,6a,7a,8a,9a)
\Edges(6a,10a)
\tikzset{VertexStyle/.style= {inner sep=1.5pt, shape=circle}}
\Vertex[x=2,y=-0.4]{1}
\Vertex[x=3,y=-0.4]{2}
\Vertex[x=4,y=-0.4]{3}
\Vertex[x=5,y=-0.4]{4}
\Vertex[x=6,y=-0.4]{3}
\Vertex[x=7,y=-0.4]{2}
\Vertex[x=8,y=-0.4]{1}
\Vertex[x=5,y=1.4]{2}
\end{tikzpicture}
\caption{The $\widetilde{E}_{7}$ diagram.}\label{figure:E7tilde}
\end{figure}

\begin{figure}
\begin{tikzpicture}
\tikzset{VertexStyle/.style= {fill=black, inner sep=1.5pt, shape=circle}}
\Vertex[NoLabel,x=1,y=0]{2a}
\Vertex[NoLabel,x=2,y=0]{3a}
\Vertex[NoLabel,x=3,y=0]{4a}
\Vertex[NoLabel,x=4,y=0]{5a}
\Vertex[NoLabel,x=5,y=0]{6a}
\Vertex[NoLabel,x=6,y=0]{7a}
\Vertex[NoLabel,x=7,y=0]{8a}
\Vertex[NoLabel,x=8,y=0]{9a}
\Vertex[NoLabel,x=6,y=1]{10a}
\Edges(2a,3a,4a,5a,6a,7a,8a,9a)
\Edges(7a,10a)
\tikzset{VertexStyle/.style= {inner sep=1.5pt, shape=circle}}
\Vertex[x=1,y=-0.4]{1}
\Vertex[x=2,y=-0.4]{2}
\Vertex[x=3,y=-0.4]{3}
\Vertex[x=4,y=-0.4]{4}
\Vertex[x=5,y=-0.4]{5}
\Vertex[x=6,y=-0.4]{6}
\Vertex[x=7,y=-0.4]{4}
\Vertex[x=8,y=-0.4]{2}
\Vertex[x=6,y=1.4]{3}
\end{tikzpicture}
\caption{The $\widetilde{E}_{8}$ diagram.}\label{figure:E8tilde}
\end{figure}

\subsection{The Vinberg lattice}\label{sec:vinberg}
\begin{figure}
\begin{tikzpicture}
\tikzset{VertexStyle/.style= {fill=black, inner sep=1.5pt, shape=circle}}
\Vertex[NoLabel,x=0,y=0]{1a}
\Vertex[NoLabel,x=1,y=0]{2a}
\Vertex[NoLabel,x=2,y=0]{3a}
\Vertex[NoLabel,x=3,y=0]{4a}
\Vertex[NoLabel,x=4,y=0]{5a}
\Vertex[NoLabel,x=5,y=0]{6a}
\Vertex[NoLabel,x=6,y=0]{7a}
\Vertex[NoLabel,x=7,y=0]{8a}
\Vertex[NoLabel,x=8,y=0]{9a}
\Vertex[NoLabel,x=6,y=1]{10a}
\Edges(1a,2a,3a,4a,5a,6a,7a,8a,9a)
\Edges(7a,10a)
\tikzset{VertexStyle/.style= {inner sep=1.5pt, shape=circle}}
\Vertex[x=0,y=-0.4]{1}
\Vertex[x=1,y=-0.4]{2}
\Vertex[x=2,y=-0.4]{3}
\Vertex[x=3,y=-0.4]{4}
\Vertex[x=4,y=-0.4]{5}
\Vertex[x=5,y=-0.4]{6}
\Vertex[x=6,y=-0.4]{7}
\Vertex[x=7,y=-0.4]{8}
\Vertex[x=8,y=-0.4]{9}
\Vertex[x=6,y=1.4]{10}
\end{tikzpicture}
\caption{The $E_{10}$ diagram.}\label{figure:E10}
\end{figure}
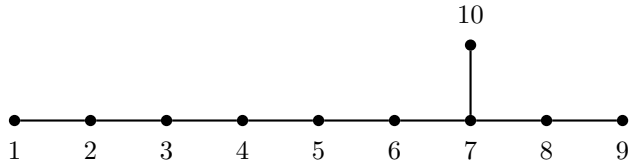
Let $E_{10}$ be the lattice corresponding to the Coxeter-Dynkin diagram in \Cref{figure:E10}.
It is an even unimodular lattice of signature $(1,9)$ and therefore $E_{10}\cong U \oplus E_8$.
Note that if $Y$ is an Enriques surface, then $S_Y\cong E_{10}$.
We denote by $e_{1}, \dots, e_{10}$ the basis of $E_{10}$ corresponding to its Coxeter-Dynkin diagram.

The space $E_{10} \otimes \FF_2$ comes equipped with a non-degenerate $\FF_2$-valued quadratic form $q$ defined by $q(x)= x^2/2 \mod 2$.
It can be interpreted as the discriminant form of $E_{10}(2)$ by identifying $\tfrac{1}{2}E_{10}/E_{10}$ with $E_{10}\otimes \FF_2$. The quadratic space $E_{10} \otimes \FF_2$ has a totally isotropic subspace of dimension $5$. For a subset $B \subseteq E_{10}$, we denote its image in $E_{10}\otimes \FF_2$ by $\overline{B}$.

\begin{theorem}\cite{shimada:ADE}
If $R\subseteq E_{10}$ is a root lattice, then its primitive closure $R'$ is a root lattice. There are exactly $184$ $O(E_{10})$-orbits of root sublattices $R \subseteq E_{10}$, and they are distinguished by 
their type $(\tau(R),\tau(R'))$.
\end{theorem}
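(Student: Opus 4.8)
The plan is to reduce the whole statement to Nikulin's theory of primitive embeddings into the even unimodular lattice $E_{10}$, treating separately the \emph{primitive} root sublattices (those with $R=R'$) and the way a general $R$ sits inside its closure. I would first dispose of the claim that $R'$ is a root lattice by a root-counting reduction. Let $\Phi$ be the set of $(-2)$-vectors of $E_{10}$ lying in $R\otimes\QQ$ and $R''=\langle\Phi\rangle$ the lattice they generate; then $R\subseteq R''\subseteq R'$ with a common rational span, so it suffices to show $R''=R'$, i.e. that a root lattice containing \emph{all} the roots of its rational span is already primitively embedded. This is a genuinely special feature of $E_{10}$: for a general even lattice it is false, since $A_1^{\oplus 8}+\tfrac12(e_1+\dots+e_8)$ is an even negative-definite overlattice of $A_1^{\oplus 8}$ of norm $-4$ glue vector whose only roots are the $\pm e_i$, hence not a root lattice. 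The mechanism ruling such overlattices out is that a non-primitive embedding would force the orthogonal complement $T=(R')^{\perp}\subseteq E_{10}$, a hyperbolic lattice of signature $(1,9-\rk R')$ with discriminant form $-q_{R'}$, to realize a discriminant form that does not occur for any even lattice of that small rank and signature. The hard technical point here is showing that every non-root even overlattice $M$ of a root lattice fails the existence criterion (Nikulin's condition on signatures and the length $\ell(D_T)$) for a complement inside $E_{10}$; I expect this to be the main obstacle.

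For the primitive sublattices I would then classify the $O(E_{10})$-orbits through their orthogonal complements. Because $E_{10}$ is even unimodular, a primitive $R'\hookrightarrow E_{10}$ has complement $T$ with $D_T\cong D_{R'}(-1)$ and signature $(1,9-\rk R')$, and by Nikulin's existence theorem the embedding exists precisely when such an even $T$ exists. Since $T$ is (for $\rk R'\le 8$) indefinite, Eichler's theorem on uniqueness of indefinite lattices in their genus, equivalently Nikulin's uniqueness criterion $\rk T\ge\ell(D_T)+2$, shows the embedding, hence the $O(E_{10})$-orbit, is determined by the isomorphism class $\tau(R')$ alone. This both identifies $\tau(R')$ as the complete invariant for primitive sublattices and reduces their count to enumerating the root lattices of rank $\le 9$ that admit such a complement.

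Finally, for a general $R$ I would fix its primitive (hence uniquely embedded) closure $R'$ and classify the finite-index root sublattices $R\subseteq R'$ modulo the image of $\mathrm{Stab}_{O(E_{10})}(R')\to O(R')$. One checks that this stabilizer surjects onto $O(R')$ because the complement $T$ has enough room to realize every isometry of $D_{R'}$, so the orbits coincide with the $O(R')$-orbits of full-rank root sublattices of $R'$, which by Borel--de Siebenthal are controlled by the isomorphism type $\tau(R)$. Combining the three steps shows that every orbit is pinned down by $(\tau(R),\tau(R'))$, and the number $184$ then emerges from the finite enumeration of admissible pairs. Besides the non-existence argument of the first step, the real work concentrates in the small-corank cases $\rk R'\in\{8,9\}$, where $T$ becomes definite or the hypothesis $\rk T\ge\ell(D_T)+2$ fails, so both uniqueness of the embedding and surjectivity onto $O(R')$ must be checked by hand; these are exactly the configurations responsible for the exceptional multiplicities in \Cref{thm:main}.
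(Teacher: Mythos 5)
The paper does not prove this statement at all: it is quoted from Shimada's classification \cite{shimada:ADE}, whose proof is a lattice-theoretic and partly computer-assisted enumeration. So your proposal can only be judged on its own terms, and on those terms it is a sensible strategic outline of the standard Nikulin-style argument (and close in spirit to how such classifications are actually carried out), but it is not a proof: each of its three steps defers precisely the point where the real work lies.

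Concretely: (1) in the first step you yourself flag the key claim --- that every even overlattice of a root lattice which is not itself a root lattice fails Nikulin's existence criterion for a complement inside $E_{10}$ --- as ``the main obstacle'' and do not prove it; verifying it amounts to enumerating the isotropic glue subgroups of the discriminant groups of all root lattices of rank $\le 9$ and checking each non-root overlattice, i.e.\ essentially the finite computation the reduction was meant to avoid. (2) In the second step, the uniqueness criterion $\rk T \ge \ell(D_T)+2$ fails far more often than in the coranks $\rk R' \in \{8,9\}$ you single out: it already fails whenever $\ell(D_{R'}) > 8 - \rk R'$, for instance for $A_1^{\oplus k}$ with $k\ge 5$ or $A_2^{\oplus 3}$, and Eichler's theorem does not apply to the rank-$2$ indefinite complements either. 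Those cases require Miranda--Morrison theory or explicit genus computations --- exactly the kind of computation the paper itself performs in \Cref{lem:GY-transitive_on_ADE}, where it also records that for the $A_9$ complement one has $O(K)=\{\pm\id_K\}$, so your ``enough room'' heuristic genuinely breaks down there. (3) In the third step, the assertion that full-rank root sublattices of $R'$ of the same ADE type form a single $O(R')$-orbit is not a consequence of Borel--de Siebenthal, which constructs maximal-rank subsystems but says nothing about their conjugacy; that conjugacy statement is a separate nontrivial theorem (it is what the paper cites \cite{wallach:subsystem} for in \Cref{lem:root_sublattices_orbits}), and the surjectivity of $\mathrm{Stab}_{O(E_{10})}(R')\to O(R')$ again needs $O(T)\to O(D_T)$ to be surjective, which must be checked case by case and fails in the small-complement cases. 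In short, the architecture is sound, but all three pillars are asserted rather than established, and the exceptional configurations you relegate to ``checking by hand'' are the actual content of Shimada's theorem.
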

\begin{remark}\label{shimada:irreducible}
If $R$ is an \emph{irreducible} root sublattice of rank at most $9$ and $R'$ an even overlattice, then $(\tau(R),\tau(R'))$ appears in the list of Shimada.
The cases with $R\neq R'$ are $(A_7,E_7)$, $(D_8,E_8)$ and $(A_8,E_8)$. Indeed, $A_7,D_8,A_8$ are exactly the irreducible root lattices of rank at most $9$ whose discriminant group contains an isotropic vector. 
This vector is unique for $A_7$ and $A_8$, but there are two isotropic vectors in the discriminant group of $D_8$ exchanged by the diagram symmetry. Therefore the diagram symmetry of an $(A_7,E_7)$ and $(A_8,E_8)$ extends to an isometry of $E_{10}$, but not the one of of $(D_8,E_8)$.
\end{remark}
The following lemma will be convenient to choose coordinates.
\begin{lemma}\label{lem:isotropic_in_Rperp}
Let $R\subseteq E_{10}$ be an irreducible negative definite root sublattice of rank at most $8$. Then $R^\perp$ contains a unique isotropic vector up to the action of $O(E_{10})_{R}$.
\end{lemma}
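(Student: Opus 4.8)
The plan is to turn the statement into a transitivity assertion for the \emph{stable orthogonal group} of $M:=R^\perp$, and then to reduce that to a uniqueness statement for primitive embeddings into $E_8$. First I would replace $R$ by its primitive closure $P:=R'$: an isometry of $E_{10}$ fixes $R$ pointwise if and only if it fixes $R\otimes\QQ$, hence $P=(R\otimes\QQ)\cap E_{10}$, pointwise, so $O(E_{10})_R=O(E_{10})_P$ and $R^\perp=P^\perp$. By \Cref{shimada:irreducible} the closure $P$ is again an irreducible root lattice of rank at most $8$, and it is one of those whose discriminant form carries \emph{no} nonzero isotropic vector (the only irreducible root lattices of rank $\le 9$ with an isotropic vector in their discriminant are $A_7,D_8,A_8$, and these have been absorbed into $E_7,E_8,E_8$). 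Now $M:=P^\perp$ has signature $(1,9-\operatorname{rk}P)$ with $9-\operatorname{rk}P\ge 1$, so $M$ is indefinite and contains isotropic vectors. Since $E_{10}$ is unimodular and $P$ is primitive, the discriminant quadratic form $(D_M,q_M)$ is isomorphic to $(D_P,-q_P)$ (in particular it has no nonzero isotropic vector either), and Nikulin's gluing theory identifies the image of $O(E_{10})_P\to O(M)$ with the stable orthogonal group $\widetilde O(M):=\ker\bigl(O(M)\to O(D_M)\bigr)$. It therefore suffices to prove that $\widetilde O(M)$ is transitive on the primitive isotropic vectors of $M$.

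Next I would fix the divisibility. For primitive isotropic $f\in M$ the class $f_*:=f/\operatorname{div}(f)\in D_M$ satisfies $q(f_*)=f^2/\operatorname{div}(f)^2=0$, so it is isotropic and hence $f_*=0$; thus $\operatorname{div}_M(f)=1$. Choosing $g\in M$ with $f\cdot g=1$ and adjusting $g$, the sublattice $\langle f,g\rangle\cong U$ is unimodular and splits off, giving $M=\langle f,g\rangle\oplus M_0$ with $M_0$ negative definite of rank $8-\operatorname{rk}P$. As $f,g\in P^\perp$, this upgrades to an orthogonal splitting $E_{10}=\langle f,g\rangle\oplus N$ with $N\cong E_8$, inside which $P$ sits as a primitive sublattice with $P^{\perp_N}=M_0$. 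In short, each primitive isotropic $f$ yields a primitive embedding $P\hookrightarrow E_8$ with complement $M_0$, and conversely.

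Given two primitive isotropic vectors $f_1,f_2$, with splittings $E_{10}=\langle f_i,g_i\rangle\oplus N_i$ and $P\subseteq N_i\cong E_8$, an isometry $\phi\in O(E_{10})_P$ sending $f_1\mapsto f_2$ can be assembled from the evident isometry $\langle f_1,g_1\rangle\to\langle f_2,g_2\rangle$, $f_1\mapsto f_2$, together with \emph{any} isometry $N_1\to N_2$ restricting to the identity on $P$ (using $-\operatorname{id}$ on the hyperbolic plane also shows $-f$ lies in the orbit of $f$). Hence the lemma reduces to the claim that the primitive embedding $P\hookrightarrow E_8$ is unique up to isometries fixing $P$ pointwise. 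By the standard correspondence between such embeddings and their gluing data, this amounts to two conditions on the complement $M_0=P^{\perp_{E_8}}$ (which lies in a single fixed genus, prescribed by its signature and its discriminant form $-q_P$): (a) the genus of $M_0$ consists of one isometry class, and (b) the natural map $O(M_0)\to O(D_{M_0})$ is surjective. Granting (a) and (b), any two complements are isometric via some $\mu$, and surjectivity lets one correct $\mu$ by an element of $O(M_0)$ so that the two glue maps $\gamma_1,\gamma_2\colon D_P\to D_{M_0}$ agree, producing the required isometry $N_1\to N_2$, and thus $\phi$.

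The main obstacle is precisely the verification of (a) and (b), which is where a finite, explicit case analysis is unavoidable. The complements $M_0$ that occur are small and classical — they are $0$, root lattices such as $A_1,A_2,A_3,A_4,A_2\oplus A_1,D_4,D_5,E_6,E_7$, together with a couple of lattices of rank $\le 2$ and small discriminant — and for each one must confirm class number one together with surjectivity of $O(M_0)\to O(D_{M_0})$, the latter being realised by diagram automorphisms and $-\operatorname{id}$. I expect this to be routine, but it is exactly the step where the special arithmetic of these definite lattices is used.
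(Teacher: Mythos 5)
Your overall strategy --- pass to the primitive closure $P=R'$, identify the image of $O(E_{10})_P\to O(P^\perp)$ with the stable orthogonal group $\widetilde{O}(P^\perp)=\ker\bigl(O(P^\perp)\to O(D_{P^\perp})\bigr)$, and reduce transitivity to uniqueness of primitive embeddings $P\hookrightarrow E_8$ via Nikulin's criterion --- is coherent, and it is genuinely more detailed than the paper's own proof, which is only a two-line sketch (surjectivity of $O(E_{10})_R\to O(R^\perp)$ plus an orbit count). The gap is your parenthetical claim that $P$ never carries a nonzero isotropic vector in its discriminant form because $A_7,D_8,A_8$ ``have been absorbed into $E_7,E_8,E_8$''. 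Taking the primitive closure changes nothing when $R$ is already primitive in $E_{10}$, and primitively embedded copies of $A_7$, $D_8$ and $A_8$ do exist: they are the types $(A_7,A_7)$, $(D_8,D_8)$, $(A_8,A_8)$ appearing in Shimada's classification (cf.\ \Cref{shimada:irreducible}) and in \Cref{table:root_mod2}. For those $R$ one has $P=R$, the discriminant group of $P$ does contain nonzero isotropic elements, and your divisibility-one argument --- hence the splitting $E_{10}=U\oplus E_8$ and everything built on it --- collapses.

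This gap is not repairable, because these cases are counterexamples to the transitivity statement you reduced to, and in fact to the lemma as literally stated. For a primitive $A_7\subseteq E_{10}$ the complement is $R^\perp\cong U\oplus\langle -8\rangle$; writing $e,f$ for the hyperbolic basis and $w$ for the generator of square $-8$, both $e$ and $2e+2f+w$ are primitive isotropic vectors, of divisibility $1$ and $2$ respectively, so they are inequivalent even under the full group $O(R^\perp)$. For a primitive $D_8$ the complement is $U(2)$, the image of $O(E_{10})_R$ in $O(U(2))$ is $\ker\bigl(O(U(2))\to O(D_{U(2)})\bigr)=\{\pm\id\}$, and $\pm e$, $\pm f$ form two distinct orbits; for a primitive $A_8$ the four primitive isotropic vectors of $R^\perp$ fall into two orbits interchanged only by $-\id$, which again is not in the image. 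A careful run through your own reduction would have surfaced exactly these counterexamples. Note that the paper's sketch suffers from the same defect (the claimed surjectivity of $O(E_{10})_R\to O(R^\perp)$ fails for these $R$), and that the lemma is only ever invoked for configurations of type $(A_7,E_7)$, $(D_8,E_8)$ and $(E_7,E_7)$, where $R^\perp\cong U$ or $U\oplus\langle -2\rangle$ and both arguments go through. So your proposal, restricted to those $R$ whose primitive closure is not $A_7$, $D_8$ or $A_8$, would be a valid and more self-contained alternative to the paper's sketch; as a proof of the stated lemma it fails at precisely the quoted claim.
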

\begin{proof}
This is a standard computation. One first checks that $O(E_{10})_{R} \to O(R^\perp)$ is surjective and then computes the number of isotropic vectors in $R^\perp$ up to $O(R^\perp)$.
\end{proof}

\subsection*{Quadratic forms in characteristic $2$.}
We refer to the book of M. Kneser \cite{kneser} for the general theory of quadratic forms over fields.
Let $K$ be a field and $q:V \to K$ be a quadratic form. The pair $(V,q)$ is called a quadratic space. By definition 
\[b(x,y)=q(x+y)-q(x)-q(y)\] 
is a symmetric bilinear form. 
For $X \subseteq V$ set $X^\perp = \{v \in V \mid \forall x \in X\colon b(x,v)=0 \}$.
We call $V$ regular if the radical of its bilinear form $\mathrm{rad}(b):=V^\perp=0$ and half-regular if 
$\mathrm{rad}(q):=q^{-1}(\{0\})\cap V^\perp = 0$.

Throughout the paper we will use the following quadratic spaces over $\FF_2$:
\begin{itemize}
    \item $U_2=(\FF_2^2,q)$, where $q(x_1,x_2)=x_1x_2$,
    \item $V_2=(\FF_2^2,q)$ with $q(x_1,x_2)=x_1^2+x_1x_2+x_2^2$,
    \item $[0]=(\FF_2,q)$ with $q(x)=0$, and
    \item $[1]=(\FF_2,q)$ with $q(x)=x^2=x$. 
\end{itemize}
The relations 
\[U_2\oplus U_2 \cong V_2 \oplus V_2,\;\; U_2 \oplus [1]\cong V_2 \oplus [1],\;\; [1]\oplus [1] \cong [1] \oplus [0]\] 
lead to the following classification. 
\begin{theorem}
Let $W$ be a quadratic space over $\FF_2$. Then there are unique integers $u,v,k,e$ with $v,e\leq 1$ and $ev=0$ such that 
\[W \cong U_2^u \oplus V_2^v \oplus [1]^e \oplus [0]^k.\]
\end{theorem}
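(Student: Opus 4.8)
The plan is to prove existence and uniqueness separately, in both cases organizing the argument around the filtration $\mathrm{rad}(q) \subseteq \mathrm{rad}(b) = W^\perp \subseteq W$.

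\emph{Existence.} First I would peel off the radical of the bilinear form. Choosing any vector-space complement $C$ of $R := W^\perp$ in $W$, one has $b(C,R)=0$ by the definition of $R$, so that $q(c+r)=q(c)+q(r)$ for $c \in C$, $r \in R$; thus $W = C \perp R$ is an orthogonal decomposition of quadratic spaces, and $b|_C$ is non-degenerate because its radical would lie in $C \cap W^\perp = 0$. On $R$ the bilinear form vanishes identically ($b$ is alternating since $b(x,x)=q(2x)-2q(x)=0$), so $q|_R(x+y)=q|_R(x)+q|_R(y)$, i.e.\ $q|_R\colon R \to \FF_2$ is $\FF_2$-linear. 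Its kernel is exactly $\mathrm{rad}(q)$, and splitting off a vector on which $q|_R=1$ (if one exists) gives $R \cong [1]^e \oplus [0]^k$ with $e \in \{0,1\}$ and $k = \dim \mathrm{rad}(q)$.

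For the regular part $C$, since $b|_C$ is alternating and non-degenerate over $\FF_2$ I would choose a symplectic basis $a_1,b_1,\dots,a_m,b_m$ and split $C$ orthogonally into the planes $\langle a_i, b_i\rangle$. Each plane, having $b(a_i,b_i)=1$, is isometric to $U_2$ or $V_2$: a short basis change shows it is $U_2$ precisely when $q(a_i)q(b_i)=0$ and $V_2$ when $q(a_i)=q(b_i)=1$ (the latter being the anisotropic plane). Applying $V_2 \oplus V_2 \cong U_2 \oplus U_2$ reduces the number of $V_2$-summands to $v \in \{0,1\}$, so $C \cong U_2^u \oplus V_2^v$, and finally $U_2 \oplus [1] \cong V_2 \oplus [1]$ lets me trade away a $V_2$ whenever a $[1]$ is present, enforcing $ev=0$ and yielding the claimed normal form.

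\emph{Uniqueness.} The plan is to recover each of $u,v,k,e$ from manifestly isometry-invariant quantities. Directly $k = \dim \mathrm{rad}(q)$ and $k+e = \dim \mathrm{rad}(b) = \dim W^\perp$, so $e$, and hence $u+v = \tfrac12(\dim W - \dim W^\perp)$, are invariants. The one remaining point is to separate $u$ from $v$, for which I would use the character sum $S(W):=\sum_{x\in W}(-1)^{q(x)}$, which is multiplicative for $\perp$. Evaluating on the generators gives $S(U_2)=2$, $S(V_2)=-2$, $S([0])=2$, $S([1])=0$, whence $S(W)=2^{u+k}(-2)^{v}$ when $e=0$ and $S(W)=0$ when $e=1$. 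Since $e$ is already determined and $ev=0$, the value $v=0$ is forced when $e=1$, while for $e=0$ the sign of the nonzero integer $S(W)$ reads off $v \in \{0,1\}$; then $u=(u+v)-v$.

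The main obstacle is the existence step for the regular part: one must check that $b|_C$ is genuinely alternating and non-degenerate so a symplectic basis exists, and then normalize each hyperbolic plane carefully over $\FF_2$, where the cases $(q(a),q(b)) \in \{(0,0),(1,0),(0,1)\}$ must all be shown isometric to $U_2$ while only $(1,1)$ produces the distinct anisotropic plane $V_2$. Once the planes are classified, the three stated relations handle all remaining bookkeeping, and the character sum cleanly sidesteps the fact that $q$ need not descend to $W/W^\perp$.
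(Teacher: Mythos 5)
Your proof is correct, but it takes a genuinely different route from the paper, which offers no argument at all: it simply cites Kneser's book (Satz 2.15) and records the result as a special case. Your existence step — splitting off the radical $W^\perp$ as an orthogonal summand, observing that $q$ is linear on it to get $[1]^e\oplus[0]^k$, choosing a symplectic basis for the regular complement, classifying each hyperbolic plane as $U_2$ or $V_2$ by checking the three isotropic-vector cases, and then invoking the relations $U_2^2\cong V_2^2$ and $U_2\oplus[1]\cong V_2\oplus[1]$ stated just before the theorem — is essentially the standard structure theory that Kneser's proof also rests on, so nothing is lost there. Where your write-up adds real value is the uniqueness step: recovering $k$ and $e$ from $\dim\mathrm{rad}(q)$ and $\dim\mathrm{rad}(b)$, and then separating $u$ from $v$ via the character sum $S(W)=\sum_{x\in W}(-1)^{q(x)}$, which is multiplicative for orthogonal sums and takes the values $2,-2,2,0$ on $U_2,V_2,[0],[1]$. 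This is a clean, self-contained substitute for the Arf invariant (or Witt cancellation) that works uniformly in the degenerate case, precisely because, as you note, $q$ need not descend to $W/W^\perp$ when $e=1$ — a subtlety a naive quotient argument would stumble on. In short: the paper buys brevity by outsourcing to Kneser; your argument buys self-containedness at the cost of about a page, and all the individual steps (the orthogonal splitting, the plane classification, the invariance computations, and the sign analysis of $S(W)$) check out.
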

\begin{proof}
    Special case of \cite[Satz 2.15]{kneser}.
\end{proof}
\begin{table}
\begin{tabular}{cl|cl|ccl}
$R=R'$ & $\overline{R}$  & $R=R'$& $\overline{R}$  & $R$   &$R'$ & $\overline{R}$ \\
\hline
$A_1$&  $[1]$            & $D_4$ & $V_2\oplus [0]^2$ & $A_7$ & $E_7$ & $U_2^3$\\
$A_2$&  $V_2$            & $D_5$ & $U_2\oplus V_2 \oplus [0]$ & $A_8$ & $E_8$ & $U_2^4$\\
$A_3$&  $V_2 \oplus [0]$ & $D_6$ & $U_2\oplus V_2 \oplus [0]^2$ & $D_8$ & $E_8$ & $U_2^3 \oplus [0]$\\
$A_4$&  $U_2\oplus V_2$  & $D_7$ & $U_2^3 \oplus [0]$ & $E_{10}$ & $E_{10}$ & $U_2^5$ \\
$A_5$&  $U_2^2\oplus[1]$ & $D_8$ & $U_2^3 \oplus [0]^2$ & &  & \\
$A_6$&  $U_2^3$          & $D_9$ & $U_2^4\oplus [0]$ & &  & \\
$A_7$&  $U_2^3\oplus [0]$& $E_6$ & $U_2^2\oplus V_2$ &  & \\
$A_8$&  $U_2^4$          & $E_7$ & $U_2^3 \oplus [1]$&  & \\
$A_9$&  $U_2^4\oplus [1]$  & $E_8$ & $U_2^4$ &  & 
\end{tabular}
\caption{Irreducible root sublattices of $E_{10}$ and their quadratic forms mod $2$.}\label{table:root_mod2}
\end{table}

\begin{example}\label{example:root_invariant}
The following examples illustrate how \Cref{table:root_mod2} can be computed. There we denote by $R\subseteq E_{10}$ an irreducible root sublattice, by $R'\subseteq E_{10}$ its primitive closure, and by $\overline{R}\subseteq E_{10}\otimes\FF_2$ the image of $R\hookrightarrow E_{10}\to E_{10}\otimes\FF_2$.
In general, $\overline{R}\neq R\otimes \FF_2$ and the difference is a subtle but important part of the Nikulin root invariant defined in \Cref{sec:root_invariants}. 
\begin{enumerate}
\item Let $R\cong A_7$. It has a unique even overlattice $R' \cong E_7$. We obtain from a computation that its 2-adic Jordan decomposition is 
$R \otimes \ZZ_2 = U \oplus V$ with $U$ even unimodular of rank $6$ and $V$ of rank one with gram matrix $[8]$. Then $R' \otimes \ZZ_2 = U \oplus \frac{1}{2}V$, and we see that 
the kernel of $R \otimes \FF_2 \to R'\otimes \FF_2$ is given by $V \otimes \FF_2$. Moreover, we note that $R \otimes \FF_2$ is a quadratic space of rank $7$, and has a $1$-dimensional radical (of its quadratic and bilinear form). The space $R'\otimes \FF_2$ is $7$-dimensional. The radical $\mathrm{rad}(b)$ of its bilinear form $b$ is $\mathrm{rad}(b)=\left(\frac{1}{2}V \right)\otimes \FF_2$ and 
the radical of its quadratic form $\mathrm{rad}(q)=0$. 
\item For $R\cong D_8$ and $R'\cong E_8$, we obtain in a similar fashion that $R \otimes \ZZ_2 = U \oplus V(2)$ with $U$, $V$ even unimodular of rank $6$, respectively $2$. 
We see that $R \otimes \FF_2$ is a quadratic space with a two dimensional radical.
Since $[R':R]=2$, the kernel $R \otimes \FF_2 \to R'\otimes \FF_2$ has order $2$. It follows that its image $\overline{R}$ is a quadratic space of dimension $7$ which has a one dimensional radical $\mathrm{rad}(q)=\mathrm{rad}(b)$. 
\item Finally, for $R\cong A_8$ and $R'\cong E_8$. We have $[R':R]=3$, and so $R\otimes \FF_2=R'\otimes \FF_2=\overline{R}$.
\end{enumerate}
\end{example}

\subsection*{Weyl groups}
Let $L$ be a lattice. An element $r \in L\otimes \RR$ with $r^2=n$ is called an $n$-vector. It is an $n$-\emph{root} if the reflection
\[s_r\colon L \otimes \QQ \to L \otimes \QQ \quad x \mapsto x-\frac{2x.r}{r^2}r\]
preserves $L$. 
Any vector $x \in L$ of square $x^2=k \in\{\pm 1, \pm 2\}$ is a $k$-root. Since we are mostly interested in $(-2)$-roots, 
when we speak of a root, we mean a $(-2)$-root.
For $\Delta \subseteq L$ a set of roots, we call $W(\Delta)=\langle s_r | r \in \Delta \rangle$ the \emph{Weyl group} of $\Delta$.  We write $W(L)$ in the case where $\Delta=\Delta(L)$ is the set of roots of $L$.

For $(V,q)$ a quadratic space over $\FF_2$, we denote by $\Delta(V)=\{v \in V \mid  q(v)=1\}$. For $v \in \Delta(V)$, the formula $s_v(x)= x + b(x,v)v$ defines an isometry (a so called symplectic transvection). 
The case we have in mind is when $V = E_{10}\otimes \FF_2$ and $v=\overline{r}$ for $r \in E_{10}$ a root. Then $s_r$ induces the isometry  $s_{v}$ of $V$.
If $\sigma \subseteq \Delta(V)$ is a subset, we denote by $W(\sigma)\subseteq O(V)$ the group generated by the symplectic transvections $s_v$ with $v \in \sigma$.

\subsection*{Chambers}
Let $L$ be a hyperbolic lattice. Fix one of the two connected components of $\{x \in L \otimes \RR \mid x^2>0\}$. We denote it by $\P_L$ and call it the positive cone of $L$. We also denote by $\overline{\P}^{\QQ}_L$ the union of $\P_L$ with the rays $\RR_{>0} v$ generated by isotropic $v\in L\setminus\{0\}$ in the closure of $\P_L$.
For $Y$ an Enriques surface and $L=S_Y$, we call the component containing an ample class the \emph{positive cone} $\P_Y$ of $Y$.

Let $\Delta \subseteq L \otimes \RR$ be a subset such that $x^2<0$ for all $x\in \Delta$ and such that $x \in \Delta$ implies $-x \in \Delta$.
The connected components of $\P_L\setminus \bigcup_{r \in \Delta} r^\perp$ are called $\Delta$-\emph{chambers}. The closure of a chamber in $L \otimes \RR$ is called a \emph{closed $\Delta$-chamber}. 
Let $C$ be a $\Delta$-chamber. It defines a partition $\Delta = \Delta^+_C \cup -\Delta^+_C$ with 
\[\Delta^+_C:=\{r \in \Delta \mid \forall c \in C \colon r.c > 0\} =\{r \in \Delta \mid \exists c \in C \colon r.c>0\}.\]
We call the elements of $\Delta^+_C$ \emph{positive} (with respect to $C$). 

\subsection*{Indecomposable roots}
Let $\Delta$ consist of roots. Then an element $r$ of $\Delta^+_C$ is called \emph{indecomposable} (with respect to the positive root system $\Delta^+_C$) if $r$ cannot be written as a sum $r = \sum_i r_i$ with each $r_i \in \Delta^+_C \cup (\Pbar_L \cap L)$. In a geometric context this would mean that each $r_i$ is "effective".
The set of indecomposable elements of $\Delta^+_C$ is written as $\R(\Delta^+_C)$. 
Their reflections generate the Weyl group, i.e., $W(\Delta^+_C) = W(\R(\Delta^+_C))$.
\subsection*{Induced chambers} 
Let $S \subseteq L$ be a sublattice and $\P_S$ the positive cone of $S$ contained in $\P_L$. 
The connected components of 
\[\P_S \setminus \bigcup_{r \in \Delta} r^\perp\] 
are called \emph{induced chambers}. 
We see that any induced chamber $C_S$ can be written as $C \cap \P_S$ for some $\Delta$-chamber $C$. The converse, however, is not true:
for $C$ a $\Delta$-chamber it can happen that $C \cap \P_S$ is of smaller dimension than $S$. 

The walls cutting out the induced chambers from the positive cone $\P_S$ can be described as follows.
For $x \in L$ write $x_S \in S^{\vee}$ for the orthogonal projection of $x$ to $S^\vee\subset S\otimes\QQ$.
Set
\[\Delta|_S :=\{ x_S \mid x \in \Delta: x_S^2<0\}.\]
The induced chambers are precisely the $\Delta|_S$-chambers. 

For $\Delta$ the set of $(-2)$-roots of a lattice $L$, we speak of an $L|S$-chamber.
Note that if $r \in \Delta$ is a root, then $r_S$ is not necessarily a root of $S$ and the reflection $s_{r_S} \in O(S \otimes \QQ)$ does not necessarily preserve $\Delta|_S$; hence there is not necessarily a Weyl group acting on the set of induced chambers. 

\subsection{The Nef cone of an Enriques surface.}
Let $Y$ be an Enriques surface with covering K3 surface $\pi \colon X \to Y$ and covering involution $\epsilon$. Let $S_X:=\mathrm{Num}(X)$ and $S_Y:=\mathrm{Num}(Y)$ be the numerical lattices of $X$ and $Y$.
We identify the invariant lattice $(S_X)^\epsilon = (S_X)_+$ with $S_Y(2)$ via $\pi^*$.

The set of \emph{splitting roots} of $Y$ is defined as 
\[\Delta(Y):=\Delta(S_X)|_{S_Y(2)}.\]
It consists entirely of $(-4)$-roots with respect to $S_Y(2)$ (see \Cref{rmk:splitting}). 
With respect to the quadratic form on $S_Y$ they are $(-2)$-roots.  We shall regard $\Delta(Y)$ as a subset of $S_Y$.
\begin{remark} \label{rmk:splitting}
Explicitly, 
\begin{equation}\label{eqn:splitting}
     \Delta(Y)=\{r \in S_Y \mid r^2=-2, \exists v \in S_{X-}: v^2=-4 \mbox{ and } (\pi^*(r)+v)/2 \in S_X\}.
\end{equation}
Note that if $r\in\Delta(Y)$ and $r'$ is a root in $S_Y$ with $r' \equiv r \mod 2S_Y$, then $(\pi^*(r')+v)/2 \in S_X$ and $r'$ is also a splitting root.
\end{remark}
\begin{remark}
By \cite[Lemma 3.3, Lemma 3.4]{brandhorst-gonzalez:527} a $(-2)$-root $r$ in $S_Y$ is a splitting root if and only if there exists a $(-2)$-root $\widetilde{r}\in S_X$ with 
\[\pi^*(r) = \widetilde{r} + \epsilon^*(\widetilde{r}).\]
By Riemann-Roch on the K3 surface $X$, $\widetilde{r}$ or $-\widetilde{r}$ is effective. Hence $r = \pi_*(\widetilde{r})$ or $-r = \pi_*(-\widetilde{r})$ is effective. The set of \emph{effective splitting roots} is denoted by $\Delta^+(Y)$.  For a root lattice $R \subseteq S_Y$ with $\Delta(R) \subseteq \Delta(Y)$, we set $\Delta^+(R):=\Delta(R)\cap \Delta^+(Y)$.
\end{remark}
The Weyl group of $Y$ is defined as the group $W(Y):=W(\Delta(Y))$. 
Let $\aut(Y)$ be the image of $\Aut(Y)$ in $O(S_Y)$.
Define the modular stabilizer of $Y$ as $G_Y:= \langle W(Y) \cup \aut(Y)\rangle $ and let $\R(Y)$ be the set of classes of smooth rational curves of $Y$. Finally, let 
\[G_0:=\ker\left(O^+(S_Y) \to O(S_Y\otimes \FF_2)\right)\] 
be the level $2$ congruence subgroup of $O^+(S_Y)$.
We have the following:
\begin{proposition}\label{aut_and_weyl_group}\cite{dolgachev:OnAutomorphismsOfEnriquesSurfaces}\cite[Proposition 3.10]{brandhorst-gonzalez:527}
Let $Y$ be an Enriques surface over an algebraically closed field of characteristic not two.
\begin{enumerate}
\item The Weyl group $W(Y)$ is a normal subgroup of the modular stabilizer $G_Y$. 
\[G_Y = W(Y) \rtimes \aut(Y)\]
\item The Weyl group acts simply transitively on the set of $\Delta(Y)$-chambers.
\item The ample cone of $Y$ is the $\Delta(Y)$-chamber determined by the positive root system $\Delta^+(Y)$,
and the nef cone $\Nef_Y$ is its closure.
\item $\aut(Y)$ is the stabilizer of $\Nef_Y$ in $G_Y$.
\item The indecomposable elements of $\Delta^{+}(Y)$ are exactly the classes of smooth rational curves of $Y$, i.e. $\R(\Delta^{+}(Y))=\R(Y)$.
\item $\R(Y)$ is in bijection with the set of facets of $\Nef_Y$ via $r\mapsto r^\perp \cap \Nef_Y$ and $W(Y)=W(\R(Y))$, i.e. $\R(Y)$ is a root basis. 
\item $G_0 \subseteq G_Y$.
\end{enumerate}
\end{proposition}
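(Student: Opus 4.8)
The plan is to prove the seven assertions as a single cascade resting on the chamber geometry of the splitting roots, deducing the group-theoretic statements formally and isolating the congruence-subgroup inclusion as the genuinely deep input. \emph{Chamber structure (2) and (3).} First I would note that $\Delta(Y)$ is a locally finite set of $(-2)$-roots in the positive cone $\P_Y$: local finiteness follows because any effective splitting root meets a fixed ample class positively with bounded intersection inside a compact subcone, and by the Remark every splitting root is effective up to sign. The set $\Delta(Y)$ is moreover stable under $W(Y)$, since it is saturated modulo $2S_Y$ and $\DeltabarY$ is preserved by the transvections it generates. Vinberg's theory of reflection groups in hyperbolic lattices then yields (2): $W(Y)=W(\Delta(Y))$ acts simply transitively on the $\Delta(Y)$-chambers. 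For (3) I would fix an ample class $h$; it lies in no wall $r^\perp$ with $r\in\Delta(Y)$, hence in the interior of a unique chamber $C$, and after orienting signs I get $\Delta^+_C=\Delta^+(Y)$. The inclusion of the ample cone in $C$ is immediate because ample classes are strictly positive on effective roots; for the converse I would invoke the classical fact that the walls of $\Nef_Y$ are cut out precisely by the smooth rational curves, each of which is a splitting root, so that $C$ and the ample cone have the same walls and $\overline{C}=\Nef_Y$.

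\emph{Curves and facets (5) and (6).} With (3) in hand the facets of $\Nef_Y=\overline C$ are the $r^\perp$ for $r$ ranging over the indecomposable positive roots $\R(\Delta^+(Y))$. I would then match these with $\R(Y)$: a positive splitting root is effective, and it is indecomposable exactly when its class is not a sum of effective classes, i.e. when the divisor is an irreducible $(-2)$-curve, which is a smooth rational curve. This gives $\R(\Delta^+(Y))=\R(Y)$, hence (5), and the facet bijection $r\mapsto r^\perp\cap\Nef_Y$ of (6); the identity $W(Y)=W(\R(Y))$ is then the general fact, recalled in the preliminaries, that reflections in the indecomposable roots generate the Weyl group.

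\emph{Group structure (1) and (4).} These are now formal. Since $\Aut(Y)$ lifts to $X$ commuting with $\epsilon$, the image $\aut(Y)$ preserves the intrinsic set $\Delta(Y)$, so conjugation sends $s_r$ to $s_{g(r)}$ and $W(Y)$ is normal in $G_Y$; together with $G_Y=\langle W(Y)\cup\aut(Y)\rangle$ this gives $G_Y=W(Y)\cdot\aut(Y)$. Because $W(Y)$ acts freely on chambers while $\aut(Y)$ stabilizes the chamber $\Nef_Y$, the intersection $W(Y)\cap\aut(Y)$ is trivial, yielding the semidirect product (1). For (4), any $g\in\stab_{G_Y}(\Nef_Y)$ written as $wa$ with $w\in W(Y)$ and $a\in\aut(Y)$ forces $w$ to fix $\Nef_Y$, hence $w=\id$ and $g\in\aut(Y)$.

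\emph{The congruence subgroup (7), the main obstacle.} The hard part is (7): since $E_{10}$ is reflective, $O^+(S_Y)=W(S_Y)$ is generated by all $(-2)$-reflections, whereas $W(Y)$ uses only splitting roots, so $G_0\subseteq G_Y$ is not formal and genuinely requires the Torelli theorem (note that when $\Delta(Y)=\emptyset$ one has $W(Y)=1$, so the entire content of (7) is carried by $\aut(Y)$). My plan is: given $g\in G_0$, observe that $g$ acts trivially on $S_Y\otimes\FF_2$ and hence, by the saturation of $\Delta(Y)$ modulo $2S_Y$, preserves $\Delta(Y)$; thus $g(\Nef_Y)$ is a $\Delta(Y)$-chamber and there is a unique $w\in W(Y)$ with $wg$ stabilizing $\Nef_Y$. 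Next I would lift $wg$ to the K3 cover: every splitting reflection $s_r$ lifts to the commuting product $s_{\tilde r}s_{\epsilon^*\tilde r}$, and $g$, being trivial on the discriminant form of $(S_X)^\epsilon=S_Y(2)$, glues with the identity on $S_{X-}$ and on the transcendental lattice $T_X$ by Nikulin's theory; both lifts commute with $\epsilon^*$ and are Hodge isometries, so $wg$ lifts to an $\epsilon$-equivariant Hodge isometry of $S_X$. The Torelli theorem for K3 surfaces in its characteristic $\neq 2$ form then realizes this lift, after correction by an $\epsilon$-invariant element of $W(S_X)$, as an automorphism of $X$ commuting with $\epsilon$, which descends to $\Aut(Y)$; hence $wg\in\aut(Y)$ and $g\in W(Y)\cdot\aut(Y)=G_Y$. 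I expect the delicate points to be the compatibility of Nikulin's gluing with $\epsilon^*$ and ensuring that the Weyl correction on $X$ is $\epsilon$-invariant so that it descends to $W(Y)$ rather than the full $W(S_X)$; this bookkeeping, rather than any single estimate, is where the real work lies.
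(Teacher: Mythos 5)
Your proposal is correct in substance, but it is organized quite differently from the paper's own treatment: the paper proves only item (5) from scratch and outsources everything else to the literature --- (1)--(4) to Dolgachev's Proposition 3.2 (essentially the Vinberg chamber-theoretic argument you reconstruct), (6) to the Enriques I monograph of Cossec--Dolgachev, and (7) to Proposition 3.10 of the authors' earlier paper, which proceeds along the Nikulin-gluing-plus-Torelli lines you sketch. So where the paper is a compilation of citations plus one geometric lemma, you give a self-contained cascade; this buys logical transparency --- in particular you correctly isolate (7) as the only part that genuinely needs Torelli, and your parenthetical that for unnodal surfaces ($\Delta(Y)=\emptyset$, $W(Y)=1$) the whole content of (7) sits in $\aut(Y)$ is exactly the right way to see why it cannot be formal --- at the cost of re-deriving standard reflection-group theory. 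Two caveats where your sketch would need patching. First, in (5) your ``i.e.'' hides the only step the paper actually writes out: that the class of an irreducible $(-2)$-curve $C$ cannot decompose as a sum of effective classes is not a tautology; one needs the intersection argument $C\cdot(D_1+\cdots+D_k)=-2<0$, which forces $C$ to be a component of some $D_i$, after which one uses that effective classes summing to zero in $S_Y$ must all vanish. Second, your appeal to ``the Torelli theorem for K3 surfaces in its characteristic $\neq 2$ form'' is optimistic: in positive characteristic there is no off-the-shelf Torelli statement of the kind you invoke, and realizing the glued isometry as an automorphism requires lifting to characteristic zero or Ogus' crystalline Torelli in the supersingular case; this --- rather than the $\epsilon$-equivariance bookkeeping you worry about, which works exactly as you describe since $\epsilon^*$ is $+1$ on the invariant lattice and $-1$ on its orthogonal complement --- is the real reason the paper cites its earlier work for (7) instead of proving it inline.
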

\begin{proof}
(1-4) Are essentially proven in \cite[Proposition 3.2]{dolgachev:OnAutomorphismsOfEnriquesSurfaces} (See also \cite[Proposition 3.8]{brandhorst-gonzalez:527}).
(5) is surely known to experts. For lack of a reference, we give a proof. We first show $\R(Y)\subseteq\R(\Delta^{+}(Y))$: let $C\subseteq Y$ be a smooth rational curve, so that $C^2=-2$. 
If $[C]\in\Delta(Y)$ were not irreducible, we could write $[C]=r_1+r_2+\ldots+r_k$ with $r_1,\ldots,r_k\in \Delta(Y)^+\cup (\P_Y \cap S_Y)$, in particular $r_i=[D_i]$ for some effective divisors $D_i\subseteq Y$. 
Then $-2=C^2=C\cdot(D_1+\ldots+D_k)$ implies that $C$ is a component of at least one $D_i$, say $D_1$. We would then have $[D_1-C]+[D_2]+\ldots+[D_k]=0\in S_Y$. 
But this can only happen if $D_1-C=D_2=\ldots=D_k=0$. So $[C]=[D_1]$ is the only possible decomposition of $[C]$.

Let $r \in \Delta^+(Y)$ be indecomposable. 
Since $r$ is effective, we can write $r = [R]$ for an effective divisor $R$. Write $R = \sum_{i=1}^n C_i$ for irreducible curves $C_1,\dots, C_n$ in $Y$. Then $r=\sum_{i=1}^n c_i$ for $c_1,\dots, c_n \in S_Y$ classes of irreducible curves. Note that $c_i \neq 0$ since each $C_i$ is a curve.
In particular, $c_i \in \Delta^+(Y) \cup (\Pbar_Y \cap S_Y)$. 
Since $r$ is indecomposable, $n=1$ and $r = c_1$ is the class of an irreducible curve of self intersection $-2$, i.e. a smooth rational curve. 
(6) See e.g. \cite[Proposition 2.2.1, \S 0.8]{cdl:enriquesI}
(7) is \cite[Proposition 3.10]{brandhorst-gonzalez:527}
\end{proof}
It would be interesting to know how much of this proposition stays true for Enriques surfaces in characteristic $2$ with or without a K3-cover.\\

\noindent The image of $G_Y$ in $O(S_Y\otimes \FF_2)$ is called the \emph{Vinberg group} of $Y$ and denoted $\Gbar_Y$.  
\begin{remark}\label{remark:GbarY}
By \Cref{aut_and_weyl_group}(6) the knowledge of $\Gbar_Y\cong G_Y/G_0$ is sufficient to recover $G_Y$ as the preimage of $\Gbar_Y$ under $O^+(S_Y)\to O(S_Y \otimes \FF_2)$.
In particular $g \in O^+(S_Y)$ lies in $G_Y$ if and only if $\overline{g} \in \Gbar_Y$.
\end{remark}

\section{Root invariants}\label{sec:root_invariants}
For $R \leq E_{10}$ a sublattice, we denote by $\overline{\Delta}(R)$ the image of $\Delta(R)$ in $E_{10}\otimes \FF_2$. Define $\DeltabarY$ as the image of the set of splitting roots $\Delta(Y) \subseteq S_Y$ in $S_Y \otimes \FF_2$. Note that by \Cref{rmk:splitting} a root $r$ of $S_Y$ is a splitting root if and only if $\overline{r} \in \DeltabarY$.

The set $\DeltabarY$ is computed as follows:
Let $\pi_{\pm}\colon S_X \to S_{X\pm}^\vee$ be the orthogonal projection onto the invariant, respectively anti-invariant, lattice of $S_X$ with respect to the Enriques involution $\epsilon$. Note that $S_{X-}$ cannot contain any roots because $\epsilon$ preserves the ample cone of $X$.

Let $R$ be the $(-2)$-root sublattice of $(2\pi_-(S_X))(\frac{1}{2})$. Let $\alpha$ be given by the composition of
\begin{equation}\label{eq:alpha1}
R/2R \cong \tfrac{1}{2}R/R \xrightarrow{\beta} \pi_-(S_X)/S_{X-}
\end{equation}
and
\begin{equation}\label{eq:alpha2}
    \pi_-(S_X)/S_{X-} \xrightarrow[\gamma]{\sim} \pi_+(S_X)/S_{X+} \hookrightarrow S_{X+}^\vee/S_{X_+}\cong S_Y\otimes \FF_2
\end{equation}
where $\gamma$ is the glue map of the primitive extension $S_{X+}\oplus S_{X_-} \subseteq S_X$. (Keep in mind that $S_X$ is not necessarily unimodular.)
Then $\DeltabarY$ is the image of $\overline{\Delta}(R)$ under the homomorphism $\alpha$. 

The \emph{Nikulin root invariant} is the ADE-type of $R$ together with the kernel of the map $\alpha$, i.e the tuple $(\tau(R),\ker \alpha)$. It is more easily computed as follows:
Let $R'$ be the primitive closure of $R$ in $S_X(\frac{1}{2})$. The map $\beta$ factors as
\[\tfrac{1}{2}R/R \to \tfrac{1}{2}R'/R' \hookrightarrow \pi_-(S_X)/S_{X_-}.\]
Hence, the kernel of $\alpha$ agrees with the kernel of
\[R/2R \to R'/2R'.\]

Given a positive set of roots $\Delta^+$, we may view it as a graph by taking $\Delta^+$ as the set of vertices and by connecting two distinct $x,y \in \Delta^+$ with $n$ edges if and only if $x.y=n$. We view $\DeltabarY$ (and $\overline{\Delta}(R)$) as a graph by connecting $x,y \in \DeltabarY$ (respectively, $x,y \in \overline{\Delta}(R)$) with an edge if and only if $x.y=1$. Since $\alpha(x).\alpha(y)\equiv x.y \mod 2$, $\alpha\colon \overline{\Delta}(R) \to S_Y\otimes \FF_2$ preserves edges, i.e. restricts to a morphism of graphs $\Delta^+(R) \to \DeltabarY$.
From the next lemma it will follow that this morphism is injective.
\begin{lemma}\label{roots_mod2_injective}
Let $R$ be a negative definite root lattice and $R' \supseteq R$ an overlattice.
Let $\Delta^+(R)$ be a positive root system.
If $R'$ does not contain any $(-1)$-vectors, then $\Delta^+(R) \to R'/2R'$ is injective.
\end{lemma}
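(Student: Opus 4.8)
\emph{Plan.} I would prove the equivalent statement that if $r_1,r_2 \in \Delta^+(R)$ have the same image in $R'/2R'$, then in fact $r_1=r_2$. The whole argument rests on extracting two short vectors of $R'$ out of the congruence $r_1\equiv r_2 \pmod{2R'}$ and then exploiting negative-definiteness. First I would set $v:=(r_1-r_2)/2$; by hypothesis $r_1-r_2\in 2R'$, so $v\in R'$. The key observation is that the companion vector $w:=(r_1+r_2)/2$ \emph{also} lies in $R'$, since $w=r_1-v$ and $r_1\in R\subseteq R'$. Thus $r_1=v+w$ and $r_2=w-v$ are both expressed through $v,w\in R'$.

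Next I would feed the equalities $r_1^2=r_2^2=-2$ into these expressions. Expanding $(v+w)^2=(w-v)^2=-2$ and subtracting gives $v\cdot w=0$, while adding gives $v^2+w^2=-2$. Now $R'$ is a finite-index overlattice of the negative-definite lattice $R$, hence is itself negative-definite and integral; therefore $v^2$ and $w^2$ are nonpositive integers summing to $-2$, leaving only the possibilities $(v^2,w^2)\in\{(0,-2),(-1,-1),(-2,0)\}$.

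Finally I would eliminate the two nontrivial possibilities. If $v^2=w^2=-1$, then $v$ is a $(-1)$-vector of $R'$, contradicting the hypothesis. If $w^2=0$, then $w=0$ by definiteness, forcing $r_1=-r_2$; but $r_1$ and $r_2$ are both positive with respect to the chamber defining $\Delta^+(R)$, so $r_1=-r_2$ is impossible. Hence the only surviving case is $v^2=0$, i.e. $v=0$ and $r_1=r_2$, which is the desired injectivity.

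\emph{Where the difficulty lies.} The lemma is short, and there is essentially a single point requiring care: that $w$, and not merely $v$, belongs to $R'$. This is precisely what upgrades the single congruence into the rigid relation $v^2+w^2=-2$, from which integrality and definiteness pin down the squares. Once this is in place, the $(-1)$-vector hypothesis kills the symmetric case and positivity of $\Delta^+(R)$ discards the degenerate case $r_1=-r_2$; notably, no case analysis on the value of the inner product $r_1\cdot r_2$ is needed.
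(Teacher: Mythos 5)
Your proof is correct and takes essentially the same route as the paper's: both arguments hinge on the observation that $v=(r_1-r_2)/2$ lies in $R'$ and, unless $r_1=\pm r_2$, is a vector of square $-1$, contradicting the hypothesis (with $r_1=-r_2$ excluded by positivity of the root system). The only difference is bookkeeping — the paper pins down $r_1\cdot r_2=0$ via the parity congruence $r_1\cdot r_2\equiv r_1^2\equiv 0 \bmod 2$ together with the bound $|r_1\cdot r_2|\le 2$ in a definite lattice, while you obtain the same trichotomy from $v^2+w^2=-2$ with $w=(r_1+r_2)/2\in R'$, using integrality and definiteness of $R'$.
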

\begin{proof}
For any $\delta,\delta' \in \Delta^+(R)$ we have $\delta.\delta' \in \{0,\pm 1, \pm 2\}$ and $\delta.\delta'=\pm 2$ if and only if $\delta=\pm \delta'$ (since $(\delta \pm \delta')^2=-2 + 4 - 2=0$ implies $\delta-\delta'=0$ because the lattice $R$ is definite.) Now, suppose that $\delta \neq \pm\delta'$ have the same image in $R'/2R'$. Then $\delta.\delta'=0$ because $\delta.\delta' \equiv \delta^2 =-2 \equiv 0 \mod 2$.
Moreover, $(\delta-\delta')/2 \in R'$ is of square $-1$ in $R'$ contradicting the assumption.
\end{proof}

Let $R$ once again be the $(-2)$-root sublattice of $2\pi_-(S_X)(\frac{1}{2})\subseteq S_{X-}(\frac{1}{2})$ and $R'$ its primitive closure in $S_{X-}$. By \Cref{roots_mod2_injective},
$\Delta^+(R) \to R'/2R'$ is injective: indeed, any $(-1)$-root of $R'$ is of square $-2$ in $R'(2) \subseteq S_{X-}$, which is absurd since the latter lattice does not contain any $(-2)$-roots.
This proves the following proposition.
\begin{proposition}\label{prop:root_systems_mod2}
The map $R \to R/2R \xrightarrow{\alpha} S_Y \otimes \FF_2$ with $\alpha$ as in \cref{eq:alpha1,eq:alpha2} restricts to an isomorphism $\Delta^+(R)\to \DeltabarY$ of graphs. In particular, their connected components $\sigma$ agree.
\end{proposition}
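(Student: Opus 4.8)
The plan is to show the map in the statement, $\Delta^+(R) \to R/2R \xrightarrow{\alpha} S_Y\otimes\FF_2$, is a bijection onto $\DeltabarY$ that both preserves and reflects adjacency, hence an isomorphism of graphs; the agreement of connected components then follows automatically, since any graph isomorphism induces a bijection on components. Surjectivity onto $\DeltabarY$ costs nothing: because $\delta$ and $-\delta$ have the same class in $R/2R$, the image of $\Delta^+(R)$ in $R/2R$ is all of $\overline{\Delta}(R)$, and $\DeltabarY$ was \emph{defined} as $\alpha(\overline{\Delta}(R))$.

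The substance is injectivity, which I would obtain by splitting $\alpha$ through the primitive closure $R'$. As recorded just above the statement, $\ker\alpha$ equals $\ker(R/2R \to R'/2R')$, so $\alpha$ factors as $R/2R \to R'/2R' \xrightarrow{\alpha'} S_Y\otimes\FF_2$ with $\alpha'$ injective: its legs are the inclusion $\tfrac12 R'/R' \hookrightarrow \pi_-(S_X)/S_{X-}$, injective by primitivity of $R'$, followed by the isomorphism $\gamma$ of \cref{eq:alpha2} and the injection into $S_{X+}^\vee/S_{X+}$. It therefore suffices that $\Delta^+(R) \to R'/2R'$ be injective, which is exactly \Cref{roots_mod2_injective} applied to the overlattice $R \subseteq R'$, provided $R'$ has no $(-1)$-vectors. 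This last point is the genuine input and the step I expect to be the main obstacle: a $(-1)$-vector of $R' \subseteq S_{X-}(\tfrac12)$ is a $(-2)$-root of $R'(2)\subseteq S_{X-}$, and $S_{X-}$ carries no roots because $\epsilon$ preserves the ample cone of $X$. Everything else is formal; this is where the geometry of the Enriques cover enters.

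It remains to match the graph structures. Since $\alpha$ respects the bilinear form modulo $2$, one has $\alpha(x).\alpha(y) \equiv x.y \bmod 2$; and since distinct positive roots of the definite lattice $R$ satisfy $\delta.\delta' \in \{0,\pm 1\}$, two vertices $\delta,\delta'$ of $\Delta^+(R)$ are joined by an edge (i.e.\ $\delta.\delta'$ is odd) exactly when their images are adjacent in $\DeltabarY$. Combined with the bijection above, both the map and its inverse preserve adjacency, so it is an isomorphism of graphs and in particular restricts to a bijection of connected components, as claimed.
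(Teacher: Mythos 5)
Your proposal is correct and follows essentially the same route as the paper: injectivity is reduced, via the identification $\ker\alpha=\ker(R/2R\to R'/2R')$, to \Cref{roots_mod2_injective} applied to $R\subseteq R'$, with the hypothesis on $(-1)$-vectors verified exactly as in the paper by rescaling into $R'(2)\subseteq S_{X-}$ and using that $S_{X-}$ contains no roots since $\epsilon$ preserves the ample cone. The surrounding points you spell out (surjectivity from the description of $\DeltabarY$ as $\alpha(\overline{\Delta}(R))$, and adjacency being preserved in both directions because distinct positive roots pair to $0$ or $\pm 1$) are the same observations the paper makes, just stated more explicitly.
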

\begin{definition}\label{def:R_sigma}
We denote by $R_\sigma \leq R$ the irreducible root sublattice whose roots are in bijection with $\sigma$ via $\alpha$ as in \cref{eq:alpha1,eq:alpha2}.
\end{definition}
Since one can recover the ADE-type of a positive root system from its graph, $\DeltabarY$ has a well-defined ADE-type, and so do its connected components.
The component $\sigma$ also contains information on the primitive closure $R_\sigma'$. The type, as defined below, keeps track of this. 

\begin{definition}\label{def:rank_type_sigma}
Let $\sigma \subseteq \DeltabarY$ be a connected component. 
Its \emph{rank} $r$ is defined as the rank of its underlying root system (and coincides with the rank of the root lattice $R_\sigma$). Its \emph{dimension} is
defined as $d=\dim_{\FF_2} \langle \sigma \rangle$ and its kernel $k$ is $k=r-d$.
Its \emph{type} is defined as the tuple $(\tau(R_\sigma),(\ZZ/2\ZZ)^k)$. 
\end{definition}

\begin{proposition}\label{prop:connected_comp_delta}
Let $\sigma$ be a connected component of $\DeltabarY$ of rank $r$ and kernel $k$. 
Then $r\leq 9$ and either 
\begin{itemize} 
\item $k=0$ or 
\item $k=1$, in which case $R_\sigma$ is of type $D_8$ or $A_7$ . 
\end{itemize}

\end{proposition}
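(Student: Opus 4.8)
The plan is to pull everything back to a genuine root sublattice of $E_{10}$ and then quote \Cref{shimada:irreducible}. First I would invoke the geometric realization recorded in the introduction: the component $\sigma$ equals $\overline{\Delta}(\langle b_1,\dots,b_r\rangle)$ for a configuration of smooth rational curves $b_1,\dots,b_r$ on $Y$ spanning a sublattice of $S_Y$ isomorphic to $R_\sigma$, with $K_\sigma=\ker(R_\sigma\otimes\FF_2\to S_Y\otimes\FF_2)$. Since the roots span $R_\sigma\otimes\FF_2$, the image of this map is exactly $\langle\sigma\rangle$, so $k=r-d$ agrees with $\dim_{\FF_2}K_\sigma$, matching \Cref{def:rank_type_sigma}. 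As $S_Y\cong E_{10}$ has signature $(1,9)$ while $R_\sigma$ is negative definite, this embedding immediately forces $r=\rk R_\sigma\le 9$.

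Next I would compute $k$ through the primitive closure. Let $R_\sigma''\subseteq E_{10}$ be the primitive closure of $R_\sigma$; by Shimada's theorem (the primitive closure of a root sublattice of $E_{10}$ is again a root lattice) it is a root lattice, and an even overlattice of $R_\sigma$ of the same rank. A class in $R_\sigma\otimes\FF_2$ dies in $E_{10}\otimes\FF_2$ precisely when half of a lift lies in $E_{10}\cap(R_\sigma\otimes\QQ)=R_\sigma''$; tensoring $0\to R_\sigma\to R_\sigma''\to R_\sigma''/R_\sigma\to 0$ with $\FF_2$ then identifies $K_\sigma$ with the $2$-torsion $(R_\sigma''/R_\sigma)[2]$, so $k$ is the $2$-rank of $R_\sigma''/R_\sigma$. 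Now \Cref{shimada:irreducible} applies, since $R_\sigma$ is irreducible of rank at most $9$ and $R_\sigma''$ is an even overlattice: either $R_\sigma''=R_\sigma$, whence $k=0$, or $(\tau(R_\sigma),\tau(R_\sigma''))$ is one of $(A_7,E_7)$, $(D_8,E_8)$, $(A_8,E_8)$, with $R_\sigma''/R_\sigma$ cyclic of order $2$, $2$, $3$ respectively. The corresponding $2$-ranks are $1,1,0$, so $k\le 1$ and $k=1$ holds exactly when $R_\sigma$ is of type $A_7$ or $D_8$, which is the assertion.

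The delicate step is the rank bound $r\le 9$, and I expect it to be the main obstacle: it cannot be read off from the quadratic space $E_{10}\otimes\FF_2$ alone, because $\overline{A_{10}}\cong U_2^5\cong E_{10}\otimes\FF_2$ would embed as the full space, so the negative-definite lattice embedding $R_\sigma\hookrightarrow S_Y$ and the signature $(1,9)$ are genuinely needed. If one wishes to avoid the geometric realization, the bound on $k$ can instead be obtained directly from the primitive closure $R_\sigma'$ of $R_\sigma$ in $S_{X-}(\tfrac12)$: as $S_{X-}$ carries no $(-2)$-roots, $R_\sigma'$ has no $(-1)$-vectors, so every nonzero glue class in $R_\sigma'/R_\sigma\subseteq D_{R_\sigma}$ must have minimal norm $\le -2$. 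Computing the minimal coset norms of the order-$2$ elements of $D_{A_n}$, $D_{D_n}$, $D_{E_k}$ then excludes all types but $A_7$ and $D_8$, and shows that two independent order-$2$ glues in $D_{D_{2m}}$ would force the norm-$(-1)$ vector class, again giving $k\le 1$; but even on this route, ruling out $r=10$ (the cases $A_{10}$ and $D_{10}$) ultimately rests on the signature of $S_Y$, so the embedding remains the essential ingredient.
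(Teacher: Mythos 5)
Your main argument is circular. The ``geometric realization recorded in the introduction'' --- the existence of smooth rational curves $b_1,\dots,b_r$ on $Y$ spanning a copy of $R_\sigma$ in $S_Y$ with $\sigma=\overline{\Delta}(\langle b_1,\dots,b_r\rangle)$ --- is not available at this point of the paper: it is \Cref{lem:BofTypeSigmaexists}, whose proof goes through \Cref{lem:existsRwithDeltaRequalSigma}, and the first sentence of that proof is precisely ``By \Cref{prop:connected_comp_delta}, the rank of $\sigma$ is at most $9$.'' So you derive the rank bound from a statement that is itself derived from the rank bound. Nor can the order be reversed by some independent argument: a priori $R_\sigma$ lives only in $S_{X-}(\tfrac{1}{2})$, whose rank is bounded by $\rho(X)-10\le 12$, and no embedding of the negative definite lattice $R_\sigma$ into $S_Y\cong E_{10}$ of signature $(1,9)$ can exist before one knows $r\le 9$; producing coherent lifts of the elements of $\sigma$ to an ADE configuration in $S_Y$ is exactly what \Cref{lem:existsRwithDeltaRequalSigma} does, via Shimada's list and Kneser's theorem, and therefore presupposes the proposition. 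The same objection hits your second paragraph: the primitive closure $R_\sigma''\subseteq E_{10}$ and the appeal to \Cref{shimada:irreducible} only make sense once $R_\sigma\subseteq E_{10}$ is known. (Your identification of $k$ with the $2$-rank of the glue group via the Tor sequence is correct, and would work equally well on the $S_{X-}$ side.)

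Moreover, your reason for believing the circular step is unavoidable --- that ``$\overline{A_{10}}\cong U_2^5\cong E_{10}\otimes\FF_2$ would embed as the full space'' --- rests on a miscomputation. In fact $A_{10}\otimes\FF_2\cong U_2^4\oplus V_2$, which has Arf invariant $1$ and is therefore \emph{not} isometric to $E_{10}\otimes\FF_2\cong U_2^5$, while $D_{10}\otimes\FF_2$ has degenerate bilinear form. This is exactly how the paper closes the argument without ever embedding $R_\sigma$ into $S_Y$: working in $S_{X-}(\tfrac{1}{2})$ one has $r\le 12$; inspecting the possible primitive closures (integral overlattices of irreducible root lattices without $(-1)$-vectors, since $S_{X-}$ contains no $(-2)$-vectors --- essentially your own fallback route) gives $k\le 1$ with $k=1$ only for $A_7$ and $D_8$; hence for $r\ge 9$ one has $k=0$, so $r=\dim_{\FF_2}\langle\sigma\rangle\le\dim_{\FF_2}(S_Y\otimes\FF_2)=10$, and $r=10$ is then excluded by the mod-$2$ quadratic-space comparison just described. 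So the non-circular route you dismissed as impossible is the paper's actual proof; your final paragraph recovers the bound on $k$ along these lines, but then wrongly concedes the rank bound to the circular embedding argument.
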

\begin{proof}
We keep the notation of this section.
The rank of $R\leq S_{X-}$ is at most 
\[\rk R \leq \rk S_{X_-}=\rho(X) - 10 \leq 22-10=12.\] 

Let $R_\sigma'$ denote the primitive closure of $R_\sigma$ in $S_{X-}$. The vector space $\langle \sigma \rangle \subseteq S_Y\otimes \FF_2$ is isomorphic to the image of $R_\sigma/2R_\sigma \to R_\sigma'/2R_\sigma'$. Hence $\dim \langle \sigma \rangle = \rk R_\sigma - k$,
where $k$ is given by $2^ks = [R_\sigma':R_\sigma]$ with $2\nmid s \in \ZZ$. Inspecting the irreducible root lattices of rank at most $12$, we find that $k\leq 1$ and that $k=0$ unless $R_\sigma \cong A_7$ or $R_\sigma\cong D_8$.

If $\rk R_{\sigma} \geq 9$, then $\rk R_\sigma = \dim \langle \sigma \rangle \leq \dim S_Y \otimes \FF_2 = 10$. Otherwise, $\rk R_\sigma \leq 8$.
In any case $\rk R_\sigma \leq 10$. If $\rk R_\sigma = 10$, then $R_\sigma \cong A_{10}$ or $D_{10}$. By the above there is an isomorphism of quadratic spaces $5U_2 \cong S_Y\otimes \FF_2 = \langle \sigma \rangle \cong R_\sigma/2R_\sigma \cong 4U_2\oplus 2[0]$. But $D_{10}$ has a degenerate bilinear form mod $2$. The quadratic form of $A_{10}\otimes \FF_2=4U_2 \oplus V_2$ is non-degenerate but of a different isomorphism class than that of $S_Y\otimes \FF_2$. Hence, both give a contradiction.
\end{proof}

\begin{lemma}\label{lem:existsRwithDeltaRequalSigma}
Let $\sigma$ be a connected component of $\DeltabarY$.
Then there exists a root lattice $R \leq S_Y$ with $\sigma=\overline{\Delta}(R)$.
\end{lemma}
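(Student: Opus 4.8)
The plan is to realise the component $\sigma$ integrally, by lifting a simple root basis of $R_\sigma$ to genuine splitting roots of $S_Y$. By \Cref{prop:root_systems_mod2} the reduction map restricts to a graph isomorphism $\alpha\colon \Delta^+(R_\sigma)\xrightarrow{\sim}\sigma$, where $R_\sigma$ is the irreducible $ADE$ root lattice attached to $\sigma$ (\Cref{def:R_sigma}). I would fix simple roots $\rho_1,\dots,\rho_r$ of $R_\sigma$ and set $\bar c_i:=\alpha(\bar\rho_i)\in\sigma$. The target is a root sublattice $R=\langle c_1,\dots,c_r\rangle\le S_Y$, where each $c_i$ is a splitting root reducing to $\bar c_i$ and where the assignment $c_i\mapsto\rho_i$ is an isometry $R\cong R_\sigma$; once this is achieved the conclusion is formal, as explained at the end.

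First I would produce the lifts. Since $\bar c_i\in\DeltabarY$, \Cref{rmk:splitting} provides a splitting root $c_i\in\Delta(Y)\subseteq S_Y$ with $\overline{c_i}=\bar c_i$, which I may take to be effective. The reductions record the pairing only modulo $2$, i.e.\ $c_i\cdot c_j\equiv\rho_i\cdot\rho_j\pmod 2$, so $\bar c_1,\dots,\bar c_r$ already carry the $ADE$ diagram of $R_\sigma$. The entire difficulty is to promote this to the \emph{integral} identity $c_i\cdot c_j=\rho_i\cdot\rho_j$: only then is $R=\langle c_i\rangle$ a copy of $R_\sigma$, rather than a larger (possibly indefinite) lattice merely spanned by $(-2)$-vectors.

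To control the integral intersections I would pass to the K3 cover $\pi\colon X\to Y$. By \Cref{rmk:splitting} each $c_i$ lifts to a $(-2)$-root $\tilde c_i\in S_X$ with $\pi^*c_i=\tilde c_i+\epsilon^*\tilde c_i$, and the root of $R_\sigma\le S_{X-}(\tfrac12)$ representing $\rho_i$ is the class of $\tilde c_i-\epsilon^*\tilde c_i$. Using $\pi^*c_i\cdot\pi^*c_j=2\,c_i\cdot c_j$ and that $\epsilon^*$ is an isometry one computes
\[
c_i\cdot c_j=\tilde c_i\cdot\tilde c_j+\tilde c_i\cdot\epsilon^*\tilde c_j,\qquad \rho_i\cdot\rho_j=\tilde c_i\cdot\tilde c_j-\tilde c_i\cdot\epsilon^*\tilde c_j,
\]
so $c_i\cdot c_j=\rho_i\cdot\rho_j$ holds precisely when $\tilde c_i\cdot\epsilon^*\tilde c_j=0$ (automatic for $i=j$ since $c_i^2=\tilde c_i^2=-2$, hence the only real condition is for $i\neq j$). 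The task thus reduces to choosing the lifts $\tilde c_i$ so that the configuration is orthogonal to its image under the Enriques involution. Here I would invoke that $\epsilon$ is fixed point free together with the standard chamber theory on $X$ (the analogue of \Cref{aut_and_weyl_group}(2), via $W(X)$) to move the $\tilde c_i$ to an honest $ADE$ configuration of smooth rational curves disjoint from its $\epsilon$-translate. This is the step I expect to be the main obstacle: it is exactly where the purely mod-$2$ datum $\sigma$ must be rigidified to an integral root lattice.

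Granting $R\cong R_\sigma$, the conclusion is immediate. Reduction $S_Y\to S_Y\otimes\FF_2$ is additive, so a positive root $\rho=\sum_i a_i\rho_i$ of $R_\sigma$ corresponds to the positive root $\sum_i a_i c_i$ of $R$, which reduces to $\sum_i \bar a_i\bar c_i=\alpha(\bar\rho)$. As $\rho$ ranges over $\Delta^+(R_\sigma)$ this reproduces exactly $\alpha(\Delta^+(R_\sigma))=\sigma$, whence $\overline{\Delta}(R)=\sigma$. Finally, I would emphasise why one cannot shortcut the argument by taking $R$ generated by \emph{all} splitting roots reducing into $\sigma$ and counting anisotropic vectors: already for $\sigma$ of type $E_7$ the span $\langle\sigma\rangle\cong U_2^3\oplus[1]$ (see \Cref{table:root_mod2}) contains $64$ vectors of square $\equiv 1$ while $\lvert\sigma\rvert=63$, so $\sigma$ is a \emph{proper} subset of the anisotropic vectors of its span; it is the isomorphism type $R\cong R_\sigma$, not merely the $\FF_2$-span, that must be matched.
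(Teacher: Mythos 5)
Your reduction is correct as far as it goes: lifting the simple roots $\bar c_i$ of $\sigma$ to effective splitting roots $c_i$ (legitimate by \Cref{rmk:splitting}), the desired identity $c_i\cdot c_j=\rho_i\cdot\rho_j$ is indeed equivalent to $\tilde c_i\cdot\epsilon^*\tilde c_j=0$ on the K3 cover, and once an integral copy $R\cong R_\sigma$ compatible with reduction is found, $\overline{\Delta}(R)=\sigma$ follows by linearity of $\alpha$ and \Cref{prop:root_systems_mod2}. But the one step carrying all the content --- choosing the lifts so that $\tilde c_i\cdot\epsilon^*\tilde c_j=0$ for all $i\neq j$ --- is not proved; you flag it yourself as ``the main obstacle''. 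Moreover, the mechanism you propose for it is flawed. The only isometries of $S_X$ that move the configuration $\{\tilde c_i\}$ while preserving the data you need are the $\epsilon$-equivariant ones, and those change nothing: if $g\epsilon^*=\epsilon^*g$, then $g\tilde c_i\cdot\epsilon^*(g\tilde c_j)=\tilde c_i\cdot\epsilon^*\tilde c_j$. A general $w\in W(X)$, on the other hand, does not commute with $\epsilon^*$, and then $w\tilde c_i+\epsilon^*(w\tilde c_i)$ is (at best) the pullback of a splitting root whose class mod $2$ and whose associated anti-invariant root are no longer $\bar c_i$ and $\pm\rho_i$; indeed $\pi_*(w\tilde c_i)$ need not even be a splitting root, since $(w\tilde c_i)\cdot\epsilon^*(w\tilde c_i)$ can be nonzero. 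So ``moving by chamber theory on $X$'' either does nothing or destroys the relation to $\sigma$. The genuine freedom is the choice of representative $c_i\in\bar c_i+2S_Y$ (and of its K3 lift), and you give no way of controlling the integral Gram matrix under such choices --- which is precisely what the lemma asserts.

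For comparison, the paper's proof never touches the K3 cover and is purely lattice-theoretic: by \Cref{shimada:irreducible} there is a root sublattice $R_0\le S_Y\cong E_{10}$ with $(\tau(R_0),\tau(R_0'))=(\tau(R_\sigma),\tau(R_\sigma'))$; using transitivity of $O(R_\sigma)$ on even overlattices of a given index, an isometry $R_0\to R_\sigma$ can be chosen to extend to the primitive closures, so that it induces a well-defined isometry of $\FF_2$-quadratic spaces $\overline{R_0}\to\langle\sigma\rangle$ inside $S_Y\otimes\FF_2$; Kneser's extension theorem extends this to some $\bar g\in O(E_{10}\otimes\FF_2)$, which lifts to $g\in O(E_{10})$ by surjectivity of the reduction map, and $R:=R_0^g$ does the job. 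Note also that the lemma does not require $R$ to consist of effective classes, let alone classes of curves --- that refinement is exactly \Cref{lem:BofTypeSigmaexists}, obtained afterwards by one application of the Weyl group --- so your plan of realizing $\sigma$ by smooth rational curves on $X$ disjoint from their $\epsilon$-translates attacks a strictly harder statement than needed.
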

\begin{proof}

By \Cref{prop:connected_comp_delta}, the rank of $\sigma$ is at most $9$. By \Cref{shimada:irreducible}, we find a root sublattice $R_0 \leq S_Y$ with $R_\sigma \cong R_0$ and $R_0' \cong R_\sigma'$.
Fix an isometry $f\colon R_0 \to R_\sigma$. 
Then $R_\sigma'':=(f\otimes \id_{\QQ})(R_0')$ is an even overlattice of $R_\sigma$ with $[R_\sigma':R_\sigma]=[R_\sigma'':R_\sigma]$.
Since $O(R_\sigma)$ acts transitively on the set of even overlattices of $R_\sigma$ of a given index (see e.g. the discussion in Section 3.3 of \cite{ebeling:lattices-and-codes}), we can compose $f$ by an element of $O(R_\sigma)$ if necessary, such that $f$ extends to an isometry $f'\colon R_0' \to R_\sigma'$. 
Then 
\[\beta:=\alpha\circ(f\otimes \id_{\FF_2}) \colon R_0\otimes \FF_2 \to \langle \sigma \rangle\]
is an isometry. Since $f \otimes \id_{\FF_2}$ is injective, $\ker \beta$ is mapped to $\ker \alpha|_{R_\sigma \otimes \FF_2}= \ker(R_\sigma \otimes \FF_2 \to R_\sigma'\otimes \FF_2)=:K_\sigma$ under $f\otimes \id_{\FF_2}$. Likewise $K_0:=\ker (R_0 \otimes \FF_2 \to R_0' \otimes \FF_2)$ is mapped to $K_\sigma$.
$$\begin{tikzcd}
K_0 \arrow[r]\arrow[d]& R_0\otimes\FF_2 \arrow[r] \arrow[d, "\cong", "f\otimes\id_{\FF_2}"'] & R_0'\otimes\FF_2 \arrow[d, "\cong"', "f'\otimes\id_{\FF_2}"] \\
K_\sigma \arrow[r]& R_{\sigma}\otimes\FF_2 \arrow[r]                      & R_{\sigma}'\otimes\FF_2             
\end{tikzcd}$$
This shows that $\ker \beta =K_0.$
Then $\beta$ descends to an isometry $\bar \beta\colon (R_0\otimes \FF_2) /K_0 \cong (R_0+2S_Y)/2S_Y \to \langle \sigma \rangle$.
The point is that $(R_0+2S_Y)/2S_Y$ is a submodule of $S_Y \otimes \FF_2$.

By Kneser's theorem \cite[(4.4)]{kneser}, $\bar \beta$ is the restriction of some $\overline{g}\in O(E_{10}\otimes \FF_2)$, which in turn is the reduction mod $2$ of some $g \in O(E_{10})$. 
Then $R:=R_0^g$, does the job:
\[\overline{\Delta}(R_0^g)=\overline{g}(\overline{\Delta}(R_0))=\overline{\beta}(\overline{\Delta}(R_0))=\alpha(\overline{\Delta}(R_\sigma))=\sigma.\]
\end{proof}

\begin{lemma}\label{lem:BofTypeSigmaexists}
Let $\sigma \subseteq \DeltabarY$ be a connected component of rank $n$. 
Then there exist $b_1,\dots, b_n \in \R(Y)$ with $\overline{\Delta}(\langle b_1, \dots, b_n \rangle)=\sigma$.
\end{lemma}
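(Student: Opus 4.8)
The plan is to realize the abstract root system $\sigma$ by honest $(-2)$-curves, starting from the purely lattice-theoretic realization of \Cref{lem:existsRwithDeltaRequalSigma} and then \emph{straightening} it into the nef cone. By \Cref{lem:existsRwithDeltaRequalSigma} there is a root lattice $R\le S_Y$ with $\overline{\Delta}(R)=\sigma$; since every $r\in\Delta(R)$ satisfies $\overline r\in\sigma\subseteq\DeltabarY$, \Cref{rmk:splitting} gives $\Delta(R)\subseteq\Delta(Y)$, i.e. all roots of $R$ are splitting roots. Fixing a primitive ample class $h$ and taking the induced positive system $\Delta^+(R)=\{r\in\Delta(R)\mid h\cdot r>0\}=\Delta(R)\cap\Delta^+(Y)$, its simple roots $b_1,\dots,b_n$ form a $\ZZ$-basis of $R$, so $\langle b_1,\dots,b_n\rangle=R$ and $\overline{\Delta}(\langle b_1,\dots,b_n\rangle)=\sigma$. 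Thus it suffices to produce such an $R$ whose simple roots lie in $\R(Y)=\R(\Delta^+(Y))$ (\Cref{aut_and_weyl_group}(5)). The point is that this \emph{fails} for an arbitrary $R$ (there are infinitely many effective splitting roots reducing into $\sigma$, and a generic $R$ has simple roots that are sums of $(-2)$-curves), so the heart of the proof is to \emph{select} a good lattice.

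The selection mechanism I would use is minimisation of height together with the following stability: reflections in $(-2)$-curves preserve $\sigma$. Indeed, if $C\in\R(Y)$ then $\overline C\in\DeltabarY$, and because $\sigma$ is an entire connected component of the graph $\DeltabarY$, either $\overline C\in\sigma$ or $\overline C\cdot\overline r=0$ for every $r\in\Delta(R)$. In the first case $s_{\overline C}$ lies in the Weyl group $W(\sigma)$ of the component and fixes $\sigma$ setwise; in the second it fixes $\sigma$ pointwise. Hence $s_{\overline C}(\sigma)=\sigma$, so each $s_C\in W(Y)$ maps the set $\M_\sigma$ of all root lattices $R\le S_Y$ with $\Delta(R)\subseteq\Delta(Y)$ and $\overline{\Delta}(R)=\sigma$ into itself. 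Since $h$ is integral and every positive root is effective, $\sum_i h\cdot b_i\in\ZZ_{>0}$, so I would choose $R\in\M_\sigma$ minimising this total height of its simple roots.

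The claim is that for such a minimiser every $b_i$ is a $(-2)$-curve. This is equivalent to $b_i\cdot C\ge 0$ for all $(-2)$-curves $C\ne b_i$: writing $b_i=\sum_D m_D D$ as a sum of curves, $b_i^2=\sum_D m_D(b_i\cdot D)\ge 0$ once all those intersections are nonnegative, contradicting $b_i^2=-2$, so a reducible $b_i$ must have a curve $C$ in its support with $b_i\cdot C<0$. To reach a contradiction I would reflect in such a $C$: $s_C(R)\in\M_\sigma$ by the previous paragraph, and I expect the $h$-minimal simple system of $s_C(R)$ to have strictly smaller total height, contradicting minimality. \textbf{The strict decrease is the main obstacle.} Its mechanism is the effective-root reduction on $Y$ (reflecting an effective root across a curve on which it is negative strictly lowers its degree against $h$ while keeping it effective and its class in $\DeltabarY$), but the bookkeeping is delicate because $s_C$ reshuffles the $h$-positive system of the reflected lattice; concretely one separates the odd case $b_i\cdot C=-1$, where connectedness forces $\overline C\in\sigma$ and $s_C(b_i)=b_i-C$ is again an effective root reducing into $\sigma$ of smaller height, from the even case, where one first checks that the reflected root still reduces into $\sigma$ (using \Cref{roots_mod2_injective} and \Cref{prop:root_systems_mod2}) before comparing heights. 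Equivalently, the crux can be phrased as producing a \emph{contractible} realisation, i.e. $R\in\M_\sigma$ with $R^\perp$ meeting the ample cone: then the facets of the nef cone through the face $R^\perp\cap\Nef_Y$ are exactly the simple roots of $R$ and are automatically $(-2)$-curves. A final, routine check handles the two kernel-type components $(A_7,\ZZ/2\ZZ)$ and $(D_8,\ZZ/2\ZZ)$, where one must account for the primitive closure $R'$ when passing between $R\otimes\FF_2$ and $S_Y\otimes\FF_2$.
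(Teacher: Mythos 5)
Your reduction of the problem is the right one, and two of your ingredients are sound: the realization of $\sigma$ by a root lattice $R\le S_Y$ all of whose roots are splitting roots (\Cref{lem:existsRwithDeltaRequalSigma} together with \Cref{rmk:splitting}), and the stability observation that reflections in smooth rational curves (in fact all of $W(Y)$) preserve the component $\sigma$, hence preserve your set $\M_\sigma$. But the proof has to live exactly in the step you flag as ``the main obstacle'', and that step is not merely unproven --- as formulated, it can fail. Suppose the minimiser $R$ has a simple root $b_i\notin\R(Y)$ and let $C\in\R(Y)$ be a curve in its support with $b_i\cdot C<0$. One checks $C\notin\Delta(R)$ (otherwise $C-b_i\in\Delta^+(R)$ and $C=b_i+(C-b_i)$ would decompose the indecomposable class $C$), so $s_C$ maps $\Delta^+(R)$ bijectively onto $\Delta^+(s_C(R))$ and the new fundamental system is $\{s_C(b_j)\}_j$. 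Its total height is $\sum_j h\cdot b_j+(h\cdot C)\sum_j(b_j\cdot C)$: only the single term $b_i\cdot C$ is known to be negative, while the other simple roots may meet $C$ positively, and nothing in your setup rules out $\sum_j(b_j\cdot C)\ge 0$, in which case the height does not drop. So minimality yields no contradiction, and the odd/even case split on $b_i\cdot C$ does not address this, because the failure is global --- it is precisely the ``reshuffling'' you yourself acknowledge.

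The fix is to run the minimisation on a single auxiliary vector rather than on the whole simple system, and this is essentially the paper's proof. Pick an integral $h'\in\P_Y\cap R_0^\perp$ (nonempty since $\rk R_0\le 9$) and move it into the nef cone: either quote \Cref{aut_and_weyl_group}(2)--(3) ($W(Y)$ acts transitively on the $\Delta(Y)$-chambers and $\Nef_Y$ is the closure of one of them), or, if you prefer a descent, minimise $h\cdot w(h')$ over $w\in W(Y)$ for a fixed ample $h$: for one vector, reflecting in a curve $C$ changes the height by $(w(h')\cdot C)(h\cdot C)$, so minimality forces $w(h')\cdot C\ge 0$ for every curve, i.e. $w(h')$ is nef, with no reshuffling to control. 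Then $R:=R_0^w$ still satisfies $\overline{\Delta}(R)=\sigma$ by your own stability observation, and its simple roots are indecomposable in $\Delta^+(Y)$: any decomposition into effective classes pairs to zero against the nef class $w(h')$ of positive square and is therefore confined to the negative definite orthogonal complement of it. Hence they lie in $\R(Y)$ by \Cref{aut_and_weyl_group}(5) and they span $R$, which finishes the proof; no separate treatment of the kernel types $(A_7,\ZZ/2\ZZ)$ and $(D_8,\ZZ/2\ZZ)$ is needed. In short, you correctly identified the ``contractible realisation'' as the crux (your closing reformulation is exactly what the paper establishes), but height-minimisation of the simple system cannot deliver it; the Weyl-group action applied to a single positive vector orthogonal to $R_0$ does.
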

\begin{proof}
By \Cref{lem:existsRwithDeltaRequalSigma}, there exists a root lattice $R_0$  with $\overline{\Delta}(R_0)=\sigma$.
Let $h' \in \P_Y \cap (R_0)^\perp$. There exists $w \in W(Y)$ with $h:=w(h')$ nef. With $R = R_0^w$ we have $\overline{\Delta}(R)=\overline{\Delta}(R_0)^w = \sigma^w=\sigma$ where the last equality follows because the Weyl group $W(Y)$ preserves each connected component of $\overline{\Delta}(Y)$.
Then define $B$ as the set of indecomposable elements of $\Delta^+(R)$. By \Cref{aut_and_weyl_group} (5), $B \subseteq \R(Y)$.
\end{proof}

\section{Minimal faces - maximal configurations}
Recall from \Cref{aut_and_weyl_group} that the smooth rational curves on an Enriques surface $Y$ are in bijection with the facets of its nef cone.
To count their $\aut(Y)$-orbits, we take a closer look at the faces of the nef cone of higher codimension.
For $i \in \{1,\dots, 9\}$ we define
\begin{eqnarray}
\R^i(Y)\defeq \{B \subseteq \R(Y) \mid  \# B =i, \langle B \rangle \mbox{ is negative definite of rank } i\},\\
\Delta^i(Y)\defeq \{B \subseteq \Delta(Y) \mid  \# B =i, \langle B \rangle \mbox{ is negative definite of rank } i\}.
\end{eqnarray}
Let $\sigma \subseteq \DeltabarY$ be a connected component.
Define
\begin{eqnarray}
\R^i(Y,\sigma) = \{B \in \R^i(Y) \mid \overline{B}\subseteq \sigma\},\\
\Delta^i(Y,\sigma) = \{B \in \Delta^i(Y) \mid \overline{B}\subseteq \sigma\}.
\end{eqnarray}
For $B \in \R^i(Y)$, $B^\perp$ defines a $10-i$ dimensional face of the nef cone and every face arises in this way.

\begin{proposition}\label{prop:rational_curves}
The natural map
\[\phi \colon \R^i(Y)/\aut(Y) \to \Delta^i(Y)/G_Y\] 
is surjective for each $i \in \{1,\dots,9\}$.
\end{proposition}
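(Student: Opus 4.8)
The plan is to show that every $G_Y$-orbit in $\Delta^i(Y)$ meets $\R^i(Y)$; since $\aut(Y)\subseteq G_Y$, this is precisely the surjectivity of $\phi$. In fact I will produce the representative using only the Weyl group $W(Y)\subseteq G_Y$, following the positioning trick of \Cref{lem:BofTypeSigmaexists}. Fix $B\in\Delta^i(Y)$, put $M=\langle B\rangle$ (negative definite of rank $i\leq 9$), and let $\bar M=(M\otimes\QQ)\cap S_Y$ be its primitive closure. Because $M$ is negative definite of rank at most $9$, the complement $M^\perp\subseteq S_Y\otimes\RR$ has signature $(1,9-i)$ and therefore meets the positive cone $\P_Y$.

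First I would fix a basepoint in general position. Appealing to the local finiteness of the wall arrangement $\{r^\perp\mid r\in\Delta(Y)\}$ on $\P_Y$ that underlies the chamber decomposition of \Cref{aut_and_weyl_group}, choose $h\in M^\perp\cap\P_Y$ such that the splitting roots orthogonal to $h$ are exactly those lying in $\bar M$. These form a finite negative definite sub-root-system $\Delta_{\bar M}:=\Delta(Y)\cap\bar M\supseteq B$, and the $\Delta(Y)$-chambers whose closures contain $h$ correspond bijectively to the Weyl chambers of $\Delta_{\bar M}$. The set $B$ is then the set of indecomposable positive roots of exactly one of these chambers, say $C$; this is the step where one must know that the face datum $B$ occurs as a base (simple system) of $\Delta_{\bar M}$.

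Next I would transport $C$ to the nef cone. By \Cref{aut_and_weyl_group}(2) the Weyl group $W(Y)$ acts simply transitively on $\Delta(Y)$-chambers, so there is $w\in W(Y)$ carrying $C$ to the ample chamber; then $h^w$ is nef. Transport sends the positive system $\Delta^+_C$ to $\Delta^+(Y)$, hence indecomposables to indecomposables, so by \Cref{aut_and_weyl_group}(5) we obtain $B^w\subseteq\R(\Delta^+(Y))=\R(Y)$. Since $w$ is an isometry, $\langle B^w\rangle=M^w$ remains negative definite of rank $i$, so $B^w\in\R^i(Y)$; and as $w\in G_Y$ we conclude $\phi\bigl([B^w]_{\aut(Y)}\bigr)=[B^w]_{G_Y}=[B]_{G_Y}$.

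The main obstacle is the identification made in the second step: that $B$ arises as the simple roots of the positive system on $\Delta_{\bar M}$ induced by a genuine $\Delta(Y)$-chamber, equivalently that the face of $C$ cut out by $M^\perp$ has the expected dimension $10-i$. Once this is secured, the remainder is the formal transport of positive systems, whose only substantive input is that the indecomposable positive roots of the ample chamber are smooth rational curves (\Cref{aut_and_weyl_group}(5)). I expect the delicate point to be exactly the verification that a negative definite configuration of $i$ splitting roots spanning $\bar M$ behaves as a base of $\Delta_{\bar M}$, which is what lets us match the chambers through $h$ with the Weyl chambers of the finite system $\Delta_{\bar M}$ and thereby realise $B$ on the boundary of the nef cone.
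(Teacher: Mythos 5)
Your strategy is essentially the paper's: pick a generic $h\in M^\perp\cap\P_Y$, identify the $\Delta(Y)$-chambers through $h$ with the Weyl chambers of the finite root system $\Delta_{M'}:=\Delta(Y)\cap M'$, where $M'$ is the primitive closure of $M=\langle B\rangle$ (your $\bar M$), transport a suitable chamber onto $\Nef_Y$ by \Cref{aut_and_weyl_group}(2), and conclude via \Cref{aut_and_weyl_group}(5). The step you flag and do not prove --- that $B$ is the \emph{base} of $\Delta_{M'}$ attached to one of these chambers --- is a genuine gap, and it cannot be closed in general, because the claim is false. Under the paper's literal definition of $\Delta^i(Y)$: if $\alpha,\beta\in\Delta(Y)$ span an $A_2$ of splitting roots (available whenever $\DeltabarY$ has a component of rank at least $2$), then $B=\{\alpha,\alpha+\beta\}\in\Delta^2(Y)$, but $\alpha.(\alpha+\beta)=-1$, whereas two distinct smooth rational curves always meet non-negatively; since $G_Y$ acts by isometries, no element of $G_Y$ maps $B$ into $\R^2(Y)$. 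The failure persists even for honest ADE configurations: if $\DeltabarY$ has a component of type $(E_7,0)$, then \Cref{lem:existsRwithDeltaRequalSigma} gives $R_0\cong E_7$ inside $S_Y$ all of whose roots are splitting, and an $A_7$-chain $B\subseteq\Delta(R_0)$ spanning a sublattice of index $2$ lies in $\Delta^7(Y)$; if $B^g\subseteq\R(Y)$ for some $g\in G_Y$, its seven elements would be indecomposable in $\Delta^+(Y)$, hence indecomposable in the positive system of the finite subsystem $\Delta(R_0^g)\subseteq\Delta(Y)$, hence (by counting) equal to the base of that $E_7$ --- impossible, since an $A_7$-chain is not an $E_7$-diagram. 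Note also that your closing ``equivalently'' is wrong: every chamber through a generic $h$ has a face of dimension $10-i$ in $M^\perp$, whether or not $B$ is a base of $\Delta_{M'}$, so the dimension condition is no substitute for the base condition.

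In isolating this point you have in fact located the weakness of the paper's own proof: there, $D$ is chosen only so that $B$ lies in its positive system ($D\subseteq H_B$, which is automatic for a linearly independent $B$), and then $B^w\in\R^i(Y)$ is deduced from effectivity of $B^w$ together with the codimension of the face $(B^w)^\perp\cap\Nef_Y$; that deduction silently requires each $b\in B^w$ to be indecomposable, which is exactly what fails above (in the $E_7$ example the non-simple positive roots of the splitting $E_7$ are effective and orthogonal to the codimension-$7$ face, yet are not rational curves). The proposition is correct --- and both your argument and the paper's go through --- under the stronger reading that $B$ is a fundamental system of $\Delta(Y)\cap\langle B\rangle'$, which is how such configurations are actually used later (compare the proof of \Cref{lem:maximal-characterization}, where curves are placed inside fundamental systems precisely by indecomposability). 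With that hypothesis, choose the chamber $C$ for which $B$ is the base of $\Delta_{M'}\cap\Delta^+_C$ and transport by $w$; if $b^w=\sum_j s_j$ with $s_j\in\Delta^+(Y)\cup(\Pbar_Y\cap S_Y)$, then pairing with $x=h^w\in\Nef_Y\cap\P_Y$ gives $s_j.x=0$ for all $j$, which rules out summands from $\Pbar_Y\cap S_Y$ (as $x^2>0$) and puts every $s_j$ in $\Delta^+(Y)\cap(M')^w$, contradicting simplicity of $b^w$ in that finite positive system. This local-to-global argument is what your phrase ``indecomposables to indecomposables'' actually has to carry, and it is the part worth writing out.
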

\begin{proof}
Let $B \in \Delta^i(Y)$. Then $B^\perp$ contains a codimension $i$ face of some closed $\Delta(Y)$-chamber $D$ with $D\subseteq H_B=\{x \in \P_Y \mid \forall b \in B: x.b\geq0\}$. By \Cref{aut_and_weyl_group} (2-3), we find an element $w$ of the Weyl group $W(Y)\subseteq G_Y$ with $D^w = \Nef_Y$. Then $(B^w)^{\perp} \cap \Nef_Y$ is a codimension $i$ face of $\Nef_Y$ and each $b \in B^w$ is effective. Therefore, $B \in \R^i(Y)$.
\end{proof}

We consider the fibers of the map $\phi$ in \Cref{prop:rational_curves}.
For any subset $A \subseteq L\otimes \RR$ we denote by $A^\circ$ the relative interior of $A$ in its linear hull.
\begin{lemma}\label{lem:phi-fiber}
Let $\phi$ be as in \Cref{prop:rational_curves}, $i \in \{1,\dots, 9\}$ and $B \in \R^i(Y)$.
Then the map
\[\psi\colon \phi^{-1}(B G_Y) \to \{\Delta(Y)|_{B^\perp}-\mbox{chambers}\}/
(G_Y)_{\{B^\perp\}}\]
\[B^g \mapsto \left((\Nef_Y)^{g^{-1}}\cap B^\perp\right)^\circ(G_Y)_{\{B^\perp\}}\]
is bijective.
\end{lemma}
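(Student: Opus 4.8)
The plan is to prove bijectivity by producing explicit representatives and letting the finite reflection group of the face do the bookkeeping; the key preliminary is a saturation statement about $B$.

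\emph{Structural fact.} For $B\in\R^i(Y)$ I would first show that $M_0:=\langle B\rangle^{\mathrm{prim}}\subseteq S_Y$ is a root lattice (by \cite{shimada:ADE}, primitive closures of root lattices are root lattices) and that $B$ is exactly a base of $\Delta(M_0)$ with respect to $\Nef_Y$. Indeed, $B$ consists of $i$ rational curves spanning a negative definite rank-$i$ lattice, so the face $F_0:=\Nef_Y\cap B^\perp=\Nef_Y\cap M_0^\perp$ has codimension $i$; the walls of $\Nef_Y$ through $F_0$ are the rational curves lying in $(F_0)^\perp=M_0$, i.e.\ $\R(Y)\cap M_0$, and these are among the simple roots of $\Delta(M_0)$. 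Counting forces $B=\R(Y)\cap M_0=\{\text{simple roots of }\Delta(M_0)\}$, so $\langle B\rangle=M_0$. In particular $W(M_0)=\langle s_b\mid b\in B\rangle$ is generated by reflections in rational curves, hence $W(M_0)\subseteq W(Y)\subseteq G_Y$; it fixes $M_0^\perp=B^\perp$ pointwise, so $W(M_0)\subseteq (G_Y)_{\{B^\perp\}}$; and by \Cref{aut_and_weyl_group}(2) it acts simply transitively both on the bases of $\Delta(M_0)$ and on the $\Delta(Y)$-chambers in the star of $F_0$ (those whose closure contains $F_0$).

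\emph{Well-definedness, target, and surjectivity.} If $B^{g_1}=B^{g_2}$ then $g_1g_2^{-1}\in(G_Y)_{\{B\}}\subseteq(G_Y)_{\{B^\perp\}}$, and this difference dies in the quotient; replacing $B^g$ by $(B^g)^a=B^{ga}$ with $a\in\aut(Y)$ changes $(\Nef_Y)^{g^{-1}}$ by $a^{-1}$, which fixes $\Nef_Y$ by \Cref{aut_and_weyl_group}(4). Since $B^g\in\R^i(Y)$, the face $\Nef_Y\cap(B^g)^\perp$ is full dimensional in $(B^g)^\perp$, so its $g^{-1}$-image is full dimensional in $B^\perp$ and its relative interior is a genuine $\Delta(Y)|_{B^\perp}$-chamber. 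For surjectivity, given such a chamber $D$ I would write $\overline D=C\cap\P_{B^\perp}$ with $C=(\Nef_Y)^{w^{-1}}$, $w\in W(Y)$ (\Cref{aut_and_weyl_group}(2)); then $F':=\Nef_Y\cap(B^w)^\perp$ is a codimension-$i$ face of $\Nef_Y$, and $\Nef_Y$ itself lies in its star. The structural fact gives a rational-curve base $B_1\in\R^i(Y)$ of $\Delta(\langle B^w\rangle^{\mathrm{prim}})$; since $B^w$ is another base of the same root system, $B_1=B^{ww_1^\ast}$ for a unique $w_1^\ast\in W(\langle B^w\rangle^{\mathrm{prim}})\subseteq W(Y)$, so $g:=ww_1^\ast$ has $B^g=B_1\in\R^i(Y)$ and a short computation yields $(\Nef_Y)^{g^{-1}}\cap B^\perp=(F')^{w^{-1}}=\overline D$, whence $\psi(B^g\aut(Y))=D$.

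\emph{Injectivity.} Suppose $\psi(B^{g_1}\aut(Y))=\psi(B^{g_2}\aut(Y))$, so after applying a suitable $h\in(G_Y)_{\{B^\perp\}}$ the closed faces coincide, $(\Nef_Y)^{g_1^{-1}h}\cap B^\perp=(\Nef_Y)^{g_2^{-1}}\cap B^\perp=:F^\ast$. Both chambers lie in the star of $F^\ast$, and by the structural fact they determine the bases $B^h$ and $B$ of $\Delta(M_0)$ respectively. The unique $w^\ast\in W(M_0)\subseteq(G_Y)_{\{B^\perp\}}$ with $(B^h)^{w^\ast}=B$ carries $(\Nef_Y)^{g_1^{-1}h}$ to a chamber in the same star with the same base as $(\Nef_Y)^{g_2^{-1}}$, hence equal to it by simple transitivity; using $\aut(Y)=\stab_{G_Y}(\Nef_Y)$ (\Cref{aut_and_weyl_group}(4)) this yields $a\in\aut(Y)$ with $B^{g_1}=(B^{g_2})^{a}$, so the two orbits agree.

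\emph{Main obstacle.} The crux is the structural fact together with the identification of the finite reflection group $W(M_0)$ of the primitive closure $M_0=\langle B\rangle^{\mathrm{prim}}$ as sitting simultaneously in $W(Y)\subseteq G_Y$ and in the setwise stabilizer $(G_Y)_{\{B^\perp\}}$, acting trivially on $B^\perp$. It is precisely this group, acting simply transitively on the chambers of the star of the face, that supplies rational-curve representatives in surjectivity and collapses the chamber ambiguity modulo $\aut(Y)$ in injectivity. Establishing that $B$ is genuinely a base of $\Delta(\langle B\rangle^{\mathrm{prim}})$ — which rests on Shimada's theorem that primitive closures of root lattices are root lattices — is what keeps the fibers of $\phi$ no larger than the induced-chamber count.
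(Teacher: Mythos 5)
Your overall skeleton (well-definedness exactly as needed; surjectivity by writing the induced chamber as a $W(Y)$-translate of $\Nef_Y$ and correcting by a finite reflection group fixing $B^\perp$ pointwise; injectivity by simple transitivity of that group on the local chambers and then recognizing the leftover element as lying in $\aut(Y)$, the stabilizer of $\Nef_Y$ in $G_Y$ by \Cref{aut_and_weyl_group}(4)) is the same as the paper's proof. However, the ``structural fact'' that carries all the weight is false, and it fails exactly in the cases this paper cares most about. Write $M_0$ for the primitive closure of $\langle B\rangle$ in $S_Y$. It is not true in general that $\langle B\rangle=M_0$, that $B$ is a base of $\Delta(M_0)$, or that $W(M_0)\subseteq W(Y)$: configurations of rational curves spanning \emph{imprimitive} root sublattices, of types $(A_7,E_7)$, $(D_8,E_8)$ and $(A_8,E_8)$, do occur on Enriques surfaces (see \Cref{lem:Bmaximal_classification}, \Cref{lem:A8E8invol}, \Cref{lem:AutOrbitD8E8}); for an $(A_8,E_8)$ configuration one has $\langle B\rangle\cong A_8\subsetneq M_0\cong E_8$, and an $A_8$ diagram is not a base of the $E_8$ root system. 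The precise step that fails in your derivation is the inclusion ``rational curves lying in $M_0$ are among the simple roots of $\Delta(M_0)$'': by \Cref{aut_and_weyl_group}(5) a rational curve is indecomposable only with respect to sums of \emph{effective} classes, i.e.\ inside $\Delta^+(Y)$, and it may perfectly well decompose into positive roots of $M_0$ that are not splitting roots, hence not effective. For the same reason $W(M_0)$ need not be contained in $W(Y)$, nor even in $G_Y$; worse, it need not preserve $\Delta(Y)$, so it does not act on $\Delta(Y)$-chambers at all. Consequently your appeals to its ``simple transitivity on the star of $F_0$'', to ``$B^w$ is another base of the same root system'' in surjectivity, and to ``the unique $w^\ast\in W(M_0)$'' in injectivity all break down whenever $\langle B\rangle$ is not primitive.

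The argument is repaired by replacing $\Delta(M_0)$ throughout with the finite root system $\Delta(Y)\cap M_0$ of splitting roots orthogonal to the face, and $W(M_0)$ with $W(B)=\langle s_b\mid b\in B\rangle$. Your counting argument does work at that level: the facets of $\Nef_Y$ through $F_0$ are indexed by $\R(Y)\cap M_0$, whose elements are indecomposable in $\Delta^+(Y)$ and a fortiori in the positive system $\Delta^+(Y)\cap M_0$, so $\R(Y)\cap M_0$ is contained in the base of $\Delta(Y)\cap M_0$, which has at most $i$ elements; since it contains the $i$ linearly independent elements of $B$, one gets $B=\R(Y)\cap M_0$ equal to that base (but $\langle B\rangle$ may still be a proper finite-index sublattice of $M_0$). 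Then $W(B)=W(\Delta(Y)\cap M_0)\subseteq W(Y)$ fixes $B^\perp$ pointwise and acts simply transitively on the $\Delta(Y)$-chambers whose closure contains the face --- this is the transitivity statement the paper invokes from \cite[Lemma 4.2]{brandhorst-gonzalez:527}. With this substitution your surjectivity and injectivity paragraphs go through and reproduce the paper's proof. As written, though, your argument only covers $B$ with primitive span, excluding precisely the configurations responsible for the nontrivial multiplicities in \Cref{thm:main}.
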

\begin{proof}
\underline{The map $\psi$ is well defined:} Let $B^g \in \R^i(Y)$  represent an element in the fiber of $\phi$ over $BG_Y$  where $g\in G_Y$.
Since $B^g \in \R^i(Y)$, \(\Nef_Y \cap (B^g)^\perp\) is a face of dimension $10-i$ of the nef cone.
In particular, its relative interior is non-empty. Hence, its image under $g^{-1}$, $\Nef_Y^{g^{-1}}\cap B^\perp$, has non-empty relative interior as well.
Thus, $\left(\Nef_Y^{g^{-1}}\cap B^\perp\right)^{\circ}$ is indeed a $\Delta(Y)|_{B^\perp}$-chamber. Let now $f \in \aut(Y)$. Then 
\[\Nef_Y^{(gf)^{-1}}\cap \; B^\perp = \Nef_Y^{g^{-1}} \cap\; B^\perp\] because $f$ preserves $\Nef_Y$. In particular their $(G_Y)_{\{B^\perp\}}$ orbits agree. 

\underline{The map $\psi$ is injective:}
Let \(g,\widetilde{g} \in G_Y \) with \(B^g,B^{\widetilde{g}} \in \R^{i}(Y)\)
and \(\psi(B^g\aut(Y)) = \psi(B^{\widetilde{g}}\aut(Y))\).
This means that there exists $h \in \stab(G_Y,B^\perp)$ such that
\[\left(\left(\Nef_Y^{g^{-1}}\cap \;B^\perp\right)^\circ \right)^h = \left(\Nef_Y^{\widetilde{g}^{-1}}\cap\; B^\perp\right)^\circ.\]
Thus $\Nef_Y^{g^{-1}}$ and $ \Nef_Y^{\widetilde{g}^{-1}h^{-1}}$ contain a common face defined by $B^\perp$. Let $x$ be a point in the relative interior of this face. Then $W(B)$ acts transitively on the set of closed $\Delta(Y)$-chambers containing $x$ (cf. \cite[Lemma 4.2]{brandhorst-gonzalez:527}), and therefore we find $w \in W(B)$ with
 $\Nef_Y^{g^{-1}}=\Nef_Y^{\widetilde{g}^{-1}h^{-1}w}$.
Now $k = \widetilde{g}^{-1}h^{-1}wg \in G_Y$ preserves the nef cone. Hence $k \in \aut(Y)$.
By construction, $k$ maps $\langle B^{\widetilde{g}}\rangle $ to $\langle B^g \rangle$. In particular, it maps the facet of the nef cone defined by $B^g$ to the facet defined by $B^{\widetilde{g}}$.
The rational curves cutting out these facets are $B^g$ and $B^{\widetilde{g}}$. Hence, $k$ maps $B^g$ to $B^{\widetilde{g}}$, and thus $B^g\aut(Y)=B^{\widetilde{g}}\aut(Y)$.

\underline{The map $\psi$ is surjective:}
Let $C$ be a $\Delta(Y)|_{B^\perp}$-chamber. Write $C = \left(\overline{C'}\cap B^\perp\right)^\circ$ for some $\Delta(Y)$-chamber $C'$. We have $\overline{C'} = \Nef_Y^w$ for some $w \in W(Y)$. Now $\overline{C} = \Nef_Y^w \cap B^\perp$. Thus $\overline{C}^{w^{-1}}= \Nef_Y \cap (B^{w^{-1}})^\perp$ is a $10-i$ dimensional face of the nef cone. As before we find $w' \in W(B)$ such that $B^{w'w^{-1}} \in \R^i(Y)$.
Its $\aut(Y)$-orbit maps to the orbit of $C$ under $\psi$.
\end{proof}

If $R\subseteq S_Y$, and $\Delta(R)\subseteq \Delta(Y)$, set $\Delta^+(R)=\Delta^+(Y)\cap \Delta(R)$. It is a positive root system and therefore contains a unique fundamental root system $B$. Its connected components form $ADE$ configurations.

\begin{lemma}\label{lem:external_root}
Let $B \in \R^i(Y)$, denote by $\pi_B\colon S_Y \to (B^\perp)^\vee$ the orthogonal projection and let $\delta \in \Delta(Y)$ be such that $\pi_B(\delta) \in \Delta(Y)|_{B^\perp}$. Then $B':=\{\delta\} \cup B \subseteq \Delta(Y)$ is a union of ADE configurations. In particular, $B' \in \Delta^{i+1}(Y)$.
If, in addition, $\delta \not\in B^\perp$, then $B'$ has at most as many connected components as $B$.
\end{lemma}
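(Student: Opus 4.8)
The plan is to reduce the whole statement to the single numerical fact hidden in the hypothesis and then argue entirely at the level of the lattice $\langle B'\rangle$.

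\textbf{Unwinding the hypothesis and the ``in particular''.} By definition $\Delta(Y)|_{B^\perp}=\{\pi_B(x)\mid x\in\Delta(Y),\ \pi_B(x)^2<0\}$, so for $\delta\in\Delta(Y)$ the assumption $\pi_B(\delta)\in\Delta(Y)|_{B^\perp}$ is \emph{equivalent} to $\pi_B(\delta)^2<0$. Write $\delta=\pi_B(\delta)+\delta_B$ with $\delta_B\in\langle B\rangle\otimes\QQ$ the orthogonal projection onto $\langle B\rangle$. The two summands are orthogonal, $\pi_B(\delta)^2<0$, and $\langle B\rangle$ is negative definite of rank $i$; hence $\langle B'\rangle\otimes\QQ=(\langle B\rangle\otimes\QQ)\perp\QQ\,\pi_B(\delta)$ is negative definite of rank $i+1$. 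Since $\pi_B(\delta)\neq0$ forces $\delta\notin\langle B\rangle\otimes\QQ$, the $i+1$ vectors of $B'$ are linearly independent, so $\#B'=i+1$; using $\delta\in\Delta(Y)$ and $\R(Y)\subseteq\Delta(Y)$ (\Cref{aut_and_weyl_group}(5)) we get $B'\subseteq\Delta(Y)$ and therefore $B'\in\Delta^{i+1}(Y)$.

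\textbf{The ADE statement.} The lattice $\langle B'\rangle$ is even (it is generated by the norm-$(-2)$ vectors $\{\delta\}\cup B$, so every element has even norm) and negative definite; hence it is a root lattice, and by the classification of definite root lattices it is an orthogonal direct sum of irreducible lattices of type $A_n,D_n,E_n$. Thus $B'$ generates a union of ADE configurations. I would also record that negative definiteness of $\langle\delta,b_j\rangle$ already forces $\delta.b_j\in\{0,\pm1\}$ (otherwise the $2\times2$ Gram determinant $4-(\delta.b_j)^2$ would be $\le0$), so the configuration is simply laced; replacing $\delta$ by a suitable element of its $W(B)$-orbit (the reflections $s_{b_j}$ fix $\pi_B(\delta)\in B^\perp$ pointwise and permute the roots of $\langle B\rangle$, so one may move $\delta_B$ into the dominant region) one may further arrange $\delta.b_j\ge0$, turning $B'$ into an honest fundamental root system.

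\textbf{Counting connected components.} Decompose $\langle B\rangle=\bigoplus_{m=1}^{c}R_m$ into irreducible components; since $B$ is a fundamental root system, these are exactly the connected components of $B$. Assume $\delta\notin B^\perp$. The key observation is that if $\delta$ is orthogonal to every simple root of some $R_m$, then $\delta$ is orthogonal to all of $R_m$, its roots being integral combinations of those simple roots. Consequently the components \emph{met} by $\delta$ are precisely those containing a simple root $b_j$ with $\delta.b_j\neq0$, and there is at least one such since $\delta\notin B^\perp$; let $k\ge1$ be their number. In the orthogonal decomposition of $\langle B'\rangle$ the $c-k$ components orthogonal to $\delta$ survive unchanged, whereas $\delta$ together with the $k$ components it meets all lie in a single irreducible component (they are pairwise linked through $\delta$). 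Hence $\langle B'\rangle$ has $c-k+1\le c$ components, with equality iff $k=1$.

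\textbf{Main obstacle.} The genuinely delicate points are the clean identification of the hypothesis with $\pi_B(\delta)^2<0$, and, in the last step, the passage from ``orthogonal to the simple roots of $R_m$'' to ``orthogonal to $R_m$'', which is what makes the notion of ``component met by $\delta$'' well defined for the orthogonal decomposition; the remaining verifications are routine. One should also fix the reading of ``ADE configuration'': I interpret it as ``$\langle B'\rangle$ is a root lattice'', the honest simple-system normalization being available through $W(B)$ as indicated above.
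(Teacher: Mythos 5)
Your proof is correct, and its skeleton is the same as the paper's: reduce the hypothesis to $\pi_B(\delta)^2<0$, deduce that $\langle B'\rangle$ is negative definite, get $B'\in\Delta^{i+1}(Y)$ from linear independence, and observe that $\delta\notin B^\perp$ forces $\delta$ to attach to an existing component of $B$, so the number of components cannot grow. The one real (if small) difference is how definiteness is obtained: the paper picks $h\in\pi_B(\delta)^\perp\cap B^\perp$ with $h^2>0$ and places $\langle B'\rangle$ inside the negative definite space $h^\perp\otimes\RR$, which implicitly uses the hyperbolic signature of $S_Y$, whereas you split $\langle B'\rangle\otimes\QQ=(\langle B\rangle\otimes\QQ)\oplus\QQ\,\pi_B(\delta)$ as an orthogonal sum of two negative definite pieces; your version is self-contained and works in any ambient lattice, the paper's is a one-line signature argument. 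Your closing discussion is in fact more careful than the paper on one point: the paper asserts that linear independence plus definiteness already makes $B'$ an ADE configuration, which is only true up to sign normalization, since products $\delta.b=-1$ can genuinely occur (e.g. $\delta=b+C$ with $C$ a splitting root meeting $b\in B$ once). Your reading of the conclusion at the level of the root lattice $\langle B'\rangle$ (equivalently, of the graph whose edges record non-orthogonality), with the honest simple system recovered by a $W(B)$-translate, is exactly the interpretation under which the statement and its later uses in \Cref{lem:extend} and \Cref{lem:transitive_on_induced_chambers} are literally correct, and it does not affect either membership in $\Delta^{i+1}(Y)$ or the component count, which is all those applications need.
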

\begin{proof}
Since $\pi_B(\delta) \in \Delta(Y)|_{B^\perp}$, $\pi_B(\delta)^2<0$. Thus we find a vector $h \in \pi_B(\delta)^\perp \cap B^\perp$ with $h^2>0$. 
The root lattice $\langle B' \rangle$ is contained in the negative definite quadratic space $h^\perp \otimes \RR$, hence negative definite itself.
Since $B'$ is linearly independent, this implies that $B'$ is an $ADE$-configuration.
Now suppose that $\delta \not \in B^\perp$, then there exists some $b \in B$ with $b.\delta \neq 0$. This means that $\delta$ and $b$ are in the same connected component. 
\end{proof}
\begin{definition}
We say that a connected $B\subseteq \Delta^i(Y)$ is \emph{maximal} in $\Delta(Y)$
if it is not contained in any connected $B' \in \Delta^{i+1}(Y)$.

Similarly, we say that a connected $B \in \R^i(Y)$ is \emph{maximal} in $\R(Y)$ if it is not contained in any connected $B' \in \R^{i+1}(Y)$.
\end{definition}
\begin{lemma}\label{lem:extend}
Let $B \in \R^i(Y)$ be connected. Then $B$ is maximal in $\R(Y)$ if and only if it is maximal in $\Delta(Y)$.
\end{lemma}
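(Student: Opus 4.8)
The statement is an equivalence, and the two implications are of very different character: one is formal, the other carries all the content.

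\textbf{The easy implication} ``$B$ maximal in $\Delta(Y)$ $\Rightarrow$ $B$ maximal in $\R(Y)$'' is immediate. By \Cref{aut_and_weyl_group}(5) every smooth rational curve is an effective splitting root, so $\R(Y)\subseteq\Delta(Y)$ and hence $\R^{j}(Y)\subseteq\Delta^{j}(Y)$ for all $j$. A connected $B'\in\R^{i+1}(Y)$ containing $B$ is then also a connected $B'\in\Delta^{i+1}(Y)$ containing $B$, so non-maximality in $\R(Y)$ forces non-maximality in $\Delta(Y)$, which is the contrapositive of what we want.

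\textbf{The converse} I argue contrapositively: assuming $B$ is not maximal in $\Delta(Y)$, I produce a connected rank-$(i+1)$ extension inside $\R(Y)$. Fix $\delta\in\Delta(Y)$ with $\delta\notin B^\perp$ and $B\cup\{\delta\}$ connected of rank $i+1$, and set $M=\langle B,\delta\rangle$, an irreducible negative definite ADE lattice of rank $i+1$. The first step is to show that \emph{every} root of $M$ is a splitting root. The reductions $\overline B$ and $\overline\delta$ lie in a single connected component $\sigma$ of $\DeltabarY$ (they are joined by an edge, since $\delta.b\equiv1\bmod 2$ for the $b\in B$ meeting $\delta$), and by \Cref{prop:root_systems_mod2} the component $\sigma$ is the image of the roots of a root lattice of type $\tau(M)$; reconstructing $M$ from its simple roots gives $\overline{\Delta}(M)\subseteq\sigma\subseteq\DeltabarY$, and a root $r$ of $S_Y$ is a splitting root exactly when $\overline r\in\DeltabarY$. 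Hence $\Delta(M)\subseteq\Delta(Y)$, so the positive system $\Delta(M)^+:=\Delta(M)\cap\Delta^+(Y)$ consists of effective classes. Its indecomposable elements $\R(\Delta(M)^+)$ form a fundamental system of the finite root system $M$ (here indecomposability coincides with being a simple root, as $M$ is negative definite), with pairwise intersection numbers in $\{0,1\}$. Since the members of $B$ are irreducible they are indecomposable, so $B\subseteq\R(\Delta(M)^+)$; as $M$ is irreducible of rank $i+1$ this yields $\R(\Delta(M)^+)=B\cup\{c\}$ with $c$ effective, $c.b\in\{0,1\}$ for all $b\in B$, and $B\cup\{c\}$ a connected ADE configuration.

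\textbf{The final step}, and the main obstacle, is to replace the a priori reducible effective root $c$ by an honest smooth rational curve extending $B$ connectedly. Write $c=\sum_\ell m_\ell C_\ell$ with $C_\ell\in\R(Y)$. The clean case is that in which no $C_\ell$ lies in $B$: then $0\le C_\ell.b\le c.b\le 1$ for every $b$, so the graph of $B\cup\{C_\ell\}$ is a subgraph of the ADE diagram of $B\cup\{c\}$; taking the component $C_{\ell_0}$ with $C_{\ell_0}.b_0=1$ (where $c.b_0=1$) makes $B\cup\{C_{\ell_0}\}$ a connected subtree of an ADE diagram, hence itself ADE of rank $i+1$, and $C_{\ell_0}\in\R(Y)$ is the desired connected rational-curve extension, contradicting maximality in $\R(Y)$. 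The difficulty is precisely the possibility that some curve of $B$ appears in the support of $c$: then $C_\ell.b\le c.b$ may fail and a single component need not form an ADE configuration with $B$. I expect to remove this case by choosing $c$ with $c.a$ minimal (for a fixed ample $a$) among connected effective rank-$(i+1)$ extensions of $B$, and showing that a $B$-curve occurring in the support of a minimal $c$ can be eliminated—e.g. by reflecting in the corresponding $s_b\in W(B)$ and re-selecting an indecomposable effective root—producing a connected extension of strictly smaller degree. Making this minimality/reflection reduction rigorous, and thereby guaranteeing that connectivity survives the passage from splitting roots to irreducible curves, is the heart of the proof; negative-definiteness is then automatic from the principle that a connected subgraph of an ADE diagram is again ADE.
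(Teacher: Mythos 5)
Your first two steps are sound: the inclusion $\R(Y)\subseteq\Delta(Y)$ gives the easy implication exactly as in the paper, and your construction of the irreducible root lattice $M=\langle B,\delta\rangle$, the verification that all roots of $M$ are splitting roots (via the component $\sigma$ and \Cref{prop:root_systems_mod2}), and the extraction of the fundamental system $B\cup\{c\}$ of the positive system $\Delta(M)\cap\Delta^+(Y)$ are correct in outline. But the proof is not complete: the final step, replacing the effective splitting root $c$ by a smooth rational curve, is exactly where you stop, and it is the only place where the hypothesis $B\subseteq\R(Y)$ (rather than $B\subseteq\Delta(Y)$) must be used, i.e. it is the entire content of the hard direction. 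Concretely, $c$ is indecomposable only in $\Delta(M)^+$, not in $\Delta^+(Y)$, so even your starting point $c=\sum_\ell m_\ell C_\ell$ with $C_\ell\in\R(Y)$ is unjustified: the support of an effective root may contain curves of non-negative square (for instance $c=f+r$ with $f$ a half-fiber class and $r$ a $(-2)$-curve orthogonal to it is an effective root), and it may contain members of $B$. For these cases the proposed ``minimal degree plus reflection'' reduction is only a hope, not an argument: reflecting in $s_b$ with $c.b=1$ gives $s_b(c)=c+(c.b)b=c+b$, which \emph{increases} the degree against an ample class, and no terminating procedure is exhibited. You acknowledge this yourself (``making this ... rigorous ... is the heart of the proof''), so the proposal has a genuine gap.

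The paper closes this gap by a different, convex-geometric mechanism. Since $B\in\R^i(Y)$, the set $\Nef_Y\cap\P_{B^\perp}$ is a face of the nef cone of the expected dimension, hence a $\Delta(Y)|_{B^\perp}$-chamber, and every facet of it is cut out by the projection of a smooth rational curve, because facets of $\Nef_Y$ are. The witness $\delta$ of non-maximality in $\Delta(Y)$ shows $\Delta(Y)|_{B^\perp}\neq\Delta(Y)\cap B^\perp$. If every facet of the chamber were defined by a root in $\Delta(Y)\cap B^\perp$, the chamber would be a $(\Delta(Y)\cap B^\perp)$-chamber, whose $W(\Delta(Y)\cap B^\perp)$-translates tessellate $\P_{B^\perp}$, forcing $\Delta(Y)|_{B^\perp}=\Delta(Y)\cap B^\perp$, a contradiction. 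Hence some facet is defined by a rational curve $\delta''\notin B^\perp$, and \Cref{lem:external_root} shows that $B\cup\{\delta''\}\in\R^{i+1}(Y)$ is connected. This Weyl-group tessellation argument is precisely what your minimality/reflection sketch would have to replace; without it, the hard direction remains unproven.
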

\begin{proof}
If $B$ is maximal in $\Delta(Y)$, then it is maximal in $\R(Y)$, because $\R(Y) \subseteq \Delta(Y)$.

Conversely, suppose that $B \in \R^{i}(Y)$ is not maximal in $\Delta(Y)$.
Then there exists $\delta' \in \Delta(Y)\setminus B^\perp$ such that $\{\delta'\} \cup B =:B' \in \Delta^{i+1}(Y)$ is connected. In particular, $\delta' \neq \pi_B(\delta')$. Thus $\pi_B(\delta') \in \Delta(Y)|_{B^\perp} \setminus \Delta(B^\perp)$.
Since $B \in \R^i(Y)$, $\Nef(Y) \cap \P_{B^\perp}$ has the expected codimension, and is therefore a $\Delta(Y)|_{B^\perp}$-chamber. 

We  claim that it has a facet defined by some $\pi_B(\delta'') \in \Delta(Y)|_{B^\perp} \setminus \Delta(B^\perp)$ with $\delta'' \in \R(Y)$. Otherwise, its facets would all be defined by roots contained in $\Delta(Y)\cap B^\perp$, i.e.
it would be a $(\Delta(Y) \cap B^\perp)$-chamber.
Then its $W(\Delta(Y)\cap B^\perp)$-translates would tesselate the positive cone $\P_{B^\perp}$ of $B^{\perp}$. This would imply $\Delta(Y)|_{B^\perp} = \Delta(Y)\cap B^\perp$, which is not the case because of $\delta'$.
Set $B'' = \{\delta''\}\cup B \in \R^{i+1}(Y)$. By \Cref{lem:external_root} we know that $B''$ is connected.
\end{proof}
\begin{lemma}\label{lem:transitive_on_induced_chambers}
Suppose that $B \in \R^i(Y)$ is connected and maximal. Then:
\begin{enumerate}
\item $\Delta(Y)|_{B^\perp} = \Delta(Y) \cap B^\perp$,
\item $W(\Delta(Y)\cap B^\perp) \subseteq \stab(G_Y,B^\perp)$ acts transitively on the set of $\Delta(Y)|_{B^\perp}$-chambers,
\item the preimage of $B G_Y$ under $\phi$ is a single point.
\end{enumerate}
\end{lemma}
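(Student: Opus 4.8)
The plan is to prove the three parts in order, since each builds on the previous, and to exploit the fact that $B$ is maximal to collapse the combinatorics of induced chambers.

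\medskip

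\noindent\textbf{Part (1).} I would prove $\Delta(Y)|_{B^\perp} = \Delta(Y)\cap B^\perp$ by contradiction. The inclusion $\supseteq$ is clear: if $\delta \in \Delta(Y)\cap B^\perp$, then $\pi_B(\delta)=\delta$, so $\delta \in \Delta(Y)|_{B^\perp}$. For the reverse, suppose there is some $\pi_B(\delta) \in \Delta(Y)|_{B^\perp}$ with $\delta \notin B^\perp$. Then $\pi_B(\delta)\neq\delta$, and \Cref{lem:external_root} (applied with this $\delta$, whose projection lies in $\Delta(Y)|_{B^\perp}$ by hypothesis) produces $B'=\{\delta\}\cup B \in \Delta^{i+1}(Y)$, connected because $\delta\notin B^\perp$. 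This contradicts the maximality of $B$ in $\Delta(Y)$, which holds by \Cref{lem:extend} since $B$ is maximal in $\R(Y)$. Hence every element of $\Delta(Y)|_{B^\perp}$ is already a genuine root in $\Delta(Y)\cap B^\perp$.

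\medskip

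\noindent\textbf{Part (2).} Once part (1) holds, the induced chambers are exactly the $(\Delta(Y)\cap B^\perp)$-chambers of the positive cone $\P_{B^\perp}$, and these form an honest hyperplane arrangement of a genuine set of $(-2)$-roots inside the lattice $B^\perp$. For such a root system the Weyl group $W(\Delta(Y)\cap B^\perp)$ acts simply transitively on its chambers by the standard theory (this is the analogue of \Cref{aut_and_weyl_group}(2) applied to the root system $\Delta(Y)\cap B^\perp$ in the hyperbolic lattice $B^\perp$). It remains to check the containment $W(\Delta(Y)\cap B^\perp)\subseteq \stab(G_Y,B^\perp)$: each generating reflection $s_r$ with $r\in \Delta(Y)\cap B^\perp$ lies in $W(Y)\subseteq G_Y$, and since $r\perp B$, the reflection $s_r$ fixes $B$ pointwise, hence preserves $B^\perp$ setwise. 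Therefore the whole group $W(\Delta(Y)\cap B^\perp)$ stabilizes $B^\perp$ and acts transitively on the induced chambers.

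\medskip

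\noindent\textbf{Part (3).} By \Cref{lem:phi-fiber}, the fiber $\phi^{-1}(BG_Y)$ is in bijection with the set of $\Delta(Y)|_{B^\perp}$-chambers modulo $(G_Y)_{\{B^\perp\}}=\stab(G_Y,B^\perp)$. Part (2) shows that $\stab(G_Y,B^\perp)$ already acts transitively on the set of these chambers (since it contains $W(\Delta(Y)\cap B^\perp)$, which does so). Hence there is a single orbit, and the fiber is a single point.

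\medskip

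\noindent The main obstacle is part (1): the heart of the argument is translating the maximality hypothesis into the non-existence of an external splitting root whose projection gives a new wall, and this is precisely where \Cref{lem:extend} and \Cref{lem:external_root} must be combined carefully. Once part (1) converts the induced chamber structure into a genuine Weyl-group chamber decomposition, parts (2) and (3) are comparatively formal.
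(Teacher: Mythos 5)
Your proposal is correct and takes essentially the same route as the paper's proof: part (1) rules out an external root via \Cref{lem:external_root} and maximality (converted through \Cref{lem:extend}), part (2) observes that reflections in $\Delta(Y)\cap B^\perp$ lie in $W(Y)\subseteq G_Y$, preserve $B^\perp$, and act transitively on the chambers by classical Weyl-group theory, and part (3) combines this with \Cref{lem:phi-fiber}. If anything, your write-up of part (1) is slightly more explicit than the paper's, which leaves the appeal to \Cref{lem:extend} and the case $\delta\in B^\perp$ implicit.
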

\begin{proof}
\begin{enumerate}
\item From the definitions it follows that $\Delta(Y)|_{B^\perp} \supseteq \Delta(Y) \cap B^\perp$. We show the other inclusion.
Suppose by contradiction that there exists a $\delta \in \Delta(Y)$ such that $\pi_B(\delta) \in \Delta(Y)|_{B^\perp} \setminus B^\perp$. Then \Cref{lem:external_root} applies and gives that 
\[B':=\{\delta\}\cup B \in \Delta^{i+1}(Y)\]
is a larger connected ADE graph than $B$, contradicting the assumption on $B$.

\item By (1) the walls of the $\Delta(Y)|_{B^\perp}$-chambers are all cut out by $(-2)$-vectors in $B^\perp \cap \Delta(Y)$. The corresponding reflections preserve $B^\perp$, $\Delta(Y) \cap B^\perp$ and are in $G_Y$. It is a classical fact that the Weyl group generated by reflections in $\Delta(Y) \cap B^\perp$ acts transitively on the collection of $\Delta(Y) \cap B^\perp$-chambers.

\item follows from (2) and \Cref{lem:phi-fiber}.
\end{enumerate}
\end{proof}

\section{Orbits by the 2-congruence subgroup}\label{sec:orbits-by-2congruence}
In this section we determine the ADE-sublattices of $E_{10}$ up to the action of $O(E_{10})(2)$.
The reader interested mainly in Enriques surfaces may skip the proofs and jump directly to \Cref{prop:B-mod2-orbit}.\\

Let $V$ be a quadratic space over a field $k$ and $W \leq V$ a subspace. We call a subspace $U\leq V$ dual to $W$ in $V$ if the map $U \to \Hom(W,k), u \mapsto b(u, \cdot)|_W$ is an isomorphism. Typically, such a space is denoted by $W^\vee:=U$. It is however not unique: if $U,U'$ are dual to $W$ then $U+W^\perp = U'+W^\perp$. 
\begin{lemma}\label{lemma:order_stabilizer_W_in_V}
Let $V$ be a regular quadratic space over \(\FF_2\) and $W \leq V$ a subspace. Let $K_b = \mathrm{rad}(b_{\mid W})= W^\perp \cap W$ be the radical of the bilinear form of $W$ and let $K_q =\mathrm{rad}(q_{\mid W})\subseteq K_b$ be the radical of the quadratic form on $W$.
Let $w = \dim W^\perp$, $k=\dim K_q$, $n = \dim K_b$.
Then the order of the pointwise stabilizer of $W$ in $O(V)$ is
\[\#O(V)_W = \# O(W^\perp/K_q)  2^{e}\]
where $e=n(n-1)/2 + k(w-n) + n - k$.
\end{lemma}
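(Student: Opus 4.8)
The plan is to analyze $O(V)_W$ through the restriction homomorphism to the orthogonal group of the half‑regular quotient $W^\perp/K_q$. First observe that any $g\in O(V)_W$ preserves $W^\perp$ setwise: if $b(u,W)=0$ then $b(gu,W)=b(gu,gW)=b(u,W)=0$. Since $K_b=\mathrm{rad}(b|_{W^\perp})=W\cap W^\perp$ (using $(W^\perp)^\perp=W$ as $V$ is regular) and $K_q=\mathrm{rad}(q|_{W^\perp})$ are intrinsic to $(W^\perp,q|_{W^\perp})$, the restriction $g|_{W^\perp}$ descends to an isometry of $W^\perp/K_q$. This defines a homomorphism $\rho\colon O(V)_W \to O(W^\perp/K_q)$, $g\mapsto \overline{g|_{W^\perp}}$, and I would establish the short exact sequence $1\to N \to O(V)_W \xrightarrow{\rho} O(W^\perp/K_q)\to 1$ with $N=\ker\rho$, so that $\#O(V)_W = \#O(W^\perp/K_q)\cdot \#N$. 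It then suffices to show that $\rho$ is surjective and that $\#N = 2^e$.

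Surjectivity is the main obstacle. Given $\bar h \in O(W^\perp/K_q)$, I would first lift it to an isometry $h$ of $W^\perp$ fixing $K_b$ pointwise; this is possible because $K_b/K_q$ is the bilinear radical of $W^\perp/K_q$ and is fixed pointwise by every isometry (if $n-k=1$ its nonzero vector has $q=1$, so has no other image). Then $h$ agrees with $\mathrm{id}_W$ on the overlap $K_b=W\cap W^\perp$, so the two glue to an isometry of $W+W^\perp$, which I extend to all of $V$ fixing $W$ pointwise. The extension step is Witt's theorem for quadratic spaces over $\FF_2$ (\cite{kneser}); the delicate point is that $W$ and $W^\perp$ are $b$‑ and $q$‑degenerate, so one must invoke the characteristic $2$ version carefully and check that the defect of $W+W^\perp$ is compatible with that of the regular space $V$.

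For the kernel I parametrize. Every $g\in O(V)_W$ has the form $g=\mathrm{id}+\phi$ with $\phi\colon V\to W^\perp$ linear and $\phi|_W=0$, since $b(gv-v,W)=0$ forces $\operatorname{im}\phi\subseteq W^\perp$; and $g\in O(V)$ is equivalent to the single condition $q(\phi v)=b(v,\phi v)$ for all $v$ $(\star)$, which also implies $g$ preserves $b$ and is hence bijective. One checks $g\in N$ iff moreover $\phi(W^\perp)\subseteq K_q$. Using $\dim(V/W)=\dim W^\perp=w$ and the nondegenerate pairing between $V/(W+W^\perp)$ and $K_b$, I split $\phi$ into its restriction $\lambda\colon W^\perp/K_b\to K_q$, which is unconstrained by $(\star)$ and contributes $2^{k(w-n)}$ choices, and its behaviour on a complement $T\cong V/(W+W^\perp)$ of dimension $n$: the mixed part of $(\star)$ forces the $W^\perp/K_b$‑component of $\phi|_T$ in terms of $\lambda$, while the remaining $K_b$‑component $\mu\colon T\to K_b$ is governed by the diagonal part of $(\star)$, which becomes a single $\FF_2$‑linear condition $\Theta(\mu)=C$. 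Since $q|_{K_b}$ is linear (its kernel is exactly $K_q$, whence $n-k\in\{0,1\}$), the map $\mu\mapsto \bigl(t\mapsto q(\mu t)+b(t,\mu t)\bigr)$ is linear with image of codimension $n-k$ in the space of quadratic forms on $T$; one verifies the system is consistent for every $\lambda$, and its solution space has dimension $\binom{n}{2}+(n-k)$. Multiplying the contributions gives $\#N = 2^{k(w-n)}\cdot 2^{\binom{n}{2}+(n-k)}=2^e$, which completes the proof.
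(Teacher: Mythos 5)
Your proposal has the same skeleton as the paper's proof --- the exact sequence $1\to N\to O(V)_W\to O(W^\perp/K_q)\to 1$, surjectivity via characteristic~$2$ Witt extension, and the count $\#N=2^{k(w-n)}\cdot 2^{n(n-1)/2+(n-k)}$ --- but both technical steps are executed differently. For surjectivity, the paper applies the relative extension theorem \cite[Satz 4.3]{kneser} with an explicit auxiliary space $H=(R+R^\vee)\oplus U_1$, so that the extension fixes $W$ automatically; you instead lift $\bar h$ to an isometry of $W^\perp$ fixing $K_b$ pointwise (this works: write $W^\perp=U_1\oplus R\oplus K_q$ with $K_b=R\oplus K_q$, let $\bar h$ act on $U_1\oplus R\cong W^\perp/K_q$ and extend by $\id_{K_q}$), glue with $\id_W$ along $K_b$, and extend by the absolute Witt theorem. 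This is legitimate and arguably cleaner: when $\mathrm{rad}(b_V)=0$ the characteristic~$2$ extension theorem carries no side condition (the only hypothesis in the degenerate-subspace version concerns $F\cap\mathrm{rad}(b_V)$), so the ``compatibility of defects'' you flag as delicate is actually vacuous. For the kernel, the paper filters $N$ in two steps --- the image of $N\to O(W^\perp)$, shown to be all of $\Hom(U_1,K_q)$ by a \emph{second} application of Kneser's theorem, and then the pointwise stabilizer of $W+W^\perp$, counted by matrix conditions --- whereas you parametrize $N$ in one stroke by $(\lambda,\phi_1,\mu)$, where $\phi_1\colon T\to U_1$ and $\mu\colon T\to K_b$ are the two components of $\phi|_T$. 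Your route avoids the second Witt application, but its entire content is transferred into the sentence ``one verifies the system is consistent for every $\lambda$.''

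That consistency claim is the one genuine gap, and it is not a formality: it is precisely the point where $K_q$ (rather than $K_b$) enters, and if it failed for even one $\lambda$ your count would give $\#N<2^e$. Here is the missing verification in the nontrivial case $n-k=1$. Let $t_\ell\in T$ be the unique vector with $b(t_\ell,x)=q(x)$ for all $x\in K_b$ (the paper's characteristic vector $r_1$, transported to $T$ by the nondegenerate pairing). For every $\mu$ one has $\Theta(\mu)(t_\ell)=q(\mu t_\ell)+b(t_\ell,\mu t_\ell)=q(\mu t_\ell)+q(\mu t_\ell)=0$, so $\im(\Theta)\subseteq\{Q\colon Q(t_\ell)=0\}$, and a short computation in a basis adapted to $t_\ell$ shows this hyperplane is exactly the image (confirming your codimension claim); hence consistency means $C(t_\ell)=0$. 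Now evaluate the mixed part of $(\star)$ at $t=t_\ell$: for all $u\in W^\perp$ one gets $b(u,\phi_1(t_\ell))=b(t_\ell,\lambda(u))=q(\lambda(u))=0$, the last equality because $\lambda$ takes values in $K_q$. Thus $\phi_1(t_\ell)$ lies in $\mathrm{rad}(b|_{W^\perp})\cap U_1=K_b\cap U_1=0$, so $\phi_1(t_\ell)=0$ and $C(t_\ell)=q(\phi_1(t_\ell))+b(t_\ell,\phi_1(t_\ell))=0$. With this paragraph inserted, your proof is complete and yields exactly the paper's formula; note that this computation is the elementary substitute for the paper's second invocation of \cite[Satz 4.3]{kneser}.
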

\begin{proof}
Write $W = W_1 \oplus K_b$ for some $W_1 \leq W$ and write $W^\perp = U_1 \oplus K_b$ for some $U_1 \leq W^\perp$. 
Since $V$ is regular, there exist a subspace $K_b^\vee \leq V$ dual to $K_b$ in $V$ such that 
\[V = W_1 \oplus (K_b + K_b^\vee)\oplus U_1,\]
where the sum in the middle is direct but not orthogonal.

We claim that the composition
\[O(V)_W \to O(W^\perp) \to O(W^\perp/K_q)\] is surjective. 
To this end write $K_b = K_q \oplus R$ and correspondingly $K_b^\vee = K_q^\vee + R^\vee$. 
In summary we have:
\begin{eqnarray}
V &=& W_1 \oplus \left((R \oplus K_q)+(R^\vee \oplus K_q^\vee)\right) \oplus U_1\\
W &=& W_1 \oplus R \oplus K_q\\
W^\perp &=& U_1 \oplus R \oplus K_q
\end{eqnarray}
Note that $W^\perp / K_q\cong R \oplus U_1$ as quadratic spaces. 
Let 
\[t: R \oplus U_1 \to R \oplus U_1\] 
be an isometry.
Then $t$ preserves $R$ because it is the radical. Since $R$ consists of at most two elements, one of which is zero, $t$ fixes $R$ pointwise. 

We check the conditions of \cite[Satz 4.3]{kneser} (in the notation of loc. cit.) with $E:=V$, $F:=G:=W^\perp$, $H:= (R +R^\vee)\oplus U_1$ and $t$ as above.
Trivially, $tx \equiv x \mod H$ for all $x \in R \oplus U_1$. By \cite[Satz 4.3]{kneser}, $t$ extends to an isometry $t'$ of $V$ satisfying $t'x \equiv x \mod H$ and fixing $H^\perp=W_1 \oplus (K_q+K_q^\vee)$ pointwise. 

Since $t$ fixes $R$ pointwise as well, $t'$ restricts to the identity on $H^\perp \oplus R$. One checks that $W \subseteq H^\perp \oplus R$, hence $t'$ restricts to the identity of $W$, i.e. $t' \in O(V)_W$.
The claim is proven.

Let $C$ be the kernel of $O(V)_W \to O(W^\perp/K_q)$. 
First consider the image of 
\begin{equation}\label{eq:CtoOWperp}
C \to O(W^\perp).
\end{equation}
Let $f$ be an element of the image. Then $f$ is of the form 
$f = \id_{W^\perp} + g$ with $g\colon U_1 \to K_q$ linear.
Conversely, given $g \in \Hom(U_1, K_q)$,
set $f = \id_{W^\perp}+g$. By \cite[Satz 4.3]{kneser} once more, this time with $E:=V$, $F:=G:=W^\perp$, $H:=U_1 \oplus K_b \oplus K_b^\vee$ and $t:=f$, $f$ can be extended to an isometry $f' \in O(V)$ 
fixing $H^\perp=W_1$. Since $f$ already fixes $K_b$, $f'$ fixes $W_1+K_b=W$, i.e. $f' \in O(V)_W$.
This shows that the image of $C$ in $O(W^\perp)$ is of order $2^c$ with 
\[c = \dim \Hom(U_1,K_q)=\dim K_q \cdot \dim U_1=k(w-n).\]

Let $D \leq C$ be the kernel of the map in \cref{eq:CtoOWperp}. It is the pointwise stabilizer $O(V)_{W+W^\perp}$ of $W+W^\perp$. We determine the order of $D$.
Since any $t \in D$ preserves $W_1\oplus U_1$ and this space is regular, we may assume that $W_1 \oplus U_1 =0$, i.e., that 
\[W=W^\perp = K_b.\]
Let $t \in D$. Then $t(y)=y$ for all $y \in K_b$ and $t(x)\equiv x \mod K_b$ for all $x \in K_b^\vee$.
This means that we can write $t = 1 + (0_{K_b}\oplus a)$ with a linear map 
\[a\colon K_b^\vee \to K_b \subseteq V\]
Conversely, let $a\colon K_b^\vee \to K_b$ be any linear map. The condition that $t$ preserves the bilinear form $b$ translates to 
\[b(x,a(y))=b(a(x),y) \quad  \mbox{for all } x,y \in K_b^\vee.\] 
Let $r_1 \in K_b^\vee$ be the (possibly zero), so called, \emph{characteristic vector} satisfying $q(x) = b(r_1,x)$ for all $x \in K_b$. That $t$ also preserves the quadratic form $q$ amounts to the linear condition
\[b(r_1+x,a(x))=0 \mbox{ for all } x \in K_b^\vee.\] 
We compute the rank of these linear conditions.
Choose a basis of $K_b$ and take the corresponding dual basis on $K_b^\vee$. Let $A$ be the matrix representing $a$ with respect to these bases. The condition that $a$ preserves the bilinear form translates to $A+A^t=0$. Because of characteristic $2$, this is a linear system of equations of rank $n(n-1)/2$.
The preservation of $q$ amounts to another $n$ equations independent from the previous ones involving the diagonal of $A$. 
If $r_1 = 0$, i.e., $K_q=K_b$, then they have rank $n$, but if $r_1\neq 0$, they have rank $n-1$ (because the equation $b(r_1+x,a(x))=0$ with $x=r_1$ is trivial). 

Thus, $\# D = 2^d$ with $d=n(n-1)/2$ for $K_q=K_b$ and $d=n(n-1)/2+1$ else.
This can be reformulated as 
\[d = n(n-1)/2+(n-k)\] 
in either case.

In total 
\[\# O(V)_W = \# O(W^\perp/K_q) \# C = \# O(W^\perp/K_q) 2^c \# D = \# O(W^\perp/K_q) 2^{c+d}. \]
\end{proof}
\begin{remark}
Let $V$ be a quadratic space. Write $V=V_1 \oplus V_2$ with $V_1$ half-regular and $q(V_2)=0$. 
Then $O(V) \to O(V/V_2) \times \mathrm{GL}(V_2)$ is surjective with kernel $\Hom(V_1,V_2)$.
For $V$ defined over a finite field the order of $O(V_1)$ is well known, see for instance \cite[(13.3)]{kneser}.
\end{remark}

Recall that for $R \leq E_{10}$ a submodule we write $\overline{R}:=(R+2E_{10})/2E_{10}\subseteq E_{10}\otimes \FF_2$.

We shall use the following facts from \cite{nikulin}.
Let $L$ be a non-degenerate lattice and $A$ a primitive and non-degenerate sublattice and $B:=A^\perp$.
Then \[L/(A\oplus B)=:\Gamma \leq D_A \oplus D_B\] 
is the graph $\Gamma=\Gamma_\alpha$ of an anti-isometry $\alpha \colon H_A \to H_B$ for certain subgroups $H_A \leq D_A$ and $H_B \leq D_B$. If $L$ is unimodular, then $H_A=D_A$ and $H_B=D_B$. In general we have $D_L = \Gamma^\perp / \Gamma$, and $f\oplus g \in O(D_A)\times O(D_B)$ preserves $\Gamma$ if and only if $f$ preserves $H_A$, $g$ preserves $H_B$ and $g(\alpha(x)) = \alpha(f(x))$ for all $x \in H_A$. In this case $f \oplus g$ induces an isometry on $\Gamma^\perp/\Gamma = D_L$.

\begin{lemma}\label{lem:GY-transitive_on_ADE}
Let $R \subseteq E_{10}$ be a negative definite, irreducible root sublattice and $R'$ its primitive closure.
Then the natural map 
\[\xi\colon O(E_{10})_R \to O(E_{10}\otimes \FF_2)_{\overline{R},\{\overline{R'}\}}\] 
is surjective
unless $R$ is of type $(A_9,A_9)$. In this exceptional case the image has index 2 in the codomain.
\end{lemma}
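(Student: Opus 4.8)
The plan is to transport the whole question to the orthogonal complement $T:=R^\perp=(R')^\perp$ via Nikulin's gluing theory \cite{nikulin}. First I would check that $\xi$ is well defined and identify its source. An isometry $g\in O(E_{10})$ fixing $R$ pointwise fixes $R\otimes\QQ=R'\otimes\QQ$, hence fixes the primitive closure $R'$ pointwise and preserves $T$; thus $O(E_{10})_R=O(E_{10})_{R'}$ and $\xi(g)=\bar g$ fixes $\overline{R'}\supseteq\overline R$ pointwise and preserves $\overline{R'}$. Writing $g=\id_{R'}\oplus h$ with $h\in O(T)$ and using that $E_{10}$ is unimodular, so the glue map $\alpha\colon D_{R'}\xrightarrow{\sim}D_T$ is an isomorphism, extendability of $g$ to $E_{10}$ forces $h$ to act trivially on $D_T$. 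This yields a canonical isomorphism
\[O(E_{10})_R\;\cong\;\widetilde O(T):=\ker\bigl(O(T)\to O(D_T)\bigr),\qquad g\mapsto g|_T,\]
and shows $\im\xi\subseteq O(V)_{\overline{R'}}\subseteq O(V)_{\overline R,\{\overline{R'}\}}$, where $V:=E_{10}\otimes\FF_2$. For the three non-primitive types $(A_7,E_7),(A_8,E_8),(D_8,E_8)$ of \Cref{shimada:irreducible} I would first reduce to the primitive situation by checking, from the explicit forms of $\overline R$ in \Cref{table:root_mod2}, that fixing $\overline R$ pointwise and preserving $\overline{R'}$ setwise already forces fixing $\overline{R'}$ pointwise (for $A_7,A_8$ because $\overline R$ is non-degenerate; for $D_8$ because the extra vector of $\overline{R'}$ pairs nontrivially with the radical of $\overline R$). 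Hence in all cases it suffices to prove surjectivity of $\xi\colon O(E_{10})_{R'}\to O(V)_{\overline{R'}}$ for the primitive lattice $R'$.

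So assume $R=R'$ primitive, write $T=R^\perp$ and note $\overline T=\overline R^{\perp_V}$ (both have dimension $10-\rk R$, and $\overline T\subseteq\overline R^\perp$ by orthogonality since $R$ is saturated). I would describe $O(V)_{\overline R}$ through \Cref{lemma:order_stabilizer_W_in_V} applied to $W=\overline R\subseteq V\cong U_2^5$, reading $k,n,w$ off \Cref{table:root_mod2}: it is generated by a \emph{semisimple} part acting on $\overline T/K_q$ and a \emph{unipotent} part of transvection-type maps supported on the radical $K_b(\overline R)=\overline R\cap\overline T$. The semisimple part and the $q$-isotropic ($[0]$) unipotent part lie in $\im\xi$: reductions of reflections $s_\rho$ in $(\pm2)$-vectors $\rho\in T$ and of Eichler transvections $E(f,x)$ with $f,x\in T$ and $f$ isotropic are isometries of $E_{10}$ fixing $R$ pointwise, and by a Kneser-type lifting argument \cite{kneser} they surject onto this subgroup of $O(V)_{\overline R}$. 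The clean way to make this rigorous is an order comparison: compute $\#O(V)_{\overline R}$ from \Cref{lemma:order_stabilizer_W_in_V} and $\#\im\xi=\#\widetilde O(T)/\#\ker$ from the structure of $\widetilde O(T)$, type by type.

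The whole exception is concentrated in the remaining generators: the genuine transvections $s_v$ with $v\in K_b(\overline R)$ and $q(v)=1$, which fix $\overline R$ pointwise. By \Cref{table:root_mod2} such a $v$ exists exactly when $\overline R$ has a summand $[1]$, i.e. for $R\in\{A_1,A_5,A_9,E_7\}$. Such an $s_v$ lies in $\im\xi$ precisely when $T$ contains a $(\pm2)$-root $\rho$ with $\bar\rho=v$, for then $s_\rho\in O(E_{10})_R$ reduces to $s_v$. I would verify this directly for $A_1$ (taking $\rho=e+f$ a $(+2)$-vector in a hyperbolic plane of $T$), and for $A_5,E_7$, where $T$ has rank $\ge3$, indefinite signature and is root-rich. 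The sole failure is $R=A_9$: here $T$ has rank $1$ with $|\!\det T|=10$, so $T\cong\langle10\rangle$ has no $(\pm2)$-vector and $\widetilde O(\langle10\rangle)=1$ (as $-\id$ acts by $-1\neq1$ on $D_T=\ZZ/10\ZZ$); thus $O(E_{10})_{A_9}=\{1\}$, while \Cref{lemma:order_stabilizer_W_in_V} gives $\#O(V)_{\overline{A_9}}=2$, the nontrivial element being $s_v$. This forces image of index $2$. A parallel computation for $D_9$ (complement $\langle4\rangle$, also with $\widetilde O=1$) shows its radical vector has $q(v)=0$ and contributes no transvection, so there $\#O(V)_{\overline{D_9}}=1$ and $\xi$ is surjective, confirming that the condition $q(v)=1$, and not the rank of $T$ alone, singles out $A_9$. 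The main obstacle is the lifting claim for the semisimple and $[0]$-unipotent generators, i.e. showing that $\im\xi$ is exactly as large as \Cref{lemma:order_stabilizer_W_in_V} predicts in every type; the transvection analysis then isolates the unique index-$2$ defect.
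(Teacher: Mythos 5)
Your skeleton is sound and in fact coincides with the paper's: the identification $O(E_{10})_R\cong O^\sharp(T)$ (the paper's $O^\sharp(K)$) via unimodularity, the reduction of the imprimitive types $(A_7,E_7)$, $(A_8,E_8)$, $(D_8,E_8)$ to the pointwise stabilizer of $\overline{R'}$ for the primitive lattice $R'$, and the analysis of the exception (for $A_9$ the complement is $\langle 10\rangle$, so $O^\sharp(T)=1$, while $O(E_{10}\otimes\FF_2)_{\overline{R}}$ has order $2$, generated by the transvection in the radical vector $v$ with $q(v)=1$) are all correct and match the paper. But the gap is exactly where you yourself locate it: the surjectivity of $\xi$ in all the non-exceptional primitive cases, which is the entire content of the lemma, is asserted and never proved. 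Worse, the ``clean'' repair you propose --- computing $\#\im\xi=\#O^\sharp(T)/\#\ker\xi$ and comparing with \Cref{lemma:order_stabilizer_W_in_V} type by type --- is not well posed: for $\rk R\leq 7$ the complement $T$ is an indefinite lattice of rank $\geq 3$, so $O^\sharp(T)$ and $\ker\xi$ are both \emph{infinite} groups. What must actually be computed is the image of this infinite arithmetic group in the finite group $O(E_{10}\otimes\FF_2)_{\overline{R}}$, and that is precisely where a real theorem is required.

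The paper supplies two ingredients that have no counterpart in your proposal. First, a gluing-theoretic argument (embedding $E_{10}(2)$ primitively into an even unimodular lattice) shows that the natural map $\eta\colon O^\sharp(D_{T(2)})\to O(E_{10}\otimes\FF_2)_{\overline{R}}$ is injective, so that $\#\im\xi$ equals the order of the image $G_1$ of $O^\sharp(T)$ in $O(D_{T(2)})$; without this, even knowing the image in discriminant groups says nothing about the image in $O(E_{10}\otimes\FF_2)$. Second, Miranda--Morrison theory is invoked to prove that $O(T)\to O(D_T)$ is surjective and that $[O(D_{T(2)}):\im O(T)]=1$ except for $A_9$, whence $\#G_1=\#O(D_{T(2)})/\#O(D_T)$, which is then compared numerically (via a table of orders) against \Cref{lemma:order_stabilizer_W_in_V}. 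Your substitute --- reflections in $(\pm2)$-vectors of $T$ and Eichler transvections, plus a ``Kneser-type lifting argument'' --- cannot work uniformly: for instance for primitive $R\cong A_8$ the complement is the binary isotropic lattice $T\cong\begin{pmatrix}0&3\\3&-2\end{pmatrix}$, where every Eichler isometry $E(f,x)$ with $x\in f^\perp\cap T$ is the identity (as $f^\perp\cap T=\ZZ f$), so these generators produce nothing, and the image must be determined by a direct finite computation in any case. So to close the gap you would need to either import Miranda--Morrison theory together with the injectivity of $\eta$, as the paper does, or prove an equivalent strong-approximation statement for each of the complements; your correct treatment of $A_1$, $A_5$, $E_7$, $A_9$, $D_9$ covers only the types where the extra transvection $s_v$ exists or $T$ has rank one, not the general case.
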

\begin{proof}
Let $K$ be the orthogonal complement of $R$ in $E_{10}$.
Since $E_{10}$ is unimodular, 
\[O(E_{10})_R = \{\id_{R}\} \times O^\sharp(K).\]

We identify $E_{10}\otimes \FF_2$ with the discriminant group of $E_{10}(2)$.
Choose and fix a primitive embedding of $E_{10}(2)$ into some even unimodular lattice $L$. Let $S$ be its orthogonal complement. Since $D_{K}=2D_{K(2)}\subset D_{K(2)}$ as groups, there is a natural map $O(D_{K(2)}) \to O(D_K)$.
Let $O^\sharp(D_{K(2)})$ denote its kernel.

There is a natural homomorphism 
\[\eta \colon O^\sharp(D_{K(2)}) \to O(E_{10}\otimes \FF_2)_{\overline{R}} \hookrightarrow O(D_{E_{10}(2)})\] 
defined as follows.
With $\Gamma = E_{10}/(R\oplus K)$ and $\Gamma^\perp = \tfrac{1}{2}E_{10}/(R\oplus K)$ its orthogonal complement in $D_{R(2)}\oplus D_{K(2)}$, we know by the discussion above this lemma that
$D_{E_{10}(2)}=\Gamma^\perp/ \Gamma$, which is a subquotient of $D_{R(2)}\oplus D_{K(2)}$. 
The image of $f \in O^\sharp(D_{K(2)})$ is given by first extending $f$ to $\id_{D_{R(2)}}\oplus f$, then restricting to $\Gamma^\perp$ and subsequently considering the induced map on $\Gamma^\perp/\Gamma =D_{E_{10}(2)}$.

We claim that $\eta$ is injective. Let $f \in \ker \eta$. Then $\id_{D_{R(2)}}\oplus f$ acts trivially on $D_{E_{10}(2)}$ and therefore
$f':=\id_{D_{R(2)}}\oplus f \oplus \id_{D_S}$ preserves $\Gamma_2 = L/(E_{10}(2)\oplus S)$. This implies that $f'$ also preserves $L/(R(2) \oplus K(2) \oplus S)$. Let $T$ be the orthogonal complement of $K(2)$ in $L$. Then $\Gamma_3 = L / (K(2) \oplus T)$ is also preserved by $f'$ and is the graph of an anti-isometry $\alpha \colon D_{K(2)}\to D_T$ with 
$\alpha \circ f = \id_{D_{T}} \circ \alpha$. Thus $f = 1$.

The homomorphism 
\[\eta_2\colon O^\sharp(K) \to O(E_{10}\otimes \FF_2)_{\overline{R}}\] 
factors as  
\[O^\sharp(K) \xrightarrow{\eta_1} O^\sharp(D_{K(2)})\xrightarrow{\eta} O(E_{10}\otimes \FF_2)_{\overline{R}}.\] 
We will compute the order of the image of $\eta_2$. Since $\eta$ is injective, it agrees with the order of $G_1:=\im \eta_1$.

A standard computation based on the theory of Nikulin \cite{nikulin} and Miranda-Morrison \cite[VIII Theorem 7.5]{miranda_morrison:embeddings} shows that $K$ is unique in its genus and that $O(K) \to O(D_K)$ is surjective. 
Let $G_2$ be the image of $O(K) \to O(D_{K(2)})$.

Since $G_1$ is the kernel of the surjective map $G_2 \to O(D_{K})$, we have 
\[ \# G_1 =\frac{\# G_2}{\#O(D_K)} =\frac{\# O(D_{K(2)})}{  \# O(D_K)  [O(D_{K(2)}): G_2 ]}.\]
By Miranda-Morrison theory the index $[O(D_{K(2)}):G_2]=1$, except for $R\cong A_9$ in which case $O(K) = \{\pm \id_{K}\}$ and $O^\sharp(K)=1$ is trivial. We record 
the order of $O(D_{K(2)})$ and $O(D_K)$ in a table. They are computed based on the formulas  found in \cite{brandhorst-veniani}.\\
\noindent
\begin{center}
\begin{tabular}{crc|crc}
$R$ & $\sharp O(D_{K(2)})$ & $\sharp O(D_K)$ & $R$ & $\sharp O(D_{K(2)})$ & $\sharp O(D_K)$  \\
\hline 
$A_1$ & $94755225600$ & 1  & $D_4$& $368640 $& $6 $\\
$A_2$ & $ 789626880 $&$ 2$ & $D_5$& $3840 $&$2 $\\
$A_3$ & $ 6635520 $&$ 2$ & $D_6$&$ 192$&$2 $\\
$A_4$ & $ 103680 $&$ 2$ & $D_7$&$ 16$&$2 $\\
$A_5$ & $ 2880$&$ 2$ & $D_8$&$4 $&$ 2$\\
$A_6$ & $ 144 $&$ 2$ & $D_9$&$2 $&$ 2$\\
$A_7$ & $ 16$&$ 2$& $E_6$ & $240$ & $2$\\
$A_8$ & $ 4$&$ 2$ & $E_7$ & $12$ & $1$\\
$A_9$ & $ 4$& $2$& $E_8$ & $2$ & $1$
\end{tabular}
\end{center}

Now that we have computed the order of $G_1$, we compare it with the order of 
$O(E_{10}\otimes \FF_2)_{\overline{R}\cup \{\overline{R'}\}}$.
Suppose that the index $[R':R]$ is odd. Then we have 
$\overline{R}=\overline{R'}$, hence 
\[O(E_{10}\otimes \FF_2)_{\overline{R},\{\overline{R'}\}}= O(E_{10}\otimes \FF_2)_{\overline{R}}.\]
Its order is computed in \Cref{lemma:order_stabilizer_W_in_V}.
The orders of $G_1$ and $O(E_{10}\otimes \FF_2)_{\overline{R}}$ agree unless $R$ is of type $(A_9,A_9)$ and then the index we seek is $2$.

It remains to treat the cases $(D_8,E_8)$ and $(A_7,E_7)$ where the index is $[R':R]=2$.
Suppose that $R$ is of type $(A_7,E_7)$. Recall from \Cref{table:root_mod2} that $\overline{R}$ is a non-degenerate quadratic space of rank $6$ contained in the quadratic space $R'\otimes \FF_2 \leq E_{10}\otimes\FF_2$, whose bilinear form has a $1$-dimensional radical. Its generator $v$ is invariant under $O(R'\otimes \FF_2)$ and hence under $O(E_{10}\otimes \FF_2)_{\overline{R},\{\overline{R'}\}}$. 
Since $\overline{R} \cup \{v\}$ generates $\overline{R'}$, $O(E_{10}\otimes \FF_2)_{\overline{R},\{\overline{R'}\}}$ is the pointwise stabilizer of $\overline{R'}$, whose order we already know from the case $(E_7,E_7)$ because $R' \cong E_7$.

For $R$ of type $(D_8,E_8)$ recall from \Cref{table:root_mod2} that $\overline{R}$ is a quadratic space of dimension $7$ with a one dimensional radical. It is a subspace of $R' \otimes \FF_2 \cong E_8 \otimes \FF_2$. 
Once again $O(E_{10}\otimes \FF_2)_{\overline{R}, \{\overline{R'}\}}$ is the pointwise stabilizer of $R'\otimes \FF_2$: to see this let $v$ be the generator of the radical of $\overline{R}$. Since $R'\otimes \FF_2$ is regular, we find $w \in R'\otimes \FF_2$ with $b(v,w)=1$. Any $c \in O(E_{10}\otimes \FF_2)_{\overline{R},\{\overline{R'}\}}$ must map $w$ to an element of $\overline{R}^\perp = \langle v,w \rangle$, i.e.
either to $w$ or to $v+w$. But $q(v+w)=b(v,w)+q(w) = 1+q(w) \neq q(w)$. Thus $w^c=w$, and therefore $c|_{R'\otimes \FF_2}=\id$.
\end{proof}

Recall from \Cref{shimada:irreducible} that the $O(E_{10})$-orbit of a root sublattice $R \leq O(E_{10})$ is determined by its type $(\tau(R),\tau(R'))$. 
We shall need a similar result for $O(E_{10})(2)$ orbits.

Before we can prove it, we need to prepare a lemma.
\begin{lemma}\label{lem:D8E8_stabilizer}
Let $B \subseteq \Delta(E_{10})$ be a configuration of roots of type $(D_8,E_8)$. Set $R=\langle B \rangle$.
Then 
\[O(E_{10}\otimes \FF_2)_{\{\overline{B}\},\{\overline{R'}\}}=O(E_{10}\otimes \FF_2)_{\overline{B}, \{\overline{R'}\}}.\]
\end{lemma}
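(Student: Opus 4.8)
The plan is to prove the stronger statement that every isometry of $E_{10}\otimes \FF_2$ stabilizing the \emph{set} $\overline{B}$ already fixes it pointwise, i.e.
$O(E_{10}\otimes \FF_2)_{\{\overline{B}\}}=O(E_{10}\otimes \FF_2)_{\overline{B}}$;
intersecting both sides with the setwise stabilizer of $\{\overline{R'}\}$ then yields the lemma (the inclusion $\supseteq$ being trivial, only $\subseteq$ needs an argument). Write $B=\{b_1,\dots,b_8\}$ with node $6$ trivalent and $b_7,b_8$ the two fork tips. First I would record that the eight images $\overline{b_i}$ are distinct: by \Cref{roots_mod2_injective} the reduction $\Delta^+(R)\to R'/2R'$ is injective, since $R'\cong E_8$ is even and hence has no $(-1)$-vectors, and $R'/2R'\cong\overline{R'}\hookrightarrow E_{10}\otimes\FF_2$ by primitivity of $R'$. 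Now any $g\in O(E_{10}\otimes \FF_2)_{\{\overline{B}\}}$ induces a permutation $\pi$ of $\{1,\dots,8\}$, and because $g$ preserves $b$ while $b(\overline{b_i},\overline{b_j})\equiv b_i.b_j\bmod 2$ equals $1$ exactly when $b_i,b_j$ are adjacent, $\pi$ is an automorphism of the $D_8$ diagram. As $\operatorname{Aut}(D_8)=\ZZ/2\ZZ$ is generated by the transposition $\tau$ of the fork tips $7,8$, it remains only to exclude $\pi=\tau$.

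For this I would pin down the unique linear relation among the $\overline{b_i}$. By \Cref{shimada:irreducible} all configurations of type $(D_8,E_8)$ are $O(E_{10})$-equivalent, so I may pass to the standard model $R=D_8\subseteq E_8=R'\subseteq E_{10}$ with $b_i=e_i-e_{i+1}$ for $1\le i\le 7$ and $b_8=e_7+e_8$. By \Cref{table:root_mod2} (see also \Cref{example:root_invariant}(2)) the space $\overline{R}=\langle\overline{B}\rangle$ has dimension $7$, so the kernel of $\FF_2^{8}\to\overline{R}$, $e_i\mapsto\overline{b_i}$, is one-dimensional; by the discussion in \Cref{sec:root_invariants} it coincides with the kernel of $R/2R\to R'/2R'$. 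A generator is the class of $e_1+\dots+e_8=2\cdot\tfrac12(e_1+\dots+e_8)\in 2E_8$, which is nonzero in $D_8/2D_8$. Expressing it in the basis $b_1,\dots,b_8$ gives coefficients $(1,2,3,4,5,6,3,4)$, whose odd entries sit in positions $\{1,3,5,7\}$, so the unique nonzero relation is
\[\overline{b_1}+\overline{b_3}+\overline{b_5}+\overline{b_7}=0.\]

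Finally, since $g$ is linear it carries relations to relations, so $\pi$ must stabilize the one-dimensional module of relations, hence the support $\{1,3,5,7\}$ of its unique nonzero element. This support meets the pair of fork tips $\{7,8\}$ in exactly one point, whereas $\tau$ interchanges them; thus $\tau(\{1,3,5,7\})\ne\{1,3,5,7\}$ and $\pi=\tau$ is impossible. Therefore $\pi=\mathrm{id}$ and $g$ fixes $\overline{B}$ pointwise. The only genuine computation here is the identification of the mod $2$ relation together with the observation that it breaks the fork symmetry — this is the crux; everything else is formal. In particular the hypothesis on $\{\overline{R'}\}$ plays no role in the argument and is merely carried along to match the statement.
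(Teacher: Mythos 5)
Your proof is correct, and it takes a genuinely different route from the paper's — in fact it establishes a stronger statement. Both arguments reduce to ruling out the order-two diagram automorphism (the flip of the two fork tips), and both ultimately exploit the same phenomenon, namely that this flip exchanges the two even overlattices of $D_8$; but the mechanics differ. The paper first uses that $\overline{R'}$ is a regular subspace (since $2\nmid\det E_8$) to split the setwise stabilizer of $\overline{R'}$ as $O(\overline{R'})\times O(\overline{R'}^{\perp})$, reducing the problem to isometries of $\overline{R'}$, and then excludes the flip because exactly one of the two dual vectors $\overline{d_1}^\vee,\overline{d_2}^\vee$ lies in $\overline{R'}$. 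You never restrict to $\overline{R'}$: you exhibit the unique nonzero linear relation $\overline{b_1}+\overline{b_3}+\overline{b_5}+\overline{b_7}=0$ (which is correct: $\tfrac{1}{2}(b_1+b_3+b_5+b_7)=\tfrac{1}{2}(1,-1,\dots,1,-1)\in E_8$, and your coefficient vector $(1,2,3,4,5,6,3,4)$ checks out), note that its support meets the fork $\{7,8\}$ in exactly one vertex, and conclude by linearity together with the injectivity from \Cref{roots_mod2_injective} that no isometry of the ambient space can induce the flip. This gives $O(E_{10}\otimes\FF_2)_{\{\overline{B}\}}=O(E_{10}\otimes\FF_2)_{\overline{B}}$ with no hypothesis on $\overline{R'}$ at all, which is strictly stronger than the lemma and still suffices for its use in \Cref{prop:B-mod2-orbit}. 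What the paper's route buys in exchange is coordinate-freeness: it needs neither a normalization to a standard model nor the explicit relation, and the decomposition along $\overline{R'}$ matches the setup of the neighboring arguments such as \Cref{lem:GY-transitive_on_ADE}. One step you should make explicit when passing to the standard model: \Cref{shimada:irreducible} gives transitivity of $O(E_{10})$ on root \emph{sublattices} of type $(D_8,E_8)$, and you additionally need the transitivity of $W(R)$ (extended by the identity on $R^\perp$) on fundamental root systems of $R$ to normalize the configuration $B$ itself, not just the lattice it spans.
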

\begin{proof}
Since $2 \nmid \det E_8$, $\overline{R'}$ is a regular subspace of $E_{10}\otimes \FF_2$. Hence, we have 
\[O(E_{10}\otimes \FF_2)_{\left\{\overline{R'}\right\}}=O(\overline{R'})\times O(V)\] 
with $V=\overline{R'}^\perp$. 
Thus, it suffices to prove that $O(\overline{R'})_{\overline{B}}=O(\overline{R'})_{\{\overline{B}\}}$. 
The only nontrivial graph automorphism of $\overline{B}$ is the one that swaps the two short arms $\overline{d_1},\overline{d_2}$ of the $D_8$ diagram. 
\begin{center}
 \begin{tikzpicture}[
vertex/.style = {circle, fill, inner sep=1.5pt, outer sep=0pt},
every edge quotes/.style = {auto=left, sloped, font=\scriptsize, inner sep=1pt}]
\node[vertex] (1) at (0,0)     [label=below: $d_1$] {};
\node[vertex] (2) at (1,1)     [label=above: $d_2$] {};
\node[vertex] (3) at (1,0)     [label=below: $d_3$] {};
\node[vertex] (4) at (2,0)     [label=below: $d_4$] {};
\node[vertex] (5) at (3,0)     [label=below: $d_5$] {};
\node[vertex] (6) at (4,0)     [label=below: $d_6$] {};
\node[vertex] (7) at (5,0)     [label=below: $d_7$] {};
\node[vertex] (8) at (6,0)     [label=below: $d_8$] {};
\Edges(2,3,4,5,6,7,8)
\Edges(1,3)
\end{tikzpicture}
\end{center}
Let $d_1^\vee, \dots, d_8^\vee \in R'\otimes \QQ$ of $B=\{d_1,\ldots,d_8\}$.
Then $R'$ is generated by $R$ and either $d_1^\vee$ or $d_2^\vee$ (but not both). In particular, only one of $\overline{d_1}^\vee$ or $\overline{d_2}^\vee$ lies in $\overline{R'}$. Therefore, the non-trivial graph isomorphism of $\overline{B}$ does not extend to an isometry of $\overline{R'}$.
\end{proof}
\begin{proposition}\label{prop:B-mod2-orbit}
    Let $B_1,B_2 \in \Delta^n(E_{10})$ for some $n$. 
    Let $R_i'$ be the primitive closure of $R_i:= \langle B_i \rangle$ in $E_{10}$ and suppose that $R_1 \not \cong A_9$.
    
    Then there exists $g \in O(E_{10})(2)$ with $B_1^g = B_2$ if and only if $R_1$ and $R_2$ have the same type, $\overline{B}_1=\overline{B}_2$ and $\overline{R_1'}=\overline{R_2'}$.
\end{proposition}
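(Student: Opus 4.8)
The forward implication is the easy one, and I would dispatch it first. If $g\in O(E_{10})(2)$ satisfies $B_1^g=B_2$, then $g$ restricts to an isometry $R_1\to R_2$ carrying the primitive closure $R_1'$ onto $R_2'$, so $R_1$ and $R_2$ have the same type; and since $\overline g=\id$ on $E_{10}\otimes\FF_2$ we get $\overline{B_2}=\overline{B_1^g}=\overline{B_1}$ and $\overline{R_2'}=\overline{(R_1')^g}=\overline{R_1'}$ at once.

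For the converse the plan is to first produce \emph{some} $h\in O(E_{10})$ with $B_1^h=B_2$, and then to correct $h$ by an element of the stabilizer of $B_2$ so as to make it level $2$. Since the types agree, the theorem of Shimada \cite{shimada:ADE} gives $h_1\in O(E_{10})$ with $R_1^{h_1}=R_2$. Then $B_1^{h_1}$ and $B_2$ are two systems of simple roots of the \emph{same} root lattice $R_2$, and the Weyl group $W(R_2)$ (realized inside $O(E_{10})$ by the reflections in the roots of $R_2$) acts simply transitively on such systems; choosing $w\in W(R_2)$ with $(B_1^{h_1})^w=B_2$ and setting $h:=h_1w$, I obtain $h\in O(E_{10})$ with $B_1^h=B_2$.

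Reducing modulo $2$ and invoking the hypotheses $\overline{B_1}=\overline{B_2}$ and $\overline{R_1'}=\overline{R_2'}$, the class $\overline h$ maps $\overline{B_2}$ to itself and $\overline{R_2'}$ to itself, so $\overline h\in O(E_{10}\otimes\FF_2)_{\{\overline{B_2}\},\{\overline{R_2'}\}}$. The whole statement then reduces to surjectivity of the reduction homomorphism
\[\rho\colon O(E_{10})_{\{B_2\}}\longrightarrow O(E_{10}\otimes\FF_2)_{\{\overline{B_2}\},\{\overline{R_2'}\}}.\]
Granting this, I would pick $c\in O(E_{10})_{\{B_2\}}$ with $\overline c=\overline h^{-1}$ and put $g:=hc$; then $B_1^g=B_2^c=B_2$ while $\overline g=\overline h\,\overline h^{-1}=\id$, i.e.\ $g\in O(E_{10})(2)$, as required.

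To prove $\rho$ surjective I would separate the diagram-automorphism part from the pointwise part. By \Cref{roots_mod2_injective} the reduction $B_2\to\overline{B_2}$ is a bijection identifying the Dynkin diagram of $R_2$ with the graph $\overline{B_2}$, so $O(E_{10})_{\{B_2\}}$ and $O(E_{10}\otimes\FF_2)_{\{\overline{B_2}\},\{\overline{R_2'}\}}$ surject onto subgroups $P$ and $\overline P$ of $\mathrm{Aut}$ of that diagram, with kernels the pointwise stabilizers $O(E_{10})_{R_2}$ and $O(E_{10}\otimes\FF_2)_{\overline{B_2},\{\overline{R_2'}\}}$; on these kernels $\rho$ is precisely the map shown surjective in \Cref{lem:GY-transitive_on_ADE} (and this is exactly where the hypothesis $R_1\not\cong A_9$ enters). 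A diagram chase then reduces the surjectivity of $\rho$ to the equality $P=\overline P$, i.e.\ to the statement that every diagram automorphism realizable mod $2$ while preserving $\overline{R_2'}$ is already realizable over $\ZZ$ while preserving $R_2$. This equality is the main obstacle, and I would verify it on the short list of irreducible types. When $R_2=R_2'$ is primitively embedded, the surjectivity of $O(R_2^\perp)\to O(D_{R_2^\perp})$ established inside \Cref{lem:GY-transitive_on_ADE} lets every isometry of $R_2$ extend to $E_{10}$, so $P$ is the full automorphism group of the diagram and hence $P\supseteq\overline P$. When $R_2\neq R_2'$ the only irreducible possibilities are $(A_7,E_7)$, $(A_8,E_8)$ and $(D_8,E_8)$: for the first two the diagram symmetry extends to $E_{10}$ by \Cref{shimada:irreducible}, whereas for $(D_8,E_8)$ it does not, and correspondingly \Cref{lem:D8E8_stabilizer} shows that the mod-$2$ setwise and pointwise stabilizers coincide, forcing $\overline P$ to be trivial as well. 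Thus $P=\overline P$ in every non-excluded case, $\rho$ is surjective, and the proof is complete.
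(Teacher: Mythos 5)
Your proof is correct and takes essentially the same route as the paper's: an integral isometry with $B_1^h=B_2$ obtained from Shimada's classification plus Weyl-group transitivity, then corrected modulo $2$ by splitting off the diagram-automorphism part (via \Cref{shimada:irreducible} for the extendable symmetries and \Cref{lem:D8E8_stabilizer} for $(D_8,E_8)$) from the pointwise part (via \Cref{lem:GY-transitive_on_ADE}, which is exactly where $R_1\not\cong A_9$ enters). The differences are only presentational: the paper corrects the isometry in place rather than phrasing the step as surjectivity of the stabilizer-reduction map $\rho$ with a short-exact-sequence chase, and your explicit appeal to the surjectivity of $O(R_2^\perp)\to O(D_{R_2^\perp})$ in the primitive case spells out a detail the paper leaves to its citation of \Cref{shimada:irreducible}.
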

\begin{proof}
The necessity of the $3$ conditions is clear.

We turn to the sufficiency. By assumption $R_1$ and $R_2$ have the same type. Hence there is $g \in O(E_{10})$ with $B_1^g = B_2$.
Since $\overline{B}_1=\overline{B}_2$, and $\overline{R_1'}=\overline{R_2'}$ we have $\overline{g} \in O(E_{10}\otimes \FF_2)_{\{\overline{B_1}\},\{\overline{R_1'}\}}$.
If $R_1$ is primitive or of type $(A_7,E_7),(A_8,E_8)$, its diagram symmetry extends to an isometry of $E_{10}$, see \Cref{shimada:irreducible}. Surely it preserves $\overline{R_1'}$. Hence, after composing $g$ with this isometry, we may assume that $g$ restricts to the identity on $\overline{B}_1$ and preserves $\overline{R_1'}$.
Otherwise, $R_1\cong D_8$ is imprimitive of type $(D_8,E_8)$.
By \Cref{lem:D8E8_stabilizer}
$O(E_{10}\otimes \FF_2)_{\{\overline{B_1},\overline{R_1'}\}}=O(E_{10}\otimes \FF_2)_{\overline{B_1}, \{\overline{R_1'}\}}$.

In any case we may assume that 
\[g\in O(E_{10})_{\overline{R_1},\{\overline{R_1'}\}}.\]
By \Cref{lem:GY-transitive_on_ADE} (with the exception $A_9$) we find $h \in O(E_{10})$ acting trivially on $B_1$ and preserving $R'_1 \otimes \FF_2$ with $\overline{h}=\overline{g}^{-1}$. Then $hg \in O(E_{10})(2)$ and $B_1^{hg}=B_2$.
\end{proof}

\section{Maximal $ADE$ configurations.}\label{sec:maximal-ADE}
Now, we turn to classifying maximal connected configurations in $\R(Y,\sigma)$.

\begin{lemma}\label{lem:excludeA7E7}
Let $B \in \Delta^7(Y,\sigma)$ be of type $(A_7,E_7)$, then $\sigma$ 
is of type $(A_7, \ZZ/2\ZZ)$, $(D_8, \ZZ/2\ZZ)$, $(E_8, 0)$ or $(D_9, 0)$.
\end{lemma}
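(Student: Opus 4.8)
The plan is to combine a purely combinatorial restriction on the graph $\sigma$ with the mod $2$ bookkeeping of \Cref{table:root_mod2}, and then to convert the integral hypothesis ``primitive closure $E_7$'' into a root count. First I would record the shape of $\overline{B}$. Since $B\in\Delta^7(Y,\sigma)$ is a fundamental system of $\langle B\rangle\cong A_7$, distinct roots satisfy $b_i.b_j\in\{0,1\}$; if one had $\overline{b_i}=\overline{b_j}$, then $(b_i-b_j)/2\in S_Y$ would have square $(-4-2b_i.b_j)/4$, equal to $-1$ or $-3/2$, which is impossible in the even lattice $S_Y$. Hence the reduction is injective on $B$, and since $\overline{b_i}.\overline{b_j}\equiv b_i.b_j\bmod 2$ it preserves the edges, so $\overline{B}$ is an induced $A_7$-subgraph of $\sigma$. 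As $B$ is of type $(A_7,E_7)$, \Cref{table:root_mod2} gives that its span $\langle\overline{B}\rangle=\overline{\langle B\rangle}$ is the regular $6$-dimensional space $U_2^3$.

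Next I would use that $\sigma$ is a connected ADE-graph of rank at most $9$ (\Cref{prop:connected_comp_delta}). Inspecting the longest induced path of each such graph (the arms have lengths $1,2,2$ for $E_6$, $1,2,3$ for $E_7$, $1,2,4$ for $E_8$, and $1,1,n-3$ for $D_n$) shows that an induced $A_7$ can occur only for the types $A_7,A_8,A_9,D_8,D_9,E_8$; all smaller $A$'s and $D$'s, together with $E_6$ and $E_7$, are excluded at once. It therefore remains to rule out the primitively closed types $(A_7,0),(A_8,0),(A_9,0),(D_8,0)$ and to promote $A_7,D_8$ to their $\ZZ/2\ZZ$-variants.

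The key (and most delicate) step is to transfer the hypothesis on $B$ to $R_\sigma$. Using \Cref{lem:BofTypeSigmaexists} I would fix a model $R_\sigma\subseteq S_Y$ with $\overline{\Delta}(R_\sigma)=\sigma$; by the graph isomorphism $\Delta^+(R_\sigma)\xrightarrow{\sim}\sigma$ of \Cref{prop:root_systems_mod2}, the subgraph $\overline{B}$ lifts to an $A_7$-configuration $\tilde B\subseteq\Delta^+(R_\sigma)$ with $\overline{\tilde B}=\overline{B}$ and $\langle\tilde B\rangle\cong A_7$. Then $\overline{\langle\tilde B\rangle}=\langle\overline{B}\rangle=U_2^3$ is $6$-dimensional. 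By \Cref{shimada:irreducible} the only possibilities for $\langle\tilde B\rangle\subseteq E_{10}$ are the types $(A_7,A_7)$ and $(A_7,E_7)$, whose reductions are $U_2^3\oplus[0]$ (dimension $7$) and $U_2^3$ (dimension $6$) respectively; the dimension thus forces $\langle\tilde B\rangle$ to be of type $(A_7,E_7)$. Consequently its primitive closure, which is $\cong E_7$, is contained in the primitive closure $R_\sigma'$ of $R_\sigma$, so $R_\sigma'$ contains all $126$ roots of $E_7$.

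Finally I would conclude by a root count. For the primitively closed candidates one has $\#\Delta(A_7)=56$, $\#\Delta(A_8)=72$, $\#\Delta(A_9)=90$ and $\#\Delta(D_8)=112$, all strictly below $126=\#\Delta(E_7)$; hence none of $(A_7,0),(A_8,0),(A_9,0),(D_8,0)$ can have $R_\sigma'\supseteq E_7$, and these are excluded. The surviving possibilities are exactly $(A_7,\ZZ/2\ZZ)$, $(D_8,\ZZ/2\ZZ)$, $(D_9,0)$ and $(E_8,0)$, whose closures $E_7,E_8,D_9,E_8$ all carry at least $126$ roots, as required. I expect the genuine obstacle to be the third step: making precise that $\overline{B}\subseteq\sigma$ forces an $A_7$-sublattice of $R_\sigma$ of type $(A_7,E_7)$ rather than $(A_7,A_7)$, for which the dimension count read off from \Cref{table:root_mod2} is the essential input.
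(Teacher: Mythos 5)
Your middle step --- lifting $\overline{B}$ into a model $R_\sigma\subseteq S_Y$ and using $\dim\langle\overline{B}\rangle=6$ to force the lift to have primitive closure $E_7$, whence $E_7\subseteq R_\sigma'$ --- is exactly the paper's own argument: there $S:=\langle r\in\Delta(R)\mid\overline{r}\in\overline{B}\rangle$ is asserted to be of type $(A_7,E_7)$, and your dimension count is precisely the missing justification (modulo the small point that the lifted positive roots have pairwise products $\pm1$, so you must flip signs along the $A_7$ tree before claiming $\langle\tilde{B}\rangle\cong A_7$). The two outer steps, however, each contain a genuine gap. The serious one is your combinatorial filter: a connected component $\sigma$ of $\DeltabarY$ is \emph{not} a Coxeter--Dynkin diagram. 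By \Cref{prop:root_systems_mod2} it is isomorphic as a graph to the \emph{full} positive root system $\Delta^+(R_\sigma)$, one vertex for every positive root, so ``arms'' and longest induced paths of the diagram are meaningless here. An induced $A_7$-subgraph of $\sigma$ corresponds to an $A_7$ sub-root-lattice of $R_\sigma$, and $E_7$ \emph{does} contain one: the index-$2$ sublattice obtained from $\widetilde{E}_7$ by deleting the branch node. So your exclusion of $E_7$ collapses, and it cannot be repaired by a better graph argument: taking $R\cong E_7$ primitively embedded with $\overline{\Delta}(R)=\sigma$ (\Cref{lem:existsRwithDeltaRequalSigma}; all roots of $R$ are splitting roots by \Cref{rmk:splitting}) and a fundamental system of the subsystem $A_7\subseteq R$, one obtains $B\in\Delta^7(Y,\sigma)$ whose primitive closure is $R\cong E_7$, i.e.\ of type $(A_7,E_7)$, while $\sigma$ has type $(E_7,0)$. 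The paper itself relies on exactly such configurations in \Cref{lem:Bmaximal_classification}(7), \Cref{lem:GYorbitA7E7} and \Cref{lem:AutOrbitE7E7}; correspondingly, the paper's own proof of the present lemma concludes only that $E_7$ embeds into $R'$, which does not rule out $(E_7,0)$ --- in the application (proof of \Cref{lem:maximal-characterization}(3)) the rank-$7$ types are removed by the extra hypothesis $7=i<\rk\sigma$, not by this argument.

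The second gap is that your root count does not dispose of $(A_8,0)$. That type records only that $R_\sigma\otimes\FF_2\to S_Y\otimes\FF_2$ is injective, i.e.\ that $[R_\sigma':R_\sigma]$ is \emph{odd}; it does not say $R_\sigma'=R_\sigma$. Since $[E_8:A_8]=3$, the case $R_\sigma\cong A_8$ with $R_\sigma'\cong E_8$ also has type $(A_8,0)$, and there $\#\Delta(R_\sigma')=240\geq126$, so counting roots proves nothing. (For $A_7$, $A_9$ and $D_8$ your identification of ``kernel $0$'' with ``primitively closed'' is correct, because their unique proper even overlattices have index $2$ or do not exist.) To close this case you must use where your $A_7$ actually sits: from $\langle\tilde{B}\rangle\subseteq R_\sigma\cong A_8$ the primitive closure $T$ of $\langle\tilde{B}\rangle$ satisfies $T/\langle\tilde{B}\rangle\hookrightarrow R_\sigma'/R_\sigma\cong\ZZ/3\ZZ$, while the only proper even overlattice of $A_7$ is $E_7$, of index $2$; hence $T\cong A_7$, contradicting the type $(A_7,E_7)$ you established in the middle step. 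Incidentally, the count is too coarse in general: $\#\Delta(D_9)=144\geq126$, yet $E_7$ is not a sub-root-lattice of $D_9$, since every sub-root-system of $D_n$ has components of types $A$ and $D$ only; containment of root lattices, not cardinality of root systems, is the criterion one should apply throughout.
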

\begin{proof}
By \Cref{lem:existsRwithDeltaRequalSigma}, we find $R \subseteq S_Y$ with $\overline{\Delta}(R)=\sigma$ and set 
$S:=\langle r \in \Delta(R) \mid \overline{r} \in B\rangle$. Then $S$ is a root lattice of type $(A_7,E_7)$ contained in $R$. In particular, $E_7$ embeds into $R'$, which implies that $R'$ is one of $E_7,E_8$ or $D_9$.  
\end{proof}

\begin{lemma}\label{lem:maximal-characterization}
Let $\sigma$ be a connected component of $\DeltabarY$ and $B \in \R^i(Y,\sigma)$ connected. Let $R$ be the span of $B$. 
Then $B$ is maximal if and only if one of the following holds:
\begin{enumerate}
    \item $i = \rk \sigma$ ;
    \item $B$ is of type $(A_8,E_8)$ or $(D_8,E_8)$;
    \item $B$ is of type $(A_7,E_7)$ and $\overline{R'}\cap \overline{R'}^\perp\subseteq \sigma^\perp$. 
\end{enumerate}
\end{lemma}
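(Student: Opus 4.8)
The plan is to reduce the question to extendability inside $\Delta(Y)$ and then to run a rigidity argument modulo $2$. By \Cref{lem:extend}, $B$ is maximal in $\R(Y)$ if and only if it is maximal in $\Delta(Y)$, so it suffices to decide when there is a splitting root $\delta\in\Delta(Y)$ making $B\cup\{\delta\}\in\Delta^{i+1}(Y)$ connected. First I would record the basic \emph{rigidity}: for such a $\delta$ the span $\langle B\cup\{\delta\}\rangle$ is negative definite, and since $B$ is connected and $\langle B\cup\{\delta\}\rangle$ has rank $i+1$ one gets $\delta\notin R\otimes\QQ$, $\overline{\delta}\in\sigma$, and $\delta.b=\pm1$ for some $b\in B$. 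The crucial observation is that $\overline{\delta}$ cannot be the class of a root of $R'$: if $\overline{\delta}=\overline{\rho}$ with $\rho\in R'$ a root, then $\rho$ lies together with $\delta$ in the negative definite lattice $\langle B\cup\{\delta\}\rangle'$, the congruence $\delta\equiv\rho\bmod 2$ forces $\delta.\rho$ even, and Cauchy--Schwarz together with the absence of $(-1)$-vectors in the even lattice $S_Y$ force $\delta=\pm\rho\in R'\otimes\QQ=R\otimes\QQ$, contradicting $\delta\notin R\otimes\QQ$.

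For the implication $(1)\vee(2)\vee(3)\Rightarrow$ maximal I would treat the three cases with this rigidity. If $i=\rk\sigma$ (case (1)), then by \Cref{prop:root_systems_mod2} and \Cref{roots_mod2_injective} one has $\sigma=\overline{\Delta}(R)$, so every element of $\sigma$ is the class of a root of $R\subseteq R'$, and the rigidity forbids any extending $\delta$. The types $(A_8,E_8)$ and $(D_8,E_8)$ of case (2) have $R'=E_8$ of rank $8$, hence $\rk\sigma=8=i$, so they are already covered by (1). For case (3), $B$ is of type $(A_7,E_7)$; a connected rank-$8$ configuration extending an $A_7$ is of type $A_8$, $D_8$ or $E_8$ (the middle attachment gives the affine $\widetilde{E}_7$ and is excluded by negative definiteness), and its primitive closure $R'''$ contains $R'=E_7$ and hence equals $E_8$. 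Writing $v$ for the generator of the radical $\overline{R'}\cap\overline{R'}^\perp$ of $\overline{E_7}$, so that $q(v)=1$ by \Cref{table:root_mod2}, the hypothesis $v\in\sigma^\perp$ gives $v\perp\overline{B'}$ and hence $v\perp\overline{R''}$ for $R''=\langle B'\rangle$, while $v\in\overline{E_7}\subseteq\overline{E_8}=\overline{R'''}$. For $R''$ of type $A_8$ or $E_8$ one has $\overline{R''}=\overline{E_8}=U_2^4$ regular, forcing $v=0$; for $R''\cong D_8$ the radical $w$ of $\overline{D_8}$ satisfies $q(w)=0$ and spans $(\overline{D_8})^\perp\cap\overline{E_8}$, so $v\perp\overline{D_8}$ forces $v=w$, contradicting $q(v)=1\neq 0=q(w)$. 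Thus no extension exists and $B$ is maximal.

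For the converse I would argue contrapositively: assuming $i<\rk\sigma$ and, in the type $(A_7,E_7)$ case, that $v\notin\sigma^\perp$, I build an extension by transporting one from a model. Using \Cref{lem:existsRwithDeltaRequalSigma} I fix a model $R_\sigma\subseteq S_Y$ with $\overline{\Delta}(R_\sigma)=\sigma$, inside which every root is a splitting root and the graph isomorphism $\Delta^+(R_\sigma)\cong\sigma$ of \Cref{prop:root_systems_mod2} lets me pick a connected rank-$i$ subconfiguration $B_0$ of the same $ADE$-type as $B$, with $\overline{B_0}=\overline{B}$ and $\overline{\langle B_0\rangle'}=\overline{R'}$; since $i<\rk R_\sigma$ and $R_\sigma$ is irreducible, $B_0$ then extends inside $\Delta^+(R_\sigma)$ to a connected rank-$(i+1)$ configuration and so is not maximal. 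As $i\le 8$ we have $R\not\cong A_9$, so \Cref{prop:B-mod2-orbit} yields $B_0=B^{g}$ with $g\in O(E_{10})(2)=G_0\subseteq G_Y$; since $G_Y$ preserves $\Delta(Y)$, $B$ is not maximal in $\Delta(Y)$, hence not maximal in $\R(Y)$ by \Cref{lem:extend}. The main obstacle is exactly this step: producing, inside the model, a rank-$i$ subconfiguration that simultaneously realizes the type, the class $\overline{B}$ and the \emph{mod-$2$ primitive-closure invariant} $\overline{R'}$ while still admitting an extension. This is a finite verification over the short list of \Cref{shimada:irreducible}, and the condition $v\in\sigma^\perp$ of (3) is precisely the obstruction that makes it fail for type $(A_7,E_7)$, so that $v\notin\sigma^\perp$ is what permits the matching model (with a $D_8$-extension) to be chosen.
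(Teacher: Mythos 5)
Your reduction to $\Delta(Y)$ via \Cref{lem:extend}, your rigidity observation, and your proof that condition (3) is sufficient are sound; in fact your treatment of (3), which disposes of all three possible extension types $A_8$, $D_8$, $E_8$ directly, is arguably cleaner than the paper's (which first pins the extension down to type $(D_8,E_8)$ and then contradicts the regularity of $\overline{E_8}$). But the rest of the proposal has genuine gaps. The most serious one is case (2): your claim that type $(A_8,E_8)$ or $(D_8,E_8)$ forces $\rk\sigma=8=i$ is false, and this is exactly where (2) has content. The component $\sigma$ containing $\overline{B}$ is not determined by $B$: an $(A_8,E_8)$ configuration can have $\overline{B}$ inside a component of rank $9$, of type $(A_9,0)$ or $(D_9,0)$ --- this happens for the $A_8$-subconfigurations of an $\widetilde{A}_8$ fiber (cf.\ \Cref{lem:A8E8invol}(5)) and is precisely why \Cref{lem:Bmaximal_classification}(9),(10) list $(A_8,E_8)$ as a maximal type there. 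For these you give no argument at all; the paper's is that a connected rank-$9$ extension would span $A_9$ or $D_9$, which is primitive in $S_Y$ and hence would have to contain $R'\cong E_8$, which is impossible. Case (1) is also flawed as written: the identity $\sigma=\overline{\Delta}(R)$ misapplies \Cref{prop:root_systems_mod2}, which concerns the Nikulin root lattice $R_\sigma$, not the span of an arbitrary full-rank configuration. It fails, e.g., for $B$ of type $(D_8,E_8)$ inside $\sigma$ of type $(E_8,0)$: there $\overline{\Delta}(R)$ has $56$ elements while $\sigma$ has $120$, and since $\overline{R'}\neq\langle\sigma\rangle$ for such configurations (implicit in \Cref{lem:E8E8_max_configurations}) one gets $\overline{R'}\cap\langle\sigma\rangle=\overline{R}$, so most elements of $\sigma$ are \emph{not} classes of roots of $R'$ and your rigidity cannot exclude an extending $\delta$ with class among them. (Case (1) does have a one-line correct proof: an extension would give a connected ADE graph on $i+1$ vertices inside $\sigma\cong\Delta^+(R_\sigma)$, i.e.\ $i+1$ linearly independent roots in a rank-$i$ root lattice --- but that is not the argument you gave.)

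The converse for type $(A_7,E_7)$ with $v\notin\sigma^\perp$ is also not the ``finite verification over the list of \Cref{shimada:irreducible}'' you describe. By \Cref{roots_mod2_injective} the preimage of $\overline{B}$ in $\Delta^+(R_\sigma)$ is the \emph{unique} subconfiguration of your fixed model with $\overline{B_0}=\overline{B}$, and it realizes one specific radical line of $\overline{\langle B_0\rangle'}$; nothing forces that line to equal $\langle v\rangle$. For a fixed $\overline{B}$ there are several lines $\langle v\rangle\not\subseteq\sigma^\perp$ (this multiplicity is exactly what \Cref{lem:GYorbitA7E7,lem:D8E8_max_configurations} are counting), and whenever the given $B$ realizes a line different from the model's, the hypothesis $\overline{R_1'}=\overline{R_2'}$ of \Cref{prop:B-mod2-orbit} fails and your transport collapses. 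This is where the paper has to do real work: starting from the given generator $k$ of $\overline{R'}\cap\overline{R'}^\perp$ it chooses an isotropic $x\in\langle\sigma\rangle\cap\overline{R}^\perp$ with $b(x,k)=1$, forms the regular space $V=\overline{R}\oplus\langle x,k\rangle\cong\overline{E_8}$, realizes $V$ as $\overline{R_1}$ for an honest sublattice $E_8\cong R_1\leq S_Y$, and only then extracts inside $R_1$ a model of type $(A_7,E_7)$ whose primitive closure reduces to the prescribed $\overline{R'}$. Tailoring the model to the given $v$, rather than hoping the fixed model $R_\sigma$ contains a matching one, is the missing content of your sketch.
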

\begin{proof}
(1) If $i = \rk \sigma$, then $B$ is clearly maximal.
Suppose that $i < \rk \sigma$ and $B$ is not of type $(A_7,E_7)$ $(A_8,E_8)$ or $(D_8,E_8)$. We show that $B$ is not maximal in $\Delta(Y)$.
To this end we apply \Cref{lem:existsRwithDeltaRequalSigma} to obtain a negative definite root sublattice $R_1 \subseteq S_Y$ with $\overline{\Delta}(R_1)=\sigma$. 
Let $\Delta^+(R_1) \subseteq \Delta^+(Y)$ be the positive root system of $R_1$ and recall from \Cref{roots_mod2_injective} that $\Delta^+(R_1) \to \overline{\Delta}(R_1)=\sigma$ is bijective.
Set $B_2 = \{x\in \Delta^+(R_1) \mid \overline{x} \in \overline{B} \} \subseteq R_1$. Then $B \to \overline{B} \to B_2$ is an isomorphism of graphs, hence its linear extension $R=\langle B \rangle \to R_2:
= \langle x \in \Delta(R_1) \mid \overline{x} \in \overline{B}\rangle$ is an isomorphism of lattices. Since $\overline{\Delta}(R_2)=\overline{\Delta}(R)$, $R$ and $R_2$ have the same type,
unless possibly if $R$ is of type $(A_8,A_8)$ and $R_2$ of type $(A_8,E_8)$. However, in this case $\sigma$ is of rank $9$ and therefore of type $(A_9, 0)$ or $(D_9, 0)$, and so $R_1$ is primitive in $S_Y$. Now, $E_8\cong R_2' \subseteq R_1$ is absurd because $R_1$ is irreducible.

By assumption $R=R'$ (\Cref{shimada:irreducible}) and so $\overline{R'}=\overline{R}=\overline{R_2}=\overline{R_2'}$.
By \Cref{prop:B-mod2-orbit} there is a $g \in O(S_Y)(2)$ with
\[\langle B \rangle  = R_2^g\subseteq R_1^g.\] 
Now, the elements of $B$ are indecomposable with respect to $\Delta^+(Y)\supseteq \Delta^+(R_1^g)$ and therefore part of the fundamental root system $F$ of $\Delta^+(R_1^g)$.
Since $B \subset F \in \Delta^{\rk \sigma}(Y)$ (and $i< \rk \sigma)$, $B$ is not maximal.

(2) Let $i< \rk \sigma$ and $B$ be of type $(A_8,E_8)$ or $(D_8,E_8)$, then $\sigma$ is of rank $9$ and therefore $\sigma$ of type $(A_9, 0)$ or $(D_9, 0)$. 
Suppose that $B_1$ extends $B$. Then $B_1$ is of rank $9$ and spans an $A_9$ or $D_9$ lattice, which must be primitive in $S_Y$ and therefore contain $R'$. However, $E_8$ is not a sublattice of $A_9$ and $D_9$, a contradiction. Thus, $B$ is maximal. 

(3) Let $i < \rk \sigma$ and $B$ be of type $(A_7,E_7)$. 
Suppose that $B_1$ extends $B$ and denote  $R_1=\langle B_1\rangle$. 
If $R \otimes \FF_2 \to R_1 \otimes \FF_2$ has a kernel, then $R' \subseteq R_1$ and therefore $\overline{\Delta}(R') \subseteq \overline{\Delta}(R_1)\subseteq \sigma$, which is absurd:
indeed, in this case $\Delta^+(R')$ would consist entirely of splitting roots. Let $F$ be a fundamental root system. Then some element of $B$ would be a sum of at least two elements of $F$, i.e. reducible.

Thus the map is injective. But
\[R \otimes \FF_2 \to R_1 \otimes \FF_2 \to E_{10}\otimes \FF_2\]
has a one dimensional kernel. This shows that the index of $R_1$ in $R_1'$ is even. The only possibility is that $R_1$ is of type $(D_8,E_8)$.
Set 
\[K=\overline{R'}\cap \overline{R'}^\perp\]
and note that 
$\overline{R'}=\overline{R}\oplus K$.
We have
\begin{equation}\label{eqn:max-char}
\overline{R_1}+K \subseteq \overline{R_1'}
\end{equation}
and the sum is direct (but possibly not orthogonal): otherwise, the line $K$ is contained in $\overline{R_1}$. Then $\overline{R'}=\overline{R}\oplus K \subseteq \overline{R_1}$ and both sides are of dimension $7$, i.e equal, but they are not isometric, because the left hand side is half-regular and the right hand side not. 
Since both sides of \cref{eqn:max-char} are of dimension $7+1=8$, they are equal.

Note that if $0 \neq \overline{R'}\cap \overline{R'}^\perp$ is contained in $\sigma^\perp$, then the sum in \cref{eqn:max-char} is orthogonal because $\overline{R}\subseteq \langle \sigma \rangle$. Hence
$\overline{R_1'}\cong \overline{E_8}$ is degenerate, which is a contradiction. Therefore, $\overline{R'}\cap\overline{R'}^\perp \not\subseteq \sigma^\perp$.

Let $B$ be of type $(A_7,E_7)$ and $K \not\subseteq \sigma^\perp$. We show that $B$ is not maximal.

We claim that there is an $x \in \langle \sigma \rangle\cap \overline{R}^\perp$
which is not orthogonal to $K$. 
Indeed, because $\overline{R}$ is non-degenerate, we can write
\[\langle \sigma \rangle=\overline{R} \oplus (\langle \sigma \rangle\cap \overline{R}^\perp).\]
We can assume moreover that $x$ is isotropic:
By \Cref{lem:excludeA7E7}, $\sigma$ is of type $(D_8, \ZZ/2\ZZ),(E_8, 0)$ or $(D_9, 0)$, hence 
$\langle \sigma \rangle$ is isometric to $U_2^3\oplus [0], U_2^4, U_2^4\oplus [0]$, see \Cref{table:root_mod2}. Since $\overline{R}\cong U_2^3$, $\langle \sigma \rangle\cap \overline{R}^\perp\cong [0],U_2,U_2\oplus [0]$.
Hence $\langle \sigma \rangle\cap \overline{R}^\perp$ has a basis consisting of isotropic vectors. Then $x$ can be chosen among this basis. 

Since $\overline{R'}\not\subseteq \langle \sigma \rangle$, $x\not\in \overline{R'}$. 
Let $k$ be the generator of $K$. The space 
\[V:=\langle x\rangle + \overline{R'}=\overline{R}\oplus \langle x, k\rangle\] 
is of dimension $8$ and non-degenerate
because $b(x,k)=1$.
$V$ is non-degenerate and in fact isomorphic to $E_8\otimes \FF_2$, i.e. has Witt index $4$ because $\overline{R}\oplus \langle x\rangle\cong 3U\oplus z$.

Hence there is $E_8 \cong R_1\leq S_Y$ with 
\[\overline{R_1} = V.\]
Note that $\overline{\Delta}(R_1)$ is the set of vectors $v \in V$ with $q(v)=1$. 
In particular, $\overline{\Delta}(R)\subseteq \overline{\Delta}(R_1)$.
Then we can define the root lattice
\[R_2=\{ r \in R_1 | \overline{r} \in \overline{R}\}\cong R,\]
which is of type $(A_7,E_7)$ because $\overline{R_2}=\overline{R}$ is of dimension $6$. 
We have
\[R_2' = \langle r \in \Delta(R_1) | \overline{r} \in \overline{R'} \rangle,\]
so that $\overline{R'}=\overline{R'_2}$ holds. As before we find $g \in O(S_Y)(2)$ with $R_2^g=R$. The rest of the argument is as in (1).
\end{proof}

\begin{proposition}\label{lem:Bmaximal_classification}
Let $\sigma$ be a connected component of $\overline{\Delta(Y)}$ and $B \in \R^i(Y,\sigma)$ be maximal.
\begin{enumerate}
\item If $\sigma$ is of type $(A_n, 0)$ $(n\leq 7)$, then $B$ is of type $(A_n,A_n)$. 
\item If $\sigma$ is of type $(D_n, 0)$ $(n\leq 8)$, then $B$ is of type $(D_n,D_n)$.
\item If $\sigma$ is of type $(E_6, 0)$, then $B$ is of type $(E_6,E_6)$. 
\item If $\sigma$ is of type $(A_7, \ZZ/2\ZZ)$, then $B$ is of type $(A_7,E_7)$.
\item If $\sigma$ is of type $(D_8, \ZZ/2\ZZ)$, then $B$ is of type $(A_7,E_7)$ or $(D_8,E_8)$.
\item If $\sigma$ is of type $(A_8, 0)$, then $B$ is of type $(A_8,A_8)$ or $(A_8,E_8)$.
\item If $\sigma$ is of type $(E_7, 0)$, then $B$ is of type $(A_7,E_7)$ or $(E_7,E_7)$.
\item If $\sigma$ is of type $(E_8, 0)$, then $B$ is of type $(A_8,E_8)$, $(D_8,E_8)$ or $(E_8,E_8)$ or $(A_8,A_8)$ or $(A_7,E_7)$.
\item If $\sigma$ is of type $(A_9, 0)$, then $B$ is of type $(A_9,A_9)$ or $(A_8,E_8)$.
\item If $\sigma$ is of type $(D_9, 0)$, then $B$ is of type $(D_9,D_9)$ or $(A_8,E_8)$.
\end{enumerate}
\end{proposition}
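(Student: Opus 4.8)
The plan is to treat the proposition as a case-by-case classification, where for each connected component type $\sigma$ we enumerate the possible types $(\tau(R), \tau(R'))$ of a maximal connected $B \in \R^i(Y,\sigma)$. The organizing principle is \Cref{lem:maximal-characterization}: a connected $B$ is maximal exactly when (1) $i = \rk\sigma$, or (2) $B$ is of type $(A_8,E_8)$ or $(D_8,E_8)$, or (3) $B$ is of type $(A_7,E_7)$ with $\overline{R'}\cap\overline{R'}^\perp \subseteq \sigma^\perp$. So the strategy reduces to: for a given $\sigma$, determine which irreducible root sublattices $R \leq S_Y$ with $\overline{\Delta}(R)\subseteq\sigma$ can occur, and among those which satisfy one of the three maximality conditions.

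First I would record two structural constraints. Since $\overline{B}\subseteq\sigma$, we have $\overline{R}\subseteq\langle\sigma\rangle$, so $\rk R = \dim\overline{R} \leq \dim\langle\sigma\rangle = \rk\sigma - k_\sigma$ if $R=R'$, but when $R \subsetneq R'$ the kernel of $R\otimes\FF_2 \to R'\otimes\FF_2$ allows $\rk R$ to exceed $\dim\overline{R}$. The key numerical input is \Cref{table:root_mod2}, which pins down $\overline{R}$ and $\overline{R'}$ as abstract quadratic spaces for each irreducible type; combined with the isometry type of $\langle\sigma\rangle$ (also read off from \Cref{table:root_mod2} via the type of $\sigma$), this tells me exactly which $\overline{R}$ can embed into $\langle\sigma\rangle$. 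The second constraint comes from \Cref{lem:excludeA7E7}: an $(A_7,E_7)$ configuration can only sit inside a $\sigma$ of type $(A_7,\ZZ/2\ZZ)$, $(D_8,\ZZ/2\ZZ)$, $(E_8,0)$, or $(D_9,0)$, because its primitive closure $E_7$ must embed in $R'_\sigma \in \{E_7, E_8, D_9\}$. This single lemma kills most of the imprimitive possibilities for the low-rank $\sigma$.

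With these tools the cases become largely mechanical. For $\sigma$ of type $(A_n,0)$, $(D_n,0)$, $(E_6,0)$ (items 1--3), maximality forces $i=\rk\sigma$ by condition (1), since \Cref{lem:excludeA7E7} excludes the only imprimitive options and none of these ranks admit an $(A_8,E_8)$ or $(D_8,E_8)$ filling; hence $B$ has the full rank and type $(\tau(\sigma),\tau(\sigma))$. For $\sigma$ of type $(A_7,\ZZ/2\ZZ)$ (item 4), the kernel $K_\sigma \neq 0$ means $\rk R_\sigma = 7$ but $\dim\langle\sigma\rangle=6$, forcing any full-support $B$ to be imprimitive of type $(A_7,E_7)$, and condition (3) is automatic since $\sigma$ itself carries the radical. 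The mixed cases (items 5--10) are where I would enumerate both a rank-$\rk\sigma$ primitive option and the degenerate fillings allowed by conditions (2)--(3): for instance in item 8 ($\sigma$ of type $(E_8,0)$), the space $\langle\sigma\rangle\cong U_2^4$ is regular of Witt index $4$, into which $\overline{R}$ for each of $A_8, D_8, E_8$ (all $\cong U_2^4$) and the rank-$6$ spaces $\overline{A_7}\cong U_2^3$ embed, giving the listed five types, with maximality for the sub-full-rank ones supplied by condition (2) or by the $\sigma^\perp$-computation in (3).

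The main obstacle I anticipate is the $(A_7,E_7)$ condition in item (3), namely verifying the inclusion $\overline{R'}\cap\overline{R'}^\perp \subseteq \sigma^\perp$ for the specific $\sigma$ where it must hold (items 4, 7) versus fail (items 5, 8, the latter forcing an upgrade to $(D_8,E_8)$). This requires a genuine computation inside $\langle\sigma\rangle$: one must locate the one-dimensional radical of $\overline{R'}\cong\overline{E_7}$ and test its pairing against $\langle\sigma\rangle$, exactly the isotropic-vector bookkeeping carried out in the proof of \Cref{lem:maximal-characterization}(3). The delicate point is that whether $(A_7,E_7)$ survives as maximal or gets absorbed into a larger $(D_8,E_8)$ depends on whether that radical line lies in $\sigma^\perp$, and distinguishing these across the four ambient types from \Cref{lem:excludeA7E7} is where the argument has real content rather than bookkeeping.
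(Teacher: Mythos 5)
Your skeleton matches the paper's: both organize the proof around \Cref{lem:maximal-characterization} and \Cref{lem:excludeA7E7} and then run through the ten cases. But your central enumeration tool is too weak, and this is a genuine gap rather than a matter of presentation. You propose to decide which types can occur by testing whether $\overline{R}$ (read off \Cref{table:root_mod2}) embeds isometrically into $\langle\sigma\rangle$. That condition is necessary but far from sufficient, because membership of $B$ in $\R^i(Y,\sigma)$ forces much more: by \Cref{lem:existsRwithDeltaRequalSigma} and \Cref{roots_mod2_injective}, $B$ corresponds to a configuration inside $\Delta^+(R_1)$ for a root lattice $R_1 \leq S_Y$ with $\overline{\Delta}(R_1)=\sigma$, so the \emph{entire} positive root system of $\langle B\rangle$ injects into $\sigma$ --- not merely its span into $\langle\sigma\rangle$. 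This stronger, root-system-level constraint is what the paper actually uses (it is the content of ``$\Delta^+(B)\hookrightarrow\sigma$'' in cases (1--4), of the root count in case (9), and of the preimage argument in case (10)), and your quadratic-space criterion cannot replace it. Concretely: for $\sigma$ of type $(A_9,0)$ the space $\overline{R}\cong U_2^3\oplus[0]$ of a $(D_8,E_8)$ configuration \emph{does} embed isometrically into $\langle\sigma\rangle\cong U_2^4\oplus[1]$ (take $U_2^3$ plus an isotropic vector of the fourth hyperbolic plane), so your criterion admits $(D_8,E_8)$ in item (9); it is excluded only by counting roots, $\#\Delta^+(D_8)=56>45=\#\Delta^+(A_9)=\#\sigma$. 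The same failure occurs in item (10) ($U_2^3\oplus[0]\subset U_2^4\oplus[0]$, excluded by the paper via $E_8\not\hookrightarrow D_9$), in item (6) ($U_2^3\oplus[0]\hookrightarrow U_2^4$, excluded because $\Delta^+(D_8)$ cannot inject into $\Delta^+(A_8)$), and at the top ranks of items (1) and (2) (for $\sigma$ of type $(A_7,0)$ or $(D_7,0)$ the space $\langle\sigma\rangle\cong U_2^3\oplus[0]$ is literally isometric to $\overline{R}$ of $(D_8,E_8)$). So your case analysis, run as described, produces wrong lists in several items.

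A second, smaller error is your bookkeeping for the $(A_7,E_7)$ radical condition. In items (4) and (7) the maximal $(A_7,E_7)$ configurations have $i=\rk\sigma$, so their maximality follows from condition (1) of \Cref{lem:maximal-characterization}, not from condition (3). In items (5) and (8) the condition does not ``fail, forcing an upgrade to $(D_8,E_8)$'': the proposition itself lists $(A_7,E_7)$ among the maximal types there, and the radical condition holds for some configurations and fails for others (this dichotomy is exactly what \Cref{lem:D8E8_max_configurations} and \Cref{lem:E8E8_max_configurations} later quantify). The only item in which condition (3) excludes $(A_7,E_7)$ outright is (10), which your sketch never discusses: there $\sigma^\perp$ is one-dimensional and totally isotropic, while the radical of $\overline{R'}\cong U_2^3\oplus[1]$ carries $q=1$, so the inclusion $\overline{R'}\cap\overline{R'}^\perp\subseteq\sigma^\perp$ can never hold. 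Items (9) and (10) --- precisely the two cases the paper singles out for bespoke arguments --- are the missing content in your proposal.
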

\begin{proof}
(1-4) Let $\rk \sigma = n\leq 8$. By \Cref{lem:maximal-characterization,lem:excludeA7E7}, $B$ is of rank $n$ as well. Moreover, $\Delta^+(B) \hookrightarrow \sigma$.
The only root subsystem of $A_n$ of rank $n$ is $A_n$ itself.
The same statement holds for $D_n$ and $E_6$. Thus $\Delta^+(B)$ maps bijectively to $\sigma$. This identifies the type $B$, unless $B=(A_8,E_8)$ but that is excluded by the assumption $l \leq 7$.
(5 - 8) follow directly from \Cref{lem:maximal-characterization,lem:excludeA7E7}.
(9) We can exclude $(D_8,E_8)$ because a $D_8$ root system is larger than an $A_9$ root system.
(10) Suppose that $B$ had type $(D_8,E_8)$ and let $R \leq S_Y$ with $\overline{\Delta}(R)=\sigma$. Then the preimage of $\langle \overline{B} \rangle $ in $R$ would be a sublattice of $R\cong D_9$ of type $(D_8,E_8)$, which is absurd. Suppose that $B$ had type $(A_7,E_7)$. By \Cref{lem:Bmaximal_classification}, $K:=\overline{R'}\cap \overline{R'}^\perp \subseteq \sigma^\perp$. But $\sigma^\perp$ is of dimension $1$ and totally isotropic with respect to the quadratic form, whereas $K$ is of dimension $1$ and not totally isotropic. A contradiction.  
\end{proof}

\begin{lemma}\label{lem:root_sublattices_orbits}
Let $Q$ be an irreducible root lattice of rank at most $9$. Let $C_1,C_2 \subseteq \Delta(Q)$ be irreducible ADE configurations. Let $Q_i$ be the span of $C_i$.
Assume that $Q_1 \cong Q_2$ and $Q_1' \cong Q_2'$
and that one of the following holds
\begin{enumerate}
\item $Q=Q_1$;
\item $Q_1 \neq Q_1'=Q$;
\item $A_7 \cong Q_1$, $Q_1'\cong E_7$, $Q\cong E_8$;
\item $A_8 \cong Q_1$, $Q\cong A_9$;
\item $A_8 \cong Q_1$, $Q\cong D_9$.
\end{enumerate}
Then there is an element of the Weyl group $W(Q)$ mapping $C_1$ to $C_2$.
\end{lemma}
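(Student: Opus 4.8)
The plan is to reduce the statement, in each of the five cases, to transitivity of $W(Q)$ on the set of irreducible $ADE$ configurations $C\subseteq \Delta(Q)$ with prescribed pair $(\tau(\langle C\rangle),\tau(\langle C\rangle'))$, and to split this into two steps: transitivity of the full isometry group $O(Q)$, followed by a descent from $O(Q)$ to $W(Q)$. The key elementary observation is that such a $C$ is exactly a fundamental root system of the irreducible root lattice $\langle C\rangle$, and that all fundamental root systems of an irreducible root lattice are conjugate under its own Weyl group (which acts simply transitively on chambers). This already settles case (1): there $Q=Q_1$ forces $Q_2=Q$, since a full-rank root sublattice $Q_2\cong Q_1=Q$ has $\det Q_2=\det Q\cdot[Q:Q_2]^2=\det Q$ and hence index one; thus $C_1$ and $C_2$ are two fundamental systems of $Q$ and are $W(Q)$-conjugate.

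For the remaining cases I first show that $C_1$ and $C_2$ lie in a single $O(Q)$-orbit. Since the fundamental systems of $\langle C_i\rangle$ are already conjugate under $W(\langle C_i\rangle)\subseteq W(Q)$, it suffices to prove that $O(Q)$ acts transitively on root sublattices of type $(\tau(Q_1),\tau(Q_1'))$. Here $Q$ is one of $E_7,E_8,A_9,D_9$, and the sublattices are the maximal-rank subsystems $A_7\subset E_7$ and $D_8,A_8\subset E_8$ of case (2), the rank-$7$ lattice $A_7\subset E_8$ with primitive closure $E_7$ of case (3), or the $A_8\subset A_9$ and $A_8\subset D_9$ of cases (4) and (5). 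Each is a classical, finite verification: the maximal-rank cases follow from the Borel--de Siebenthal description via the extended Dynkin diagram; the embedding of case (3) factors through $E_7=\alpha^\perp\subset E_8$, which is unique up to $W(E_8)$ as $W(E_8)$ is transitive on roots, thereby reducing to the $A_7\subset E_7$ case; and the $A_8$'s inside $A_9$ and $D_9$ are the standard ``Levi'' subsystems $\{e_i-e_j\}$, unique up to the respective (signed) permutation groups. In particular the hypothesis $\tau(Q_1')=\tau(Q_2')$ together with these models pins down the single orbit.

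It remains to descend from $O(Q)$ to $W(Q)$. For $Q\in\{E_7,E_8\}$ one has $O(Q)=W(Q)$, because the Dynkin diagram admits no nontrivial symmetry and $-\id=w_0\in W(Q)$; hence the $O(Q)$-orbit is already a $W(Q)$-orbit and cases (2), (3) are complete. For $Q\in\{A_9,D_9\}$ the quotient $O(Q)/W(Q)$ is cyclic of order two, and its nontrivial coset is represented by $-\id$, since in both cases $-w_0$ is the nontrivial diagram symmetry. As $-\id$ sends any configuration $C$ to $-C=C^{w_0(\langle C\rangle)}$ with $w_0(\langle C\rangle)\in W(\langle C\rangle)\subseteq W(Q)$, the $O(Q)$-orbit of $C$ coincides with its $W(Q)$-orbit, which finishes cases (4), (5).

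The main obstacle is the $O(Q)$-transitivity of the previous paragraph, i.e.\ the orbit classification of root sublattices of prescribed type inside each $Q$; once that is established the descent to $W(Q)$ is purely formal, resting only on the fact that $-\id$ preserves the $W(Q)$-orbit of every configuration $C$ via the longest element of $W(\langle C\rangle)$.
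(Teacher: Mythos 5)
Your proposal is correct, and its skeleton is the same as the paper's: reduce from configurations to their spans using simple transitivity of the Weyl group of the span on its fundamental root systems, prove transitivity on sublattices of the given type, and for $A_9$ and $D_9$ descend from $O(Q)$ to $W(Q)$ using $O(Q)=W(Q)\times\langle-\id\rangle$ together with the fact that $-\id$ acts on each configuration as the longest element of the Weyl group of its span. The difference lies in how the transitivity on sublattices is sourced: for cases (2) and (3) the paper invokes a single external theorem (the result on $l$-maximal subsystems it cites as Theorem 3.1 of Wallach's paper), applying it once for the full-rank case and once more to conjugate the two copies of $E_7$ inside $E_8$, whereas you invoke Borel--de Siebenthal for case (2) and give a self-contained reduction for case (3) via the observation that a primitively embedded $E_7\subseteq E_8$ is the orthogonal complement of a root, so that $W(E_8)$-transitivity on roots conjugates $Q_1'$ to $Q_2'$, after which case (2) applies inside $E_7$. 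Your route buys a more elementary, essentially reference-free case (3), and you also supply a detail the paper leaves implicit, namely that in case (1) the determinant count forces $Q_2=Q$; the paper's route buys uniformity, since one theorem covers both cases. Be aware that your appeal to Borel--de Siebenthal in case (2) still requires the conjugacy supplement (that full-rank subsystems of type $A_7\subseteq E_7$, $D_8\subseteq E_8$, $A_8\subseteq E_8$ each form a single Weyl orbit), which is precisely the content of the theorem the paper cites; both texts treat this as a classical, finite verification, so this is a matter of citation rather than a gap.
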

\begin{proof}
Since the Weyl group of a root lattice acts simply transitively on its fundamental root systems, it suffices to find an element mapping $Q_1$ to $Q_2$. This solves (1) since $Q_1=Q=Q_2$.
For (2) we have $\rk Q_i=\rk Q=l$. Then the root system of $Q_i$ is $l$-maximal in the sense of \cite[Definition 2.1]{wallach:subsystem} and of characteristic $p=[Q_i':Q_i]$. 
By \cite[Theorem 3.1]{wallach:subsystem} we find an element of the Weyl group mapping $C_1$ to $C_2$. 

(3) The lattice $Q_i'$ is $l-1$ maximal in the sense of \cite[Definition 2.1]{wallach:subsystem}. By \cite[Theorem 3.1]{wallach:subsystem} we find once again an element of the Weyl group mapping $Q_1'$ to $Q_2'$. Now we can apply part (1) with $Q_1'$ in place of $Q$ to obtain an element of the Weyl group mapping $C_1$ to $C_2$.

(4-5) One checks that $A_8$ has a unique embedding into $Q$ up to $O(Q)$
Since $O(Q)=W(Q)\rtimes \langle -1 \rangle$, there is an element in $W(Q)$ mapping $Q_1$ to $Q_2$.
\end{proof}

\begin{lemma}\label{lem:max_helper}
    Let $\sigma \subseteq \DeltabarY$ be a connected component and $B_1,B_2 \in \Delta^i(Y,\sigma)$ be maximal of the same type.
 
    Then there is an element $\overline{w} \in W(\sigma)$ with $\overline{B}_1^{\overline{w}}=\overline{B}_2$. 
\end{lemma}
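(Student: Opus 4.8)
The plan is to transport the whole problem into a single irreducible root lattice and then apply \Cref{lem:root_sublattices_orbits}. First I would use \Cref{lem:existsRwithDeltaRequalSigma} to pick a root lattice $R\subseteq S_Y$ with $\overline{\Delta}(R)=\sigma$; by its construction $R$ is irreducible with $R\cong R_\sigma$ and primitive closure $R'\cong R_\sigma'$. By \Cref{prop:root_systems_mod2} the reduction $\Delta^+(R)\to\sigma$ is an isomorphism of graphs, so each $\overline{B}_j$ has a unique preimage $C_j\subseteq\Delta^+(R)$ with $\overline{C}_j=\overline{B}_j$; since $B_j\to\overline{B}_j$ is also a graph isomorphism, $C_j$ is an irreducible $ADE$-configuration isomorphic to $B_j$. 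The reason for passing to $R$ is that every reflection $s_r$ with $r\in\Delta(R)$ reduces to the symplectic transvection $s_{\overline{r}}$ with $\overline{r}\in\overline{\Delta}(R)=\sigma$; hence $\overline{W(R)}\subseteq W(\sigma)$, and it suffices to produce $w\in W(R)$ with $C_1^{w}=C_2$, for then $\overline{w}\in W(\sigma)$ satisfies $\overline{B}_1^{\,\overline{w}}=\overline{B}_2$.

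Next I would feed $C_1,C_2$ and $Q:=R$ into \Cref{lem:root_sublattices_orbits}. As $C_1\cong B_1\cong B_2\cong C_2$, their spans $Q_j=\langle C_j\rangle$ are isometric, so the only real work is to compute the primitive closure of $C_j$ \emph{inside $R$} and to match one of the conditions (1)--(5). I stress that this closure need not be of the same type as the closure of $\langle B_j\rangle$ taken in $S_Y$; it has to be recomputed from the type of $\sigma$ using \Cref{lem:Bmaximal_classification}. Whenever $\langle B_j\rangle\cong R_\sigma$ — this covers $(A_n,A_n)$, $(D_n,D_n)$, $(E_6,E_6)$, $(E_8,E_8)$, $(A_9,A_9)$, $(D_9,D_9)$, the $(A_7,E_7)$ inside $\sigma=(A_7,\ZZ/2\ZZ)$, the $(A_8,\cdot)$ inside $\sigma=(A_8,0)$, and the $(D_8,E_8)$ inside $\sigma=(D_8,\ZZ/2\ZZ)$ — condition (1) applies. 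When $C_j$ spans a proper sublattice whose closure in $R$ is all of $R$ — an $A_7\subsetneq E_7$, or a $D_8$ or $A_8$ inside $E_8$ (the latter even when $\langle B_j\rangle$ is primitive of $S_Y$-type $(A_8,A_8)$, since an $A_8$ still closes up to $E_8$ inside $R_\sigma\cong E_8$) — condition (2) applies. The remaining fits are an $A_7\subseteq E_8$ closing up to $E_7$ (condition (3)), and an $A_8$ inside $A_9$, respectively $D_9$, remaining primitive (conditions (4) and (5)). In each of these cases \Cref{lem:root_sublattices_orbits} produces the required $w\in W(R)$, and reduction modulo $2$ finishes those cases.

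The one configuration not reached by \Cref{lem:root_sublattices_orbits}, and the step I expect to cause the most trouble, is $\sigma$ of type $(D_8,\ZZ/2\ZZ)$ with $B_1,B_2$ of type $(A_7,E_7)$: then $R_\sigma\cong D_8$ contains no $E_7$, so $C_j$ spans an $A_7\subseteq D_8$ whose closure in $D_8$ is again $A_7$, matching none of (1)--(5). Worse, $W(D_8)$ is genuinely not transitive on $A_7$-subsystems: the rank-one orthogonal complement of such an $A_7$ in $D_8$ is generated by a vector of norm $8$, and these split into two $W(D_8)$-orbits, distinguished by whether the closure of the $A_7$ inside $R'\cong E_8$ is $E_7$ or $A_7$, equivalently (by \Cref{table:root_mod2}) by whether $\dim_{\FF_2}\langle\overline{\Delta}(A_7)\rangle$ equals $6$ or $7$. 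The resolution is that this dimension is already pinned down by the type: since $B_j$ has type $(A_7,E_7)$, \Cref{table:root_mod2} gives $\dim_{\FF_2}\langle\overline{B}_j\rangle=6$, and $\overline{C}_j=\overline{B}_j$ then forces $C_j$ into the single orbit whose $E_8$-closure is $E_7$. Hence $C_1,C_2$ lie in a common $W(D_8)$-orbit, some $w\in W(D_8)$ sends $C_1$ to $C_2$, and reducing modulo $2$ yields $\overline{w}\in W(\sigma)$ with $\overline{B}_1^{\,\overline{w}}=\overline{B}_2$, completing the proof.
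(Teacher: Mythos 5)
Your proof is correct and follows the same overall strategy as the paper's: choose $R\subseteq S_Y$ with $\overline{\Delta}(R)=\sigma$ via \Cref{lem:existsRwithDeltaRequalSigma}, lift $\overline{B}_j$ to configurations $C_j\subseteq\Delta^+(R)$, produce $w\in W(R)$ with $C_1^w=C_2$ via \Cref{lem:root_sublattices_orbits}, and reduce mod $2$. The noteworthy difference is in the case analysis. The paper simply asserts that ``going through all cases of \Cref{lem:Bmaximal_classification} one may check that we are in one of the $5$ cases of \Cref{lem:root_sublattices_orbits}'', illustrating this only with the $(D_8,E_8)$ example. You correctly observe that this assertion fails for $\sigma$ of type $(D_8,\ZZ/2\ZZ)$ and $B_j$ of type $(A_7,E_7)$: there $R\cong D_8$ contains no $E_7$ (it has only $112$ roots), so $C_j$ spans an $A_7$ whose primitive closure \emph{inside $R$} is again $A_7$, and none of conditions (1)--(5) applies. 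Your repair is also correct: the orthogonal complement of an $A_7$ in $D_8$ is generated by a norm-$8$ vector of shape $(\pm1)^8$, the parity of the number of minus signs is a $W(D_8)$-invariant splitting these into exactly two orbits, $W(D_8)$ acts trivially on the discriminant group and hence preserves $R'\cong E_8$, and the two orbits are separated by whether the closure in $R'$ is $E_7$ or $A_7$, equivalently by $\dim_{\FF_2}$ of the image being $6$ or $7$; since $\overline{C}_j=\overline{B}_j$ has dimension $6$ for type $(A_7,E_7)$, both $C_1,C_2$ land in the same orbit. (The same mod-$2$ dimension argument is what justifies your assertion, used tacitly in your second paragraph, that in the $(E_8,0)$ case an $A_7$-configuration closes up to $E_7$ rather than staying primitive, as required for condition (3).) So your write-up is not merely a reproduction of the paper's argument: it supplies the missing case, and in doing so fixes an inaccuracy in the paper's own proof.
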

\begin{proof}
By \Cref{lem:existsRwithDeltaRequalSigma} we can find a root lattice $R \in \Delta^n(Y)$ with $\overline{\Delta}(R)=\sigma$.
Then 
\[W(R) \to W(\sigma)\]
is surjective. The natural map 
\[\phi:=\Delta(R)\to \overline{\Delta}(R)=\sigma\] 
is compatible with the action of $W(R)$ and $W(\sigma)$.
By \Cref{roots_mod2_injective} it restricts to a bijection on $\Delta^+(R)$ and is therefore a $2:1$ surjection.

Set 
\[C_i = \phi^{-1}(\overline{B}_i)\cap \Delta^+(R).\] 
It is an ADE configuration of the same type as to $B_i$.
By going through all cases of \Cref{lem:Bmaximal_classification} one may check that we are in one of the $5$ cases of \Cref{lem:root_sublattices_orbits}. For example if $B$ is of type $(D_8,E_8)$, then $\sigma$ is of type $(D_8, \ZZ/2\ZZ)$ or $(E_8, 0)$, $R\cong D_8,E_8$ and we are in case (1) and (2) of \Cref{lem:root_sublattices_orbits}.
Hence we obtain $w \in W(R)$ with $C_1^w=C_2$. Then $\overline{w}\in W(\sigma)$ maps $\overline{B}_1$ to $\overline{B}_2$ as desired. 
\end{proof}

\begin{proposition}\label{prop:same_AutY_orbit}
 Let $\sigma$ be a connected component of $\DeltabarY$ which is not of type $(A_9, 0)$.
 For $i=1,2$ let $B_i \in \R^n(Y,\sigma)$ be maximal of the same type. Denote by $R_i=\langle B_i \rangle$.

 Then there is an automorphism $h \in \aut(Y)$ with
 $B_1^h = B_2$ if and only if there is $\bar g \in \Gbar_Y$ with $\overline{R_1}=\overline{R_2}^g$ such that one of the following holds:
 \begin{enumerate}
  \item $[R_1':R_1]$ is odd;
  \item $B_1$ is of type $(A_7,E_7)$ and $\overline{R_1'}\cap \overline{R_1'}^\perp = \left(\overline{R_2'}\cap \overline{R_2'}^\perp\right)^{\bar g}$;
  \item $B_1$ is of type $(D_8,E_8)$ and $\overline{R_1'}+\langle \sigma \rangle = \left(\overline{R_2'}+\langle \sigma \rangle\right)^{\bar g}$.
 \end{enumerate}
\end{proposition}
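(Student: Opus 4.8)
The plan is to reduce the statement about $\aut(Y)$-orbits to one about $G_Y$-orbits, and then to detect $G_Y$-equivalence on the level of the reduction mod $2$, combining \Cref{lem:max_helper} (which controls the set $\overline{B}$) with \Cref{prop:B-mod2-orbit} (which controls the level-$2$ congruence subgroup $G_0\subseteq G_Y$). Since $B_1$ and $B_2$ are connected and maximal, \Cref{lem:transitive_on_induced_chambers}(3) shows the fibre of $\phi$ over $B_iG_Y$ is a single point, so $B_1,B_2$ lie in the same $\aut(Y)$-orbit if and only if they lie in the same $G_Y$-orbit. Hence it suffices to prove that there is $g\in G_Y$ with $B_1^g=B_2$ if and only if the stated conditions hold. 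For necessity, given such a $g$ set $\bar g=\overline{g}^{-1}$; because $\overline{B_1}\subseteq\sigma$ and $\overline{B_1}^{\overline g}=\overline{B_2}\subseteq\sigma$, the element $\overline g$ fixes the component $\sigma$ and $\overline{R_1}=\overline{R_2}^{\bar g}$. As $g$ carries $R_1'$ to $R_2'$, tracking the radical $\overline{R_i'}\cap\overline{R_i'}^\perp$ in the $(A_7,E_7)$-case and the subspace $\overline{R_i'}+\langle\sigma\rangle$ in the $(D_8,E_8)$-case produces conditions (2) and (3); condition (1) is the type statement, which is $g$-invariant.

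For sufficiency I would argue case by case. In the odd-index case (1) I apply \Cref{lem:max_helper} directly to $B_1,B_2\in\Delta^n(Y,\sigma)$ to obtain $\bar w\in W(\sigma)\subseteq\Gbar_Y$ with $\overline{B_1}^{\bar w}=\overline{B_2}$, lift $\bar w$ to $w\in W(Y)$ and set $B_1^\ast=B_1^w$; then $\overline{(R_1^\ast)'}=\overline{R_1^\ast}=\overline{R_2}=\overline{R_2'}$, since an odd index leaves no kernel mod $2$, so \Cref{prop:B-mod2-orbit} applies (the hypothesis $\sigma\neq(A_9,0)$ guarantees $R_1\not\cong A_9$). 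In the even-index cases (2) and (3) I instead use the hypothesised $\bar g$: lift it to $g_0\in G_Y$ and put $B_2'=B_2^{g_0}\in\Delta^n(Y)$. The crucial point is that $B_2'$ stays inside $\sigma$: its vertices $\overline{B_2'}=\overline{B_2}^{\bar g}$ are anisotropic vectors of $\overline{R_1}$, and for $R_1\in\{A_7,D_8\}$ the reductions $\overline{R_1}=U_2^3$ resp. $U_2^3\oplus[0]$ (\Cref{table:root_mod2}) have exactly $28$ resp. $56$ anisotropic vectors, the same as the number of positive roots of $R_1$. Since $\overline{\Delta^+(R_1)}$ injects into $\sigma$ by \Cref{roots_mod2_injective} and is connected, it is a bijection onto all anisotropic vectors of $\overline{R_1}$, whence $\overline{B_2'}\subseteq\sigma$ and $\sigma^{\bar g}=\sigma$. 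Thus $B_2'\in\Delta^n(Y,\sigma)$ is again maximal of the same type, and \Cref{lem:max_helper} gives $\bar w\in W(\sigma)$ with $\overline{B_1}^{\bar w}=\overline{B_2'}$; set $B_1^\ast=B_1^w$.

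It remains to match primitive closures and then invoke \Cref{prop:B-mod2-orbit}. In case (2) one has $\overline{R'}=\overline{R}\oplus K$ with $K=\overline{R'}\cap\overline{R'}^\perp$; maximality forces $K_1\subseteq\sigma^\perp$ (\Cref{lem:maximal-characterization}), so $W(\sigma)$ fixes $K_1$ and $K_1^\ast=K_1$, while condition (2) identifies $K_1$ with the radical of the closure of $\langle B_2'\rangle$, giving $\overline{(R_1^\ast)'}=\overline{\langle B_2'\rangle'}$. In case (3) I use that $\bar g$ now preserves $\langle\sigma\rangle$ (as $\sigma^{\bar g}=\sigma$) and that $W(\sigma)$ preserves every subspace of the form $\overline{R'}+\langle\sigma\rangle$; condition (3) then yields $\overline{(R_1^\ast)'}+\langle\sigma\rangle=\overline{\langle B_2'\rangle'}+\langle\sigma\rangle$, which I upgrade to equality by the dimension count distinguishing $\sigma=(D_8,\ZZ/2\ZZ)$ (where $\langle\sigma\rangle\subseteq\overline{R'}$) from $\sigma=(E_8,0)$ (where $\overline{R'}=\langle\sigma\rangle$, because $R'=R_\sigma$ is a primitive $E_8$). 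In every case \Cref{prop:B-mod2-orbit} produces $g_2\in O(E_{10})(2)$ with $(B_1^\ast)^{g_2}=B_2'$, and then $w\,g_2\,g_0^{-1}$ (or $w\,g_2$ in case (1)) maps $B_1$ to $B_2$.

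I expect the main obstacle to be this final step: \Cref{prop:B-mod2-orbit} only produces $g_2$ in $O(E_{10})(2)$, whereas I need $g_2\in G_0=O^+(S_Y)\cap O(E_{10})(2)\subseteq G_Y$ (\Cref{aut_and_weyl_group}(7)). Since $-\id\equiv\id\bmod 2$ is orientation-reversing, $g_2$ is only determined up to orientation, and to correct it one must exhibit an orientation-reversing element of $O(E_{10})(2)$ fixing $R_1^\ast$ pointwise; this is a genus/Miranda--Morrison computation on $(R_1^\ast)^\perp$ in the spirit of \Cref{lem:GY-transitive_on_ADE}. The secondary difficulty is the closure bookkeeping in case (3), where the explicit quadratic-form types from \Cref{table:root_mod2} are needed to turn the $W(\sigma)$-invariant equality into an honest equality of primitive closures.
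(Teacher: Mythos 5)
Your overall route is the paper's: reduce to $G_Y$-orbits via \Cref{lem:transitive_on_induced_chambers}(3), align $\overline{B_1}$ with $\overline{B_2}$ inside $\sigma$ by \Cref{lem:max_helper}, show the reductions of the primitive closures then coincide, and finish with \Cref{prop:B-mod2-orbit}. Your explicit verification that $\sigma^{\bar g}=\sigma$ (counting the $28$ resp.\ $56$ anisotropic vectors of $\overline{R_1}$ against the positive roots of $A_7$ resp.\ $D_8$) is a point the paper's reduction ``we may assume $\bar g=1$'' passes over silently, and it is correct. However, two steps of your argument are genuinely broken.

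First, in case (3) the subcase $\sigma$ of type $(E_8,0)$ rests on a false claim: for a maximal $(D_8,E_8)$ configuration $B \in \R^8(Y,\sigma)$ one \emph{never} has $\overline{R'}=\langle \sigma \rangle$. If that equality held, every root of $R'\cong E_8$ would reduce into $\sigma$, hence be a splitting root, and then each element of $B$ would be a sum of at least two elements of the fundamental system of $\Delta^+(R')$, i.e.\ decomposable --- contradicting $B\subseteq \R(Y)$ (this is exactly the argument inside the proof of \Cref{lem:maximal-characterization}(3)). Your claim would also make condition (3) vacuous for $(E_8,0)$, collapsing the three $\Aut(Y)$-orbits of $(D_8,E_8)$ configurations counted in \Cref{lem:E8E8_max_configurations} into one, which would falsify the coefficient $4e_8$ in \Cref{thm:main}. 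The correct upgrade from $\overline{R_1'}+\langle\sigma\rangle=\overline{R_2'}+\langle\sigma\rangle$ to $\overline{R_1'}=\overline{R_2'}$ is a radical computation: write $\overline{R_i}=S\oplus\langle e\rangle$ with $e$ spanning the radical and $\overline{R_i'}=\overline{R_i}+\langle f_i\rangle$ with $f_i\perp S$; since $f_i\notin\langle\sigma\rangle$, the hypothesis forces $f_1-f_2\in\overline{R_i}^\perp\cap\langle\sigma\rangle$, which is one-dimensional and hence equals $\langle e\rangle$, so $\overline{R_1'}=\overline{R_2'}$.

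Second, the step you flag as the ``main obstacle'' is indeed a gap, and your proposed repair cannot work in general. You do not need $g_2$ to lie in $G_0$, only the final composite to lie in $G_Y$, and an orientation-reversing element of $O(E_{10})(2)$ fixing $R_1^\ast$ pointwise need not exist: for $B_1$ of type $(D_9,D_9)$ (which occurs for $\sigma$ of type $(D_9,0)$, a case the proposition covers), the pointwise stabilizer $O(E_{10})_{R_1}$ is trivial, because $R_1^\perp$ is generated by a vector of square $4$, glued to $R_1$ along $\ZZ/4\ZZ$, so $\id_{R_1}\oplus(-\id_{R_1^\perp})$ does not even extend to $E_{10}$. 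The resolution is one line and uniform: if $h\in O(S_Y)(2)$ maps $B_1^\ast$ to $B_2'$ but does not preserve $\P_Y$, then $-h\in O^+(S_Y)(2)=G_0\subseteq G_Y$ maps $B_1^\ast$ to $-B_2'$; since $-B_2'$ is again a fundamental root system of $\langle B_2'\rangle$, the longest element $w_2$ of $W(B_2')\subseteq W(Y)$ carries $-B_2'$ to $B_2'$, and $w(-h)w_2g_0^{-1}\in G_Y$ maps $B_1$ to $B_2$ (membership in $G_Y$, not in $G_0$, is all that \Cref{lem:transitive_on_induced_chambers} requires). With that substitution, and with case (3) repaired as above, your argument closes and agrees with the paper's proof.
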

\begin{proof}
The necessity follows from $\aut(Y)\subseteq G_Y$. We turn to the sufficiency of the conditions. 
Since $B_1$ and $B_2$ are maximal, \Cref{lem:transitive_on_induced_chambers} (3) applies and yields that it suffices to show that $B_1$ and $B_2$ lie in the same $G_Y$ orbit. We may therefore assume that $\overline g =1$.

By \Cref{lem:max_helper} there is a $w \in  W(Y) \subseteq G_Y$ such that $\overline{w} \in W(\sigma)$ and $\overline{B_1}^{\overline{w}}=\overline{B_2}$. 
This does not alter the conditions on $\overline{R_i'}$: since $W(\sigma)$ acts trivially on $S_Y\otimes \FF_2/\langle \sigma \rangle$, which covers case (3); 
and trivially on $\sigma^\perp$, which contains $\overline{R_i'}\cap \overline{R_i'}^\perp$ by \Cref{lem:maximal-characterization}. Thus, case (2) is covered.
We therefore assume that $\overline{B_1}=\overline{B_2}$.

We claim that in each case $\overline{R_1'}=\overline{R_2'}$ holds: In case (1) this is clear. 
In case (2), it follows from
\[U_2^3\oplus [0] \cong \overline{R_i'}=\overline{R_i} + \left(\overline{R_i'} \cap \overline{R_i'}^\perp\right).\]
In case (3) write $\overline{R_i}=S\oplus \langle e \rangle \subseteq \langle \sigma \rangle$ with $q(e)=0$, $S\cong U_2^{\oplus 3}$ and
$\overline{R_i'}=S +\langle e, f_i \rangle$ where $b(f_i,e)=1$ and $q(f_i)=0$. 
Then $f_1-f_2 \in \overline{R_i}^\perp \cap \langle \sigma \rangle$. By \Cref{lem:Bmaximal_classification} $\sigma$ is of type $(D_8, \ZZ/2\ZZ)$ or $(E_8, 0)$. Therefore  
$\overline{R_i}^\perp \cap \langle \sigma \rangle$ of dimension $1$. Since it contains the one dimensional subspace $\overline{R_i}\cap \overline{R_i}^\perp$, they are equal and so the claim $\overline{R_1'}=\overline{R_2'}$ follows.

By \Cref{prop:B-mod2-orbit} and the assumption that $\sigma$ is not of type $(A_9, 0)$, there is $h \in O(S_Y)(2)$ mapping $B_1^{w}$ to $B_2$.
If $h \in O(S_Y,\P_Y)(2) \subseteq G_Y$, then $wh \in G_Y$ maps $B_1$ to $B_2$.
If $h$ does not preserve $\P_Y$, then $-h$ preserves it and $w(-h) \in G_Y$ maps $B_1$ to $-B_2$. Since $-B_2$ is a fundamental root system of $\langle B_2 \rangle$, we find an element of the Weyl group $w_2 \in W(B_2)$ such that $w_2$ maps $-B_2$ to $B_2$.
Then $w(-h)w_2$ does the job.
\end{proof}

In the sequel, we will need to find all orbits of maximal configurations. Let us count them:
\begin{lemma}\label{lem:GYorbitA7E7}
 Let $\sigma$ be a connected component of $\DeltabarY$ of type $(A_7, \ZZ/2\ZZ)$ (respectively $(E_7, 0)$).
 Then there are at most $5$ (resp. $6$) $\Aut(Y)$-orbits of $(A_7,E_7)$ configurations 
 in $\R(Y,\sigma)$. 
\end{lemma}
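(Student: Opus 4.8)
The plan is to turn the statement into an orbit count for a single $\FF_2$-linear invariant and then enumerate. Any configuration $B \in \R(Y,\sigma)$ of type $(A_7,E_7)$ has $\rk \langle B\rangle = 7 = \rk\sigma$, so it is maximal by \Cref{lem:maximal-characterization}(1), and by \Cref{lem:Bmaximal_classification}(4),(7) these are exactly the relevant maximal configurations. Since $[R':R]=[E_7:A_7]=2$ is even, we are in case (2) of \Cref{prop:same_AutY_orbit}: two such $B_1,B_2$ are $\aut(Y)$-equivalent if and only if there is $\bar g\in\Gbar_Y$ with $\overline{R_1}=\overline{R_2}^{\bar g}$ and $K_1=K_2^{\bar g}$, where $K_i:=\overline{R_i'}\cap\overline{R_i'}^\perp$. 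By \Cref{table:root_mod2} we have $\overline{R_i'}\cong U_2^3\oplus[1]$, so $K_i$ is the $1$-dimensional radical of its bilinear form, an anisotropic line. Hence the number of $\aut(Y)$-orbits equals the number of $\Gbar_Y$-orbits of pairs $(\overline R,K)$ realized by configurations in $\R(Y,\sigma)$.

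The next step is to normalize $\overline B$ and isolate $K$. By \Cref{lem:max_helper}, any two maximal $(A_7,E_7)$-configurations in $\R(Y,\sigma)$ have $W(\sigma)$-equivalent reductions $\overline B$; fixing a reference reduction $\overline B_0$ with span $\overline R_0=\langle \overline B_0\rangle\cong U_2^3$, every configuration can be moved by an element of $W(\sigma)\subseteq\Gbar_Y$ so that its reduction equals $\overline B_0$. The only residual invariant is then the anisotropic line $K\subseteq\overline R_0^\perp$. Because $\overline R_0\cong U_2^3$ is non-degenerate, $\overline R_0^\perp\cong U_2^2$ is a $4$-dimensional quadratic space of $+$-type, which contains exactly $6$ anisotropic vectors and hence $6$ anisotropic lines. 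Since $K$ can take at most these $6$ values, there are at most $6$ orbits. This already establishes the bound in the $(E_7,0)$ case.

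It remains to improve $6$ to $5$ in the $(A_7,\ZZ/2\ZZ)$ case, which is where the two situations genuinely differ. The structural distinction is that for type $(E_7,0)$ one has $\langle\sigma\rangle\cong U_2^3\oplus[1]$ with $\overline R_0\subsetneq\langle\sigma\rangle$, and the line spanned by the radical of $\langle\sigma\rangle$ is realized by an $A_7$ sitting inside the $E_7$-lift itself, so all six lines occur; whereas for type $(A_7,\ZZ/2\ZZ)$ one has $\overline R_0=\langle\sigma\rangle\cong U_2^3$, so $\overline R_0^\perp=\langle\sigma\rangle^\perp$ and there is no such internal $E_7$. Fixing a lift $R_\sigma\cong A_7$ with $\overline{\Delta}(R_\sigma)=\sigma$ and $R_\sigma'\cong E_7$ (\Cref{lem:existsRwithDeltaRequalSigma}), every configuration satisfies $\overline{\Delta}(R_B)=\sigma$, and I would identify the admissible radicals $K$ relative to the distinguished subspace $\overline{(R_\sigma')^\perp}\cong[1]\oplus U_2\subseteq\langle\sigma\rangle^\perp$ (here \Cref{lem:excludeA7E7} and the explicit form of $E_7^\perp=A_1\oplus U$ in $E_{10}$ enter). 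The point to prove is that exactly one of the six anisotropic lines cannot be the radical of a realizable $\overline{R_B'}$: the corresponding $E_7$-closure would force additional roots of $R_B'$ to reduce into $\DeltabarY$ and attach to $\sigma$, enlarging the component beyond rank $7$ and contradicting its type $(A_7,\ZZ/2\ZZ)$. Removing this single line leaves $5$.

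The main obstacle is precisely this last exclusion. The reduction in the first two paragraphs is essentially formal, relying only on \Cref{prop:same_AutY_orbit} and \Cref{lem:max_helper} together with the mod-$2$ shapes in \Cref{table:root_mod2}; all the real content is in verifying that the realizable radicals account for exactly $5$ (and not $4$ or $6$) of the anisotropic lines in $\overline R_0^\perp$. This requires a careful analysis of how the primitive closure $R_B'$ and its orthogonal roots reduce modulo $2$ relative to the fixed subspace $\langle\sigma\rangle$, using the non-degeneracy of $\overline R_0$ and the constraint that the splitting roots keep $\sigma$ a rank-$7$ component. I expect this to be the delicate step, since the guaranteed part of $\Gbar_Y$ (namely $W(\sigma)$) already fixes the line $K$ once $\overline B$ is normalized, so the gain from $6$ to $5$ must come from a realizability obstruction rather than from extra merging.
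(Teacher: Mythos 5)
Your first two paragraphs reproduce the paper's own reduction and are sound: maximality is automatic since $\rk\langle B\rangle=7=\rk\sigma$, \Cref{prop:same_AutY_orbit}(2) turns the orbit count into a count of radical lines $K=\overline{R'}\cap\overline{R'}^\perp$, \Cref{lem:max_helper} lets you fix $\overline{R}$, and the candidates for $K$ are the six anisotropic lines of $\overline{R}^\perp\cong U_2^2$. The genuine error is in your third paragraph: you have the two cases exactly backwards, and this cannot be repaired, because the refinement from $6$ to $5$ belongs to the $(E_7,0)$ case, not to the $(A_7,\ZZ/2\ZZ)$ case. (In fairness, you were misled by the paper: the statement of \Cref{lem:GYorbitA7E7} has the two numbers transposed relative to its own proof and to both of its later uses --- the $(A_7,\ZZ/2\ZZ)$ transitivity proposition invokes ``at most $6$ orbits'' and then realizes all six, while \Cref{lem:AutOrbitE7E7} invokes ``one of $5$ configurations''. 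The bounds actually proved are $6$ for $(A_7,\ZZ/2\ZZ)$ and $5$ for $(E_7,0)$.)

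Concretely, the extra constraint on the generator $v$ of $K$ is $v\notin\langle\sigma\rangle$, and it has content only when $\overline{R}\subsetneq\langle\sigma\rangle$, i.e.\ for $(E_7,0)$: there $\langle\sigma\rangle\cong U_2^3\oplus[1]=\overline{R}\oplus\langle w\rangle$ with $w$ the radical generator, which is an anisotropic vector of $\overline{R}^\perp$. If $K=\langle w\rangle$, then $\overline{R'}=\langle\sigma\rangle$, and the reductions of $\Delta^+(R')$ (the $63$ anisotropic vectors of $\langle\sigma\rangle$ other than $w$) coincide with $\sigma$; so $\Delta^+(R')$ consists of splitting roots, and some element of $B$ is a sum of at least two elements of the fundamental system of $\Delta^+(R')$, contradicting indecomposability of curve classes (\Cref{aut_and_weyl_group}(5)) --- this is precisely the argument in the proof of \Cref{lem:maximal-characterization}(3), and it removes exactly one line, giving $5$. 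In the $(A_7,\ZZ/2\ZZ)$ case this mechanism excludes nothing: $\overline{R}=\langle\sigma\rangle$ is nondegenerate, so every candidate $v$ lies in $\langle\sigma\rangle^\perp$, and the roots of $\Delta(R')\setminus\Delta(R)$ reduce to vectors $x+v$ with $0\neq x\in\langle\sigma\rangle$, which lie neither in $\langle\sigma\rangle$ nor in $\sigma^\perp$, hence never in $\DeltabarY$; no choice of $v$ can ``attach'' extra splitting roots to $\sigma$. Thus your proposed exclusion fails for every one of the six lines, and the bound of $5$ you aim for is actually false: when $\DeltabarY=\sigma$ the paper exhibits six curve configurations realizing six distinct lines. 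Symmetrically, your claim that in the $(E_7,0)$ case all six lines occur because the radical line is ``realized by an $A_7$ inside the $E_7$-lift'' conflates $\Delta(Y)$ with $\R(Y)$: that $A_7$ consists of splitting roots, but its fundamental roots are decomposable in $\Delta^+(Y)$, so it is not a configuration of smooth rational curves and contributes no orbit in $\R(Y,\sigma)$; consequently even the half of your proof that matches the written statement (the bound $6$ for $(E_7,0)$) is weaker than what the paper proves and needs there, namely $5$.
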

\begin{proof}
Let $B \in \R^7(Y,\sigma)$ be an $(A_7,E_7)$ configuration and $R$ its span.
Since $W(\sigma)$ acts transitively on the $(A_7,E_7)$ subdiagrams of $\sigma$ (i.e. and $A_7$ subgraphs whose $\FF_2$-span is of dimension $6$), we may assume that $\overline{R}$ is fixed. 
Following in \Cref{prop:same_AutY_orbit} we count the number of possibilities for the generator $v$ of $\overline{R'}^\perp \cap \overline{R'}$. It satisfies $v \in \overline{R}^\perp$, $q(v)=1$ and must not be in $\langle \sigma \rangle$. There are $6$ (resp. $5$) such vectors.
\end{proof}

\begin{lemma}\label{lem:D8E8_max_configurations}
Let $\sigma$ be a connected component of $\DeltabarY$ of type $(D_8, \ZZ/2\ZZ)$.
There are at most $4$ $\Aut(Y)$-orbits of $(D_8,E_8)$ configurations $B\in \R^8(Y,\sigma)$
and at most $2$ orbits of maximal $(A_7,E_7)$ configurations. 
\end{lemma}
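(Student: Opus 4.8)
The plan is to derive both counts from \Cref{prop:same_AutY_orbit}, which applies because $\sigma$ is of type $(D_8,\ZZ/2\ZZ)$ and hence not of type $(A_9,0)$. The idea is that the $\aut(Y)$-orbits of maximal configurations of a fixed type are governed by the mod-$2$ invariant appearing in that proposition, so that after normalizing the span $\overline{R}$ by an element of $\Gbar_Y$ the number of orbits is bounded by an explicit count inside the non-degenerate quadratic space $V:=S_Y\otimes\FF_2\cong U_2^5$. The common input is the structure of $\langle\sigma\rangle=\overline{R_\sigma}\cong\overline{D_8}=U_2^3\oplus[0]$ from \Cref{table:root_mod2}: it is $7$-dimensional with a $1$-dimensional radical $\langle z\rangle$, $q(z)=0$. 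I would fix a non-degenerate complement $S_0\cong U_2^3$ with $\langle\sigma\rangle=S_0\perp\langle z\rangle$; then $S_0^\perp\cong U_2^2$, and $\sigma^\perp=\langle\sigma\rangle^\perp$ is the $3$-dimensional space $\{w\in S_0^\perp\mid b(w,z)=0\}\cong U_2\oplus[0]$, which contains $z$.

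For the $(D_8,E_8)$ configurations $B$ with span $R$, I would first note that $\overline{R}\cong\overline{D_8}=U_2^3\oplus[0]$ is $7$-dimensional and contained in the $7$-dimensional space $\langle\sigma\rangle$, forcing $\overline{R}=\langle\sigma\rangle$. Consequently the first condition of \Cref{prop:same_AutY_orbit}(3) merely asks that $\bar g$ stabilize $\langle\sigma\rangle$, and since $\langle\sigma\rangle=\overline{R}\subseteq\overline{R'}$ one has $\overline{R'}+\langle\sigma\rangle=\overline{R'}$; thus two such configurations are $\aut(Y)$-equivalent precisely when their invariants $\overline{R'}$ lie in one $\stab_{\Gbar_Y}(\langle\sigma\rangle)$-orbit. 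It then suffices to count the admissible $\overline{R'}$, namely the $8$-dimensional subspaces $W\supseteq\langle\sigma\rangle$ with $W\cong\overline{E_8}=U_2^4$. Writing $W=S_0\perp(W\cap S_0^\perp)$, the type condition is equivalent to $W\cap S_0^\perp\cong U_2$, i.e. to $W\cap S_0^\perp$ being a $2$-plane of $S_0^\perp$ through $z$ not contained in $z^\perp$. The $2$-planes of $S_0^\perp\cong U_2^2$ through $z$ number $7$, of which $3$ lie in $z^\perp$, leaving $7-3=4$ admissible $W$; hence at most $4$ orbits.

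For the maximal $(A_7,E_7)$ configurations I would first invoke \Cref{lem:max_helper} together with $W(\sigma)\subseteq\Gbar_Y$ to reduce to a single span, which I may take to be $\overline{R}=S_0\cong\overline{A_7}=U_2^3$. By \Cref{prop:same_AutY_orbit}(2) two such configurations are then $\aut(Y)$-equivalent iff their radicals $K=\overline{R'}\cap\overline{R'}^\perp$ agree up to $\stab_{\Gbar_Y}(\overline{R})$. Here $\overline{R'}=\overline{E_7}=U_2^3\oplus[1]$, so $K=\langle v\rangle$ where $v$ generates the $[1]$-radical and satisfies $v\in\overline{R}^\perp$, $q(v)=1$; maximality forces $K\subseteq\sigma^\perp$ by \Cref{lem:maximal-characterization}(3), i.e. $v\in\sigma^\perp$. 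As $\sigma^\perp\subseteq\overline{R}^\perp$, the invariant $v$ ranges over the vectors of $\sigma^\perp\cong U_2\oplus[0]$ with $q(v)=1$, of which there are exactly $2$; hence at most $2$ orbits.

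The genuinely routine work is the mod-$2$ linear algebra, but the step I expect to require the most care is making the reduction to a count legitimate: one must combine \Cref{prop:same_AutY_orbit} with \Cref{lem:max_helper} so that, after normalizing $\overline{R}$ by $W(\sigma)\subseteq\Gbar_Y$, the $\aut(Y)$-orbits biject with $\stab_{\Gbar_Y}(\overline{R})$-orbits of admissible invariants, and then verify that $\overline{R}$ is indeed forced (in the $(D_8,E_8)$ case) or transitively normalized (in the $(A_7,E_7)$ case), together with the precise isometry types $\overline{R}^\perp\cong U_2^2$ and $\sigma^\perp\cong U_2\oplus[0]$ on which the two counts $4$ and $2$ depend.
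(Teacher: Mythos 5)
Your proposal is correct and follows essentially the same route as the paper: both parts reduce to \Cref{prop:same_AutY_orbit} (with \Cref{lem:maximal-characterization}(3) supplying the constraint $K\subseteq\sigma^\perp$ and \Cref{lem:max_helper} normalizing the span), and then count the admissible mod-$2$ invariants — the $4$ regular $8$-dimensional overspaces of $\langle\sigma\rangle$ for $(D_8,E_8)$, and the $2$ anisotropic vectors of $\sigma^\perp\cong U_2\oplus[0]$ for $(A_7,E_7)$. Your bookkeeping via the splitting $W=S_0\perp(W\cap S_0^\perp)$ is just a more explicit rendering of the paper's parametrization $V+\FF_2(v_0+v)$, $v\in V^\perp$, so the two proofs agree in substance.
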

\begin{proof}
Let $V:=\langle \sigma \rangle\cong 3U_2\oplus [0]$. Then $V^\perp \cong U_2 \oplus [0]$,$V\cap V^\perp\cong [0]$ is totally isotropic of dimension $1$, hence 
\[V^\perp/\left(V\cap V^\perp\right)\cong U_2.\]

We start with the type $(D_8,E_8)$. There are $4$ regular subspaces $V \leq \overline{R} \leq E_{10} \otimes \FF_2$ of dimension $8$. Explicitly, they are of the form 
$V+\FF_2(v_0+v)$ with $v \in V^\perp$ where $v_0$ is such that $V+\FF_2 v_0$ is regular. \Cref{prop:same_AutY_orbit} yields the claim. 

Now we turn to $(A_7,E_7)$. According to \Cref{prop:same_AutY_orbit} the $\Aut(Y)$-orbit of $B$ is determined by 
$\overline{R'}\cap \overline{R'}^\perp = \FF_2 v$ where $v \in V^\perp$, by \Cref{lem:maximal-characterization}, and satisfies $q(v)=1$. 
The space $V^\perp/(V\cap V^\perp)\cong U_2$ contains two such $v$.
\end{proof}

\begin{lemma}\label{lem:E8E8_max_configurations}
Let $\sigma$ be a connected component of $\DeltabarY$ of type $(E_8, 0)$.
There are at most $3$ $\Aut(Y)$-orbits of $(D_8,E_8)$ configurations $B\in \R^8(Y,\sigma)$
and a unique orbit of maximal $(A_7,E_7)$ configurations. 
\end{lemma}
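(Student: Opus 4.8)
The plan is to argue exactly as in \Cref{lem:D8E8_max_configurations}, replacing the $7$-dimensional space there by $V:=\langle\sigma\rangle$. Since $\sigma$ is of type $(E_8,0)$, \Cref{table:root_mod2} gives $V\cong U_2^4$, which is regular of dimension $8$; as $E_{10}\otimes\FF_2\cong U_2^5$ splits off $V$ orthogonally, the complement $V^\perp\cong U_2$ is regular of dimension $2$ and $V\cap V^\perp=0$. Because $(E_8,0)\neq(A_9,0)$, \Cref{prop:same_AutY_orbit} applies; and by \Cref{lem:max_helper} the subgroup $W(\sigma)\subseteq\Gbar_Y$ acts transitively on the spans $\overline{R}$ of maximal configurations of a fixed type. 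Hence in each case I may fix $\overline{R}$ and count the possible values of the remaining mod-$2$ invariant.

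For the maximal $(A_7,E_7)$ configurations one has $\overline{R}\cong U_2^3\subseteq V$, and by \Cref{prop:same_AutY_orbit}(2) the orbit is determined by the line $\overline{R'}\cap\overline{R'}^\perp=\FF_2 v$, which is the radical of $\overline{R'}\cong U_2^3\oplus[1]$ and so satisfies $q(v)=1$. Maximality and \Cref{lem:maximal-characterization}(3) force $\FF_2 v\subseteq\sigma^\perp=V^\perp$. As $V^\perp\cong U_2$ contains a unique anisotropic vector, $v$ is uniquely determined and all such configurations lie in a single $\aut(Y)$-orbit.

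For the $(D_8,E_8)$ configurations, $\overline{R}\cong U_2^3\oplus[0]\subseteq V$ is $7$-dimensional with radical $\FF_2 e$, and by \Cref{prop:same_AutY_orbit}(3) the orbit is determined by $\overline{R'}+\langle\sigma\rangle$. I will enumerate the regular $8$-dimensional overspaces $\overline{R'}=\overline{R}\oplus\FF_2\bar v\cong U_2^4$: regularity is equivalent to $b(\bar v,e)=1$, and writing $\bar v=a+c$ with $a\in V$ and $c\in V^\perp$ this says $b(a,e)=1$, i.e. $a$ lies in the unique nontrivial coset of $V/\overline{R}$, while $c$ ranges freely over $V^\perp$. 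Thus $\overline{R'}$ is parametrized by $c\in V^\perp$, giving four subspaces, with $\overline{R'}+\langle\sigma\rangle$ equal to $V$ when $c=0$ and to the three distinct $9$-dimensional spaces $V+\FF_2 c$ when $c\neq0$.

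The crux—and the step that separates this case from \Cref{lem:D8E8_max_configurations}—is to exclude the value $c=0$, i.e. $\overline{R'}=\langle\sigma\rangle$. In that situation every vector of $\overline{\Delta}(R')$ has square $1$ and lies in $\langle\sigma\rangle$, hence in $\sigma$ (recall that $\sigma=\overline{\Delta}(R_\sigma)$ is exactly the set of square-$1$ vectors of $\langle\sigma\rangle$), so every root of $R'\cong E_8$ is a splitting root with effective positive representative. A fundamental system $B$ of the sublattice $R\cong D_8$ then consists of positive roots of $R'$; it cannot be contained in the simple system of $R'$ (otherwise, having $8$ elements, it would be all of it and span $E_8$ rather than $D_8$), so some $b\in B$ is a non-simple positive root of $R'$, hence a sum of two effective roots. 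Such a $b$ is decomposable in $\Delta^+(Y)$ and not a smooth rational curve, contradicting $B\in\R^8(Y,\sigma)$. Therefore $c=0$ does not occur, only the three values $V+\FF_2 c$ ($c\neq0$) survive, and the number of $\aut(Y)$-orbits is at most $3$. I expect this exclusion to be the only delicate point; everything else is the same $\FF_2$-quadratic bookkeeping as in \Cref{lem:D8E8_max_configurations}.
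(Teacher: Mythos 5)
Your proof is correct and takes essentially the same route as the paper: fix $\overline{R}$ via $W(\sigma)$-transitivity (\Cref{lem:max_helper}), invoke \Cref{prop:same_AutY_orbit}, and count the admissible mod-$2$ invariants --- the unique anisotropic vector of $\sigma^\perp$ for maximal $(A_7,E_7)$ configurations, and the regular overspaces $\overline{R}\subseteq\overline{R'}$ with $\overline{R'}\neq\langle\sigma\rangle$ for $(D_8,E_8)$, of which there are exactly three. The one point where you go beyond the paper's (very terse) proof is the explicit exclusion of $\overline{R'}=\langle\sigma\rangle$, which the paper asserts without comment; your argument --- all roots of $R'$ would then be splitting roots, forcing the set $B$ of indecomposable classes to be a fundamental system of $R'\cong E_8$ and contradicting $\langle B\rangle\cong D_8$ --- is precisely the mechanism the paper itself uses in the proof of \Cref{lem:maximal-characterization}(3), so this step is both correct and in the spirit of the text.
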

\begin{proof}
In this situation $\langle \sigma \rangle \cong U_2^4$, $S_Y \otimes \FF_2 = \sigma^\perp \oplus \langle \sigma \rangle$ and $\sigma^\perp$ is hyperbolic of dimension $2$. 
The count of maximal $(A_7,E_7)$ configurations follows with \Cref{lem:maximal-characterization} and \Cref{prop:same_AutY_orbit} and the fact that $\sigma^\perp$ contains a unique vector $v$ with $q(v)=1$.
For $(D_8,E_8)$ we may fix $\overline{R}$ and count regular subspaces $\overline{R} \subseteq \overline{R'} \subseteq S_Y\otimes \FF_2$ with $\overline{R'}\neq \langle \sigma \rangle$.
There are $3$, corresponding to the $3$ non-zero vectors in $\sigma^\perp$.
\end{proof}

\section{Orbits of rational curves}\label{sec:orbits-of-rational-curves}
In this section we complete the proof of \Cref{thm:main}.

\subsection{The case $A_n$}
\begin{lemma}\label{lem:odd_circle}
 Let $\sim$ be an equivalence relation on $\ZZ/n \ZZ$, such that for all $i$, $j \in \ZZ/n\ZZ$
 \[(i \sim j \mbox{ and } i+1\sim j+1) \mbox{ or }(i \sim j+1\mbox{ and } i+1 \sim j)\] 
 holds.
 If $n$ is odd, then $i\sim 0$ for all $i$.
\end{lemma}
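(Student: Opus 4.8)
The plan is to distill the hypothesis down to the single relation $i \sim i+2$ for every $i$, after which the oddness of $n$ finishes the argument immediately. The entire content of the lemma lies in observing that the two-sided ``straight-or-crossed'' condition collapses, in both of its branches, to the same conclusion.

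First I would specialize the hypothesis to the pair $(i,j)=(i,i+1)$. With this choice the two alternatives become
\[
(i \sim i+1 \mbox{ and } i+1 \sim i+2) \quad\mbox{or}\quad (i \sim i+2 \mbox{ and } i+1 \sim i+1).
\]
In the first alternative, transitivity of $\sim$ gives $i \sim i+2$; in the second we already have $i \sim i+2$ directly (and $i+1 \sim i+1$ is automatic by reflexivity). Hence in either case $i \sim i+2$, and since $i$ was arbitrary we conclude $i \sim i+2$ for all $i \in \ZZ/n\ZZ$.

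Second, because $n$ is odd, $2$ is a unit modulo $n$ and therefore generates the cyclic group $\ZZ/n\ZZ$. Starting from $0$ and repeatedly applying $i \sim i+2$ together with transitivity yields $0 \sim 2 \sim 4 \sim \cdots \sim 2k \sim \cdots$; as $k$ ranges over $\ZZ/n\ZZ$ the residues $2k$ exhaust all of $\ZZ/n\ZZ$. Thus $i \sim 0$ for every $i$, as claimed.

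I do not expect any real obstacle once the specialization $j=i+1$ is spotted — that is the only nonroutine step. It is worth noting why the hypothesis that $n$ is odd is essential: it is exactly what forces $\langle 2\rangle = \ZZ/n\ZZ$. For even $n$ the same computation yields only $0 \sim 2 \sim 4 \sim \cdots$ and $1 \sim 3 \sim 5 \sim \cdots$, i.e. the two classes of even and odd residues, which is consistent with genuine two-class relations satisfying the hypothesis.
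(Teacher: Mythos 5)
Your proof is correct, and it takes a genuinely more direct route than the paper's. The observation you exploit --- that with $j=i+1$ both branches of the dichotomy collapse to the single unconditional relation $i \sim i+2$ (one branch via transitivity, the other because $i+1\sim i+1$ is free by reflexivity) --- is exactly what the paper's argument never isolates. The paper instead proceeds by contradiction: assuming $0\not\sim 1$, it runs an induction maintaining two chains $0\sim 2i$ and $1\sim 2i+1$, extracting the step $2i\sim 2i+2$ only in the cases where consecutive elements fail to be equivalent, and then closes the loop with the same arithmetic fact you use, namely that $2$ generates $\ZZ/n\ZZ$ additively when $n$ is odd. So both arguments rest on the same mechanism (steps of $2$ reach every residue for odd $n$), but yours eliminates the contradiction hypothesis, the case analysis, and the two-track induction entirely, at no cost in generality. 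Your closing remark about even $n$ --- that the even/odd two-class partition satisfies the hypothesis --- is also correct and shows the parity assumption is sharp, which the paper does not comment on.
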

\begin{proof}
Let $n$ be odd. It suffices to show $0 \sim 1$.
If $0 \not \sim 1$, then $0 \sim 2$. If $1 \sim 2$, we are done. Otherwise, $1 \sim 3$.
Suppose by induction that $0\sim 2i$ and $1 \sim 2i+1$. If $0\sim 2i+1$, we are done.
Otherwise, $0 \sim 2i+2$. Similarly, if $1 \sim 2i+2$, then $1 \sim 2i+2 \sim 0$. Else, $1\sim 2i+3$.
By induction $0 \sim 2i$ for all $i \in \ZZ/n\ZZ$. Since $n$ is odd, $1\equiv 2i \mod n$ for some $i$ and we are done.
\end{proof}

\begin{proposition}\label{lemma:An-transitive}
Let $\sigma \subseteq \DeltabarY$ be a connected component of type $(A_n,A_n)$ with $n \leq 7$.
Then any rational curve $\delta \in \R(Y,\sigma)$ is in the same $\aut(Y)$-orbit.
\end{proposition}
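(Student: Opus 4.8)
The plan is to reduce the statement to the transitivity of $\aut(Y)$ on the vertices of a single maximal configuration, and then to realise those vertices as part of a cyclic $\widetilde{A}_n$ arrangement of curves on which $\aut(Y)$ acts edge-transitively, so that \Cref{lem:odd_circle} forces vertex-transitivity. First I would reduce to one configuration. Given $\delta\in\R(Y,\sigma)$, repeated application of \Cref{lem:extend} enlarges the connected configuration $\{\delta\}$ inside $\R(Y)$; each curve added is joined to the previous ones, so its reduction is adjacent to $\sigma$ and hence lies in $\sigma$, and the process stays inside $\R(Y,\sigma)$ until it reaches a connected maximal $B_\delta\in\R^n(Y,\sigma)$. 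By \Cref{lem:Bmaximal_classification}(1) every maximal configuration in $\R(Y,\sigma)$ has type $(A_n,A_n)$, and by \Cref{prop:same_AutY_orbit}(1) they form a single $\aut(Y)$-orbit: here $[R':R]=1$ is odd and $\overline{R}=\langle\sigma\rangle$ is common to all of them, so one may take $\bar g=\id$. Fixing one such $B=\{b_1,\dots,b_n\}$, every curve of $\R(Y,\sigma)$ is $\aut(Y)$-equivalent to some $b_i$; it remains to prove that $b_1,\dots,b_n$ lie in one orbit (for $n=1$ this is already the whole statement).

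Next I would build the cycle. Since $\rk\sigma=n\le 8$, \Cref{lem:isotropic_in_Rperp} provides an isotropic $f\in\langle B\rangle^\perp$; after a Weyl translation $f$ is nef and defines a genus one fibration having $b_1,\dots,b_n$ among the components of one fibre $F$. The class $c_0:=F-\theta$, with $\theta=b_1+\dots+b_n$, satisfies $c_0^2=-2$ and meets precisely $b_1$ and $b_n$, so $c_0,b_1,\dots,b_n$ form an $\widetilde{A}_n$ cycle (an $I_{n+1}$ fibre) of smooth rational curves, all reducing into $\sigma$ by connectivity. Relabelling them $c_0,c_1,\dots,c_n$ and indexing cyclically by $\ZZ/(n+1)\ZZ$, every window $W^{(t)}=\{c_i\mid i\neq t\}$ is a connected maximal $(A_n,A_n)$ configuration in $\R(Y,\sigma)$, so by the first step all the $W^{(t)}$ lie in a single $\aut(Y)$-orbit.

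Finally I would run the combinatorial step. Set $i\sim j$ when $c_i$ and $c_j$ are $\aut(Y)$-equivalent. For adjacent windows choose $h\in\aut(Y)$ with $(W^{(t)})^h=W^{(t+1)}$; as both are $A_n$ paths, $h$ realises either the orientation preserving or the orientation reversing graph isomorphism. In the preserving case $h$ acts as $c_i\mapsto c_{i+1}$ on its whole domain, identifying all edges of the cycle except $(t,t+1)$, which already connects every vertex; in the reversing case the two path endpoints are matched in reverse, giving $c_t\sim c_{t+1}$. More generally, comparing two arbitrary windows supplies, for every pair of indices, exactly one of the two alternatives in the hypothesis of \Cref{lem:odd_circle}, which for odd $n+1$ yields $i\sim 0$ for all $i$; the reversing maps above merge the two parity classes that occur when $n+1$ is even. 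Either way $b_1,\dots,b_n$ form one $\aut(Y)$-orbit, completing the proof.

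The step I expect to be the main obstacle is the geometric construction of the cycle: one must verify that the completing class $c_0=F-\theta$ is effective, hence a smooth rational curve, and that the fibre is genuinely of type $I_{n+1}$ rather than a larger reducible fibre. After that, the delicate point is keeping careful track of orientations so that the hypothesis of \Cref{lem:odd_circle} is met for \emph{every} pair of indices, in particular handling the parity bookkeeping when $n+1$ is even.
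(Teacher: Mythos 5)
Your strategy is the same as the paper's: extend $\delta$ to a maximal configuration $B$, produce a nef isotropic class in $B^\perp$, realise $B$ inside an $\widetilde{A}_n$ fiber, use \Cref{prop:same_AutY_orbit} to put all $n+1$ windows of the cycle into one orbit, and finish combinatorially. However, there is a genuine gap at exactly the step you flag as ``the main obstacle'' and then leave unresolved: you never prove that the fiber containing $B$ is of type $I_{n+1}$. This is not a routine verification to be deferred; it is the geometric heart of the argument. A priori the fiber could have more than $n+1$ components, or (for $n=7$) be of type $\widetilde{E}_7$, which also contains an $A_7$ chain; in that case your class $c_0=F-\theta$ still has square $-2$ but is a nontrivial nonnegative combination of several fiber components rather than the class of a smooth rational curve, and no $\widetilde{A}_n$ cycle exists. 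The paper closes this as follows: the fiber has at least $\rk B+1=n+1$ components, since $\langle B\rangle$ is negative definite of rank $n$ while the lattice spanned by the components of a fiber is degenerate; it has at most $n+1$ components, because the dual graph of a fiber is connected, so all components reduce into $\sigma$, and $\sigma$ has rank $n$; and among fibers with $n+1$ components containing an $A_n$ configuration only $\widetilde{A}_n$, $\widetilde{E}_7$ and $\widetilde{E}_8$ occur, the last two being excluded by the type $(A_n,0)$ of $\sigma$. Some version of this argument is indispensable. A second, smaller gap of the same nature: ``after a Weyl translation $f$ is nef'' is not enough. An arbitrary element of $W(Y)$ moving $f$ into the nef cone also moves $B$, which then need not consist of classes of rational curves orthogonal to the new $f$. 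One needs a translation by $W(\Delta(Y)\cap B^\perp)$, whose reflections fix $B$ pointwise, and the fact that this group acts transitively on the induced chambers is \Cref{lem:transitive_on_induced_chambers}(2) --- the second place where maximality of $B$ is used, which your write-up does not register.

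On the positive side, your first step (all maximal configurations in $\R^n(Y,\sigma)$ are of type $(A_n,A_n)$ with common reduction $\langle\sigma\rangle$, hence form one orbit by \Cref{prop:same_AutY_orbit}(1) with $\bar g=\id$) is correct, and your combinatorial endgame via adjacent windows is correct and in fact cleaner than the paper's: if some connecting automorphism $(W^{(t)})^h=W^{(t+1)}$ is orientation preserving, the resulting equivalences $c_i\sim c_{i+1}$ for $i\neq t$ form a Hamiltonian path through all $n+1$ vertices, and you are done at once; if every connecting automorphism is reversing, then $c_t\sim c_{t+1}$ for every $t$, and you are again done. Either way transitivity follows with no parity bookkeeping and no appeal to \Cref{lem:odd_circle} at all --- which is fortunate, because your stated justification that ``comparing two arbitrary windows supplies, for every pair of indices, exactly one of the two alternatives'' in the hypothesis of \Cref{lem:odd_circle} is not actually established by the maps you exhibit.
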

\begin{proof}
Let $\delta \in \R(Y)$ with $\overline{\delta} \in \sigma$. Then $\delta$ must be contained in some maximal connected $B \subseteq \R(Y)$. By \Cref{lem:Bmaximal_classification}, $B$ is of type $(A_n,A_n)$.

Since $n\leq 7$, one can check that $B^\perp$ is isotropic.
Let $f \in B^\perp$ be primitive and isotropic. Since $B$ is maximal, $W(\Delta(Y)\cap B^\perp)$ acts transitively on the $\Delta(Y)|_{B^\perp}$ chambers (\Cref{lem:transitive_on_induced_chambers}). Consequently, we may assume that $f$ is nef. Thus, $f$ induces an elliptic fibration and the elements $\delta_1,\dots,\delta_n$ of $B$ are part of a singular fiber.
This singular fiber consists of $n+1$ components: it has at least $\rk B+1 = n+1$ components; suppose that it has at least $n+2$ components, then their image in $\DeltabarY$ is a connected graph and hence contained in $\sigma$ which is of rank $n$. This is impossible.
Since the fiber contains an $A_n$ configuration, it is either of type $\widetilde{A}_n$, $\widetilde{E}_7$ or $\widetilde{E}_8$. The last two types are excluded by our assumption on $\sigma$.

Thus there is another component $\delta_{0}$ of the same fiber completing the diagram to an $\widetilde{A_n}$ configuration, i.e. a circle with $n+1$ nodes. This circle contains $n+1$ subgraphs of type $A_n$ obtained by deleting a single node.
These have the same rank $n$. By \Cref{lem:maximal-characterization}, they are therefore maximal. 

By \Cref{prop:same_AutY_orbit}, these $n+1$ subgraphs are in a single $\aut(Y)$-orbit.
This implies that their elements form a single $\aut(Y)$-orbit.
Indeed, if $n$ is odd, the central node of an $A_n$ configuration is distinguished and every vertex of the $\widetilde{A}_n$ circle is the central node of a unique $A_n$ configuration contained in it.
If $n$ is even, the two central nodes are distinguished. Again, any pair of adjacent nodes are the center of an $A_n$ diagram. Thus, every unordered adjacent pair $\{\delta_i,\delta_{i+1}\}$ can be mapped to any other unordered adjacent pair $\{\delta_j,\delta_{j+1}\}$ by an automorphism, where we see the indices as elements of $\ZZ/(n+1)\ZZ$. By \Cref{lem:odd_circle}, they form a single $\aut(Y)$-orbit.
\end{proof}

\begin{lemma}\label{lem:quadratic_eqn}
The set of integer solutions of $2x^2+6xy=-4$ is $\{\pm (-2,1), \pm (-1,1)\}$.
The set of integer solutions of $2x^2+6xy=-10$ is $\{\pm(-5,2),\pm(-1,2)\}$.
\end{lemma}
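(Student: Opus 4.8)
The plan is to reduce each equation to a factored form and then run a finite divisor check. First I would divide both equations by $2$, rewriting them as $x(x+3y)=-2$ and $x(x+3y)=-5$ respectively. In either case $x$ must be an integer divisor of the right-hand side, which is $\pm$ a prime, so $x \in \{\pm 1, \pm 2\}$ for the first equation and $x \in \{\pm 1, \pm 5\}$ for the second. This leaves only four candidate values of $x$ in each case.

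Next, for each admissible value of $x$ I would solve the resulting linear equation $x+3y = c/x$ for $y$, where $c \in \{-2,-5\}$ is the constant on the right. Concretely, writing $m := c/x$ (an integer by the divisibility just imposed), one needs $3 \mid (m-x)$ in order to obtain an integer $y = (m-x)/3$. Carrying this out for the first equation gives $(x,y) \in \{(1,-1),(-1,1),(2,-1),(-2,1)\}$, and for the second $(x,y)\in\{(1,-2),(-1,2),(5,-2),(-5,2)\}$. In both cases the four solutions organize into the sign-pairs claimed, since $(1,-1)=-(-1,1)$, $(2,-1)=-(-2,1)$, $(1,-2)=-(-1,2)$ and $(5,-2)=-(-5,2)$.

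The verification is entirely mechanical and there is no genuine obstacle: the only point requiring attention is confirming that the integrality condition $3 \mid (m-x)$ is satisfied for each of the finitely many candidate values of $x$, so that no admissible case is silently discarded and no extraneous solution is introduced. This check is immediate in all eight cases. Thus the full solution set is exactly $\{\pm(-2,1),\pm(-1,1)\}$ for $2x^2+6xy=-4$ and $\{\pm(-5,2),\pm(-1,2)\}$ for $2x^2+6xy=-10$, as claimed.
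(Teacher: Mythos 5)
Your proof is correct: factoring the equations as $x(x+3y)=-2$ and $x(x+3y)=-5$, noting that $x$ must divide a prime (up to sign), and checking the finitely many cases is precisely the mechanical verification the paper has in mind when it declares the proof ``left to the reader.'' All eight cases yield integer $y$ and produce exactly the claimed solution sets $\{\pm(-2,1),\pm(-1,1)\}$ and $\{\pm(-5,2),\pm(-1,2)\}$, so there is nothing to add.
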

The proof is left to the reader.

\begin{lemma}\label{lem:A8_nef_isotropic}
Let $B \in \R^8(Y)$ be an $A_8$ configuration. Then there is a nef isotropic vector in $B^\perp$.
\end{lemma}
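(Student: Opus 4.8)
The plan is to work inside the rank‑two lattice $B^\perp$ and exhibit a nef boundary ray. Since $\langle B\rangle\cong A_8$ is negative definite of rank $8$ and $S_Y\cong E_{10}$ has signature $(1,9)$, the complement $B^\perp$ has signature $(1,1)$. As $A_8$ is an irreducible negative definite root lattice of rank $8$, \Cref{lem:isotropic_in_Rperp} supplies a primitive isotropic vector in $B^\perp$; in a rank‑two hyperbolic lattice one rational isotropic ray forces the second boundary ray of $\P_{B^\perp}$ to be rational isotropic as well, so $\P_{B^\perp}$ is bounded by two rays $\RR_{>0}f_1,\RR_{>0}f_2$ with $f_1,f_2$ primitive isotropic, which after a sign change lie in $\Pbar_Y$. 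Because $B\in\R^8(Y)$, the intersection $F:=\Nef_Y\cap\P_{B^\perp}$ has the expected codimension and is therefore a single $\Delta(Y)|_{B^\perp}$‑chamber of dimension $2$. The lemma then amounts to showing that the closure $\overline F$ contains one of $f_1,f_2$, for that ray is a nef isotropic vector.

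First I would dispose of the case where $B$ is maximal. Then \Cref{lem:transitive_on_induced_chambers} gives $\Delta(Y)|_{B^\perp}=\Delta(Y)\cap B^\perp$, so the walls of $F$ come from honest $(-2)$‑vectors of $B^\perp$; as $B^\perp$ has rank two it carries at most one pair of roots, the corresponding reflections preserve $B^\perp$ and lie in $W(Y)$, and the resulting (at most two) chambers each abut one of the isotropic rays, so $\overline F$ does too. By \Cref{lem:maximal-characterization} a non‑maximal $A_8$‑configuration is necessarily of type $(A_8,A_8)$, i.e.\ $\langle B\rangle$ is primitive; here $B^\perp$ is the even lattice of determinant $-9$ with Gram matrix $\left(\begin{smallmatrix}0&3\\3&2\end{smallmatrix}\right)$ in a basis $f,g$ with $f$ isotropic.

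It remains to treat this primitive case, and this is where I expect the main difficulty. An interior edge of $F$ lies on an induced wall $\pi_B(\delta)^\perp$ with $\delta\in\Delta(Y)$, and such a $\delta$ extends $B$ to a connected rank‑nine configuration of type $A_9$ or $D_9$ (\Cref{lem:external_root}); the projection satisfies $\pi_B(\delta)^2=-\tfrac{10}{9}$ respectively $-\tfrac{4}{9}$, so that $(3\pi_B(\delta))^2\in\{-10,-4\}$, and \Cref{lem:quadratic_eqn} lists the finitely many resulting wall directions. Thus $F$ is one of finitely many chambers, and I must rule out that it is an \emph{interior} one, cut off from both isotropic rays; this cannot be argued abstractly, as Enriques surfaces with finite automorphism group do possess nef extremal rays of positive self‑intersection. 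The guiding dichotomy is geometric: two distinct rational curves extending the $A_8$‑chain either meet, in which case they close it up into an $\widetilde A_9$ fibre whose (isotropic, nef) class lies in $B^\perp$ and finishes the proof, or they do not, in which case they assemble with $B$ into a connected rank‑ten lattice. When that lattice is of type $A_{10}$ or $D_{10}$ it is negative definite and hence cannot embed into $E_{10}$, excluding the configuration; the genuinely delicate case is when it is the hyperbolic $E_{10}$‑diagram (an $A_9$‑extension at one end together with a $D_9$‑extension near the other end), where no contradiction arises and one must instead read off directly from the explicit vectors of \Cref{lem:quadratic_eqn} that the nef chamber still abuts an isotropic ray. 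Carrying out this finite coordinate check is the crux of the argument.
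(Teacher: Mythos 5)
Your reduction (show that the two\-/dimensional face $F=\Nef_Y\cap\overline{\P}_{B^\perp}$ abuts an isotropic ray of $\P_{B^\perp}$) and your treatment of the maximal case agree with the paper. The gap is in the non\-/maximal case, and it is fatal as proposed: you defer the crux to a ``finite coordinate check'' which is supposed to show that in the $E_{10}$-diagram case ``the nef chamber still abuts an isotropic ray''. It does not. Concretely, take $B=\{e_1,\dots,e_8\}$ in the basis of \Cref{figure:E10}, so that $e_9$ is an $A_9$-extension and $e_{10}$ a $D_9$-extension of $B$. The two isotropic rays of $\P_{B^\perp}$ are spanned by $f=e_1+2e_2+3e_3+4e_4+5e_5+6e_6+7e_7+5e_8+3e_9+3e_{10}$ and $f'=4e_1+8e_2+12e_3+16e_4+20e_5+24e_6+28e_7+17e_8+6e_9+15e_{10}$, and one computes $f.e_9=-1$, $f.e_{10}=1$, $f'.e_9=5$, $f'.e_{10}=-2$. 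Hence the cone $\{x\in\overline{\P}_{B^\perp}\mid x.e_9\geq 0,\ x.e_{10}\geq 0\}$ contains no nonzero isotropic vector at all (its extremal rays have squares $4$ and $10$). So if both $e_9$ and $e_{10}$ were classes of smooth rational curves on $Y$, then $B^\perp$ would contain \emph{no} nef isotropic vector and the lemma would be false. No inspection of the wall directions from \Cref{lem:quadratic_eqn} can circumvent this; what must be proved is that this configuration of curves cannot occur, i.e.\ that at most one of the two candidate walls is actually cut out by an element of $\Delta(Y)$.

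That exclusion is exactly where the paper uses the input your proposal never invokes: the mod $2$ datum $\DeltabarY$. If both extensions were splitting roots, then $\bar e_1,\dots,\bar e_{10}$ would all lie in one connected component $\sigma$ of $\DeltabarY$; but they are linearly independent in $S_Y\otimes\FF_2$, whereas $\sigma$ is an ADE diagram of rank at most $9$ (\Cref{prop:connected_comp_delta}); equivalently, $\sigma$, being of type $(A_9,0)$ or $(D_9,0)$ by \Cref{lem:Bmaximal_classification}, cannot contain both an $A_9$- and a $D_9$-extension of $\bar B$. The paper's proof implements this quantitatively: there is a \emph{unique} class $c\in\sigma$ completing $\bar B$, and the two solutions of the relevant equation in \Cref{lem:quadratic_eqn} produce vectors that are distinct modulo $2S_Y$, so at most one of them can be a splitting root; hence $\Delta(Y)|_{B^\perp}$ consists of at most a single pair $\pm r$ and $F$ abuts an isotropic ray. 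Without this mod $2$ step your argument cannot be completed. Two smaller points: the Gram matrix of $B^\perp$ in the primitive case is $\left(\begin{smallmatrix}0&3\\3&-2\end{smallmatrix}\right)$, not $\left(\begin{smallmatrix}0&3\\3&2\end{smallmatrix}\right)$; and your dichotomy is incomplete, since it omits, e.g., two $D_9$-extensions forming an $\widetilde{D}_9$ configuration, or extending curves that meet without closing up an $\widetilde{A}_9$ -- further evidence that ``meet or not'' is not the right organizing principle, while membership of the reductions in $\DeltabarY$ is.
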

\begin{proof}
Note that $B=\{b_1,\dots,b_8\}$ (labeled in the usual way) is either of type $(A_8,E_8)$ or of type $(A_8,A_8)$. In the first case $B^\perp$ is a hyperbolic plane and in the second case $B^\perp$ is spanned by two vectors $a,b$ with $a^2=0$, $b^2=-2$ and $a.b=3$.
In either case $B^\perp$ contains some isotropic vector and $\Delta(B^\perp)$ consists of a single element up to sign.

Since $\langle B \rangle$ is negative definite of rank $8$ and is generated by classes of $(-2)$-curves, $\Pbar_{B^\perp}$ contains a $2$-dimensional face of the nef cone, it is given by 
\[\{x \in \Pbar_{B^\perp} \mid \forall r \in \Delta(Y)|_{B^\perp}\colon x.r \geq 0\}.\] 
Let $f_1$ $f_2$ be generators of the two isotropic rays of $\Pbar_{B^\perp}$. Assume that $\Delta(Y)|_{B^\perp}$ consists of at most a single element $r$. Then $r.f_1>0$ or $r.f_2>0$, hence at least one of them is the sought  isotropic nef class. 
It remains to prove the assumption that $\sharp \Delta(Y)|_{B^\perp} \leq 1$.
First suppose that $B$ is maximal. By \Cref{lem:transitive_on_induced_chambers},
\[\Delta(Y)|_{B^\perp}=\Delta(Y) \cap B^\perp \subseteq \Delta(B^\perp).\] 
The latter consists of a single element up to sign, and the claim follows in this case. 

As for the second case, suppose that $B$ is not maximal. Then there is $r \in \Delta(Y)$ such that $B_1 =\{r\} \cup B\in \Delta^9(Y)$. Thus it is of type $A_9$ or $D_9$ and the corresponding connected component $\sigma \subseteq \DeltabarY$ must be of the same type by \Cref{lem:Bmaximal_classification}.

Suppose that $\sigma$ is of type $(A_9, 0)$.
A direct computation with the $A_9$ root system shows that there is a unique $c \in \sigma$ such that $\{c,\bar b_1,\dots,\bar b_8\}$ is an $A_9$ configuration: Indeed, $\{\bar b_1,\dots,\bar b_8\}^\perp\subseteq \langle \sigma \rangle \cong R_\sigma \otimes \FF_2$ is of dimension one and spanned by $t = \bar b_1+\bar b_3+\bar b_5+\bar b_7+\bar b_9$. Thus, $c$ is either $\bar b_9$ or $\bar t+ \bar b_9$. But $q(t+\bar b_9)=0$. Thus $c=\bar b_9$.
This shows that $\overline{r}=c$ is unique. 

Let $b_1^\vee, \dots,b_8^\vee$ be the basis of $\langle B \rangle_{\QQ}$ dual to $b_1,\dots,b_8$.
Without loss of generality $r.b_8=1$ and $r.b_i=0$ for $i\neq 8$. Hence $r = b_8^\vee+y$ with $y \in (B^\perp)^\vee$. Since $(b_8^\vee)^2=-8/9$, we have $y^2=-10/9$. The lattice $(B^\perp)^\vee$ is spanned by two classes $a^\vee,b^\vee$ with $(a^\vee)^2=2/9, (b^\vee)^2=0$ and $a^\vee.b^\vee=1/3$.
By \Cref{lem:quadratic_eqn} there are (up to sign) $2$ elements $\widetilde{y} \in \left(B^\perp\right)^\vee$ with $\widetilde{y}^2=-10/9$. But they are distinct mod $2$. Since $\det A_8=9$ is not divisible by $2$,
\[S_Y \otimes \FF_2 = \left(\langle B \rangle \otimes \FF_2\right) \oplus \left(B^\perp \otimes \FF_2\right).\]
This means that exactly one $\widetilde{r}=b_8^\vee + \widetilde{y}$ is congruent to $c$ mod $2$, namely $r$. Moreover $\bar b\notin \sigma=\DeltabarY$.
Thus $\Delta(Y)|_{B^\perp}=\{\pm y\}$ as claimed.

For sigma of type $D_9$ we argue analogously and get $r = b_7^\vee+ y$ with $(b_7^\vee)^2=-14/9$ and $y^2=-4/9$.
Again, by \Cref{lem:quadratic_eqn} there are up to sign two solutions for $y \in B^\perp$ and they differ mod $2$.
\end{proof}

\begin{lemma}\label{lem:A8E8invol}
Let $B_0 \in \R^i(Y)$ be an $(A_8,E_8)$ configuration. 
Let $B_0= \{b_1,\dots, b_8\}$ be labeled such that $b_i.b_{i+1}=1$ for $i \in \{1,\dots,7\}$. Then
\begin{enumerate}
 \item there is an involution $g \in \aut(Y)$ with $g(b_j)=b_{9-j}$ for all $j \in \{1,\dots,8\}$ and $g|B_0^\perp = 1$;
  \item there exists an $\widetilde{A}_8$ configuration $B=\{b_0,\dots,b_8\} \subseteq \R(Y,\sigma)$
 \item $b_0,b_3,b_6$ are contained in the same $\Aut(Y)$-orbit;
 \item $b_1,b_2,b_4,b_5,b_7,b_8$ are contained the same $\Aut(Y)$-orbit.
 \item Any $\widetilde{A}_8$ fiber of an elliptic fibration on $Y$ contains an $(A_8,E_8)$ and an $(A_8,A_8)$ configuration. 
\end{enumerate}
\end{lemma}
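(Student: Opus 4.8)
The plan is to prove the five assertions in order, deriving (2)--(5) from the lattice involution produced in (1); throughout I use that $\aut(Y)$-orbits and $\Aut(Y)$-orbits of curves coincide (the kernel of $\Aut(Y)\to\aut(Y)$ acts trivially on $S_Y$). For (1), since $B_0$ is of type $(A_8,E_8)$ its span $R=\langle B_0\rangle\cong A_8$ has primitive closure $R'\cong E_8$, and as $E_8$ is unimodular $S_Y\cong E_{10}$ splits orthogonally as $R'\oplus B_0^\perp\cong E_8\oplus U$ with $B_0^\perp\cong U$ a hyperbolic plane. The diagram flip $b_j\mapsto b_{9-j}$ is an isometry of $A_8$ preserving the order-$3$ isotropic subgroup of $D_{A_8}\cong\ZZ/9\ZZ$ gluing $A_8$ to $E_8$, hence extends to the involution $\widetilde\phi=-w_0\in O(E_8)$, where $w_0$ is the longest element of $W(A_8)$. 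I set $g=\widetilde\phi\oplus\id_U$: it is an involution, flips $B_0$, fixes $B_0^\perp$ pointwise, and fixes a positive vector of $U$, so $g\in O^+(S_Y)$. To see $g\in G_Y$ I reduce mod $2$: since $-1\equiv1$, $\overline g$ acts as $\overline{w_0}$ on $\overline R=\overline{R'}=E_8\otimes\FF_2$ and trivially on $\overline{B_0^\perp}$, so writing $w_0$ as a reduced word in the $s_{b_i}$ exhibits $\overline g=\prod_k s_{\overline b_{i_k}}$ as a product of transvections attached to the splitting roots $\overline b_i\in\sigma\subseteq\DeltabarY$; hence $\overline g\in\Gbar_Y$ and $g\in G_Y$ by \Cref{remark:GbarY}.

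The main obstacle is upgrading $g\in G_Y$ to $g\in\aut(Y)$, i.e.\ showing $g$ fixes $\Nef_Y$. Here $g$ fixes the codimension-$8$ face $F=\Nef_Y\cap B_0^\perp$ pointwise, so $F$ is a common face of $\Nef_Y$ and $\Nef_Y^g$; by transitivity of the reflections in the roots of $\Delta(Y)$ lying in the primitive closure of $\langle B_0\rangle$ on the $\Delta(Y)$-chambers meeting $F$ (as in the proofs of \Cref{lem:phi-fiber} and \Cref{lem:transitive_on_induced_chambers}) there is $w\in W(Y)$ fixing $B_0^\perp$ with $\Nef_Y^g=\Nef_Y^w$. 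Then $a:=gw^{-1}\in\aut(Y)$ fixes $B_0^\perp$ and permutes the facets $b_1^\perp,\dots,b_8^\perp$ of $\Nef_Y$ adjacent to $F$ by a graph automorphism of $A_8$. If $a$ is the flip then $a=g$ and we are done; the delicate point is to exclude $a=\id$, which would force $g=w\in W(Y)$ and hence $g\notin\aut(Y)$. Equivalently one must verify that $\widetilde\phi$ is not induced by an element of $W(Y)$ fixing $B_0^\perp$. As $\widetilde\phi\notin W(A_8)$ (because $-1\notin W(A_8)$), this can only fail when the splitting roots inside $R'$ enlarge the relevant Weyl group to all of $W(E_8)$ — precisely the case $\sigma$ of type $(E_8,0)$ — so (1) is invoked for $\sigma$ of type $(A_8,0)$, $(A_9,0)$ or $(D_9,0)$, where the enlargement does not occur.

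For (2), take a nef primitive isotropic $f\in B_0^\perp$ (\Cref{lem:A8_nef_isotropic}); the associated genus-$1$ fibration places $b_1,\dots,b_8$ in a single reducible fibre. The only affine types containing $A_8$ are $\widetilde A_8$ and $\widetilde E_8$, and $\widetilde E_8$ is excluded because every fibre component is a rational curve connected to $B_0$, hence has image in $\sigma$, whose type (by \Cref{lem:Bmaximal_classification}) admits no $E_8$ subsystem. Thus the fibre has type $I_9$ and its remaining component $b_0=f-\sum_{i=1}^8 b_i$, a $(-2)$-curve with $\overline{b_0}\in\sigma$, completes $B=\{b_0,\dots,b_8\}$ to an $\widetilde A_8$ configuration. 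For (5) I would compute, inside the rank-$9$ lattice spanned by the nine components, the primitive closure in $S_Y$ of each $A_8$ obtained by deleting one node; the index-$3$ gluing $A_8\subseteq E_8$ (a $\ZZ/3\subseteq\ZZ/9\cong D_{A_8}$ phenomenon) shows that deleting $b_k$ yields type $(A_8,E_8)$ exactly for $k\equiv0\pmod3$ and type $(A_8,A_8)$ otherwise — three of the former and six of the latter, the former equally spaced around the $9$-cycle.

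Finally, (3) and (4) combine these. Applying (1) to the $(A_8,E_8)$ configurations $B\setminus\{b_0\}$ and $B\setminus\{b_3\}$ yields involutions $g_0,g_3\in\aut(Y)$ acting on the $9$-cycle as the reflections fixing $b_0$, respectively $b_3$. Their product $g_0g_3$ is the rotation $b_i\mapsto b_{i-3}$ (indices mod $9$): it cycles $b_0\mapsto b_6\mapsto b_3\mapsto b_0$, which gives (3), and on the remaining six curves it acts as the two $3$-cycles $(b_1\,b_7\,b_4)$ and $(b_2\,b_8\,b_5)$, which the reflection $g_0$ (sending $b_1\mapsto b_8$) links into a single orbit, which gives (4). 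The only genuinely subtle step remains the nef-preservation in (1); once that is secured the remaining parts are formal consequences of the $I_9$-fibre geometry and the mod-$3$ gluing.
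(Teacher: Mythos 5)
Your construction of $g$, the verification that $g\in O^+(S_Y)$, and the mod-$2$ argument that $g\in G_Y$ (via $-\alpha=w_0\in W(A_8)$) coincide with the paper's proof, and your treatment of (2)--(5) (nef isotropic class in $B_0^\perp$ from \Cref{lem:A8_nef_isotropic}, exclusion of the other fibre types, the $3$-divisible glue class, and the dihedral group generated by the flips) also follows the paper's route. The genuine gap is in the last step of (1). You correctly isolate the only possible failure mode --- $a=\id$, equivalently $g\in W(Y)$, equivalently $\Delta(Y)\cap R'$ being the full root system of $R'\cong E_8$ rather than just $\Delta(\langle B_0\rangle)$ --- but you then ``resolve'' it by restricting the lemma to $\sigma$ of type $(A_8,0)$, $(A_9,0)$ or $(D_9,0)$. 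That does not prove the statement: the lemma carries no hypothesis on $\sigma$, and the paper invokes it later precisely when $\sigma$ has type $(E_8,0)$ (in the final proposition, case ``$B$ of type $(A_8,E_8)$''), which your restricted version does not cover. Moreover, your claim that the enlargement occurs ``precisely'' in the $(E_8,0)$ case is only asserted; for the other three types it requires checking that no root of $R_\sigma$ outside the relevant $A_8$ subsystem reduces mod $2$ into $\overline{\langle B_0\rangle}$.

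What closes the gap --- and what the paper's one-line assertion $W(x)=W(B_0)$ implicitly rests on --- is the hypothesis that the $b_i$ are classes of smooth rational curves, i.e.\ indecomposable elements of $\Delta^+(Y)$ by \Cref{aut_and_weyl_group}(5). The set $\Delta(Y)\cap R'$ is a closed subsystem of $\Delta(R')\cong E_8$ containing the $A_8$ system $\Delta(\langle B_0\rangle)$, hence equals one of the two. If it were all of $E_8$, then every positive root of this system would be effective, so the indecomposable elements of $\Delta^+(Y)\cap R'$ would be the eight nodes of an $E_8$-shaped fundamental system, and the eight indecomposable classes $b_1,\dots,b_8$ would have to be exactly those nodes --- absurd, since $B_0$ is an $A_8$ chain. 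Hence $\Delta(Y)\cap R'=\Delta(\langle B_0\rangle)$ holds unconditionally under the lemma's hypotheses, your dichotomy collapses to ``$a$ is the flip'', and no restriction on $\sigma$ is needed; in particular, for $\sigma$ of type $(E_8,0)$ there simply is no $(A_8,E_8)$ configuration of curves, so the lemma (and the paper's later use of it) is vacuously fine there. Two smaller inaccuracies: your formula $b_0=f-\sum_{i=1}^8 b_i$ presumes the $I_9$ fibre is the half-fibre (for a simple fibre the ninth component is $2f-\sum_{i=1}^8 b_i$), though this does not affect (2); and your computation for (5) treats the fibre produced in (2), whereas the statement concerns an arbitrary $\widetilde{A}_8$ fibre, for which one must additionally rule out that all nine deletions span primitive sublattices.
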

\begin{proof}
(1) Set $R= \langle B \rangle \cong A_8$. Then the primitive closure $R' \cong E_{8}$ is in fact the unique even unimodular overlattice of $R$. The conditions in (1) define an isometry $\alpha$ of $R$ which extends to $R'$ by uniqueness. Then $g=\alpha \oplus \id_{B^\perp}$ preserves $S_Y=R'\oplus B^\perp$ and lies in $O^+(S_Y)$ because it fixes a vector of positive square in $B^\perp$.

We know that $O(A_8)=W(A_8)\ltimes \langle \alpha \rangle$ with $\alpha$ the diagram automorphism. 
We see that $W(A_8)$ is the kernel of $O(A_8)\to O(A_8^\vee/A_8)$ and that $\alpha$ acts as $-1$ on the discriminant group.  
Thus, $-\alpha \in W(A_8)$, i.e. $-\alpha \oplus \id_{B^\perp} \in W(R)\subseteq G_Y$. We have $g \in O^+(S_Y)$ 
and by the above $\overline{g}=\overline{-\alpha\oplus \id_{B^\perp}}$ lies in $\Gbar_Y$. Hence, by \Cref{remark:GbarY}, $g \in G_Y$. 
In particular, $g$ acts on the set of $\Delta(Y)$-chambers. 
Since $B_0\in \R^{8}(Y)$, $\P_{B^\perp}$ contains a $2$-dimensional face of the nef cone. Let $x$ be a general element of that face. 
Then $W(x)=W(B_0)$ acts transitively on the $\Delta(Y)$-chambers containing $x$. The nef cone is the one on which the elements of $B_0$ evaluate non-negatively. The same is true for $\Nef_Y^g$, because $g$ preserves $B_0$.
Hence $g$ preserves the nef cone. 

(2) By \Cref{lem:A8_nef_isotropic}, there is a primitive isotropic nef class $f \in B_0^\perp$. It induces an elliptic fibration and $B_0$ is contained in a reducible singular fiber. This fiber must be of type $\widetilde{A}_8$, $\widetilde{D}_8$ or $\widetilde{E}_8$. We can exclude $\widetilde{D}_8$ because it does not contain an $A_8$-configuration. An $\widetilde{E}_8$ fiber contains an $A_8$ configuration. But that configuration spans a primitive sublattice of $S_Y$. Hence, this is excluded as well. 

(3-4) Denote $B_i = \{b_j \mid j \neq i\}$. We claim that $B_i$ is of type $(A_8,E_8)$ for $i=0,3,6$ and of type $(A_8,A_8)$ else. Note that $b_0,\dots, b_8$ are linearly independent.
The vector 
\[v :=\tfrac{1}{3}(2 b_1+b_2+2b_4+b_5+2b_7+b_8)\]
lies in $S_Y$ because it lies in the unique even overlattice of $\langle B_0 \rangle  \cong A_8$.
We see that $v \in \langle B_i \rangle \otimes \QQ$ precisely for $i=0,3,6$. Since moreover $v \in S_Y$, $\langle B_i \rangle$ is not primitive in $S_Y$ for $i=0,3,6$ as claimed.

Using the involutions constructed in (1) with $B_0$ replaced by $B_0$, $B_3$ and $B_6$, we obtain that 
$\{b_i \mid i \in \{1,2,4,5,8\}\}$ is contained in a single orbit and that 
$\{b_i \mid i \in \{0,3,6\}\}$ is contained in a single orbit.

(5) Follows from the proof of (4) because $A_8$ has a unique primitive embedding into $E_{10}$.
\end{proof}

\begin{remark}
A more geometric approach to \Cref{lem:A8E8invol} is to start from a genus one fibration $f$ with a $I_9$ fiber, which must be simple. Its Jacobian $J(f)$ is an extremal rational elliptic surface with Mordell-Weil group $\ZZ/3\ZZ$. By \cite[Corollary 2.3]{martin2024enriquessurfaceszeroentropy} it acts as a rotation of order $3$ on the $I_9$ fiber and the inversion acts as a reflection by \cite[Lemma 2.4]{martin2024enriquessurfaceszeroentropy}. Together, they generate the same group as the involutions in \Cref{lem:A8E8invol} (1).
\end{remark}

A pair of isotropic vectors $f_1,f_2$ with $f_1.f_2=1$ is called a $U$-pair if $f_1$ is nef. A $U$-pair induces a generically finite $2:1$ map $Y \to D$ onto an anti-canonical del Pezzo surface $D$ of degree $4$ in $\PP^4$. The corresponding deck transformation is called a bielliptic involution. It preserves $f_1$ and $f_2$, and acts in a controlled way on the $(-2)$-curves in $U^\perp$. For example, any maximal $A_n$ configuration in $U^\perp$ is flipped by \cite{shimada:salem} (see also \cite[Lemma 8.7.20]{enriquesII}).  
\begin{lemma}\label{lem:nef_Upair}
Let $\sigma \subseteq \DeltabarY$ be a connected component and $f_1,f_2$ be a $U$-pair such that $\sigma \subseteq \{\overline{f}_1,\overline{f}_2\}^\perp$. Suppose that $f_1$ is nef.
Then $f_2$ is not nef if and only if $\overline{f_2-f_1} \in \DeltabarY$ if and only if $f_2-f_1 \in \R(Y)$. 
\end{lemma}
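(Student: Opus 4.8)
The plan is to set $g := f_2 - f_1$ and to prove the three assertions by running the cycle
\[
f_2\text{ not nef}\ \Longrightarrow\ g\in\R(Y)\ \Longrightarrow\ \overline{g}\in\DeltabarY\ \Longrightarrow\ f_2\text{ not nef}.
\]
First I record the elementary data $g^2=f_2^2-2f_1\cdot f_2+f_1^2=-2$, $f_1\cdot g=1$ and $f_2\cdot g=-1$; in particular the reflection $s_g$ interchanges $f_1$ and $f_2$, which is the structural reason the statement should be true. The two cheap arrows are then immediate. If $g\in\R(Y)$ then $g\in\Delta(Y)$ by \Cref{aut_and_weyl_group}(5), so $\overline{g}\in\DeltabarY$. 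Conversely, if $\overline{g}\in\DeltabarY$ then $g$ is a splitting root by the characterisation recorded at the start of \Cref{sec:root_invariants} (using \Cref{rmk:splitting}), hence $g$ or $-g$ is effective; since $f_1$ is nef and $f_1\cdot g=1>0$ the class $-g$ cannot be effective, so $g\in\Delta^+(Y)$, and then $f_2\cdot g=-1<0$ produces an effective class pairing negatively with $f_2$, so $f_2$ is not nef.

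The real content is the remaining implication, that $f_2$ not nef forces $g\in\R(Y)$. Since $f_2\in\P_Y\cup\overline{\P}^{\QQ}_Y$ (as $f_1\cdot f_2=1>0$ with $f_1$ nef) and is not nef, by \Cref{aut_and_weyl_group}(3),(6) there is a smooth rational curve $r\in\R(Y)$ with $f_2\cdot r<0$. Put $a=f_1\cdot r$ and $b=f_2\cdot r$, so $a\ge 0$ and $b\le -1$. Projecting $r$ onto the hyperbolic plane $U=\langle f_1,f_2\rangle$ gives $r_U=bf_1+af_2$ with $r_U^2=2ab$, and since $U^\perp$ is negative definite, $r_\perp^2=r^2-r_U^2=-2-2ab\le 0$, whence $ab\ge -1$. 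The decisive extra input is that $h:=f_1+f_2$ is nef: this is where the $U$-pair geometry enters, $h$ being (a multiple of) the pullback under the bielliptic morphism $Y\to D$ attached to the pair, and pullbacks of nef classes are nef. Granting this, $h\cdot r=a+b\ge 0$ forces $a\ge -b\ge 1$; together with $ab\ge -1$ and $b\le -1$ this leaves only $a=1$, $b=-1$, so $r_\perp^2=0$, hence $r_\perp=0$ and $r=r_U=f_2-f_1=g$. Thus $g=r\in\R(Y)$, closing the cycle.

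The step I expect to be the crux is the nefness of $h=f_1+f_2$, equivalently that the given $U$-pair genuinely defines the morphism to the quartic del Pezzo surface. Without it one cannot rule out a \emph{vertical} obstructing curve $r$ with $f_1\cdot r=0$ and $f_2\cdot r<0$; such an $r$ would be a component of the effective divisor representing $g$, so $g$ would be reducible and $g\notin\R(Y)$ even though $f_2$ is not nef. The hypothesis $\sigma\subseteq\{\overline{f}_1,\overline{f}_2\}^\perp$, together with the bielliptic involution fixing $f_1$ and $f_2$ and permuting the $(-2)$-curves in $U^\perp$, is exactly what is needed to pin down this morphism and exclude the vertical case; once $h$ is known to be nef, the remainder is the short numerical argument above.
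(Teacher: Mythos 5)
Your two cheap implications are correct and agree with the paper's own argument: $\R(Y)\subseteq\Delta(Y)$ gives the first, and the mod $2$ characterisation of splitting roots (\Cref{rmk:splitting}) together with $f_1\cdot(f_2-f_1)=1$, $f_2\cdot(f_2-f_1)=-1$ gives the second. The numerical argument you run \emph{after} assuming $h:=f_1+f_2$ nef is also correct. The genuine gap is the step you yourself single out: the nefness of $h$ is never proved, and the justification offered is circular. The remark that a $U$-pair ``induces'' a $2:1$ map onto a quartic del Pezzo surface refers to the map defined by $|2f_1+2f_2|$, and this linear system defines a \emph{morphism} with $2h$ equal to the pullback of a hyperplane class only when $h$ is already nef; in the case you must treat ($f_2$ not nef) the system may have base components, one only has a rational map, and ``pullback of nef is nef'' is unavailable. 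Since ``$h$ is nef'' is equivalent to the nonexistence of a curve $r$ with $(f_1+f_2)\cdot r<0$, you are assuming a statement essentially equivalent to the one to be proved.

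Worse, this step cannot be recovered from the stated hypotheses, because $\sigma\subseteq\{\overline{f}_1,\overline{f}_2\}^\perp$ is only a congruence condition and is blind to a vertical curve meeting $f_2$ with even negative intersection number. Concretely, on a general nodal Enriques surface one has $\DeltabarY=\sigma=\{\overline{\alpha}\}$ of type $(A_1,0)$, and for a suitable marking $S_Y\cong U\oplus E_8$ (with hyperbolic basis $e,f$ of $U$ and $\alpha\in E_8$ a root) one may arrange that $f_1:=e$ is nef and that $\alpha$ is the class of a smooth rational curve in a fibre of $|2f_1|$. Then $f_2:=e+f+\alpha$ is isotropic with $f_1\cdot f_2=1$, the hypothesis of the lemma holds because $\alpha\cdot f_1=0$ and $\alpha\cdot f_2=-2$, and $f_2$ (hence also $h$) is not nef; yet $f_2-f_1=f+\alpha$ is not congruent to $\alpha$ modulo $2S_Y$, so it is not even a splitting root. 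So the vertical case is a genuine obstruction to your plan, not a detail that the bielliptic involution will absorb.

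For comparison, the paper's proof is purely lattice-theoretic and never invokes the del Pezzo map: since $U=\langle f_1,f_2\rangle$ is unimodular, $S_Y=U\oplus U^\perp$ with $U^\perp$ negative definite, which forces $\Delta(Y)|_U=\Delta(Y)\cap U\subseteq\{\pm(f_2-f_1)\}$, and the nefness of $f_2$ is then read off from the induced chamber structure on $\P_U$ according to whether $\overline{f_2-f_1}\in\DeltabarY$. Note, however, that this argument also tracks only roots whose hyperplanes do not contain $f_1$, so the vertical curves you worried about are equally invisible to it; in the paper's applications they are controlled by extra geometric input (the visible vertical curves span a rank $8$ sublattice of $f_1^\perp/\ZZ f_1$ and pair non-negatively with $f_2$, so all remaining fibres of $|2f_1|$ are irreducible). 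Your diagnosis of where the difficulty lies is therefore accurate; what is missing from your write-up, and what neither the bielliptic map nor the mod $2$ hypothesis supplies, is an actual argument controlling $\Delta(Y)\cap f_1^\perp$.
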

\begin{proof}
Since $f_1.f_2>0$ and $f_1$ is in the positive cone, so is $f_2$.
The set of vectors of $\Delta(Y)$ defining hyperplanes not containing $f_1$ and separating $f_1$ and $f_2$ projects onto $\Delta(Y)|_{\langle f_1,f_2\rangle}$. 
The vectors $f_1$, $f_2$ span a hyperbolic plane $U$ and $S_Y = U \oplus U^\perp$.
This implies that $\Delta(Y)|_U = \Delta(Y) \cap U$. Up to sign $U$ contains a single $-2$ vector. It is given by $r:=f_2-f_1$ and defines a face of the nef cone. Therefore, it is a $(-2)$-curve. 
If $\overline{r} \in \DeltabarY$, then $r$ is an effective splitting root and $f_2$ not nef. Otherwise, $\Delta(Y)|_U = \emptyset$ and $f_2$ is nef.
\end{proof}

\begin{proposition}\label{lemma:A8-transitive}
Let $\sigma \subseteq \DeltabarY$ be a connected component of type $(A_8, 0)$.
Then $\R(Y,\sigma)$ is contained in a single $\aut(Y)$-orbit.
\end{proposition}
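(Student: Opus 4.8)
The plan is to reduce the whole statement to the components of a single $I_9=\widetilde{A}_8$ fibre, to observe that such a fibre carries at most two $\aut(Y)$-orbits, and then to merge those two orbits. First I would show that every $\delta\in\R(Y,\sigma)$ is a component of some $I_9$ fibre, exactly as in the proof of \Cref{lemma:An-transitive}. By \Cref{lem:Bmaximal_classification}(6) the curve $\delta$ lies in a maximal connected configuration $B$ of type $(A_8,A_8)$ or $(A_8,E_8)$, and by \Cref{lem:A8_nef_isotropic} the plane $B^\perp$ contains a nef isotropic class $f$. The associated genus-one fibration has $B$ inside a reducible fibre, which must be of type $\widetilde{A}_8$: it is not $\widetilde{D}_8$ (no $A_8$ subdiagram), and it is not $\widetilde{E}_8$, because deleting the affine node of an $\widetilde{E}_8$ fibre would leave an $E_8$-configuration of $(-2)$-curves whose classes are all connected to $\sigma$ in $\DeltabarY$, hence an $(E_8,\ast)$-component, contradicting that $\sigma$ has type $(A_8,0)$ (this variant of the exclusion in \Cref{lem:A8E8invol}(2) works for both types of $B$). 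Thus $\delta$ is one of the nine components $b_0,\dots,b_8$ of an $\widetilde{A}_8$ fibre.

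Next I would invoke \Cref{lem:A8E8invol}. Parts (3) and (4) show that, for a fixed cycle, the nine components split into at most two $\aut(Y)$-orbits: the triple $O_1=\{b_0,b_3,b_6\}$ of \emph{special} components (those whose removal leaves a non-primitive, i.e.\ $(A_8,E_8)$, configuration) and the remaining six $O_2=\{b_1,b_2,b_4,b_5,b_7,b_8\}$, the two classes being swept out by the dihedral group generated by the order-three Mordell–Weil rotation and the bielliptic reflections of the $(A_8,E_8)$-subconfigurations. So the proposition reduces to proving $O_1=O_2$.

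The hard part is this merging step, and I expect it to be the main obstacle. The difficulty is that the special triple is an isometry invariant of the pair (fibre class, cycle): any automorphism fixing $f$ preserves $O_1$, and so does the \emph{second} elliptic fibration attached to an $(A_8,A_8)$-subconfiguration $C\subset\{b_0,\dots,b_8\}$. Indeed, if $q$ denotes the extra component completing $C$ to the second $I_9$ fibre, the torsion class $\tfrac13(2q+b_2+2b_4+b_5+2b_7+b_8)\in S_Y$ detecting the $(A_8,E_8)$-subconfigurations of that new cycle again vanishes exactly on $b_0,b_3,b_6$, so the special triple is unchanged. Hence the merge must come from an $I_9$ fibre whose special triple is genuinely different, and the plan is to produce one by realising a given special component, say $b_0$, as an \emph{endpoint} of an $(A_8,E_8)$-configuration $D$ (endpoints are never special). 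To build such a $D$ I would use a $U$-pair $f_1,f_2$ with $\sigma\subseteq\{\overline{f_1},\overline{f_2}\}^\perp$ as in \Cref{lem:nef_Upair} and the controlled action of its bielliptic involution on $U^\perp$, exploiting that $\R(Y,\sigma)$ contains many more curves than the nine cycle classes. Completing $D$ to an $I_9$ fibre via \Cref{lem:A8_nef_isotropic} and applying \Cref{lem:A8E8invol}(4) to that fibre would then place $b_0$ in the $O_2$-class, forcing $O_1=O_2$.

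Once the two orbits are identified, the maximality of the configurations together with \Cref{lem:transitive_on_induced_chambers}(3) closes the argument in the same way as in the $A_n$ case with $n\le 7$. The single delicate point I anticipate having to pin down rigorously is the existence of the second fibration above—equivalently, the construction of one automorphism in $\aut(Y)$ carrying a special component to a non-special one.
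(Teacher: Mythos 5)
Your reduction steps agree with the paper's proof: every $\delta\in\R(Y,\sigma)$ lies in a maximal configuration of type $(A_8,A_8)$ or $(A_8,E_8)$ (\Cref{lem:Bmaximal_classification}), hence among the components $b_0,\dots,b_8$ of an $\widetilde{A}_8$ fiber (\Cref{lem:A8_nef_isotropic}, \Cref{lem:A8E8invol}(5)), and \Cref{lem:A8E8invol}(3--4) leaves at most the two orbits $O_1=\{b_0,b_3,b_6\}$ and $O_2$. Your side claim that the special triple is unchanged when one passes to the second fibration of an $(A_8,A_8)$-subconfiguration is also correct: writing $B_1^\perp=\langle a,b\rangle$ with $a^2=0$, $b^2=-2$, $a.b=3$, the two primitive isotropic vectors $a$ and $a+3b$ differ by $3b$, so the new ninth component is congruent to $b_1$ modulo $3S_Y$ and the $3$-divisibility pattern, hence the special triple, is reproduced. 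So you have located the genuine crux correctly.

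But that crux is exactly what your proposal does not prove, and this is a real gap, which you yourself flag: the existence of an $(A_8,E_8)$-configuration having a prescribed special component $b_0$ as an endpoint (equivalently, of an automorphism moving a special component off the special triple) is asserted only as a plan (``use a $U$-pair and its bielliptic involution, exploiting that $\R(Y,\sigma)$ contains many more curves''), with no construction. This is where the paper does essentially all of its work: it takes the second primitive isotropic vector $f_2$ of $B_0^\perp$, obtains a new curve $b_{10}$ in both cases ($f_2$ nef, giving the ninth component of a second $I_9$ fiber, or $f_2$ not nef, giving $b_{10}=f_2-f_1$ by \Cref{lem:nef_Upair}); the fibration determined by $b_0+b_{10}$ then has an $\widetilde{A}_6$ fiber whose missing component $b_{11}$ meets $b_1$ and $b_8$; finally this half-fiber $f_3$ and the half-fiber $f_4=b_7+b_8+b_{11}$ form a $U$-pair ($f_3.f_4=1$) whose bielliptic involution exchanges $b_7\leftrightarrow b_{11}$ and $b_2\leftrightarrow b_6$, merging $O_1$ and $O_2$. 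Producing these auxiliary curves and pinning down the involution's action is the substance of the proof, not a routine verification. There is also a smaller logical gap in your merge even if the new cycle were granted: \Cref{lem:A8E8invol}(4) applied to it only shows that $b_0$ is equivalent to the other non-special components of that cycle, which need not include any element of $O_2$; to close the loop you additionally need that all $(A_8,E_8)$-configurations in $\R(Y,\sigma)$ form a single $\aut(Y)$-orbit --- which does follow from \Cref{prop:same_AutY_orbit}(1), since $[E_8:A_8]=3$ is odd --- together with the observation that the resulting isometry takes endpoints to endpoints, landing $b_0$ on $b_1$ or $b_8$.
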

\begin{proof}
Let $\delta \in \R(Y,\sigma)$, then we know that $\delta$ is contained in a maximal connected configuration $B_0$, which must be of type $(A_8,A_8)$ or $(A_8,E_8)$ by \Cref{lem:Bmaximal_classification} (2).

There exists a nef primitive isotropic vector $f_1$ in $B_0^\perp$ (\Cref{lem:A8_nef_isotropic}) and therefore an elliptic fibration with an $\widetilde{A}_8$ fiber with irreducible components $B:=\{b_0,\dots, b_8\}$.
By \Cref{lem:A8E8invol} (5), we can label them in such a way that $B_0:=\{b_1,\dots, b_8\}$ is of type $(A_8,E_8)$. By \Cref{lem:A8E8invol} (3 - 4), $B$ splits into at most two $\Aut(Y)$-orbits. One containing $b_2$ and the other one $b_6$.
Since $B_0$ is of type $(A_8,E_8)$, its orthogonal complement is a hyperbolic plane. Let $f_2$ be the unique isotropic vector orthogonal to $b_1,\dots, b_8$ and with $f_1.f_2=1$.\\

First suppose that $f_2$ is nef, i.e. the class of a half-fiber. By the same argument as in the proof of \Cref{lem:A8E8invol} (2), $B_0$ is contained in a fiber of type $\widetilde{A}_8$ of the elliptic fibration induced by $f_2$. 
This fiber is simple by \cite[Proposition 5.10]{brandhorst-gonzalez:527} because $f_2 \not \in \langle \sigma \rangle$
Let $b_{10} = 2f_2- b_1-\dots - b_8$ be the remaining irreducible component of the fiber.  
Since $f_1.f_2=1$, $b_{10}.b_0=2$. The situation is depicted in \Cref{figure:A8A8_1}.
\begin{figure}[H]
\begin{tikzpicture}
\tikzset{VertexStyle/.style= {fill=black, inner sep=1.5pt, shape=circle}}
\Vertex[NoLabel,x=1.532088886237956,y=1.2855752193730785]{3a}
\Vertex[NoLabel,x=0.34729635533386083,y=1.969615506024416]{4a}
\Vertex[NoLabel,x=-0.9999999999999996,y=1.7320508075688774]{5a}
\Vertex[NoLabel,x=-1.8793852415718166,y=0.6840402866513378]{6a}
\Vertex[NoLabel,x=-1.8793852415718169,y=-0.6840402866513373]{7a}
\Vertex[NoLabel,x=-1.0000000000000009,y=-1.732050807568877]{8a}
\Vertex[NoLabel,x=0.34729635533385994,y=-1.9696155060244163]{0a}
\Vertex[NoLabel,x=1.5320888862379556,y=-1.2855752193730792]{1a}
\Vertex[NoLabel,x=2.0,y=-4.898587196589413e-16]{2a}
\Vertex[NoLabel,x=0.2,y=-1.3]{10a}
\Edges(1a,2a,3a,4a,5a,6a,7a,8a,0a,1a)
\Edges(8a,10a,1a)
\path [-,bend left=15] (10a) edge node {} (0a);
\path [-,bend right=15] (10a) edge node {} (0a);
\tikzset{VertexStyle/.style= {inner sep=1.5pt, shape=circle}}
\Vertex[x=1.7619022191736493,y=1.47841150227904]{3}
\Vertex[x=0.39939080863393994,y=2.265057831928078]{4}
\Vertex[x=-1.1499999999999995,y=1.991858428704209]{5}
\Vertex[x=-2.161293027807589,y=0.7866463296490384]{6}
\Vertex[x=-2.1612930278075893,y=-0.7866463296490378]{7}
\Vertex[x=-1.150000000000001,y=-1.9918584287042083]{8}
\Vertex[x=0.3993908086339389,y=-2.2650578319280785]{0}
\Vertex[x=1.7619022191736489,y=-1.478411502279041]{1}
\Vertex[x=2.3,y=-5.633375276077824e-16]{2}
\Vertex[x=0.1,y=-1.0]{10}
\end{tikzpicture}
\caption{$(-2)$-curves visible so far.}\label{figure:A8A8_1}
\end{figure}
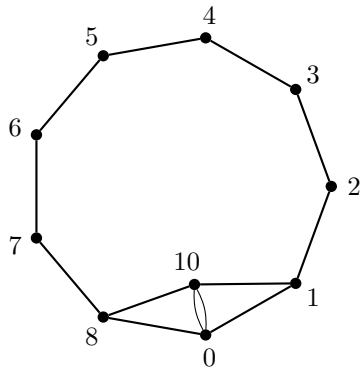
We see that $b_0,b_{10}$ is an $\widetilde{A}_1$ configuration of $(-2)$-curves. Therefore, $b_0+b_{10}$ defines another elliptic fibration with half-fiber 
\[f_3=\frac{1}{2}\left(b_0+b_{10}\right)= f_1+f_2 - b_1 - \dots - b_8.\]
Since $f_3.b_i=0$ for $i=2,\dots ,7$, these are components of a singular fiber containing an $A_6$ diagram. We calculate that $\sigma \cap \bar f_3^\perp$ maps to an $A_1\oplus A_6$ configuration in $\bar f_3^\perp/\FF_2 \bar f_3$. Then \cite{brandhorst-gonzalez:527}, implies that $f_3$ has (at least) a simple $\widetilde{A}_1$ fiber given by $b_0,b_{10}$ and a simple $\widetilde{A}_6$ fiber containing $b_2,\dots,b_7$. 
Let $b_{11} = 2f_3 - b_2 - \dots -b_7$ be the remaining component, see \Cref{figure:A8A8_2}.

\begin{figure}[H]
\begin{tikzpicture}
\tikzset{VertexStyle/.style= {fill=black, inner sep=1.5pt, shape=circle}}
\Vertex[NoLabel,x=1.532088886237956,y=1.2855752193730785]{3a}
\Vertex[NoLabel,x=0.34729635533386083,y=1.969615506024416]{4a}
\Vertex[NoLabel,x=-0.9999999999999996,y=1.7320508075688774]{5a}
\Vertex[NoLabel,x=-1.8793852415718166,y=0.6840402866513378]{6a}
\Vertex[NoLabel,x=-1.8793852415718169,y=-0.6840402866513373]{7a}
\Vertex[NoLabel,x=-1.0000000000000009,y=-1.732050807568877]{8a}
\Vertex[NoLabel,x=0.34729635533385994,y=-1.9696155060244163]{0a}
\Vertex[NoLabel,x=1.5320888862379556,y=-1.2855752193730792]{1a}
\Vertex[NoLabel,x=2.0,y=-0]{2a}
\Vertex[NoLabel,x=0.2,y=-1.3]{10a}
\Vertex[NoLabel,x=0.0554999,y=-0.342]{11a}
\Edges(1a,2a,3a,4a,5a,6a,7a,8a,0a,1a)
\Edges(8a,10a,1a)
\Edges(7a,11a,2a)
\path [-,bend left=15] (10a) edge node {} (0a);
\path [-,bend right=15] (10a) edge node {} (0a);
\path [dashed] (11a) edge node {?} (1a);
\path [dashed] (11a) edge node {?} (8a);
\tikzset{VertexStyle/.style= {inner sep=1.5pt, shape=circle}}
\Vertex[x=1.7619022191736493,y=1.47841150227904]{3}
\Vertex[x=0.39939080863393994,y=2.265057831928078]{4}
\Vertex[x=-1.1499999999999995,y=1.991858428704209]{5}
\Vertex[x=-2.161293027807589,y=0.7866463296490384]{6}
\Vertex[x=-2.1612930278075893,y=-0.7866463296490378]{7}
\Vertex[x=-1.150000000000001,y=-1.9918584287042083]{8}
\Vertex[x=0.3993908086339389,y=-2.2650578319280785]{0}
\Vertex[x=1.7619022191736489,y=-1.478411502279041]{1}
\Vertex[x=2.3,y=-5.633375276077824e-16]{2}
\Vertex[x=0.1,y=-1.0]{10}
\Vertex[x=-0.02,y=-0.0]{11}
\end{tikzpicture}
\caption{$(-2)$-curves visible so far.}\label{figure:A8A8_2}
\end{figure}
Since 
\begin{eqnarray}
    b_8.2f_3 &=& b_8.(b_0+b_{10}) = 2\\
    &=& b_8.(b_7+b_{11}) = 1 + b_8.b_{11},
\end{eqnarray}
we obtain $b_8.b_{11}=1$ and analogously $b_1.b_{11}=1$, see \Cref{figure:A8A8_3}.
\begin{figure}[H]
\begin{tikzpicture}
\tikzset{VertexStyle/.style= {fill=black, inner sep=1.5pt, shape=circle}}
\Vertex[NoLabel,x=1.532088886237956,y=1.2855752193730785]{3a}
\Vertex[NoLabel,x=0.34729635533386083,y=1.969615506024416]{4a}
\Vertex[NoLabel,x=-0.9999999999999996,y=1.7320508075688774]{5a}
\Vertex[NoLabel,x=-1.8793852415718166,y=0.6840402866513378]{6a}
\Vertex[NoLabel,x=-1.8793852415718169,y=-0.6840402866513373]{7a}
\Vertex[NoLabel,x=-1.0000000000000009,y=-1.732050807568877]{8a}
\Vertex[NoLabel,x=0.34729635533385994,y=-1.9696155060244163]{0a}
\Vertex[NoLabel,x=1.5320888862379556,y=-1.2855752193730792]{1a}
\Vertex[NoLabel,x=2.0,y=-0]{2a}
\Vertex[NoLabel,x=0.2,y=-1.3]{10a}
\Vertex[NoLabel,x=0.0554999,y=-0.342]{11a}
\Edges(1a,2a,3a,4a,5a,6a,7a,8a,0a,1a)
\Edges(8a,10a,1a)
\Edges(7a,11a,2a)
\path [-,bend left=15] (10a) edge node {} (0a);
\path [-,bend right=15] (10a) edge node {} (0a);
\path [-] (11a) edge node {} (1a);
\path [-] (11a) edge node {} (8a);
\tikzset{VertexStyle/.style= {inner sep=1.5pt, shape=circle}}
\Vertex[x=1.7619022191736493,y=1.47841150227904]{3}
\Vertex[x=0.39939080863393994,y=2.265057831928078]{4}
\Vertex[x=-1.1499999999999995,y=1.991858428704209]{5}
\Vertex[x=-2.161293027807589,y=0.7866463296490384]{6}
\Vertex[x=-2.1612930278075893,y=-0.7866463296490378]{7}
\Vertex[x=-1.150000000000001,y=-1.9918584287042083]{8}
\Vertex[x=0.3993908086339389,y=-2.2650578319280785]{0}
\Vertex[x=1.7619022191736489,y=-1.478411502279041]{1}
\Vertex[x=2.3,y=-5.633375276077824e-16]{2}
\Vertex[x=0.1,y=-1.0]{10}
\Vertex[x=-0.02,y=-0.0]{11}
\end{tikzpicture}
\caption{Final configuration.}\label{figure:A8A8_3}
\end{figure}
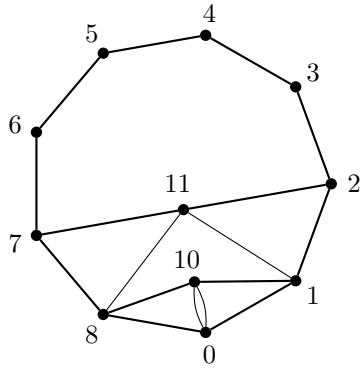
The divisor $f_4 = b_7+b_8+b_{11}$ is an $\widetilde{A}_2$ configuration of $(-2)$-curves. Moreover, it is primitive, since $f_4.b_6=1$. Hence, $f_4$ is a half fiber.
We calculate 
\[f_3.f_4 =(b_7+b_8+b_{11}).(b_0+b_{10})/2= 1.\] 
Then, following the proof of \cite[Lemma 2.6]{martin2024enriquessurfaceszeroentropy}, the linear system $|2 f_3 + 2 f_4|$ induces a separable, generically finite morphism $X \to Q\cong \mathbb{P}^1 \times \mathbb{P}^1$ of degree $2$. Let $g\in \Aut(Y)$ be the covering involution. It preserves each member of the two pencils $|2f_3|$ and $|2f_4|$. In particular, it preserves their common components, that is, $g(\{b_7,b_{11}\})=\{b_7,b_{11}\}$ and then $g(b_8)=b_8$. 
Since, by \cite[Lemma 2.4]{martin2024enriquessurfaceszeroentropy}, $g$ acts as a reflection on the half-fiber $b_7+b_8+b_{11}$, it exchanges $b_7$ and $b_{11}$. Since it also preserves the simple $I_7$ fiber of $f_3$, it exchanges $b_6$ and $b_2$. Thus $b_2$ and $b_6$ lie in the same $\Aut(Y)$-orbit, and the proof is finished in the case that $f_2$ is nef.\\

Now assume that $f_2$ is not nef. 
By \Cref{lem:nef_Upair} $b_{10}:=r=f_2-f_1$ is a $(-2)$ curve.
Since 
\[1=b_{10}.f_1= r.(b_0+\dots+b_8)=b_{10}.b_0/2,\]
$b_0+b_{10}$ is an isotropic primitive nef class, i.e. a half fiber. 
A calculation as before shows that this fibration has a double $\widetilde{A}_1$ fiber and a (simple) $\widetilde{A}_6$ fiber. The visible components of the $\widetilde{A}_6$ fiber are $b_2,\dots, b_7$. Therefore, there is another (until now invisible) component, $b_{11}$. As before $b_{11}.b_8 = b_{11}.b_1=1$ and we arrive at the configuration of smooth rational curves given in \Cref{figure:A8A8_3}.
\begin{figure}[H]
\begin{tikzpicture}
\tikzset{VertexStyle/.style= {fill=black, inner sep=1.5pt, shape=circle}}
\Vertex[NoLabel,x=1.532088886237956,y=1.2855752193730785]{3a}
\Vertex[NoLabel,x=0.34729635533386083,y=1.969615506024416]{4a}
\Vertex[NoLabel,x=-0.9999999999999996,y=1.7320508075688774]{5a}
\Vertex[NoLabel,x=-1.8793852415718166,y=0.6840402866513378]{6a}
\Vertex[NoLabel,x=-1.8793852415718169,y=-0.6840402866513373]{7a}
\Vertex[NoLabel,x=-1.0000000000000009,y=-1.732050807568877]{8a}
\Vertex[NoLabel,x=0.34729635533385994,y=-1.9696155060244163]{0a}
\Vertex[NoLabel,x=1.5320888862379556,y=-1.2855752193730792]{1a}
\Vertex[NoLabel,x=2.0,y=-0]{2a}
\Vertex[NoLabel,x=0.2,y=-1.3]{10a}
\Vertex[NoLabel,x=0.0554999,y=-0.342]{11a}
\Edges(1a,2a,3a,4a,5a,6a,7a,8a,0a,1a)
\Edges(7a,11a,2a)
\path [-,bend left=15] (10a) edge node {} (0a);
\path [-,bend right=15] (10a) edge node {} (0a);
\path [-] (11a) edge node {} (1a);
\path [-] (11a) edge node {} (8a);
\tikzset{VertexStyle/.style= {inner sep=1.5pt, shape=circle}}
\Vertex[x=1.7619022191736493,y=1.47841150227904]{3}
\Vertex[x=0.39939080863393994,y=2.265057831928078]{4}
\Vertex[x=-1.1499999999999995,y=1.991858428704209]{5}
\Vertex[x=-2.161293027807589,y=0.7866463296490384]{6}
\Vertex[x=-2.1612930278075893,y=-0.7866463296490378]{7}
\Vertex[x=-1.150000000000001,y=-1.9918584287042083]{8}
\Vertex[x=0.3993908086339389,y=-2.2650578319280785]{0}
\Vertex[x=1.7619022191736489,y=-1.478411502279041]{1}
\Vertex[x=2.3,y=-5.633375276077824e-16]{2}
\Vertex[x=0.1,y=-1.0]{10}
\Vertex[x=-0.02,y=-0.0]{11}
\end{tikzpicture}
\caption{Case $f_2$ not nef.}\label{figure:A8A8_4}
\end{figure}
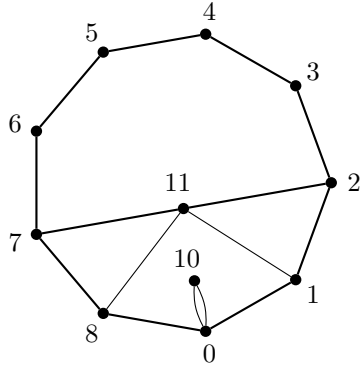
The same reasoning as before provides an involution exchanging $b_2$ and $b_6$.
\end{proof}

\begin{lemma}\label{A9:helper}
Let $\sigma$ be a connected component of $\DeltabarY$ and let $E \in \R^9(Y,\sigma)$ be an $A_9$ configuration. Then there exists an $E_{10}$ configuration $e_1,\dots, e_{10}$
with $E=\{e_1,\dots, e_9\}$ such that the following two vectors
\begin{eqnarray*}
f_1 &=& 4e_1   +8e_2   +12e_3   +16e_4   +20e_5   +24e_6   +28e_7   +17e_8   +6e_9   +15e_{10}  \\
f_2 &=& e_2 +2 e_3 + 3 e_4 +4 e_5+ 5e_6+ 6e_7+ 4 e_8 + 2e_9 +3 e_{10}
\end{eqnarray*}
are nef and isotropic.
In addition, $e_1,\dots, e_8 \in f_1^\perp$ and $e_2,\dots, e_9 \in f_2^\perp$.
\end{lemma}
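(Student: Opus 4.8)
The plan is to realize $E$ as the first nine nodes of an $E_{10}$-basis of $S_Y$, and then to recognize $f_1$ and $f_2$ as the isotropic extreme rays of the two faces of $\Nef_Y$ cut out by the $A_8$-subconfigurations $\{e_1,\dots,e_8\}$ and $\{e_2,\dots,e_9\}$, to which \Cref{lem:A8_nef_isotropic} applies.

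First I would extend $E$ to a basis. Since $\det A_9=10$ and the discriminant form $\ZZ/10\ZZ$ of $A_9$ has no nonzero isotropic element, $\langle E\rangle\cong A_9$ is primitive in $S_Y\cong E_{10}$; hence its type is $(A_9,A_9)$ and by \Cref{shimada:irreducible} the embedding $A_9\hookrightarrow E_{10}$ is unique up to $O(E_{10})$. Transporting the standard $E_{10}$-basis and composing with a Weyl element of $A_9$ (whose generating reflections are already isometries of $E_{10}$) to match the ordered chain, I obtain a vector $e_{10}\in S_Y$ completing $E$ to an $E_{10}$-basis $e_1,\dots,e_{10}$. In this basis a direct computation gives $f_1^2=f_2^2=0$, that $f_1\perp e_1,\dots,e_8$ with $f_1.e_9=5$ and $f_1.e_{10}=-2$, and that $f_2\perp e_2,\dots,e_9,e_{10}$ with $f_2.e_1=1$; this already yields the orthogonality assertions of the statement. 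The same computation produces the primitive generator $g$ of $E^\perp$, with $g^2=10$ and $f_1.g>0$, $f_2.g>0$.

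Next I fix the orientation. As $E\in\R^9(Y)$ consists of smooth rational curves, i.e. facets of $\Nef_Y$ by \Cref{aut_and_weyl_group}, the intersection $E^\perp\cap\Nef_Y$ is a $1$-dimensional nef face; let $g_0$ be its primitive generator, so $g_0=\pm g\in\overline{\P}_Y$. The diagram symmetry of $A_9$ acts as $-1$ on $\ZZ/10\ZZ\cong D_{A_9}$, and since $E^\perp\cong\langle 10\rangle$ this symmetry pairs with $-\id_{E^\perp}$ compatibly with the glue map and therefore extends to an isometry $\Theta\in O(E_{10})$ with $\Theta(g)=-g$. Reversing the labeling of the chain amounts to applying $\Theta$, so it replaces $g$ by $-g$; after possibly reversing the labeling I may thus assume $g=g_0$ is nef. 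Then $g\in\P_Y$ has positive square and lies in $\langle e_1,\dots,e_8\rangle^\perp$, so from $f_1.g>0$ the isotropic vector $f_1$ lies in $\overline{\P}_{B_1^\perp}$ with $B_1=\{e_1,\dots,e_8\}$; likewise $f_2\in\overline{\P}_{B_2^\perp}$ with $B_2=\{e_2,\dots,e_9\}$.

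Finally I prove nef-ness. The sets $B_1,B_2\in\R^8(Y)$ are $A_8$-configurations, each enlarged to the $A_9$ configuration $E$ by one further curve ($e_9$, resp. $e_1$), so by the proof of \Cref{lem:A8_nef_isotropic} one has $\Delta(Y)|_{B_1^\perp}=\{\pm r_1\}$ with $r_1=\pi_{B_1}(e_9)$; the face $B_1^\perp\cap\Nef_Y$ is then the half of the rank-$2$ cone $\overline{\P}_{B_1^\perp}$ on which $r_1$ is non-negative, namely the side of the effective curve $e_9$. Since $f_1.r_1=f_1.e_9=5>0$ and $f_1$ is an isotropic boundary ray of $\overline{\P}_{B_1^\perp}$, it is precisely the isotropic extreme ray of this face, hence nef; the identical argument with $r_2=\pi_{B_2}(e_1)$ and $f_2.e_1=1>0$ shows $f_2$ is nef. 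The main obstacle here is purely the bookkeeping of signs: one must ensure that the \emph{fixed} coordinate formulas for $f_1,f_2$ land in $\overline{\P}_Y$ rather than in its negative, and on the nef side of the single wall in each $A_8$-complement. This is exactly what the reversal-of-labeling step and the positivity of $f_i.g$ and $f_i.r_i$ are designed to control; once the unique wall $\{\pm r_i\}$ in each $B_i^\perp$ is pinned down via \Cref{lem:A8_nef_isotropic}, nef-ness of $f_1$ and $f_2$ is immediate.
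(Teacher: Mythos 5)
Your proposal is correct, and at the level of ingredients it matches the paper's proof: both complete $E$ to an $E_{10}$-basis using the uniqueness of the embedding $A_9\hookrightarrow E_{10}$ (\Cref{shimada:irreducible}), both verify the same intersection numbers ($f_1\perp e_1,\dots,e_8$, $f_1.e_9=5$, $f_2\perp e_2,\dots,e_{10}$, $f_2.e_1=1$, $f_1^2=f_2^2=0$), both fix the orientation with the $A_9$-diagram involution glued with $-\id$ on $E^\perp$ (your $\Theta$, the paper's $\alpha$), and both extract nefness from \Cref{lem:A8_nef_isotropic}. The execution of the final step is genuinely different, however. The paper uses only the \emph{statement} of \Cref{lem:A8_nef_isotropic}: the rank-two lattice $\langle e_1,\dots,e_8\rangle^\perp$ contains exactly four primitive isotropic vectors $\pm f_1,\pm f_1'$; the lemma guarantees one of them is nef; since $f_1.e_9>0$ forces $f_1'.e_9<0$, the nef one is $f_1$ or $-f_1'$ according to which of them lies in $\overline{\P}_Y$, and in the bad case the involution is applied and the chain relabeled; $f_2$ is then handled by using $f_1.f_2=4>0$ to place it in $\overline{\P}_Y$ and repeating the elimination. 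You instead first normalize so that the generator $g$ of $E^\perp$ is nef, using that $E^\perp\cap\Nef_Y$ is a one-dimensional face of the nef cone, deduce $f_i\in\overline{\P}_{B_i^\perp}$ from $f_i.g>0$, and then identify the entire nef face of $\overline{\P}_{B_i^\perp}$ as the closed half cut out by the single wall $\pm r_i$. Your route treats $f_1$ and $f_2$ symmetrically and makes the sign bookkeeping transparent; its cost is that the single-wall claim $\Delta(Y)|_{B_i^\perp}=\{\pm r_i\}$ is not the statement of \Cref{lem:A8_nef_isotropic} but an intermediate assertion inside its proof (the non-maximal, type $(A_9,0)$ case), so you are citing a proof rather than a stated result --- acceptable here, since the paper's own proof of that lemma passes through exactly this claim, but it makes your argument depend on strictly more than the paper's does. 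Two harmless inaccuracies: a Weyl element of $A_9$ need not match the \emph{ordered} chain (the flip may be required), which is immaterial because the labeling of $E$ is yours to choose and you construct $\Theta$ anyway; and the fact that $B^\perp$ cuts out a $(10-i)$-dimensional face of $\Nef_Y$ for $B\in\R^i(Y)$ is stated at the beginning of Section 4 rather than in \Cref{aut_and_weyl_group}.
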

\begin{proof}
Let $E=\{e_1,\dots, e_9\} \in \R^9(Y,\sigma)$ be an $A_9$ configuration. Since the $A_9$ lattice has a unique embedding into $E_{10}\cong S_Y$ up to isomorphism, we find $e_{10} \in S_Y$ such that $e_1,\dots, e_{10}$ form an $E_{10}$ configuration and $e_{10}.e_7=1$. 
By direct computation, one checks the following facts:
$f_1^2=f_2^2=0$, $f_1.f_2=4$, $f_1.e_i=0$ for $i=1,\dots,8$ and that $f_2.e_i=0$ for $i=2,\dots,9$, $f_i.e_j\geq 0$ for all $i \in \{1,2\}$ and $j \in \{1,\dots, 9\}$.

We first show that $f_1$ is nef.
From \Cref{lem:A8_nef_isotropic} we know that there is a primitive nef isotropic vector in $\langle e_1,\dots,e_8\rangle^\perp=:V$. Since $V$ is of rank $2$ and isotropic it contains exactly $4$ primitive isotropic vectors. Two of these are $\pm f_1$. Let us call $\pm f_1'$ the other two, where we may assume that $f_1.f_1'>0$. 
Recall that any two vectors in the closure of the positive cone have non-negative intersection. Either $f_1$ and $f_1'$  or $-f_1$ and $-f_1'$ are in the closure of the positive cone. The hyperplane $e_9^\perp$ cuts the positive cone in two components. We have $f_1.e_9>0$ and therefore $f_1'.e_9<0$. Thus of the $4$ isotropic vectors, either $f_1$ or $-f_1'$ is nef, which one it is depends on whether $f_1$ or $-f_1'$ is in the closure of the positive cone. If $f_1$ is in the positive cone, it is nef and we are done. 
So assume that $-f_1$ is in the positive cone. We let $\alpha \in O(S_Y)_{E}$ be the unique non-trivial element. Let $h$ be the generator of $E^\perp\cap S_Y$ in the positive cone. Since $\alpha$ preserves $E$, it satisfies $\alpha(e_i)=e_{10-i}$ for $i=1,\dots,9$ and therefore acts as $-1$ on the discriminant group of $\langle E \rangle \cong A_9$. Hence $\alpha$ maps $h$ to $-h$. Thus $\alpha$ exchanges $\P_Y$ and $-\P_Y$. Then $\alpha(f_1)$ is in the closure of the positive cone and by the same argument as above it is nef. Thus after relabeling $e_i$ to $e_{10-i}$ for $i=1,\dots,9$ and replacing $e_{10}$ by its image under $\alpha$, we may assume that $f_1$ is nef. 

Since $f_1.f_2>0$, $f_2$ is in the closure of the positive cone. As before $V_2:=\langle e_2,\dots,e_9\rangle^\perp$ contains $4$ isotropic vectors and exactly one of them is nef: the one in the closure of the positive cone with positive intersection with $e_1$ (taking the place of $e_9$), i.e. $f_2$.

\end{proof}

\begin{proposition}
    Let $\sigma \subseteq \DeltabarY$ be a connected component of type $(A_9, 0)$.
Then $\R(Y,\sigma)$ is contained in a single $\aut(Y)$-orbit.
\end{proposition}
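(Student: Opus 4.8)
The plan is to adapt the cascade-of-fibrations method of \Cref{lemma:A8-transitive} to the two elliptic pencils produced in \Cref{A9:helper}; the point is that \Cref{prop:same_AutY_orbit} is unavailable here (it excludes type $(A_9,0)$), so every orbit coincidence must be produced geometrically. First I would reduce to a single configuration. Given $\delta\in\R(Y,\sigma)$, \Cref{lem:Bmaximal_classification}(9) places it in a maximal configuration of type $(A_9,A_9)$ or $(A_8,E_8)$; in the latter case $\delta$ lies (by \Cref{lem:A8_nef_isotropic} and \Cref{lem:A8E8invol}) in an $\widetilde A_8$-fiber whose other $A_8$-subconfigurations are of type $(A_8,A_8)$ and hence, being non-maximal by \Cref{lem:maximal-characterization}(1), extend to $A_9$-configurations. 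So it suffices to prove that the nine curves $e_1,\dots,e_9$ of one $A_9$-configuration $E$ lie in a single $\aut(Y)$-orbit, and then that these orbits glue (see the last paragraph).

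\textbf{The two fibrations and their fiber type.} Fix such an $E=\{e_1,\dots,e_9\}$ and complete it, via \Cref{A9:helper}, to an $E_{10}$-basis $e_1,\dots,e_{10}$ together with the nef isotropic classes $f_1\perp e_1,\dots,e_8$ and $f_2\perp e_2,\dots,e_9$. Each $f_i$ defines a genus-one fibration in which the connected $A_8$-chain orthogonal to it is contained in a single reducible fiber. I claim both fibers are of type $\widetilde A_8$: all fiber components are smooth rational curves, hence lie in $\R(Y)\subseteq\Delta(Y)$, and being connected they map into the single component $\sigma$; so their incidence graph is, by \Cref{prop:root_systems_mod2}, that of a set of positive roots of $R_\sigma\cong A_9$. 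Since the finite root systems $D_8$ and $E_8$ do not embed into $A_9$, the fiber can be neither $\widetilde D_8$ nor $\widetilde E_8$; containing the $A_8$-chain it must be $\widetilde A_8$ (concretely, $b_0:=f_1-(e_1+\dots+e_8)$ satisfies $b_0^2=(e_1+\dots+e_8)^2=-2$, $b_0.e_1=b_0.e_8=1$, $b_0.e_j=0$ for $2\le j\le 7$, closing the chain into a nine-cycle, and likewise for $b_0':=f_2-(e_2+\dots+e_9)$). Therefore \Cref{lem:A8E8invol}(3)--(4) applies to each fiber: the nine components of the $f_1$-fiber, cyclically ordered as $e_1,\dots,e_8,b_0$, fall into exactly two $\aut(Y)$-orbit classes according to their position modulo $3$ (the ``small'' class being the three $(A_8,E_8)$-removal nodes, which does \emph{not} contain $b_0$), and similarly for the $f_2$-fiber $e_2,\dots,e_9,b_0'$.

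\textbf{Merging.} It remains to combine the two partitions, which overlap in the seven curves $e_2,\dots,e_8$. Writing each as a ``large'' class (two residues mod $3$) and a ``small'' class (one residue), and noting that $b_0$ resp. $b_0'$ sits in the large class, a short case check over the two possible residues of the $(A_8,E_8)$-removals in each fiber shows that the two large classes already share at least two of the $e_i$, so their union is connected and contains $e_2,\dots,e_8$ together with an endpoint; the two small classes then absorb the remaining curves among $e_1,\dots,e_9$. In every case all of $e_1,\dots,e_9$ collapse to one orbit. Finally, since every curve of $\R(Y,\sigma)$ lies in an $A_9$- or $(A_8,E_8)$-configuration, and any two $A_9$-configurations meeting in a curve have equal orbit, one propagates the relation through the overlapping $\widetilde A_8$-cycles to conclude that $\R(Y,\sigma)$ is a single $\aut(Y)$-orbit.

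\textbf{Main obstacle.} The determination of the fiber type and the combinatorial merge are both clean (the latter succeeding uniformly over all residue patterns). The genuinely delicate point is the global connectivity in the last step: because \Cref{prop:same_AutY_orbit} cannot be invoked for type $(A_9,0)$ (this is exactly the index-$2$ phenomenon of \Cref{lem:GY-transitive_on_ADE} for $A_9$), one must verify carefully that the orbit relation spreads from one $A_9$-configuration to \emph{all} of $\R(Y,\sigma)$ — in particular that the curves of an $(A_8,E_8)$-maximal configuration are reached — purely through the overlapping $\widetilde A_8$-fibers rather than through the $2$-congruence subgroup.
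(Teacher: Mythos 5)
Your overall architecture follows the paper's proof (the two fibrations attached to the two $A_8$-subchains via \Cref{A9:helper} and \Cref{lem:A8_nef_isotropic}, the orbit structure of each $\widetilde{A}_8$ fiber via \Cref{lem:A8E8invol}, then a merge), but the merging step has a genuine gap: the purely combinatorial case check does \emph{not} succeed over all residue patterns. Since $b_0$ and $b_0'$ lie in the large classes, the small class of the fiber $C$ (the cycle $b_0,e_1,\dots,e_8$) is either $\{e_1,e_4,e_7\}$ or $\{e_2,e_5,e_8\}$, and that of $D$ (the cycle $b_0',e_2,\dots,e_9$) is either $\{e_2,e_5,e_8\}$ or $\{e_3,e_6,e_9\}$. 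In three of the four cases everything indeed merges, but in the case where \emph{both} small classes equal $\{e_2,e_5,e_8\}$ it does not: the two large classes merge into $\{b_0,b_0',e_1,e_3,e_4,e_6,e_7,e_9\}$, which is disjoint from the common small class $\{e_2,e_5,e_8\}$, so the equivalences provided by \Cref{lem:A8E8invol} leave two classes and your assertion that the check succeeds ``uniformly over all residue patterns'' fails. This bad case cannot be excluded by combinatorics or symmetry (naive use of the diagram flip $e_i\mapsto e_{10-i}$ would even suggest it occurs); the paper rules it out by exactly the computation your proposal omits: reducing $b_0=2f_1-e_1-\dots-e_8$ and $b_{10}=2f_2-e_2-\dots-e_9$ modulo $3S_Y$ using the explicit coordinates of $f_1,f_2$ from \Cref{A9:helper}, which produces $3$-divisible classes showing that the $(A_8,E_8)$-removals are $C_2,C_5,C_8$ in one fiber but $D_3,D_6,D_9$ in the other, i.e.\ the two small classes are offset. (A minor slip in the same step: since $\overline{f}_1\notin\langle\sigma\rangle$ the $\widetilde{A}_8$ fibers are simple, so the ninth component is $2f_1-(e_1+\dots+e_8)$, not $f_1-(e_1+\dots+e_8)$.)

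Your final globalization step also does not close. You correctly observe that \Cref{prop:same_AutY_orbit} excludes type $(A_9,0)$ as stated, but your substitute --- propagating through ``overlapping'' $\widetilde{A}_8$-cycles --- has no mechanism to reach a maximal configuration disjoint from the one you fixed: nothing forces two configurations in $\R(Y,\sigma)$ to be linked by a chain of overlaps. The paper does invoke \Cref{prop:same_AutY_orbit} here, and this is legitimate because the exclusion of $(A_9,0)$ only reflects the failure of \Cref{prop:B-mod2-orbit} (equivalently, the index-$2$ phenomenon in \Cref{lem:GY-transitive_on_ADE}) for configurations \emph{spanning an $A_9$ lattice}; its proof applies verbatim to configurations of type $(A_8,E_8)$, whose span is $A_8$, regardless of the type of $\sigma$. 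That is the lattice-theoretic input which transports a distant $(A_8,E_8)$ configuration --- and, via the $\widetilde{A}_8$ fiber it sits in, any $(A_9,A_9)$ configuration --- onto $C$; without it, or some replacement, your propagation cannot get started.
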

\begin{proof}
By \Cref{lem:existsRwithDeltaRequalSigma} there is an $A_9$ configuration $B=\{b_1,\dots,b_9\} \in \R^9(Y,\sigma)$ labeled in the usual way.
It contains two $(A_8,A_8)$ configurations $b_1,\dots,b_8$ and $b_2,\dots,b_9$.
by \Cref{lem:A8_nef_isotropic} they give rise to two elliptic fibrations with $\widetilde{A}_8$ singular fibers $C =\{b_0,b_1,\dots,b_8\}$ respectively $D=\{b_2,\dots, b_9,b_{10}\}$ and half fibers $f_1$ respectively $f_2$.

Since $A_9$ has a unique embedding into $E_{10}$, we may complete $B$ to a basis $e_1,\dots, e_{10}$ forming an $E_{10}$-diagram where $e_i=b_i$ for $i=1,\dots, 9$ and $e_{10}.e_7=1$. Then $f_1$ and $f_2$ 
have coordinates as in \Cref{A9:helper}.

As before, set $C_i = C \setminus \{b_i\}$ and $D_j = D\setminus \{b_j\}$.
By \Cref{lem:A8E8invol} (5) each of $C$ and $D$ contain a subconfiguration of type $(A_8,E_8)$. Therefore, by \Cref{lem:A8E8invol} (3-4) each of $C$ and $D$ splits into (at most) two $\Aut(Y)$-equivalence classes containing $3$ and $6$ elements. 
With $2f_1 = \sum_{i=0}^8 b_i$
we calculate that 
\[b_0 = 2f_1-b_1\dots -b_8\equiv b_1 -b_3 + b_4 - b_6 + b_7 \mod 3S_Y\]
and
\[b_{10} = 2f_2-b_2\dots -b_{9}\equiv b_2 -b_4 + b_5 - b_7 + b_8 \mod 3S_Y.\]
Hence
\[-b_0  + b_1 -b_3 + b_4 - b_6 + b_7 + b_8 \in 3S_Y\]
is a $3$-divisible class in $S_Y$, which is contained in the span of $C_i$ for $i=2,5,8$. Therefore, $C_2$, $C_5$ and $C_8$ are of type $(A_8,E_8)$ and the other $C_i$ of type $(A_8,A_8)$. Thus, $b_2\sim b_5 \sim b_8$ and $b_0\sim b_1 \sim b_3 \sim b_4 \sim b_6\sim b_7$.
Likewise $D_i$ is of type $(A_8,E_8)$ for $i=3,6,9$, $b_3\sim b_6 \sim b_9$ and $b_{10} \sim b_2 \sim b_4 \sim b_5 \sim b_7 \sim b_8 $.
Thus $C \cup D$ is contained in a single $\Aut(Y)$-orbit. 

Let $b \in \R(Y,\sigma)$ be the class of a smooth rational curve. It is contained in a maximal connected configuration $B'$, which is of type $(A_9,A_9)$ or $(A_8,E_8)$ by \Cref{lem:Bmaximal_classification}.
By \Cref{prop:same_AutY_orbit} $B'$ can be mapped to $B$ or into $C$.
\end{proof}

\begin{proposition}
    Let $\sigma$ be a connected component of $\DeltabarY$ of type $(A_7, \ZZ/2\ZZ)$. 
    Then $\Aut(Y)$ acts transitively on $\R(Y,\sigma)$.
\end{proposition}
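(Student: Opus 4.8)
The plan is to follow the strategy of \Cref{lemma:An-transitive} and \Cref{lemma:A8-transitive}: reduce to maximal configurations, realize one inside an elliptic fibration, and then propagate orbit relations through a web of fibrations and involutions.

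By \Cref{lem:Bmaximal_classification}(4), every $\delta \in \R(Y,\sigma)$ lies in a maximal connected $B = \{b_1,\dots,b_7\} \in \R^7(Y,\sigma)$ of type $(A_7,E_7)$, and by \Cref{lem:GYorbitA7E7} there are at most five $\aut(Y)$-orbits of such $B$. Since every curve of $\R(Y,\sigma)$ occurs in some such $B$, it suffices to show that the totality of curves appearing in these maximal configurations forms a single $\aut(Y)$-orbit. As a first step I would produce the diagram involution: writing $R=\langle B\rangle \cong A_7$ with primitive closure $R'\cong E_7$ and $B^\perp \cong U\oplus A_1$, the flip of the $A_7$ diagram is compatible with the order-two glue of $E_7\oplus A_1\oplus U \subseteq E_{10}$, so exactly as in \Cref{lem:A8E8invol}(1) it extends to an involution $g\in\aut(Y)$ with $g(b_j)=b_{8-j}$ and $g|_{B^\perp}=\id$. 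This yields $b_j \sim b_{8-j}$, leaving at most the four classes $\{b_1,b_7\},\{b_2,b_6\},\{b_3,b_5\},\{b_4\}$ inside $B$.

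Next I would realize $B$ in a fibration. Since $B^\perp$ is isotropic and $B$ is maximal, \Cref{lem:transitive_on_induced_chambers}(2) allows me to assume, after a Weyl translation, that a chosen primitive isotropic $f\in B^\perp$ is nef; it induces a genus-one fibration in which $B$ sits inside a reducible fiber $F$. No curve meeting $B$ can have class in $\sigma$ --- otherwise $B$ would extend to a connected rank-$8$ configuration in $\R(Y,\sigma)$, impossible since $\rk\sigma=7$ --- and, combined with the index-two primitive closure, this pins down $F$ (I expect Kodaira type $III^*$, i.e. the $\widetilde E_7$ diagram, with $B$ the main chain and one further component $b_0$ meeting $b_4$). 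The decisive step, as in \Cref{lemma:A8-transitive}, is then to build from $B\cup\{b_0\}$ a chain of auxiliary half-fibers --- using the several isotropic classes and $U$-pairs in $B^\perp$ through \Cref{lem:nef_Upair}, together with further involutions of the type produced in \Cref{lem:A8E8invol}(1) --- whose fibers carry $A_7$ configurations that share curves with $B$ while moving, in particular, $b_4$ out of the central position. Each such configuration is again maximal of type $(A_7,E_7)$ by \Cref{lem:maximal-characterization}, and \Cref{prop:same_AutY_orbit}, once the mod-$2$ datum $\overline{R'}\cap\overline{R'}^\perp$ is matched, places it in the orbit of $B$; the overlaps then collapse the four classes above into one.

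Finally, the same fibrations serve to link the (at most five) orbits of maximal configurations: an auxiliary fiber typically contains two maximal $(A_7,E_7)$ configurations differing only in the invariant appearing in \Cref{prop:same_AutY_orbit}, so that proposition identifies their $\aut(Y)$-orbits, and transitivity on $\R(Y,\sigma)$ follows. The main obstacle I anticipate is precisely this geometric construction --- pinning down the fiber type of $f$ and producing enough half-fibers to realize every $b_i$ in mutually equivalent positions --- together with the mod-$2$ bookkeeping feeding \Cref{prop:same_AutY_orbit}, that is, the same kind of explicit case analysis carried out in \Cref{lemma:A8-transitive}.
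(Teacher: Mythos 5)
Your overall skeleton (reduce to maximal $(A_7,E_7)$ configurations, bound their number of orbits by \Cref{lem:GYorbitA7E7}, realize one inside a genus-one fibration, and propagate orbit relations via \Cref{prop:same_AutY_orbit} and involutions) is indeed the paper's strategy, and your first step is sound: by \Cref{shimada:irreducible} the flip of an $(A_7,E_7)$ diagram extends to an isometry of $S_Y$, and the argument of \Cref{lem:A8E8invol}(1), using $-\alpha\in W(A_7)$, makes it an element of $\aut(Y)$. The gap sits exactly at the step you single out as decisive. Your claim that \emph{no curve meeting $B$ can have class in $\sigma$} is false, and the reasoning behind it conflates two different notions: a curve $\delta$ meeting $B$ with $\bar\delta\in\sigma$ yields a connected $8$-\emph{element} configuration, but membership in $\R^{8}(Y,\sigma)$ requires the span to be \emph{negative definite} of rank $8$; the extension may perfectly well be parabolic, i.e.\ a fiber diagram, and in fact it is. Consequently the fiber is not of type $III^*$ but of type $I_8$: if it were $\widetilde E_7$, then deleting one end of the long chain would leave an $E_7$ configuration of rational curves whose classes all lie in $\sigma$ (your extra component $b_0$ meets $b_4$, so $\bar b_0\in\sigma$ --- already contradicting your own claim); but $E_7$ sublattices of $S_Y$ are primitive by \Cref{shimada:irreducible}, so such a configuration spans a $7$-dimensional subspace of $S_Y\otimes\FF_2$, exceeding $\dim\langle\sigma\rangle=6$.

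This is not a cosmetic slip, because the paper's whole mechanism lives on the correct fiber: the curve $a_8^1=2f_1-a_1-\cdots-a_7$, which meets $B$ and has class in $\sigma$, closes $B$ to an $\widetilde A_7$ cycle, and that cycle carries \emph{eight} $A_7$-subdiagrams (delete one vertex at a time), all maximal of type $(A_7,E_7)$, overlapping in curves, placing different curves at the central node, and alternating between two values of the invariant $\overline{R'}\cap\overline{R'}^\perp$. \Cref{prop:same_AutY_orbit} then puts every second one in a single orbit of configurations, and a bielliptic involution attached to a second fibration exchanges two adjacent components, merging the two resulting curve-orbits. A $\widetilde E_7$ fiber contains a \emph{unique} $A_7$ subdiagram, so your plan of ``moving $b_4$ out of the central position'' through overlapping $A_7$'s inside the fiber has nothing to run on. A smaller error of the same kind occurs in your last paragraph: \Cref{prop:same_AutY_orbit} \emph{separates} configurations whose invariants are not related by an element of $\Gbar_Y$; it never identifies their orbits. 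What links curves lying in configurations with different invariants is again the overlap of those configurations inside the $I_8$ fibers attached to further nef isotropic vectors ($f_2,\dots,f_5$ in the paper), together with \Cref{lem:nef_Upair} to guarantee their nefness --- this explicit construction, which you defer, is the actual content of the proof.
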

\begin{proof}
We choose generators $e_1,\dots, e_{10} \in S_Y$ forming an $E_{10}$ configuration as in \Cref{figure:E10}.
The vector
\[e_{11} = -\begin{pmatrix}
    0 & 0 & 0 & 1 & 2 & 3 & 4 & 3 & 2 & 2
\end{pmatrix},\]
is minus the highest root of the $E_7$ in the $E_{10}$ diagram. 
Then $a_1=e_4,\dots, a_6=e_9, a_7=e_{11}$ form an $(A_7,E_7)$ diagram because their $\QQ$-span contains $e_{10}$.

By \Cref{lem:BofTypeSigmaexists} and \Cref{shimada:irreducible}, we may assume (up to a change of basis) that the $a_i$ are in $\R(Y,\sigma)$ and that the usual isotropic vector of the $\widetilde{E}_8$ configuration
\[f_1= \begin{pmatrix}
0&1&2&3&4&5&6&4&2&3
\end{pmatrix}
\]
is nef (\Cref{lem:isotropic_in_Rperp}). Let 
\[a_8^1:= 2f_1 - a_1-\dots - a_7\] 
be the remaining component of the corresponding $I_8$ fiber.
Consider the primitive isotropic vector
\[f_2 = f_1+e_1+e_2\]
with $f_1.f_2=1$ and $f_2.a_i=0$ for $i=1,\dots 7$. 
By \Cref{lem:nef_Upair} either $f_2$ is nef or $f_2-f_1$ is a $(-2)$-curve.
   
Suppose that $f_2$ is nef. Set 
\[a_8^2 = 2f_2-a_1 - \dots - a_7 \in \R(Y,\sigma).\]
Since 
\[4 = 2f_1.2f_2 = a_8^1.2f_2 = a_8^1.(a_1+a_7+a_8^2)=2+a_8^1.a^8_2,\]
we have $a_8^1.a_8^2=2$.
Therefore, $2d_1:=a_8^1+a_8^2$ is an $\widetilde{A}_1$-fiber. The classes $a_2,\dots ,a_6$ are orthogonal to $d_1$, hence contained in a fiber. By assumption, $\dim \langle \sigma \rangle = 6$, hence $\dim \langle \sigma \rangle \cap d_1^\perp= 5$. We claim that the fiber is of type $I_6$. Otherwise, its span in $S_Y$ would contain a $A_6$, $D_6$ or $E_6$ root lattice $S$, which is primitive by \Cref{shimada:irreducible}. This is impossible because then $\dim S\otimes \FF_2 =6 > 5$. Let $b:=2d_1 - a_2-\dots -a_6 \in \R(Y,\sigma)$ be the remaining component.
Then $2 d_2 := a_1+a_2+b$ defines yet another elliptic fibration with an $I_3$ fiber and $f_1$ and $d_2$ form a $U$-pair. The corresponding bielliptic involution exchanges $a_1$ and $a_2$.

\begin{figure}[ht!]
\begin{tikzpicture}[
vertex/.style = {circle, fill, inner sep=1.5pt, outer sep=0pt},
every edge quotes/.style = {auto=left, sloped, font=\scriptsize, inner sep=1pt}]
 \node[vertex] (1) at (1.407,1.407)  [label=above: $a_7$] {};
 \node[vertex] (2) at ( 0, 2) [label=above: $a_8^1$] {};
 \node[vertex] (2a) at ( 0, 1.4) [label=below: $a_8^2$] {};
 \node[vertex] (3) at ( -1.407,  1.407) [label=above: $a_1$] {};
 \node[vertex] (4) at ( -2,  0) [label=left: $a_2$] {};
 \node[vertex] (c) at (0,  0) [label=below: $b$] {};
 \node[vertex] (5) at ( -1.407,  -1.407) [label=below: $a_3$] {};
 \node[vertex] (6) at ( 0,  -2) [label=below: $a_4$] {};
 \node[vertex] (7) at ( 1.407, -1.407) [label=below: $a_5$] {};
 \node[vertex] (8) at ( 2,  0) [label=right: $a_6$] {};
 \path[every node/.style={}]
  (2) edge[bend left=30] node [left] {} (2a);
  \path[every node/.style={}]
  (2) edge[bend right=30] node [left] {} (2a);
\Edges(1,2,3,4,5,6,7,8,1)
\Edges(1, 2a, 3)
\Edges(4,c,8)
\Edges(1,c,3)
\end{tikzpicture}
\begin{tikzpicture}[
vertex/.style = {circle, fill, inner sep=1.5pt, outer sep=0pt},
every edge quotes/.style = {auto=left, sloped, font=\scriptsize, inner sep=1pt}]
 \node[vertex] (1) at (1.407,1.407)  [label=above: $a_7$] {};
 \node[vertex] (2) at ( 0, 2) [label=above: $a_8^1$] {};
 \node[vertex] (2a) at ( 0, 1.4) [label=below: $\tilde{a}_8^2$] {};
 \node[vertex] (3) at ( -1.407,  1.407) [label=above: $a_1$] {};
 \node[vertex] (4) at ( -2,  0) [label=left: $a_2$] {};
 \node[vertex] (c) at (0,  0) [label=below: $\tilde b$] {};
 \node[vertex] (5) at ( -1.407,  -1.407) [label=below: $a_3$] {};
 \node[vertex] (6) at ( 0,  -2) [label=below: $a_4$] {};
 \node[vertex] (7) at ( 1.407, -1.407) [label=below: $a_5$] {};
 \node[vertex] (8) at ( 2,  0) [label=right: $a_6$] {};
 \path[every node/.style={}]
  (2) edge[bend left=30] node [left] {} (2a);
  \path[every node/.style={}]
  (2) edge[bend right=30] node [left] {} (2a);
\Edges(1,2,3,4,5,6,7,8,1)
\Edges(4,c,8)
\Edges(1,c,3)
\end{tikzpicture}
\caption{(-2)-curves depending on $f_2$ nef or not }
\end{figure}
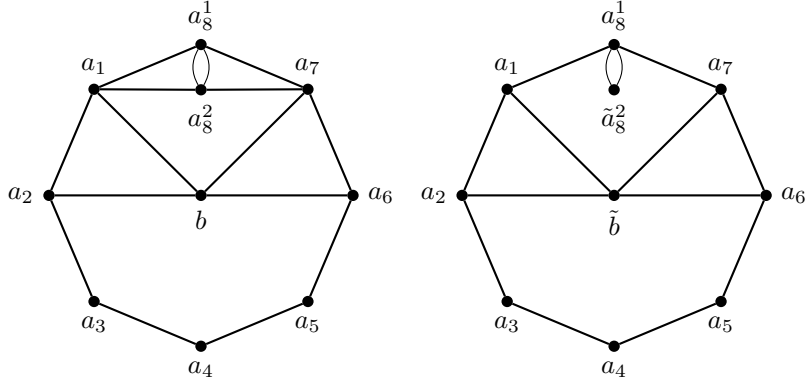

Suppose that $f_2$ is not nef. Set $\tilde{a}_8^2=f_2-f_1 = e_1+e_2$. Then $2\tilde{e}_2=a_8^1+\tilde{a}_8^2$ is still an $\widetilde{A}_1$-fiber and by the same argument one obtains a bielliptic involution exchanging $a_1$ and $a_2$. 

The $\widetilde{A_7}$ configuration \[B_1:= \{a_1,\dots, a_7,a_8^1\}\] contains $8$ configurations $B_{1,j}:=\{B_1\}\setminus \{a_i^1\}$ (with $a_j^1:=a_j$ for $j=1,\dots 7$) of type $(A_7,E_7)$. 
Set $R_{1,j}=\langle B_{1,j} \rangle$.
and let $v_{i,j}$ be the generator of $\overline{(R_{i,j})'}\cap \overline{R_{i,j}}^\perp$ for $i=1,2$.
We calculate that
\[v_{1,2j} = \bar e_{10}+ \bar e_4+\bar e_6\]
\[v_{1,2j+1} = v_{1,2}+\bar f_1=\bar e_1.\]
Because of the relation $2f_i = \sum_{k=1}^8 a_{i,k}$,  $\overline{R_{i,j}}$ is independent of $j$.
Therefore, by \Cref{prop:same_AutY_orbit} with $\overline{g}=\id$ the $B_{1,2j}$ lie in a single $\Aut(Y)$-orbit 
and so do the $B_{1,2j+1}$.
This implies that $a_1,a_3,a_5,a_7$ and $a_2,a_4,a_6,a_8^1$ each lie in a single $\Aut(Y)$-orbit. Since $a_1$ can be mapped to $a_2$, $B_1$ is contained in a single orbit. 
By the same reasoning the elements of \emph{any} $(A_7,E_7)$-configuration $B_i \in \R(Y,\sigma)$ are contained in a single $\Aut(Y)$-orbit. 

By \Cref{lem:GYorbitA7E7} there are at most $6$ orbits of maximal $(A_7,E_7)$-configurations. Let us find them. First suppose that $\DeltabarY=\sigma$. Then there are exactly $6$ orbits.

Consider the vectors $f_1,f_2,$
\begin{eqnarray}
f_3 &=& \begin{pmatrix}
1&  1&  2&  3&  4&  5&  6&  4&  2&  3
\end{pmatrix}\\
f_4 &=& \begin{pmatrix}
1&  3&  5&  6&  9&  12&  15&  10&  5&  8
\end{pmatrix}\\
f_5 &=& \begin{pmatrix}
1&  3&  7&  9&  13&  17&  21&  14&  7&  11
\end{pmatrix}.
\end{eqnarray}
We have $f_1, f_j \in (B_{1,1})^\perp$ and $f_1.f_j=1$ for $j=2,\dots, 5$. Since $\overline{f_j - f_1} \in \sigma^\perp$, it does not lie in $\sigma=\DeltabarY$, and cannot be a $(-2)$-curve. Therefore, \Cref{lem:nef_Upair} applies and yields that all the $f_j$ are nef.  
Each is the half fiber of an $I_8$ fiber and contains $8$ $(A_7,E_7)$ diagrams. Depending on the parity of $k$ (and the labeling) we have $v_{i,k} =\bar f_i+v_{1,2}$ or $v_{i,k}= v_{1,2}$ for $i=2,3$, and for $i=4,5$
$v_{i,k}= v_{1,1}+\bar f_i$ or $v_{i,k}=v_{1,1}$. This gives a total of $6$ different $v_{i,k}$ and we reach all orbits. 

If $\sigma \neq \DeltabarY$, then some $f_i$ may not be nef. By \Cref{lem:nef_Upair}, $r= f_i - f_1 \in \Delta(Y)$ and the corresponding reflection maps $f_1$ to $f_i$. Hence $f_1$ and $f_i$ are in the same $W(Y)$-orbit, and therefore the number of $(A_7,E_7)$-orbits drops by one as well. 

Let $b \in \R(Y,\sigma)$, then $b$ is contained in a maximal configuration $B_i$, which must be of type $(A_7,E_7)$ by \Cref{lem:Bmaximal_classification} (4) and thus in one of the orbits we have already found. 
Since $\Aut(Y)$ acts transitively on the nodes of each of the $6$ $A_7$ configurations and they all intersect, we can map $b$ to $a_1$.
\end{proof}

\subsection{The case $D_n$}
\begin{proposition}\label{lemma:Dn-transitive}
Let $\sigma \subseteq \DeltabarY$ be a connected component of type $(D_n, 0)$ with $n \leq 8$.
Then any rational curve $\delta \in \R(Y,\sigma)$ is in the same $\aut(Y)$-orbit.
\end{proposition}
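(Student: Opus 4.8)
The plan is to follow the template of \Cref{lemma:An-transitive}, reducing first to a single maximal configuration and then exploiting the elliptic fibration attached to it.

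First I would reduce to understanding one configuration. Any $\delta \in \R(Y,\sigma)$ is contained in a connected maximal $B \in \R^i(Y,\sigma)$, which by \Cref{lem:Bmaximal_classification}(2) must be of type $(D_n,D_n)$, so $i=n$. Writing $R=\langle B\rangle$, the index $[R':R]=1$ is odd and $\overline R=\langle\sigma\rangle$ (both are $n$-dimensional since $\sigma$ has kernel $0$), so \Cref{lem:max_helper} together with \Cref{prop:same_AutY_orbit}(1) (with $\bar g=\id$) shows that all maximal $(D_n,D_n)$-configurations in $\R(Y,\sigma)$ form a single $\aut(Y)$-orbit. Consequently it suffices to prove that the $n$ curves of one fixed such $B=\{u,v,x_1,\dots,x_{n-2}\}$ (labelled so that $u,v$ are the fork and $x_{n-2}$ the far leaf) all lie in one $\aut(Y)$-orbit: any other $\delta'\in\R(Y,\sigma)$ lies in some maximal $B'$, and an automorphism carrying $B'$ to $B$ sends $\delta'$ into $B$.

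Next I would produce an elliptic fibration. Since $n\le 8$, the lattice $B^\perp$ has signature $(1,9-n)$ with $9-n\ge 1$, hence is hyperbolic and contains a primitive isotropic vector; because $B$ is maximal, \Cref{lem:transitive_on_induced_chambers} lets me assume this vector $f$ is nef (the analogue of \Cref{lem:A8_nef_isotropic}). Then $f$ induces an elliptic fibration whose fiber through $B$ is, by the counting argument of \Cref{lemma:An-transitive} (a fiber with $\ge n+2$ components would force a connected $ADE$-subgraph of rank $n+1$ inside the rank-$n$ component $\sigma$, which is impossible) and the type of $\sigma$, exactly an $\widetilde D_n$ fiber (see \Cref{figure:Dntilde}); call the extra component $b_0$. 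The four multiplicity-one components $u,v,x_{n-2},b_0$ are the leaves of $\widetilde D_n$, and deleting any one of them yields a full-rank, hence maximal, $D_n$-configuration in $\R(Y,\sigma)$. By the first paragraph these four subconfigurations lie in one $\aut(Y)$-orbit; since an automorphism carrying one leaf-deletion to another acts as a diagram isomorphism matching far leaves to far leaves and forks to forks, the four leaves $u,v,x_{n-2},b_0$ are identified into a single orbit (and the chain is reversed, giving the relations $x_i\sim x_{n-2-i}$).

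The hard part is to bring the interior chain nodes $x_1,\dots,x_{n-3}$ (the multiplicity-two components, including the trivalent node, e.g.\ the central node when $n=4$) into the leaf orbit, since within a single $\widetilde D_n$ fiber the valency of a node is an invariant and no diagram automorphism moves an interior node to a leaf. To overcome this I would move between fibrations in the style of \Cref{lem:A8E8invol} and the $(A_8,0)$-case: starting from the leaves and working inward, I construct auxiliary half-fibers from the small fibers visible in the configuration (for instance the $\widetilde A_1$- or $\widetilde A_3$-subconfigurations near a fork) and use the associated bielliptic involutions of \Cref{lem:nef_Upair} to realize a given interior node as a leaf of a new $\widetilde D$-fiber. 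Each such step propagates the single orbit one node further along the chain; iterating, together with the reversal symmetry $x_i\sim x_{n-2-i}$ already obtained, identifies every interior node with a leaf and collapses all $n$ nodes of $B$ into one $\aut(Y)$-orbit. I expect the bookkeeping of these auxiliary fibrations --- verifying fiber types and that the relevant classes are nef --- to be the main technical obstacle, exactly as in the $(A_8,0)$ case.
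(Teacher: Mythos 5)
Your first two steps match the paper's proof: reduction to a single maximal $(D_n,D_n)$-configuration $B$ (via \Cref{lem:Bmaximal_classification}, \Cref{lem:max_helper} and \Cref{prop:same_AutY_orbit}), realization of $B$ inside an $\widetilde{D}_n$ fiber, and transitivity on the four leaves. Two small imprecisions there: the existence of an isotropic vector in $B^\perp$ does not follow from the signature alone (indefinite lattices of rank $\leq 4$ can be anisotropic), it must be checked for these particular complements, as the paper does in the $A_n$ case; and for $n=4$ the ``far leaf versus fork'' dichotomy degenerates, though leaf-transitivity still follows from a slightly different bookkeeping.

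The genuine gap is the step you yourself flag as the main obstacle: bringing the interior, multiplicity-two nodes into the leaf orbit. Your proposed mechanism --- bielliptic involutions attached to ``$\widetilde{A}_1$- or $\widetilde{A}_3$-subconfigurations near a fork'' --- cannot be run as stated, because a $\widetilde{D}_n$ fiber is a tree: its components pairwise meet at most once and it contains no cycle, so no $\widetilde{A}_1$ or $\widetilde{A}_3$ subconfiguration exists among the visible curves. One would first have to manufacture new rational curves by auxiliary computations (as the paper does in the harder cases $(A_8,0)$ and $(D_8,\ZZ/2\ZZ)$), and nothing in your sketch does this. The paper closes this step by a different and curve-free idea: inside $\widetilde{D}_n$ the four leaf-to-leaf paths are $A_{n-1}$-subdiagrams; transitivity on the four $D_n$-subdiagrams forces, by a short combinatorial argument, transitivity on these four $A_{n-1}$'s, hence (since any $A_{n-1}$ in $\R^{n-1}(Y,\sigma)$ extends to a maximal configuration, necessarily of type $(D_n,D_n)$) on \emph{all} $A_{n-1}$-configurations in $\R^{n-1}(Y,\sigma)$. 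A mod-$2$ computation then produces an isotropic class $\overline{f}$ meeting exactly one node of $B$, whence by \cite{brandhorst-gonzalez:527} an elliptic fibration with an $\widetilde{A}_{n-1}$ fiber, and the circle argument of \Cref{lemma:An-transitive} (via \Cref{lem:odd_circle}) shows that all nodes of an $A_{n-1}$-configuration lie in one orbit. Since every node of $D_n$ lies on an $A_{n-1}$-subdiagram, this finishes the proof. You should replace your third paragraph by this argument, or else actually carry out the construction of the auxiliary curves and involutions you allude to, which would be considerably more work.
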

\begin{proof}
Let $\delta \in \R(Y)$ with $\overline{\delta} \in \sigma$. Then $\delta$ must be contained in some maximal connected $B \subseteq \R(Y)$. By \Cref{lem:Bmaximal_classification}, $B$ is of type $(D_n,D_n)$.
Label $B = \{\delta_1,\dots, \delta_n\}$ such that $\delta_i.\delta_{i+1}=1$ for $2\leq i \leq n$ and $\delta_1.\delta_3=1$.

Since $n\leq 8$, we may (as in the $A_n$ case) assume that $B$ consists of components of a fiber of some elliptic fibration on $Y$. By maximality, the fiber has $n+1$ components and contains a $D_n$ configuration. This is only possible if the fiber is of type
$\widetilde{D}_n$ or of type $\widetilde{E_8}$. The second one is impossible because $\sigma$ is too small.
Thus, the fiber is of type $\widetilde{D}_n$. It contains four $D_n$ subdiagrams. Since they are of rank $n$, they are maximal and by \Cref{lem:Bmaximal_classification} of type $(D_n,D_n)$.
By \Cref{prop:same_AutY_orbit} they are in the same $\aut(Y)$-orbit in $\R^n(Y)$.

We claim that $\aut(Y)$ also acts transitively on the four $A_{n-1}$-subdiagrams of $\widetilde{D}_n$ as follows: Let $1,2,3,4$ label the end points of the $\widetilde{D}_n$ diagram with $1,2$ on the same side of it. Label the four $A_{n-1}$ diagrams by their endpoints. 
Thus we have $(1,3),(1,4),(2,3),(2,4)$. Let $\sim$ be the equivalence relation by being in the same orbit. We use transitivity on the four $D_n$ subdiagrams, also labeled by their endpoints.
Mapping $(1,3,4)$ to $(2,3,4)$, we see that (up to relabeling $3$ and $4$) we may assume that $(1,3) \sim (2,3)$ and $(1,4)\sim (2,4)$. 
Then mapping $(1,3,4)$ to $(1,2,4)$ we see that either $(1,3) \sim (1,4)$ or $(1,3)\sim (2,4) \sim (1,4)$. Thus $(1,3)\sim (1,4)$ in any case. Now mapping $(1,3,4)$ to $(2,3,4)$ yields the remaining equivalences: $(1,3) \sim (2,3)\sim (2,4)$ and proves the claim.

Any $A_{n-1}$ configuration in $\R^{n-1}(Y,\sigma)$ extends to a maximal configuration which must be of type $(D_n,D_n)$ and is mapped under $\aut(Y)$ to the original $D_{n}$. This shows that $\aut(Y)$ acts transitively on the set of all $A_{n-1}$ configurations in $\R^{n-1}(Y,\sigma)$.
Since $\sigma$ is of type $(D_n,D_n)$, $\overline{\delta}_1,\dots, \overline{\delta}_n \in S_Y\otimes \FF_2$ are linearly independent. Hence there exists an $\overline{f} \in S_Y\otimes \FF_2$ with $f.\delta_1=1$ and $f.\delta_i=0$ for $i \in 2,\dots, n$. By modifying $\overline{f}$ with an element in $\overline{B}^\perp$ we may assume $\overline{f}^2=0$.
By \cite{brandhorst-gonzalez:527} there exists an $\widetilde{A}_{n-1}$ configuration on $Y$. Now we can argue as in \Cref{lemma:An-transitive} to see that any two elements of an $A_{n-1}$ configuration lie in the same $\aut(Y)$-orbit.
Since any element of $D_n$ is part of an $A_{n-1}$ configuration we are done:
They all lie in the same $\aut(Y)$-orbit.
\end{proof}
The argument in the proof of \Cref{lemma:Dn-transitive} breaks for $(D_8,E_8)$ because 
$D_8$ has two even overlattices and thus \Cref{prop:same_AutY_orbit} applies in a different way.

\begin{lemma}\label{lem:AutOrbitD8E8}
Let $\sigma \subseteq \DeltabarY$ be a connected component of type $(D_8, \ZZ/2\ZZ)$.
Then there exists $B \in \R^8(Y,\sigma)$ of type $(D_8,E_8)$. Any rational curve $\delta \in \R(Y,\sigma)$ is in the same $\aut(Y)$-orbit as some $b_i \in B$. Further, $b_i, b_j \in B$ are in the same $\aut(Y)$ orbit if their distance (in the graph $B$) is even.
\end{lemma}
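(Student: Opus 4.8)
The plan is to treat the three assertions in turn, with the even-distance statement carrying essentially all of the work.

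For existence I would simply invoke \Cref{lem:BofTypeSigmaexists} with $n=\rk\sigma=8$: it produces $b_1,\dots,b_8\in\R(Y)$ spanning a root lattice $R$ with $\overline\Delta(R)=\sigma$. Since $\sigma$ has type $(D_8,\ZZ/2\ZZ)$ we have $R\cong R_\sigma\cong D_8$ and $R'\cong R_\sigma'\cong E_8$, so $B=\{b_1,\dots,b_8\}$ is the desired $(D_8,E_8)$ configuration. I fix the labelling so that $b_1,b_2$ are the two fork tips, $b_3$ the branch node, and $b_3-b_4-\cdots-b_8$ the tail; then the two even-distance classes are $\{b_1,b_2,b_4,b_6,b_8\}$ and $\{b_3,b_5,b_7\}$.

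For the second assertion, every $\delta\in\R(Y,\sigma)$ lies in a maximal connected configuration $B'$, which by \Cref{lem:Bmaximal_classification}(5) is of type $(A_7,E_7)$ or $(D_8,E_8)$. By \Cref{lem:D8E8_max_configurations} there are at most $4$ orbits of the latter and at most $2$ of the former, distinguished via \Cref{prop:same_AutY_orbit} by the mod-$2$ data $\overline{R'}$ (resp.\ $\overline{R'}\cap\overline{R'}^\perp$). I would exhibit a representative of each of these orbits inside the curves swept out by the elliptic fibrations attached to $B$ constructed below: the $\widetilde E_8$-fibre and its associated $U$-pairs account for the $(D_8,E_8)$-orbits, and the cyclic subconfigurations of the $\widetilde A_7$-fibres for the $(A_7,E_7)$-orbits. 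Then $B'$ is $\aut(Y)$-equivalent to one of these, and $\delta$ is carried onto some $b_i$.

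The heart is the even-distance statement, where I would argue exactly as in the case $\sigma$ of type $(A_7,\ZZ/2\ZZ)$, applied to the two $(A_7,E_7)$ sub-chains $A=B\setminus\{b_2\}$ and $A'=B\setminus\{b_1\}$ of $B$ (each is an $A_7$ whose primitive closure inside $E_8=\langle B\rangle'$ is $E_7$). Choosing a nef isotropic vector in $A^\perp$ (using \Cref{lem:transitive_on_induced_chambers} to make it nef) exhibits $A$ inside a fibre which a dimension count on $\langle\sigma\rangle$ forces to be of type $\widetilde A_7$; closing it up to an $8$-cycle, the $(A_7,E_7)$ configurations obtained by deleting one vertex have the invariant $v$ of \Cref{prop:same_AutY_orbit} depending only on the parity of the deleted vertex. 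Hence same-parity configurations lie in a single $\aut(Y)$-orbit, and since every graph isomorphism of an $A_7$-path fixes its centre, the realising automorphisms send centres to centres. As the centre of the $A_7$ obtained by deleting vertex $j$ of the $8$-cycle is vertex $j+4$, the even-indexed vertices of the cycle form one orbit and the odd-indexed ones another; restricted to $A$ this gives $\{b_1,b_4,b_6,b_8\}$ in one orbit and $\{b_3,b_5,b_7\}$ in another. Running the same argument for $A'$ yields $\{b_2,b_4,b_6,b_8\}$ in one orbit, and since the two overlap in $b_4,b_6,b_8$ their union $\{b_1,b_2,b_4,b_6,b_8\}$ is a single orbit. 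This is precisely the even-distance class.

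The main obstacle is the fibration bookkeeping hidden in the last two paragraphs: pinning down the fibre type as $\widetilde A_7$ through the dimension of the relevant intersections with $\langle\sigma\rangle$ (as was done for the $I_6$ fibre in the $(A_7,\ZZ/2\ZZ)$ case), verifying via \Cref{lem:maximal-characterization}(3) that the cyclic $(A_7,E_7)$ subconfigurations are genuinely maximal so that \Cref{prop:same_AutY_orbit} applies, computing the parity dependence of $v$, and checking that the fibrations around $B$ really do meet each of the $\le 4$ and $\le 2$ orbits needed for the second assertion. Each is a finite $\FF_2$-computation of the same flavour as those carried out explicitly in the $A_8$ and $A_7$ cases, but they must be assembled with care.
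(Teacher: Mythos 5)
Your strategy is recognizably the paper's (maximal configurations, \Cref{prop:same_AutY_orbit}, and a parity argument on an $8$-cycle), and your existence step is fine, but the even-distance argument has two gaps, both caused by non-maximality. First, your sub-chains $A=B\setminus\{b_2\}$ and $A'=B\setminus\{b_1\}$ are by definition \emph{not} maximal: they are connected and contained in the connected configuration $B\in\R^8(Y)$. Hence \Cref{lem:transitive_on_induced_chambers} does not apply to $A^\perp$, and you cannot use it to produce (or normalize to) a nef isotropic vector there. Note also that having \emph{some} nef isotropic vector in $A^\perp$ is not enough: $f_1\in B^\perp\subset A^\perp$ is nef but its fibration puts $A$ inside a $\widetilde{D}_8$ fiber (incidentally, the fiber attached to $B$ is $\widetilde{D}_8$, not $\widetilde{E}_8$ as you write); you need a nef isotropic vector orthogonal to $A$ but not orthogonal to $\overline{b}_2$, so that the dimension count can force the fiber type $\widetilde{A}_7$. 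The paper gets this by writing down an explicit second isotropic vector $f_2$ with $f_1.f_2=1$ and deducing nefness from \Cref{lem:nef_Upair}, using that $\overline{f_2-f_1}$ lies in neither $\sigma$ nor $\sigma^\perp$, hence not in $\DeltabarY$.

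The second gap is the serious one. In the $8$-cycle the eight $(A_7,E_7)$ subconfigurations are \emph{not} all maximal: the computation you plan via \Cref{lem:maximal-characterization}(3) shows that exactly every second one is maximal, and the other four extend to $(D_8,E_8)$ configurations (this is the key asymmetry with the $(A_7,\ZZ/2\ZZ)$ case, where $\rk\sigma=7$ makes all eight maximal). Consequently \Cref{prop:same_AutY_orbit} applies only to one parity class. Your argument therefore does prove that $\{b_1,b_4,b_6,b_8\}$, and by symmetry $\{b_2,b_4,b_6,b_8\}$, lie in a single orbit, giving the five-element class; but it proves nothing about $\{b_3,b_5,b_7\}$, whose elements are precisely the centres of the \emph{non-maximal} cyclic subconfigurations, and these pairs are also at even distance, so the lemma requires them. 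One cannot shortcut this through the ambient $(D_8,E_8)$ configurations being mutually equivalent either: by \Cref{lem:D8E8_max_configurations} there are up to four orbit classes of those, and the paper exhibits four with pairwise distinct invariants $\overline{R_i'}$, so they are pairwise \emph{in}equivalent. The paper's resolution is to extend each non-maximal $A_7$ to its $(D_8,E_8)$ closure, apply \Cref{prop:same_AutY_orbit} to those, and check on the explicit $12$-curve diagram (which requires constructing the additional curves $d_0$, $a_0$, $r_1$, $r_2$) that in each of the four target configurations $B_1,\dots,B_4$ the tracked centre lands on the very same vertex $d_4$. Without this piece, which your sketch defers to ``bookkeeping'', the claim $b_3\sim b_5\sim b_7$ remains unproven.
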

\begin{proof}
As in \Cref{lem:AutOrbitD8E8}, we work in the coordinates of $S_Y \cong \ZZ^{10}$ given by the $E_{10}$ diagram in \Cref{figure:E10}.
Set
\[d_1 = \begin{pmatrix}
0& 0& -2& -3& -4& -5& -6& -4& -2& -3
\end{pmatrix},\]
$d_2 = e_3,\dots, d_7=e_8$ and $d_8=e_{10}$.
Then $B_1 = \{d_1,\dots, d_8\}$ is a $D_8$ configuration and of type $(D_8,E_8)$ since $d_1+d_4+d_6+d_8 $ is divisible by $2$. 
Let $f_1 \in B_1^\perp$ be the isotropic vector
\[f_1=\begin{pmatrix}
   0&  1&  2&  3&  4&  5&  6&  4&  2&  3
  \end{pmatrix}.
\]
By \Cref{lem:isotropic_in_Rperp,shimada:irreducible,lem:existsRwithDeltaRequalSigma}, up to a change of coordinates, one can assume that $B_1 \subseteq \R(Y)$, that $f_1$ is nef and that $\overline{\Delta}(B_1)=\sigma$.

Let $h_{D_8}$ be the highest root of the positive root system $\Delta^+(B_1)$. Then
$d_0 := 2f_1 - h_{D_8} \in \R(Y)$ is a fiber component of the elliptic fibration of $Y$ with half fiber $f_1$. Set $\widetilde{B_1} = \{d_0,\dots, d_8\}$. It is a $\widetilde{D}_8$ configuration.

To get another $(-2)$ curve, we need another isotropic class
\[
f_2 :=
\begin{pmatrix}
1&  2&  4&  6&  8&  10&  12&  8&  5&  6
\end{pmatrix}.
\]
It satisfies $f_1.f_2=1$. By \Cref{lem:nef_Upair} either $f_2$ is nef or $r:=f_2-f_1$ is a $(-2)$ curve. However, $\overline{r} \not \in \sigma$ 
and $\overline{r} \not \in \sigma^\perp$. Since $\DeltabarY \subseteq \sigma \cup \sigma^\perp$, $\overline{r} \not\in\DeltabarY$. Hence $f_2$ is nef. 

The $A_7$ configuration $C_1=\{d_1,\dots, d_6,d_8\}$ is contained in $ f_2^\perp$. Define $a_0 := 2f_2 - h_2 \in \R(Y)$, where $h_2$ is the highest root of $C_1$. The set $\widetilde{C}_1=C_1 \cup \{a_0\}$ consists of the components of an $\widetilde{A}_7$-fiber of the elliptic fibration on $Y$ with half fiber $f_2$.

The following picture shows the dual graph of the curves exhibited so far.
\begin{center}
 \begin{tikzpicture}
\tikzset{VertexStyle/.style= {fill=black, inner sep=1.5pt, shape=circle}}
\Vertex[NoLabel,x=-1.7,y=0]{e}
\Vertex[NoLabel,x=0,y=0.7]{-1}
\Vertex[NoLabel,x=0,y=-0.7]{0}
\Vertex[NoLabel,x=1,y=0]{1}
\Vertex[NoLabel,x=2,y=0]{2}
\Vertex[NoLabel,x=3,y=0]{3}
\Vertex[NoLabel,x=4,y=0]{4}
\Vertex[NoLabel,x=5,y=0]{5}
\Vertex[NoLabel,x=6,y=-0.7]{6}
\Vertex[NoLabel,x=6,y=0.7]{7}
\Edges(0,1,2,3,4,5,6)
\Edges(5,7)
\Edges(-1,1)
\Edges(e,0)
\Edges(e,-1)
\Edges(e,0)
  \path[every node/.style={}]
  (e) edge[bend left=50] node [left] {} (7);
    \path[every node/.style={}]
  (e) edge[bend right=50] node [left] {} (6);
\tikzset{VertexStyle/.style= {inner sep=1.5pt, shape=circle}}
\Vertex[x=-0,y=-1]{$d_0$}
\Vertex[x=-0,y=1.05]{$d_1$}
\Vertex[x=1,y=-0.4]{$d_2$}
\Vertex[x=2,y=-0.4]{$d_3$}
\Vertex[x=3,y=-0.4]{$d_4$}
\Vertex[x=4,y=-0.4]{$d_5$}
\Vertex[x=5,y=-0.4]{$d_6$}
\Vertex[x=6.4,y=1]{$d_7$}
\Vertex[x=6.4,y=-1]{$d_8$}
\Vertex[x=-2,y=0]{$a_0$}
\end{tikzpicture}
\end{center}
Let $L=\langle d_0,\dots,d_8,a_0\rangle$ be the sublattice of the curves visible so far.
It is of rank $10$ and determinant $4^3$. Thus $[S_Y:L]=2^3$ and $L\otimes \FF_2 \to S_Y\otimes \FF_2$ has a kernel of dimension $3$.
This gives the following $3$ independent relations
\begin{align}\label{eqn:D8_relation1}
\overline{d}_1+\overline{d}_3+\overline{d}_5+\overline{d}_8 &= 0 \\
\overline{d}_0+\overline{d}_1+\overline{d}_7+\overline{d}_8 &= 0 \label{eqn:D8_relation2}\\
\overline{a}_0+\overline{d}_2+\dots+\overline{d}_6+\overline{d}_8 &= 0 . \label{eqn:D8_relation3}
\end{align}
We see that $a_0,d_0,d_1,d_7,d_8$ is a $\widetilde{D_4}$ configuration of rational curves. Let 
\[f_3= (2a_0+d_0+d_1+d_7+d_8)/2\] be the corresponding half fiber. Then $d_3,d_4,d_5 \in f_3^\perp$ are components of another fiber, which, by inspection of $\overline{f_3}^\perp \cap \sigma$, must be of type $\widetilde{D_4}$. The missing two components are 
\[
r_1 := \begin{pmatrix}
2&  4&  8&  13&  18&  24&  30&  21&  12&  15
\end{pmatrix}
\]
and
\[
r_2 :=\begin{pmatrix}
2&  6&  12&  18&  24&  31&  38&  27&  16&  19\end{pmatrix}.
\]
The intersection numbers are as in \Cref{figure:D8E8}:
\begin{figure}[H]
 \begin{tikzpicture}[
vertex/.style = {circle, fill, inner sep=1.5pt, outer sep=0pt},
every edge quotes/.style = {auto=left, sloped, font=\scriptsize, inner sep=1pt}]
\node[vertex] (1) at (-2,2)     [label=above: $d_1$] {};
\node[vertex] (2) at (0,2)     [label=above: $d_2$] {};
\node[vertex] (3) at (2,2)     [label=above: $d_3$] {};
\node[vertex] (4) at (2,0)     [label=right: $d_4$] {};
\node[vertex] (5) at (2,-2)     [label=right: $d_5$] {};
\node[vertex] (6) at (0,-2)     [label=below: $d_6$] {};
\node[vertex] (7) at (-2,-2)     [label=left: $d_7$] {};
\node[vertex] (a0) at (-2,0)     [label=left: $a_0$] {};
\node[vertex] (0) at (-1,1)     [label=above: $d_0$] {};
\node[vertex] (r1) at (1,1)     [label=10: $r_1$] {};
\node[vertex] (r2) at (1,-1)     [label=above:$r_2$] {};
\node[vertex] (8) at (-1,-1)     [label=above: $d_8$] {};

\Edges(1,2,3,4,5,6,7,a0,1)
\Edges(a0,0,2)
\Edges(a0,8,6)
\Edges(2,r1,4)
\Edges(6,r2,4)
\end{tikzpicture}
\caption{}\label{figure:D8E8}
\end{figure}

They coincide with the dual graph of an Enriques surface with a numerically trivial automorphism as in \cite[8.2.23 (B)]{enriquesII}.

Define 
$B_1=\{d_1,\dots, d_8\}$,
$B_2 = (B_1\cup\{r_1\})\setminus \{d_3\}$, $B_3 = (B_1\cup\{r_2\})\setminus \{d_5\}$
and $B_4 = \{d_0,d_1,d_2,\dots,d_6,d_7\}$.
Set $R_i=\langle B_i \rangle$ for $i=1,\dots 4$. It is of type $(D_8,E_8)$. 
By a direct computation, one checks that the $\overline{R_i'}$ are pairwise distinct. 
By \Cref{lem:D8E8_max_configurations} every $(D_8,E_8)$ configuration of rational curves is in the same $\Aut(Y)$-orbit as one of $B_1,\dots, B_4$.

Define $\widetilde{C}_2=\{a_0,d_0,d_2,\dots,d_6, d_8\}$.
By \cref{eqn:D8_relation1,eqn:D8_relation2,eqn:D8_relation3}, second of its $8$ $A_7$ subdiagrams is of type $(A_7,E_7)$.
A direct computation reveals that the radical of their primitive closures mod $2$ fall into two classes. By \Cref{lem:D8E8_max_configurations} every $(A_7,E_7)$ configuration is in the same $\Aut(Y)$-orbit as these. 

Now, if $r \in \R(Y,\sigma)$, then $r$ is part of a maximal configuration, which is either of type $(A_7,E_7)$ or $(D_8,E_8)$
by \Cref{lem:Bmaximal_classification}. Therefore $r$ is in the same orbit as one of the nodes of the diagram.  

By \Cref{eqn:D8_relation1,eqn:D8_relation2,eqn:D8_relation3} every $A_7$ configuration in $\widetilde{C}_1$ is of type $(A_7,E_7)$. By a direct calculation and \Cref{lem:maximal-characterization} (3) every second one is maximal. Thus every other extends to a maximal configuration in $\R^8(Y,\sigma)$, which must be of type $(D_8,E_8)$ by \Cref{lem:Bmaximal_classification}. By \Cref{prop:same_AutY_orbit}, it 
maps to one of $B_1,B_2,B_3,B_4$. In particular, its center vertex is mapped to $d_4$. This shows that $d_2,d_4,d_6,a_0$ lie in a single $\aut(Y)$-orbit.
Similarly the maximal $(A_7,E_7)$ subdiagrams all lie in a single orbit and so do their central vertices $d_1,d_3, d_5$ and $d_8$. For each of $d_0,r_1,r_2,d_7$ one can write down a maximal $(A_7,E_7)$ diagram containing it as a central node and which is in the same orbit as the one we have seen before.
This leaves us with a total of at most two orbits as claimed.\\

The bielliptic involution associated to the $U$-pair $f_1,f_2$ is numerically trivial, hence in the center of $\Aut(Y)$. It fixes point-wise $4$ smooth rational curves \cite[Table 8.8,Theorem 8.8.21, Remark 8.8.22]{enriquesII}. (Looking at \Cref{figure:D8E8} these must be the vertices $a_0,d_2,d_4,d_6$ of valency greater than two.).
This shows that there are at least two orbits.
\end{proof}

\begin{proposition}\label{lemma:D9}
Let $\sigma \subseteq \DeltabarY$ be a connected component of type $(D_9, 0)$. 
Then $\DeltabarY=\sigma$, $Y$ has finite automorphism group, and $12$ rational curves their dual graph is as depicted below. They fall into two $\Aut(Y)$ orbits, distinguished by their valency $2$ or $3$. 
\end{proposition}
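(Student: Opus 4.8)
The plan is to first determine $\DeltabarY$ exactly, then to identify $Y$ with a Kondō surface of type \RN{2} in order to obtain finiteness of $\Aut(Y)$ and the list of $12$ curves, and finally to separate the orbits by valency. First I would show $\DeltabarY=\sigma$. By \Cref{table:root_mod2} the span $\langle\sigma\rangle=\overline{R_\sigma}\cong\overline{D_9}=U_2^4\oplus[0]$ is a $9$-dimensional subspace of $S_Y\otimes\FF_2\cong U_2^5$ whose bilinear radical is a line $\langle v\rangle$ with $q(v)=0$. Since the ambient form is non-degenerate, $\dim\langle\sigma\rangle^\perp=1$, and as the radical $\langle\sigma\rangle\cap\langle\sigma\rangle^\perp$ is contained in $\langle\sigma\rangle^\perp$ we get $\langle\sigma\rangle^\perp=\langle v\rangle$. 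Any connected component $\sigma'\neq\sigma$ of $\DeltabarY$ consists of vertices orthogonal (no edge) to every vertex of $\sigma$, so $\langle\sigma'\rangle\subseteq\langle\sigma\rangle^\perp=\langle v\rangle$; but every vertex $\overline\delta$ satisfies $q(\overline\delta)=\delta^2/2=-1\equiv 1$, whereas $q(v)=0$. Hence no such $\sigma'$ exists, $\DeltabarY=\sigma$, and the Nikulin root invariant of $Y$ is exactly $(D_9,0)$.

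Next I would produce the $12$ curves. By \Cref{lem:BofTypeSigmaexists} there is a $D_9$ configuration $B=\{b_1,\dots,b_9\}\subseteq\R(Y)$ with $\overline{\Delta}(\langle B\rangle)=\sigma$; write $R=\langle B\rangle\cong D_9$, which is primitive in $S_Y$ because the kernel of the invariant is trivial. Then $R^\perp=\ZZ f$ is positive definite of rank one with $f^2=4$, so — unlike the cases $D_n$, $n\le 8$ — there is no isotropic class orthogonal to $B$ and the configuration is rigid. As in \Cref{lem:A8E8invol,lem:AutOrbitD8E8}, I would recover the remaining curves from the elliptic fibrations attached to the $A_8$- and $A_7$-subconfigurations of $B$ (their orthogonal complements do contain nef isotropic classes by \Cref{lem:A8_nef_isotropic,lem:isotropic_in_Rperp}), reading off the missing fiber components; this produces a total of $12$ smooth rational curves with the depicted dual graph. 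Having matched both the root invariant $(D_9,0)$ and this configuration, I invoke \cite[Main Theorem]{kondo:enriques_finite_automorphism}: $Y$ is of type \RN{2}, its automorphism group is finite, and these $12$ curves exhaust $\R(Y)$. Finiteness of $\Aut(Y)$ then follows formally: $\R(Y)$ spans $S_Y\otimes\QQ$, so the faithful action of $\aut(Y)$ embeds it into the finite symmetric group on $\R(Y)$, and $\Aut(Y)\to\aut(Y)$ has finite kernel (the numerically trivial automorphisms).

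For the orbit count, every $h\in\Aut(Y)$ permutes $\R(Y)$ preserving intersection numbers, hence acts as an automorphism of the dual graph and preserves valencies; since the graph has vertices of valency $2$ and of valency $3$, these two sets cannot be interchanged and there are at least two orbits. For the reverse inequality I would prove transitivity within each valency class. By \Cref{lem:Bmaximal_classification}(10) the maximal connected configurations in $\R(Y,\sigma)$ are of type $(D_9,D_9)$ or $(A_8,E_8)$; in both cases $[R':R]$ is odd, so \Cref{prop:same_AutY_orbit}(1) (applicable since $\sigma$ is not of type $(A_9,0)$) together with \Cref{lem:max_helper} shows that all maximal configurations of a fixed type lie in a single $\aut(Y)$-orbit. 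Tracking which vertex of the graph plays which role in these configurations — in the spirit of \Cref{lemma:Dn-transitive,lemma:A8-transitive}, but now respecting the rigidity that keeps the two valency classes apart — then gives exactly one orbit of valency-$3$ curves and one of valency-$2$ curves; the same conclusion can be read off Kondō's explicit determination of $\Aut(Y)$ for type \RN{2}.

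The main obstacle is the completeness assertion in the second step, namely that the $12$ constructed curves are all of $\R(Y)$ (equivalently, that the cone they bound has no further walls), since this is exactly what forces $\Aut(Y)$ to be finite. The clean lattice computation of the first step pins down $\DeltabarY$ but does not by itself bound $\R(Y)$, and a direct Vinberg-type verification is delicate precisely because the rigidity $R^\perp=\ZZ f$ with $f^2=4$ removes the elliptic-fibration maneuvers available when $n\le 8$. This is the point at which the appeal to \cite{kondo:enriques_finite_automorphism} is essential.
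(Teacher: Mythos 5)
Your proposal is correct and shares the paper's skeleton---your mod-$2$ argument for $\DeltabarY=\sigma$ is exactly the paper's (stated there in one line: $\sigma^\perp\subseteq\langle\sigma\rangle$ and is totally isotropic, so it contains no roots), and both proofs then lean on \cite[Main Theorem]{kondo:enriques_finite_automorphism} for finiteness and type \RN{2}---but the second half is executed by genuinely different means. The paper does not construct the $12$ curves from elliptic fibrations: it quotes the Borcherds-method computation of \cite[(7.4)]{brandhorst_shimada:tautaubar}, by which $\Nef_Y$ is an $L_{1,25}/S_Y(2)$-chamber of type $\mathrm{12B}$ with $12$ walls, which delivers the curves and the dual graph in one stroke; and for the bound of at most two orbits it chains the $\widetilde{A}_8$ subgraphs of that graph through \Cref{lem:A8E8invol}(3-4). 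You instead get exhaustiveness of the curve list from Kondo's graph and count orbits with the maximal-configuration machinery: \Cref{lem:Bmaximal_classification}(10), \Cref{lem:max_helper} and \Cref{prop:same_AutY_orbit}(1) (both types $(D_9,D_9)$ and $(A_8,E_8)$ have odd index $[R':R]$, and $\sigma$ is not of type $(A_9,0)$), so each type forms a single $\aut(Y)$-orbit of configurations. That route does work, and is arguably more self-contained than an appeal to an external chamber computation: already the $D_9$ subdiagrams of the graph, transported into one another, identify all trivalent vertices with one another and all bivalent vertices with one another (choosing one or the other of the two parallel length-two paths between a pair of trivalent vertices swaps the corresponding bivalent pair, and moving the fork around the diagram does the rest). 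But this ``tracking which vertex plays which role'' is precisely the combinatorial step you leave vague and would need to write out---it is the analogue of the paper's equally terse ``we find enough automorphisms.'' Two blemishes, neither fatal: your closing ``finiteness follows formally'' remark is circular, since the finiteness of $\R(Y)$ it feeds on was itself extracted from Kondo's classification, which presupposes (indeed contains) finiteness of $\Aut(Y)$; and once Kondo is cited for the dual graph, your fibration construction of the $12$ curves is redundant for the statement itself---its only role is to supply explicit classes for the orbit bookkeeping, which is also how the paper uses its chamber computation.
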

\begin{center}
 \begin{tikzpicture}
\tikzset{VertexStyle/.style={fill=black, inner sep=1.5pt, shape=circle}}
\Vertex[NoLabel,x=1,y=0]{b1}
\Vertex[NoLabel,x=3,y=0]{b3}
\Vertex[NoLabel,x=4,y=0]{b4}
\Vertex[NoLabel,x=6,y=0]{b6}
\Vertex[NoLabel,x=7,y=0]{b7}
\Vertex[NoLabel,x=9,y=0]{b10}
\tikzset{VertexStyle/.style= {style=draw, inner sep=2pt, shape=rectangle}}
\Vertex[NoLabel,x=2,y=0.7]{b2}
\Vertex[NoLabel,x=2,y=-0.7]{b2m}
\Vertex[NoLabel,x=5,y=0.7]{b5}
\Vertex[NoLabel,x=5,y=-0.7]{b5m}
\Vertex[NoLabel,x=8,y=0.7]{b8}
\Vertex[NoLabel,x=8,y=-0.7]{b9}
\Edges(b1,b2,b3,b4,b5,b6,b7,b8,b10)
\Edges(b1,b2m,b3,b4,b5m,b6,b7,b9,b10)
\Edges(b7,b9)
  \path[every node/.style={}]
  (b1) edge[bend right=90] node [left] {} (b10);
\end{tikzpicture}
\end{center}
\begin{proof}
Since $\langle \sigma \rangle$ is a quadratic space of dimension $9$ and $1$-dimensional radical, we have  $\sigma^\perp \subseteq \langle \sigma \rangle$. Therefore $\DeltabarY=\sigma$ and this is an Enriques surface of finite automorphism group of type \(\mathrm{II}\) \cite[Main Theorem]{kondo:enriques_finite_automorphism}. If it is generic, then it is a $(D_9,D_9)$-generic Enriques surface.
Following \cite[(7.4)]{brandhorst_shimada:tautaubar}, we compute that $\Nef_Y$ is an $L_{1,25}/S_Y(2)$-chamber 
of type $\mathrm{12B}$, it has $12$ walls, giving $12$ rational curves and they decompose under $\Aut(Y)$ into two orbits of size $6$. Indeed, it decomposes into at least $2$ orbits and by looking at the various subgraphs of type $\widetilde{A}_8$ and using \Cref{lem:A8E8invol}(3-4) we find enough automorphisms to get at most $2$ orbits. 
\end{proof}

\subsection{The case $E_n$}
\begin{proposition}\label{lemma:E6-transitive}
Let $\sigma \subseteq \DeltabarY$ be a connected component of type $(E_6,E_6)$.
Then any rational curve $\delta \in \R(Y,\sigma)$ is in the same $\aut(Y)$-orbit.
\end{proposition}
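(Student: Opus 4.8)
The plan is to follow the proofs of \Cref{lemma:An-transitive} and \Cref{lemma:Dn-transitive}. Given $\delta\in\R(Y,\sigma)$, I would first embed it, via \Cref{lem:Bmaximal_classification}~(3), into a maximal connected $B\in\R^6(Y,\sigma)$ of type $(E_6,E_6)$, so that $R:=\langle B\rangle\cong E_6$ is primitive in $S_Y\cong E_{10}$. By \Cref{lem:isotropic_in_Rperp} the complement $B^\perp$ (which is isometric to $U\oplus A_2$, since $E_6^\perp\cong U\oplus A_2$ in $U\oplus E_8$) contains a primitive isotropic vector; as $B$ is maximal, \Cref{lem:transitive_on_induced_chambers} lets me move it by an element of $W(\Delta(Y)\cap B^\perp)$ to a nef class $f$. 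Then $f$ is the half-fiber of an elliptic fibration in which $B$ lies in a reducible fiber. An $E_6$ root basis embeds only into affine diagrams of type $\widetilde E_6,\widetilde E_7,\widetilde E_8$ (it is excluded from $\widetilde A_n$ and $\widetilde D_n$ for valency reasons), and each fiber component is a smooth rational $(-2)$-curve, hence reduces into $\DeltabarY$; since the fiber is connected, all these reductions land in the single component $\sigma$, so the rank bound $\rk\sigma=6$ leaves only $\widetilde E_6$. I thus obtain a $\widetilde E_6$ fiber with seven components: a center $c$, three ``middle'' nodes $m_1,m_2,m_3$, and three ``outer'' nodes $o_1,o_2,o_3$ (cf.\ \Cref{figure:E6tilde}).

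Deleting an outer node $o_i$ produces an $E_6$ subdiagram; each has rank $6=\rk\sigma$, hence is maximal by \Cref{lem:maximal-characterization}~(1) and again of type $(E_6,E_6)$. As $[R':R]=1$ is odd, \Cref{prop:same_AutY_orbit}~(1), together with \Cref{lem:max_helper} (which supplies the $\bar g\in W(\sigma)\subseteq\Gbar_Y$ matching their reductions), shows that \emph{all} maximal $E_6$ configurations in $\R(Y,\sigma)$ form a single $\aut(Y)$-orbit, and in particular the three subdiagrams do. Applying this to the three deletions yields automorphisms realizing the three-fold symmetry of $\widetilde E_6$, whence the outer curves $o_1,o_2,o_3$ lie in one orbit and the middle curves $m_1,m_2,m_3$ lie in one orbit. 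Chasing which role each curve plays in the three subdiagrams shows moreover that within this one fiber the curves split into exactly three $\aut(Y)$-classes: the center $c$ (always the trivalent node), the middles $\{m_i\}$ (realising both the short-arm leaf and the two valency-$2$ roles), and the outers $\{o_i\}$ (the long-arm leaves).

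The hard part is that every diagram automorphism of $\widetilde E_6$ fixes its center, so no automorphism preserving this fiber can move $c$, nor connect a long-arm leaf to a middle; merging the three remaining classes genuinely requires $\widetilde E_6$ fibers in which these curves occur in interchanged roles. Concretely, working in an $E_{10}$-basis adapted to the first fiber (as in the explicit computations of \Cref{lem:A8E8invol} and \Cref{lem:AutOrbitD8E8}), I would select further primitive isotropic nef classes $f'$ — using \Cref{lem:isotropic_in_Rperp} and, where convenient, a $U$-pair together with \Cref{lem:nef_Upair} to name the missing fiber component — whose $\widetilde E_6$ fibers place $c$ (respectively some $o_i$) as a non-central node. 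Since all maximal $E_6$ configurations lie in one orbit, the automorphism identifying such a new configuration with one of the $E_6^{(i)}$ then carries $c$ into the arm classes, and a symmetric choice carries an outer curve onto a middle, merging $\{c\}$, $\{m_i\}$ and $\{o_i\}$. Once this is done, every $\delta\in\R(Y,\sigma)$, being a node of some maximal $E_6$ configuration and hence of some $\widetilde E_6$ fiber, lies in the single resulting $\aut(Y)$-orbit. The one laborious step, exactly as in the $A_8$, $D_8$ and $A_7$ cases, is writing down these auxiliary isotropic classes explicitly and checking the claimed fiber types.
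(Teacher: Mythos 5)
Your first half reproduces the paper's argument: embedding $\delta$ in a maximal configuration of type $(E_6,E_6)$ via \Cref{lem:Bmaximal_classification}, producing an $\widetilde{E}_6$ fiber, and applying \Cref{prop:same_AutY_orbit} (through \Cref{lem:max_helper}) to the three $E_6$ subdiagrams to conclude that the middle curves form one class, the outer curves another, and the center $c$ is a priori stuck. The gap is in the second half, which you yourself flag as ``the hard part'' and then do not carry out: you never construct the auxiliary fibrations whose $\widetilde{E}_6$ fibers would exhibit $c$, or an outer curve, in a non-central role. Nothing you cite guarantees that such fibers exist: \Cref{lem:isotropic_in_Rperp} and \Cref{lem:nef_Upair} can produce isotropic nef classes, but you would still have to show that the resulting fiber is of type $\widetilde{E}_6$, that it contains $c$ at all, and that $c$ occupies a prescribed non-central position in it. This existence statement is precisely the content of the proposition (a posteriori it follows from the theorem, but that cannot be used), so deferring it as ``the one laborious step'' leaves the proof incomplete exactly at its crux.

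The paper closes this gap by a different reduction that avoids any explicit coordinate computation. First, every $A_5$ configuration in $\R^5(Y,\sigma)$ extends to a maximal connected configuration, which by \Cref{lem:Bmaximal_classification} must be of type $(E_6,E_6)$; since all such maximal configurations lie in one $\aut(Y)$-orbit and the $E_6$ diagram contains a unique $A_5$ chain, \emph{all} $A_5$ configurations in $\R^5(Y,\sigma)$ lie in a single $\aut(Y)$-orbit. Second, a short mod-$2$ argument (using that $A_5\oplus A_1$ has index $2$ in $E_6$, and correcting the corresponding vector $\bar e$ by an element of $\sigma^\perp$ to make it isotropic) produces an elliptic fibration with an $\widetilde{A}_5$ fiber. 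The circle argument of \Cref{lemma:An-transitive}, fed with the one-orbit statement for $A_5$ configurations, then shows that the five nodes of any $A_5$ configuration are in one orbit. Since the $A_5$ chains through the center of the $\widetilde{E}_6$ diagram contain $c$, a middle node and an outer node simultaneously, this merges your three classes at once. If you insist on your route, you would have to do for $E_6$ what \Cref{lem:AutOrbitD8E8} does for $(D_8,\ZZ/2\ZZ)$: fix explicit $E_{10}$-coordinates, write down the auxiliary isotropic classes, verify nefness and the fiber types, and exhibit the new curves by hand; that computation is exactly what is missing from your proposal.
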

\begin{proof}
The argument parallels that of \Cref{lemma:An-transitive,lemma:Dn-transitive}. As before we find a maximal $B \in \R^6(Y,\sigma)$ and thus of type $(E_6,E_6)$ by \Cref{lem:Bmaximal_classification}.
Further there is an elliptic fibration contracting $B$. Therefore the corresponding fiber $\widetilde{B} \supset B$ is of type $\widetilde{E_6}$.
It contains $3$ subdiagrams of type $E_6$ which must be maximal by \Cref{lem:maximal-characterization} and of type $(E_6,E_6)$ as well \Cref{lem:Bmaximal_classification}.  By \Cref{prop:same_AutY_orbit} they are in the same $\aut(Y)$-orbit in $\R^6(Y)$.

Any $A_5$ configuration in $\R^5(Y,\sigma)$ extends to a maximal one, i.e. an $(E_6,E_6)$.
Hence by \Cref{prop:same_AutY_orbit}
they are all in the same $\aut(Y)$-orbit as the unique $A_5$-subdiagram of the $E_6$ configuration $B$.

Now, any node of $\widetilde{B}$ is contained in an $A_5$-configuration and these configurations overlap in the central node.
Thus, it remains to show that the nodes in any $A_5$-configuration form a single $\Aut(Y)$-orbit.
For this one uses another elliptic fibration with an $\widetilde{A}_5$ fiber and argues as in the ($A_n$, $n\leq 7$)-case in \Cref{lemma:An-transitive} using that all $A_5$-configurations are in the same orbit. 

For its existence note that $E_6$ contains $A_5 \oplus A_1$ as an index $2$ sublattice. Let $\bar e \in E_6\otimes \FF_2$ such that $\bar e^\perp$ is the corresponding hyperplane cutting out $A_5 \oplus A_1$. 
Define $\bar f:=\bar e+\bar g$ with $g \in \sigma^\perp$ and $q(e)=q(g)$. 
Then $\bar f$ is isotropic and defines the required fibration with an $\widetilde{A_5}$ fiber (cf. \cite{brandhorst_shimada:tautaubar}).
\end{proof}

\begin{proposition}\label{lem:AutOrbitE7E7}
Let $\sigma \subseteq \DeltabarY$ be a connected component of type $(E_7, 0)$.
Then there exists $B \in \R^7(Y,\sigma)$ of type $(E_7,E_7)$. Further, any rational curve $\delta \in \R(Y,\sigma)$ is in the same $\aut(Y)$-orbit as some $b_i \in B$ and $b_i, b_j \in B$ are in the same $\aut(Y)$ orbit if and only if their distance (in the graph $B$) is even.
\end{proposition}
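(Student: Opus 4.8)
The plan is to follow the template of \Cref{lem:AutOrbitD8E8}, splitting the argument into existence, a reduction to nodes of maximal configurations, the upper bound (even distance $\Rightarrow$ same orbit), and the lower bound (odd distance $\Rightarrow$ distinct orbits). For existence, by \Cref{lem:BofTypeSigmaexists} there are $b_1,\dots,b_7\in\R(Y)$ with $\overline{\Delta}(\langle b_1,\dots,b_7\rangle)=\sigma$; since $\sigma$ is of type $(E_7,0)$ its kernel is $0$ and $E_7$ does not occur in the list of imprimitive irreducible roots lattices of \Cref{shimada:irreducible}, so $R:=\langle b_1,\dots,b_7\rangle\cong E_7=R'$ and $B:=\{b_1,\dots,b_7\}$ is an $(E_7,E_7)$ configuration. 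As $\rk\sigma=7$, $B$ is maximal by \Cref{lem:maximal-characterization}(1), and by \Cref{lem:Bmaximal_classification}(7) every maximal connected configuration in $\R(Y,\sigma)$ is of type $(E_7,E_7)$ or $(A_7,E_7)$; hence every $\delta\in\R(Y,\sigma)$ lies in one and is $\aut(Y)$-equivalent to one of its nodes, proving the first two assertions. A key preliminary observation is that \emph{all} $(E_7,E_7)$ configurations lie in a single $\aut(Y)$-orbit: for any such configuration $[R':R]=1$ is odd and $\overline{R}=\langle\sigma\rangle$, so condition (1) of \Cref{prop:same_AutY_orbit} applies with $\bar g=1$.

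For the upper bound I would work in the coordinates of \Cref{figure:E10}, realize $B$ explicitly, and choose a nef primitive isotropic $f_1\in B^\perp$ via \Cref{lem:isotropic_in_Rperp}. The induced genus-one fibration contains $B$ in a fiber, which is necessarily of type $\widetilde E_7$ by maximality of $B$ (an $\widetilde E_8$ fiber would extend $B$ to a rank-$8$ configuration of curves). Writing $b_0$ for the extra component, the $\widetilde E_7$ fiber contains exactly two $E_7$-subdiagrams (deleting either end of the affine chain) and one $A_7$-subdiagram (deleting the pendant); applying the single-orbit statement above to the two $E_7$-subdiagrams produces an automorphism realizing the chain reversal $b_i\mapsto b_{8-i}$, which identifies the interchanged nodes and, crucially, respects the bipartition of the $E_7$ graph. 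To merge the remaining even-distance pairs into a single orbit per color class I would exhibit further genus-one fibrations through $U$-pairs $f_1,f_2$ (\Cref{lem:nef_Upair}) with their bielliptic involutions, together with the involution attached to the $\widetilde A_8$ fiber coming from the $A_7$-subdiagram as in \Cref{lem:A8E8invol}(1,3--4), tracking the emerging curves in a dual graph exactly as in \Cref{lem:AutOrbitD8E8}. Each such involution flips a maximal $A_n$ configuration of \emph{odd} length and therefore preserves the bipartition, so iterating yields at most the two color classes $\{b\mid\text{even distance}\}$ as orbits.

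The hard part will be the lower bound, i.e. showing the two color classes are genuinely distinct, equivalently that no automorphism exchanges the two parts of the (bipartite) graph on $\R(Y,\sigma)$. As in the $D_8$ case this should follow from a central, numerically trivial involution whose pointwise fixed locus meets $\R(Y,\sigma)$ in exactly one color class (four smooth rational curves): being central, its fixed curves form an $\Aut(Y)$-stable set, so no automorphism can carry an even-distance curve to an odd-distance one. The genuine obstacle, and the reason this case differs from $D_n$ with $n\le 8$, is that a bare $(E_7,0)$ component need not itself carry such an automorphism; it appears only after enlarging the configuration to root invariant $(E_7+A_1,\ZZ/2\ZZ)$, where the surface coincides with the model of \cite[Theorem 8.2.23]{enriquesII}. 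I would therefore construct the explicit extended configuration matching that model (in the spirit of \Cref{figure:D8E8}), read off the numerically trivial involution and its four fixed curves, and then argue that the number of orbits is constant along this locus, so that the separation into two orbits persists for every $Y$ with an $(E_7,0)$ component; establishing this rigidity is where the main work lies.
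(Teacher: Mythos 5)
Your existence argument and the observation that all $(E_7,E_7)$ configurations form a single $\aut(Y)$-orbit (via \Cref{prop:same_AutY_orbit}(1)) are correct and match the paper, as does the chain-reversal automorphism coming from the two $E_7$ subdiagrams of an $\widetilde{E}_7$ fiber. But your claim that this already proves the first two assertions is premature: a curve $\delta\in\R(Y,\sigma)$ may lie only in maximal configurations of type $(A_7,E_7)$, and by \Cref{lem:GYorbitA7E7} these fall into as many as six distinct $\Aut(Y)$-orbits of configurations; nothing in your sketch guarantees that the nodes of \emph{every} such configuration are equivalent to nodes of $B$. This is where the real work of the upper bound lies, and the paper does it concretely: working in $E_{10}$-coordinates it produces isotropic vectors $f_2,\dots,f_5$ orthogonal to suitable $A_7$ subconfigurations, uses \Cref{lem:nef_Upair} to decide whether each is nef or differs from a nef class by a $(-2)$-curve (here one needs $\overline{f_i-f_1}\notin\sigma\cup\sigma^\perp\supseteq\DeltabarY$), and realizes all five possible radicals $\overline{R_i'}\cap\overline{R_i'}^\perp$ by $(A_7,E_7)$ configurations sitting inside $\widetilde{A}_7$ fibers whose nodes are already linked to $B$ by \Cref{prop:same_AutY_orbit}. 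Your appeal to \Cref{lem:A8E8invol} and an ``$\widetilde{A}_8$ fiber coming from the $A_7$-subdiagram'' is also off target: no $A_8$ configuration can lie in $\R(Y,\sigma)$, since its span mod $2$ would have dimension $8>7=\dim\langle\sigma\rangle$; the relevant fibers here are of type $\widetilde{A}_7$, and the relevant tool is \Cref{prop:same_AutY_orbit}, not the $\widetilde{A}_8$ involutions.

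The more serious gap is the lower bound, which you yourself flag as unestablished. You correctly identify the geometric source (the numerically trivial involution existing only after enlarging the root invariant to $(E_7+A_1,\ZZ/2\ZZ)$), but your proposed mechanism --- ``the number of orbits is constant along this locus'' --- is precisely the statement requiring proof, and no deformation argument is offered that could deliver it; orbit counts, automorphism groups and the set of $(-2)$-curves all jump along such loci. The paper avoids deformation theory entirely and argues lattice-theoretically on a \emph{fixed} $Y$: it identifies $S_Y\cong S_{Y'}$ with $\sigma_1'=\sigma$, observes $\Delta(Y,\sigma)=\Delta(Y',\sigma_1')\subseteq\Delta(Y')$, proves $\Gbar_Y=W(\DeltabarY)$ (using that $\sigma$ is the unique rank-$7$ component, so $G_Y$ fixes every component), and then shows that any $f\in\aut(Y)$, after composition with Weyl elements acting trivially on $\langle\sigma\rangle$, agrees mod $2$ on $\langle\sigma\rangle$ with some $f'\in\aut(Y')$. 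Since the involution on $Y'$ is central, $\aut(Y')$ permutes its four fixed curves and hence fixes the class $\bar c$ of their sum; therefore $f$ fixes $\bar c$ as well, and $b(\cdot,\bar c)$ separates the two colour classes of $B$. Without this transfer argument (or a substitute for it), the ``only if'' direction of the proposition is unproven in your proposal.
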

\begin{proof}
Since there is an isotropic vector $\overline{f} \in \sigma^\perp$, $Y$ has an elliptic fibration with an $\widetilde{E}_7$ fiber. Let $\widetilde{B}_1$ denote the set of its irreducible components.
It contains an $E_7$ subdiagram, denoted $B_1$ and an $A_7$-subdiagram whose set of vertices is denoted $B_2$. Note that $\overline{\Delta}(B_1)=\sigma$.
By \Cref{lem:Bmaximal_classification} the types of $B_1$ and $B_2$ are $(E_7,E_7)$ and $(A_7,E_7)$.
Let $\{e_1,\dots, e_{10}\}$ be a fundamental root system of $S_Y \cong E_{10}$ labeled as in \Cref{figure:E10}.
By \Cref{lem:isotropic_in_Rperp,shimada:irreducible,lem:existsRwithDeltaRequalSigma}, up to a change of coordinates, we may assume that $B_1=\{e_4,\dots, e_{10}\}$
and that the $\widetilde{E}_7$ half fiber $f_1$ has coordinates
\[f_1 = \begin{pmatrix}
0&1&2&3&4&5&6&4&2&3
\end{pmatrix}.\]
Let $h_{E_7}$ be the highest root of the positive root system $\Delta^+(B_1)$. Define $a_7:= 2 f_1 + h_{E_7}$,
which is given in coordinates by
\[a_7 =\begin{pmatrix}
0& 2& 4& 5& 6& 7& 8& 5& 2& 4
\end{pmatrix}.\]
Then $\widetilde{B}_1 = B_1 \cup \{a_7\}$.
Let $f_2 \in S_Y$ be the isotropic vector given by
\[f_2 = \begin{pmatrix}
    1&3&5&6&7&8&9&6&3&4
\end{pmatrix}.\]
The vectors $(f_1,f_2)$ form a $U$-pair, and by \Cref{lem:nef_Upair} $f_2$ is nef or $r=f_2-f_1$ is a $(-2)$-curve. The latter is impossible because $\overline{r}\notin \sigma \cup \sigma^\perp \supseteq \DeltabarY$.

Since $\overline{e}_{10} \in \sigma$ and $f_2.e_{10}\equiv 1 \mod 2$, we have $\sigma \not\subseteq \overline{f}_2^\perp$ and therefore $f_2$ induces an elliptic fibration with an $\widetilde{A}_7$ reducible fiber given by $a_1:=e_4,\dots, a_6:=e_9, a_7$,   and
\begin{eqnarray}
a_0 =
\begin{pmatrix}
2& 4& 6& 6& 7& 8& 9& 6& 3& 4
\end{pmatrix}.\end{eqnarray}
It contains the $A_7$ subdiagram of $\widetilde{E}_7$ as $a_1,\dots, a_7$.
\begin{center}
 \begin{tikzpicture}
\tikzset{VertexStyle/.style= {fill=black, inner sep=1.5pt, shape=circle}}
\Vertex[NoLabel,x=3,y=0]{4a}
\Vertex[NoLabel,x=4,y=0]{5a}
\Vertex[NoLabel,x=5,y=0]{6a}
\Vertex[NoLabel,x=6,y=0]{7a}
\Vertex[NoLabel,x=7,y=0]{8a}
\Vertex[NoLabel,x=8,y=0]{9a}
\Vertex[NoLabel,x=9,y=0]{9b}
\Vertex[NoLabel,x=6,y=1]{10a}
\Vertex[NoLabel,x=6,y=2]{10b}
\Edges(4a,5a,6a,7a,8a,9a,9b)
\Edges(7a,10a,10b)
\Edges(4a,10b,9b)
\tikzset{VertexStyle/.style= {inner sep=1.5pt, shape=circle}}
\Vertex[x=3,y=-0.4]{$a_1$}
\Vertex[x=4,y=-0.4]{$a_2$}
\Vertex[x=5,y=-0.4]{$a_3$}
\Vertex[x=6,y=-0.4]{$a_4$}
\Vertex[x=7,y=-0.4]{$a_5$}
\Vertex[x=8,y=-0.4]{$a_6$}
\Vertex[x=9,y=-0.4]{$a_7$}
\Vertex[x=6.4,y=1]{$e_{10}$}
\Vertex[x=6,y=2.4]{$a_0$}
\end{tikzpicture}
\end{center}

Now, we obtain $8$ $A_7$ configurations in the $\widetilde{A}_7$ diagram. They are given by $C_2^{i}:=\{a_j \mid j \neq i\}$ for $i=0,\dots,7$.
They are maximal, hence of type $(A_7,E_7)$.
Set 
\[d_0 = \tfrac{1}{2}(a_0+a_2+a_4+a_6)\]
and 
\[d_1 = \tfrac{1}{2}(a_1+a_3+a_5+a_7).\]
Note that $2f_2 = a_0 + \dots+ a_7$ and hence $d_0 +d_1= f_2$.

The primitive closure of $\langle C_2^i\rangle $ is generated by $C_2^i$ and $d_{i+1 \mod 2}$.
By \Cref{prop:same_AutY_orbit}, the $C_2^{2i}$ with $i=0, 1,2,3$ form a single $\aut(Y)$-orbit, and so do the $C_2^{2i+1}$.
This shows that $e_4,e_6,e_8$ lie in a single orbit, and so do $e_5$,$e_7$, and $e_9$.
To deal with $e_{10}$, we use the fact that $B_3 = \{a_2,a_3,a_4,e_{10},a_0,a_7,a_5\}$ is another $E_7$-configuration. By \Cref{prop:same_AutY_orbit} there is an automorphism mapping $B_1$ to $B_3$ and therefore $e_{10}$ to $a_5$. 

Set $R=\langle B_2 \rangle = \langle a_1,\dots a_7 \rangle$ and note that 
\[\overline{R} = \langle \bar a_1,\dots, \bar a_6\rangle=\langle \bar e_4,\dots, \bar e_9\rangle. \]
If $r \in \R(Y,\sigma)$ is a rational curve, it extends to a maximal configuration of type $(A_7,E_7)$ or $(E_7,E_7)$. In the second case it can be mapped to $B_1$. In the first case, by 
\Cref{lem:GYorbitA7E7} (and a calculation in coordinates), to one of $5$ configurations, whose span $R_i$ is such that
$\overline{R_i'}=\overline{R}+\langle v \rangle $ with $v$ one of
\[\bar e_1, \bar e_2,\bar e_1+\bar e_2,\bar e_1+\bar e_3,\bar e_1+\bar e_3+\bar e_{10}.\]

Set $R_1:=\langle C_2^1\rangle $ and $R_2:=\langle C_2^2\rangle$ giving $\bar e_2$ and $\bar e_1+\bar e_3$. 
The isotropic vector 
\[f_3 = \begin{pmatrix} 1&  3&  7&  9&  11&  13&  15&  10&  5&  7\end{pmatrix} \in (C_2^0)^\perp\]
satisfies $f_1.f_3=1$ and is nef by the same reasoning as for $f_2$.
Let $a_0^3$ be the missing component of the $\widetilde{A}_7$ fiber of $f_3$. 
By the same reasoning as before $a_0^3$ can be mapped to $C^0_2$.
With $R_3=\langle a_2,\dots a_7, a_0^3\rangle$ we obtain $\overline{R_3'}=\overline{R}+\langle \bar e_1+\bar e_3+ \bar e_{10}\rangle$.

Consider the isotropic vectors and $f_4=f_2+e_1$ and $f_5=f_2+e_1+e_2$.
They satisfy $f_2.f_4=1=f_2.f_5$ and $f_4,f_5 \in (C_2^7)^\perp$. 
If $e_1\notin \R(Y)$, then $f_4$ is nef
and we obtain $R_4$ with $\overline{R_4'}=\overline{R}+\langle \bar e_1 +\bar e_2\rangle$.
If $e_1\in \R(Y)$, then $\bar e_2$ and $\bar e_1+\bar e_2$ are in the same $W(\sigma)$-orbit.
Similarly, from $f_5$ we obtain $\bar e_1$ (whether or not it is nef).
This shows that there are at most two $\Aut(Y)$-orbits of rational curves in $\R(Y,\sigma)$.\\

It remains to show that $\aut(Y)$ decomposes $\R(Y,\sigma)$ into at least two orbits.

We consider an $(E_7\oplus A_1,E_8)$-generic Enriques surface $Y'$. It has a numerically trivial involution by \cite[Theorem 1.18]{brandhorst_shimada:tautaubar} (the source assumes $\mathbb{C}$, but it works for $p\neq 2$ just the same. Alternatively, take the bielliptic involution of the $U$-pair in $(E_7\oplus A_1)^\perp$). This involution fixes $4$ disjoint smooth rational curves $C_1,\dots, C_4$ (and some isolated points) \cite[Table 8.8,Theorem 8.2.21, Remark 8.2.22]{enriquesII}. 
Since it is numerically trivial, it lies in the center of $\Aut(Y')$ and $\Aut(Y')$ permutes its fixed locus $C_1,\dots, C_4$. Since there is an $E_7$ configuration of $(-2)$ curves in $\R(Y',\sigma)$, there is a rational curve different from $C_1,\dots, C_4$. This shows that there are at least two distinct $\aut(Y')$-orbits on $\R(Y',\sigma_1')$ where $\sigma'_1$ is the $E_7$ component of $\DeltabarY$ and $\sigma'_2$ the $A_1$ component. 
Hence, there are exactly $3$ $\aut(Y')$-orbits on $\R(Y')$, one on $\R(Y',\sigma'_2)$ and two on $\R(Y',\sigma_1)$, which are distinguished by their images in $S_Y\otimes \FF_2$ (by whether $C.C_i \equiv 1 \mod 2$ for some $i$ or not). Let $c$ be the class of $C_1+C_2+C_3+C_4$. \\

Note that since $\sigma$ is of dimension $7$, it is the unique component of $\DeltabarY$ of this rank and therefore preserved by $G_Y$. The same is true for the non-trivial vector in $\sigma^\perp\cap\sigma$, which we call $\bar v$. Set $\sigma_2=\{v\}$. Besides $\sigma$ and possibly $\sigma_2$, $\Delta(Y)$ can have at most another connected component, which is of type $A_1$ or $A_2$. Therefore $G_Y$ acts trivially on the connected components and $\Gbar_Y=W(\DeltabarY)$.

We identify $S_Y$ and $S_{Y'}$ in such a way that $\sigma'_1=\sigma$. Then $\Nef_{Y'}$ is contained in the $\Delta(Y,\sigma)$-chamber $D_\sigma$ containing $\Nef_{Y}$. This chamber is tesselated by $\Delta(Y')$ chambers in a natural way since $\Delta(Y,\sigma)=\Delta(Y',\sigma_1') \subseteq \Delta(Y')$. 

Let $f \in \aut(Y)$ and let $B\subseteq \R(Y,\sigma^\perp)$ with $R:=\langle B \rangle$ negative definite and such that $\overline{\Delta}(R)=\sigma^\perp \cap \DeltabarY$. It exists because the components of $\DeltabarY$ besides $\sigma$ are rank at most $2$. Then there is $w \in W(B)$ with $\bar w \in W(\overline{\Delta}(Y)\setminus \sigma)$ such that $\bar f \in \Gbar_{Y'}=\overline{W(Y')}$.  Hence $f\circ w=f'\circ w'$ with $f' \in \aut(Y')$ and $w' \in W(Y')$.
Since $f\circ w$ preserves $B^\perp\cap \Nef_Y$, it preserves an interior point of $D_\sigma$ and therefore $D_\sigma$.  

In fact, we know that $w' \in W(\Delta(Y',\sigma_2'))$ because any reflection in an element of $\Delta(Y,\sigma)$ would leave $D_\sigma$. Since $w$ and $w'$ act trivially on $\sigma$, the actions of $f$ and $f'$ on $\langle \sigma \rangle \subseteq \DeltabarY$ agree. Hence $f$ preserves $\bar c$. This separates the two orbits.
\end{proof}

\begin{proposition}
    For $\sigma$ of type $(E_8,0)$ $\Aut(Y)$ acts with $4$ orbits on $\R(Y,\sigma)$.
    \begin{center}
 \begin{tikzpicture}
\tikzset{VertexStyle/.style= {fill=black, inner sep=1.5pt, shape=circle}}
\Vertex[NoLabel,x=3,y=0]{4a}
\Vertex[NoLabel,x=4,y=0]{5a}
\Vertex[NoLabel,x=5,y=0]{6a}
\Vertex[NoLabel,x=6,y=0]{7a}
\Vertex[NoLabel,x=7,y=0]{8a}
\Vertex[NoLabel,x=8,y=0]{9a}
\Vertex[NoLabel,x=9,y=0]{9b}
\Vertex[NoLabel,x=7,y=1]{10a}
\Edges(4a,5a,6a,7a,8a,9a,9b)
\Edges(8a,10a)
\tikzset{VertexStyle/.style= {inner sep=1.5pt, shape=circle}}
\Vertex[x=3,y=-0.4]{$b$}
\Vertex[x=4,y=-0.4]{$c$}
\Vertex[x=5,y=-0.4]{$d$}
\Vertex[x=6,y=-0.4]{$c$}
\Vertex[x=7,y=-0.4]{$b$}
\Vertex[x=8,y=-0.4]{$c$}
\Vertex[x=9,y=-0.4]{$d$}
\Vertex[x=6.4,y=1]{$a$}
\end{tikzpicture}
\end{center}
\end{proposition}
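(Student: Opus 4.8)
The plan is to follow the pattern of the proofs of \Cref{lem:AutOrbitD8E8} and \Cref{lem:AutOrbitE7E7}: build one explicit configuration of smooth rational curves whose dual graph is completely understood, collapse the curve-orbits using \Cref{prop:same_AutY_orbit}, \Cref{lem:A8E8invol} and the counts of \Cref{lem:E8E8_max_configurations}, and then separate the remaining classes by means of the central automorphism that the root invariant $(E_8,0)$ forces on $Y$. First I would fix coordinates. Since $E_8$ is unimodular, $B_1^\perp\cong U$, so using \Cref{lem:existsRwithDeltaRequalSigma}, \Cref{shimada:irreducible} and \Cref{lem:isotropic_in_Rperp} I may assume that $S_Y\cong E_{10}$ with the basis of \Cref{figure:E10}, that the finite $E_8$ configuration $B_1$ lies in $\R(Y,\sigma)$ with $\overline{\Delta}(B_1)=\sigma$, and that a chosen primitive isotropic $f_1\in B_1^\perp$ is nef. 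Then $f_1$ gives an elliptic fibration with an $\widetilde{E}_8$ fiber; writing $v_0=2f_1-h_{E_8}$ for the affine component, its nine components realize \Cref{figure:E8tilde} and all lie in $\R(Y,\sigma)$.

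Deleting the components $v_0$, the mark-$2$ leaf and the mark-$3$ leaf produces connected maximal configurations of types $(E_8,E_8)$, $(D_8,E_8)$ and $(A_8,A_8)$; here one checks directly that the $A_8$ is \emph{primitive} in $S_Y$ (because $v_0$ involves the $f_1$-direction, the index computation forces the scaling parameter to be integral), so that \Cref{lem:A8E8invol} does not yet apply to it. For the upper bound I would then enlarge this graph. By \Cref{lem:Bmaximal_classification}(8) every maximal connected configuration in $\R(Y,\sigma)$ has type $(A_8,E_8)$, $(D_8,E_8)$, $(E_8,E_8)$, $(A_8,A_8)$ or $(A_7,E_7)$. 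Using \Cref{lem:A8_nef_isotropic} and \Cref{lem:nef_Upair} I would produce the nef isotropic classes cutting out $\widetilde{A}_8$- and $\widetilde{D}_8$-fibers through the configurations found above; by \Cref{lem:A8E8invol}(5) each $\widetilde{A}_8$-fiber also contains an $(A_8,E_8)$ configuration, so \Cref{lem:A8E8invol}(3)--(4) applies and merges its components into the two classes of sizes $3$ and $6$. Feeding the $(D_8,E_8)$ and $(A_7,E_7)$ configurations into \Cref{prop:same_AutY_orbit}, together with the bounds of \Cref{lem:E8E8_max_configurations} (at most three orbits of $(D_8,E_8)$ and a single orbit of maximal $(A_7,E_7)$ configurations), I expect all these identifications to glue into exactly the four classes $a,b,c,d$ of the stated diagram, and that every curve of $\R(Y,\sigma)$ is accounted for; as in \Cref{lem:AutOrbitD8E8} I expect the resulting dual graph to coincide with a configuration of \cite[Theorem 8.2.23]{enriquesII}.

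For the lower bound I would use the geometry forced by $(E_8,0)$: such a $Y$ is of zero entropy and carries a cohomologically trivial involution $\iota$, central in $\Aut(Y)$, whose fixed locus consists of four disjoint smooth rational curves and four isolated points \cite{martin2024enriquessurfaceszeroentropy,enriquesII}. The unique positive-rank elliptic fibration is $\Aut(Y)$-invariant and splits the four fixed curves into two intrinsic kinds; on the model of \cite[Theorem 8.2.23]{enriquesII} these are exactly the two curves fixed, resp.\ not fixed, by a numerically trivial automorphism of order $4$ with square $\iota$. Assigning to each $\delta\in\R(Y,\sigma)$ its pattern of incidences with these two kinds of fixed curves then yields an $\Aut(Y)$-invariant taking four distinct values, so the four classes are genuinely distinct orbits.

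The step I expect to be hardest is the bookkeeping in the upper bound: the auxiliary fibrations must be chosen so that their fibers jointly realize a representative of every orbit of maximal configuration, and one has to track the primitive closures $\overline{R'}$ modulo $2$ through all of them simultaneously in order to land on exactly four orbits --- neither collapsing $b,c,d$ nor leaving a fifth class. A secondary subtlety is the lower bound on an \emph{arbitrary} such $Y$ (rather than the special one with the order-$4$ automorphism), where the two kinds of fixed curves must be distinguished intrinsically through the distinguished fibration rather than through an automorphism that need not exist on $Y$.
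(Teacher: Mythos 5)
Your toolkit is the right one (maximal configurations via \Cref{lem:Bmaximal_classification}, the counts in \Cref{lem:E8E8_max_configurations}, merging via \Cref{prop:same_AutY_orbit} and \Cref{lem:A8E8invol}), but there are two genuine gaps. The first is in your lower bound. Your separating invariant is built from ``the unique positive-rank elliptic fibration'', but such a fibration need not exist: the hypothesis is only that $\DeltabarY$ has a component $\sigma$ of type $(E_8,0)$, and the paper's proof splits into the cases $\DeltabarY=\sigma$ and $\DeltabarY=\sigma\cup\tau$ with $\tau$ of type $(A_1,0)$. In the second case $Y$ has \emph{finite} automorphism group of type I \cite[Theorem 8.9.3]{enriquesII}, every elliptic fibration is extremal, so there is no positive-rank fibration at all and your invariant is undefined; the zero-entropy citation gives nothing here. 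The paper settles this case separately by writing down the full dual graph of $(-2)$-curves of a type I surface \cite[Table 8.11]{enriquesII} and reading off the orbits. In the remaining case $\DeltabarY=\sigma$ your plan does work, and the subtlety you flag (distinguishing the ``two kinds'' of fixed curves without the order-$4$ automorphism) is actually unproblematic: the fixed curves that are components of the preserved fiber are separated from those that are not by the fibration itself, which is exactly how the paper argues with $b_1,b_5$ versus $b_9,b_{10}$.

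The second gap is the upper bound, which you correctly identify as the hardest step but which, as planned, cannot be completed from your starting configuration. The nine components of an $\widetilde{E}_8$ fiber form a tree with trivial symmetry: its only $8$-vertex connected subdiagrams are the three obtained by deleting a leaf, so it contains exactly \emph{one} $(D_8,E_8)$ configuration, while $(D_8,E_8)$ configurations may fall into as many as three $\Aut(Y)$-orbits, distinguished by $\overline{R'}+\langle \sigma \rangle$ (\Cref{lem:E8E8_max_configurations}, \Cref{prop:same_AutY_orbit}); moreover a tree admits no diagram symmetries, so no orbit-merging automorphisms arise from it. You therefore still need to construct explicit extra curves realizing the other two values of $\overline{R'}+\langle \sigma\rangle$, a maximal $(A_7,E_7)$ representative, and automorphisms identifying curves inside the $E_8$ diagram --- and this construction, not its expected outcome, is the actual content of the proof. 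The paper avoids all of this bookkeeping by a different opening move: the component of type $(E_8,0)$ forces a numerically trivial automorphism of type (a) \cite[Theorem 8.2.21]{enriquesII}, whose presence forces a ten-curve diagram (an $8$-cycle with two tails attached at antipodal vertices). That diagram already contains an $(E_8,E_8)$, an $(A_8,A_8)$, an $(A_7,E_7)$ and two $\widetilde{D}_8$ configurations realizing all three $(D_8,E_8)$ classes, and its $C_2\times C_2$ graph symmetry is realized by honest automorphisms via the four $E_8$ configurations it contains together with \Cref{prop:same_AutY_orbit}. Without this diagram, or an equivalent explicit supply of curves and automorphisms, your sentence ``I expect all these identifications to glue into exactly the four classes'' is a restatement of the proposition rather than a proof of it.
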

\begin{proof}
Such an Enriques surface has a numerically trivial automorphism of type (a)
\cite[Theorem 8.2.21, Remark 8.2.22]{enriquesII}. Therefore it contains $10$ $(-2)$ curves whose dual graph is as below:

\begin{center}

\begin{tikzpicture}[
vertex/.style = {circle, fill, inner sep=1.5pt, outer sep=0pt},
every edge quotes/.style = {auto=left, sloped, font=\scriptsize, inner sep=1pt}]
 \node[vertex] (1) at (0.707,0.707)  [label=above: $b_4$] {};
 \node[vertex] (2) at ( 0, 1) [label=above: $b_3$] {};
 \node[vertex] (3) at ( -0.707,  0.707) [label=above: $b_2$] {};
 \node[vertex] (4) at ( -1,  0) [label=left: $b_1$] {};
 \node[vertex] (5) at ( -0.707,  -0.707) [label=below: $b_8$] {};
 \node[vertex] (6) at ( 0,  -1) [label=below: $b_7$] {};
 \node[vertex] (7) at ( 0.707, -0.707) [label=below: $b_6$] {};
 \node[vertex] (8) at ( 1,  0) [label=right: $b_5$] {};
 \node[vertex] (9) at ( -0.3,  0) [label=above: $b_9$] {};
 \node[vertex] (10) at ( 0.3,  0) [label=above: $b_{10}$] {};
\Edges(1,2,3,4,5,6,7,8,1)
\Edges(4,9)
\Edges(8,10)
\end{tikzpicture}
\end{center}
Its symmetry is $C_2 \times C_2$ and acts with $4$ orbits. In the diagram we see $4$ ways to form an $E_8$ configuration. This gives four automorphisms and hence the $C_2 \times C_2$ symmetry of the diagram is realized by the automorphism group action of $Y$ on it. 

Now, we have to ensure that every $(-2)$-curve $r \in \R(Y,\sigma)$ is equivalent to one in the diagram.
The curve $r$ is contained in a maximal configuration $B$, which is of type $(E_8,E_8),(A_8,E_8)$, $(D_8,E_8)$, $(A_8,A_8)$ or $(A_7,E_7)$ by \Cref{lem:Bmaximal_classification}.
By \Cref{lem:E8E8_max_configurations} and \Cref{prop:same_AutY_orbit}, these come in 
a single $\Aut(Y)$-orbit each, except for $(D_8,E_8)$ which comes in $3$ orbits. 
We check that each orbit meets a configuration $B_0$ in the diagram.

Let $b_1,\dots, b_{10}$ be the curves in the diagram and $L$ the sublattice they span. Its determinant is $16$ and therefore $[S_Y:L]=4$.
The diagram contains two $\widetilde{D}_8$ configurations
\[2f_1 = b_8+b_9+2(b_1+b_2+b_3+b_4+b_5)+b_6+b_{10},\]
\[2f_2 = b_2+b_9+2(b_1+b_8+b_7+b_6+b_5)+b_4+b_{10}\]
with half fibers $f_1,f_2 \in S_Y$, which together with $L$ span $S_Y$.

For $B$ of type $(A_7,E_7),(E_8,E_8)$ and $(A_8,A_8)$ take as $B_0$
one of 
\[\{b_2,\dots ,b_8\},\;\{b_1,\dots,b_7,b_{10}\},\;\{b_9,b_1,\dots,b_7\}.\]
For $B$ of type $(A_8,E_8)$ we can use \Cref{lem:A8E8invol} to obtain an $\widetilde{A}_8$ configuration, which contains an $(A_8,A_8)$ configuration such that each element of the $\widetilde{A}_8$ configuration is equivalent to an element of its $(A_8,A_8)$-subconfiguration.

For $B$ of type $(D_8,E_8)$ we have a total of $3$ possibilities for $\overline{R'}+\langle \sigma \rangle$ and on may check that each of them occurs in the diagram.

Finally, we need to show that there are at least $4$-orbits of rational curves. If $\Delta(Y)=\sigma$, then $Y$ is of zero entropy \cite{martin2024enriquessurfaceszeroentropy}. In particular, it admits a \emph{unique} elliptic fibration of positive rank. It is therefore preserved by $\Aut(Y)$. The class $b_1+\dots b_8$ is a fiber of this fibration and therefore preserved. Further, we see that $b_1$ and $b_5$ are fixed by the numerically trivial automorphism. Therefore, $\Aut(Y)$ acts with at least $3$ orbits on $b_1,\dots,b_8$, and $b_9$ must belong to a different orbit. 
If $\DeltabarY$ is larger than $\sigma$, then it is $\sigma \cup \tau$ with $\tau$ of type $(A_1,0)$. Such an Enriques surface has a finite automorphism group \cite[Theorem 8.9.3]{enriquesII} of type $I$ and the following dual graph of $(-2)$-curves \cite[Table 8.11]{enriquesII}
\begin{center}
\begin{tikzpicture}[
vertex/.style = {circle, fill, inner sep=1.5pt, outer sep=0pt},
every edge quotes/.style = {auto=left, sloped, font=\scriptsize, inner sep=1pt}]
 \node[vertex] (1) at (0.707,0.707)  [label=above: $b_4$] {};
 \node[vertex] (2) at ( 0, 1) [label=above: $b_3$] {};
 \node[vertex] (3) at ( -0.707,  0.707) [label=above: $b_2$] {};
 \node[vertex] (4) at ( -1,  0) [label=left: $b_1$] {};
 \node[vertex] (5) at ( -0.707,  -0.707) [label=below: $b_8$] {};
 \node[vertex] (6) at ( 0,  -1) [label=below: $b_7$] {};
 \node[vertex] (7) at ( 0.707, -0.707) [label=below: $b_6$] {};
 \node[vertex] (8) at ( 1,  0) [label=right: $b_5$] {};
 \node[vertex] (9) at ( -0.5,  0)  {};
 \node[vertex] (10) at ( 0.5,  0) [label=above: ] {};
 \node[vertex] (11) at ( -0.15,  0) [label=above: ] {};
 \node[vertex] (12) at ( 0.15,  0) [label=above: ] {};
\Edges(1,2,3,4,5,6,7,8,1)
\Edges(4,9)
\Edges(8,10)
\path [-,bend left=15] (10) edge node {} (12);
\path [-,bend right=15] (10) edge node {} (12);
\path [-,bend left=15] (11) edge node {} (12);
\path [-,bend right=15] (11) edge node {} (12);
\path [-,bend left=15] (9) edge node {} (11);
\path [-,bend right=15] (9) edge node {} (11);
\end{tikzpicture}
\end{center}
Clearly, there are at most $4$ orbits.
\end{proof}

\bibliographystyle{alpha}
\bibliography{literature}

\end{document}